\let\over\@@over\makeatother
\numberwithin{equation}{section}
\theoremstyle{plain} 
\newtheorem{theorem}{Theorem}[section] 
\newtheorem{proposition}[theorem]{Proposition} 
\newtheorem{corollary}[theorem]{Corollary}
\newtheorem{lemma}[theorem]{Lemma}
\theoremstyle{remark}
\newtheorem{remark}[theorem]{Remark}
\newcommand{\dist}{\operatorname{dist}}
\newcommand{\realpart}{\operatorname{Re}}
\newcommand{\imagpart}{\operatorname{Im}}
\newcommand{\signum}[1]{\operatorname{sgn}{#1}}
\newcommand{\sech}{\operatorname{sech}} 
\newcommand{\jump}[1]{\left\llbracket{#1}\right\rrbracket}
\newcommand{\ucenter}{u^{\mathrm{c}} }
\newcommand{\varuc}{\dot{u}^{\mathrm{c}} }
\newcommand{\vvaruc}{\ddot{u}^{\mathrm{c}} }
\newcommand{\vvvaruc}{\dddot{u}^{\mathrm{c}} }
\newcommand{\Phicr}{\Phi_{\mathrm{cr}}}
\newcommand{\Xb}{{X_{\mathrm{b}} }}
\newcommand{\Yb}{{Y_{\mathrm{b}} }}
\newcommand{\LV}{\left|}
\newcommand{\RV}{\right|}
\newcommand{\LB}{\left[}
\newcommand{\RB}{\right]}
\newcommand{\LC}{\left(}
\newcommand{\RC}{\right)}
\newcommand{\p}{\partial}
\newcommand{\R}{\mathbb{R}} 
\newcommand{\id}{\operatorname{id}}
\newcommand{\even}{\mathrm{e}}      
\newcommand{\bdd}{\mathrm{b}}       
\newcommand{\F}{{\mathscr F}}       
\newcommand{\ham}{{\mathcal H}}              
\newcommand{\symp}{\omega}                   
\newcommand{\sympep}{\omega^\epsilon}        
\newcommand{\man}{\mathcal M}                
\newcommand{\cman}{{\mathcal{W}^\epsilon}}     
\newcommand{\cmang}{\mathcal{V}^\epsilon}    
\newcommand{\csymp}{\gamma}                  
\newcommand{\csympg}{\varpi^\epsilon}                
\newcommand{\cham}{\mathcal{K}^\epsilon}     
\newcommand{\chamzero}{\mathcal{K}^0}        
\newcommand{\vf}{\mathcal V_{\ham^\epsilon}} 
\newcommand{\twodcham}{K^\epsilon}
\newcommand{\dirnu}{\nu_{\mathrm{D}}}
\newcommand{\dirF}{F_{\mathrm{D}}}
\newcommand{\nuF}{F_{\mathrm{N}}}
\newcommand{\dirmu}{\mu_{\mathrm{D}}}
\newcommand{\numu}{\mu_{\mathrm{N}}}
\newcommand{\qprime}{A}
\newcommand{\reverser}{\mathcal S}  
\newcommand{\creverser}{S}          
\newcommand{\linear}{L}             
\newcommand{\X}{{\mathcal X}}       
\newcommand{\Y}{{\mathcal Y}}       
\newcommand{\U}{{\mathcal U}}       
\newcommand{\cs}{{\mathrm{c}}}      
\newcommand{\hs}{{\mathrm{su}}}     
\newcommand{\D}{{\mathcal D}}          
\newcommand{\cm}{{\mathscr C}}         
\newcommand{\loc}{{\mathrm{loc}} }     
\newcommand{\Fcr}{F_{\mathrm{cr}} }    
\newcommand{\mucr}{\mu_{\mathrm{cr}} } 
\newcommand{\Phia}{\tilde\Phi}         
\newcommand{\flowforce}{\mathscr{S}}
\newcommand{\genG}{\mathcal F}
\newcommand{\genI}{\mathcal I}
\newcommand{\genU}{\mathcal U}
\newcommand{\genX}{\mathcal X}
\newcommand{\genY}{\mathcal Y}
\newcommand{\placeholder}{\;\cdot\;}
\newcommand{\maps}{\colon}         
\newcommand{\by}{\times}         
\newcommand{\sub}{\subset}         
\newcommand{\til}{\widetilde}
\newcommand{\n}[2][]{#1\lVert #2 #1\rVert}
\newcommand{\abs}[2][]{#1\lvert #2 #1\rvert}
\newcommand{\dell}{\partial}
\begin{document}

\title[Stratified solitary waves]{Existence and qualitative theory for stratified solitary water waves }
\date{\today}

\author[R. M. Chen]{Robin Ming Chen}
\address{Department of Mathematics, University of Pittsburgh, Pittsburgh, PA 15260} 
\email{mingchen@pitt.edu}  
\thanks{The research of the first author is supported in part by the Simons Foundation under Grant 354996 and the Central Research Development Fund No.\ 04.13205.30205 from the University of Pittsburgh}

\author[S. Walsh]{Samuel Walsh}
\address{Department of Mathematics, University of Missouri, Columbia, MO 65211} 
\email{walshsa@missouri.edu} 
\thanks{The research of the second author is supported in part by the National Science Foundation through DMS-1514910}

\author[M. H. Wheeler]{Miles H. Wheeler}
\address{Courant Institute of Mathematical Sciences, New York University, New York, NY 10012}
\email{mwheeler@cims.nyu.edu}
\thanks{The research of the third author supported in part by the National Science Foundation through DMS-1400926}

\begin{abstract}  
  This paper considers two-dimensional gravity solitary waves moving through a body of density stratified water lying below vacuum.  The fluid domain is assumed to lie above an impenetrable flat ocean bed, while the interface between the water and vacuum is a free boundary where the pressure is constant.  We prove that, for any smooth choice of upstream velocity field and density function, there exists a continuous curve of such solutions that includes large-amplitude surface waves.  Furthermore, following this solution curve, one encounters waves that come arbitrarily close to possessing points of  horizontal stagnation.

  We also provide a number of results characterizing the qualitative features of solitary stratified waves.  In part, these include bounds on the wave speed from above and below, some of which are new even for constant density flow; an a priori bound on the velocity field and lower bound on the pressure; a proof of the nonexistence of monotone bores in this physical regime; and a theorem ensuring that all supercritical solitary waves of elevation have an axis of even symmetry.  
\end{abstract}

\maketitle

\tableofcontents

\section{Introduction}

Water in the depths of the ocean has a lower temperature and higher salinity than that found near the surface.  The resulting density distribution is thus \emph{heterogeneous} or \emph{stratified}, which creates the potential for types of wavelike  motion not possible in the constant density regime.  Indeed, the density strata in the bulk are themselves free surfaces along which waves may propagate.  This can lead to a remarkable phenomenon wherein large-amplitude waves steal through the interior of the fluid while leaving the upper surface nearly undisturbed.  Field observations have revealed that these internal waves are a common feature of coastal flows, and they are believed to play a central role in the dynamics of ocean mixing (cf., e.g., \cite{helfrich2006review}).   
Stratified waves can be truly immense while traveling over vast distances:  they include the largest waves ever recorded, with amplitude measuring up to 500 meters in some instances \cite{alford2015formation}.    

In this paper, we investigate two-dimensional solitary waves moving through a heterogeneous body of water.  These are a type of traveling wave:  they consist of a spatially localized disturbance riding at a constant velocity along an underlying current without changing shape.  Solitary water waves have a long and rich mathematical history, stretching back to their discovery by Russell in 1834 (cf.\ \cite{russell1844report}).  
Most of the work on this subject has been devoted to the homogeneous and irrotational regime, that is, the density is assumed to be constant and the curl of the velocity is assumed to vanish identically.  This choice permits the use of many powerful tools from complex analysis such as conformal mappings and nonlocal formulations on the boundary.   However, stratification generically creates vorticity, and thus investigations of heterogeneous waves are most naturally made in the rotational setting.  The first rigorous existence theory for traveling waves with stratification was provided by Dubreil-Jacotin \cite{dubreil1937theoremes} over a century after Russell's discovery.  She specifically studied the small-amplitude and periodic regime by means of a new formulation of the problem that did not rely on conformal transformations.  

Solitary waves are typically more difficult to analyze than periodic waves due to compactness issues that we will elaborate below.  Stratification further complicates matters by allowing for a wealth of possible qualitative structures.  In \cite{terkrikorov1960existence,ter1963theorie}, Ter-Krikorov proved the existence of small-amplitude solitary waves with non-constant density.  
The first large-amplitude existence result for stratified solitary waves was given by Amick \cite{amick1984semilinear} and Amick--Turner \cite{amick1986global}.  This was the culmination of a burst of activity in the 1980s devoted largely to \emph{channel flows}, that is, stratified waves in an infinite strip bounded above and below by rigid walls (see also \cite{bona1983finite,turner1981internal,turner1984variational}).  Remarkably, these works came two decades before the development of an existence theory for large-amplitude periodic water waves with vorticity by Constantin and Strauss \cite{constantin2004exact}.  
What explains this seeming discrepancy is that Amick, Turner, and their contemporaries restricted their attention to waves whose velocity is constant upstream and downstream, and hence are asymptotically irrotational.  
This assumption enabled them to handle stratification without confronting the effects of vorticity in their full generality.   For further discussion of the literature, see Section~\ref{history section}.  

One of the main contributions of the present work is an existence theory for large-amplitude surface solitary waves with density stratification.  We are able to allow an arbitrary smooth density distribution and horizontal velocity profile at infinity. 
Moreover, the families we construct continue up to the appearance of an ``extreme wave'' that has a stagnation point.  This is connected to the famous Stokes conjecture, which originally pertained to periodic irrotational waves (cf., \cite{stokes1880theory,amick1982stokes}) but has since been extended to other regimes.  
For example, in the setting of irrotational solitary waves, Amick and Toland proved the existence of a continuum that limited to stagnation \cite{amick1981solitary}. For stratified solitary waves which are asymptotically irrotational,  Amick \cite{amick1984semilinear} constructed a family of solutions and proved that either it contains an extreme wave in its closure, or a certain alternative occurs that he deemed highly unusual and conjectured never happens (see \cite[Theorem~7.4]{amick1984semilinear}).  
Here, we are able to state without qualification that our continuum limits to stagnation.  
This is the first such result for rotational solitary waves, even in the constant density case (see \cite{wheeler2013solitary} and \cite[Section~6]{wheeler2015froude}).

Our existence theory is built upon a host of new theorems concerning the qualitative properties of water waves with stratification.  
We first construct a family of small-amplitude waves via center manifold reduction methods.  The full continuum is then obtained using a new global bifurcation scheme that abstracts and extends the ideas of \cite{wheeler2013solitary,wheeler2015pressure}.   
This analysis hinges critically on having a thorough understanding of the possible structures that may arise as one moves away from the small-amplitude regime, and hence the qualitative theory plays an essential role.

It is worth mentioning that a great deal of recent research has centered on the Cauchy problem for water waves in various physical regimes.
At present, a number of authors have proved results concerning the global in time well-posedness for irrotational waves with small data (see, e.g., \cite{germain2009global,wu2011global,germain2015capillary,ionescu2015gravity,alazard2013global}), or local in time existence for rotational waves or interfacial flows (see, \cite{coutand2007wellposedness,shatah2008geometry,shatah2011interface}).  Yet the stratified waves we wish to study are both colossal and long-lived.  They are also fundamentally rotational due to the baroclinic generation of vorticity.  In short, by considering the steady regime,  we are able to treat waves that lie far beyond the current limitations of  the time-dependent theory. 

Now, let us describe the setting of the problem more precisely.  We are interested in two-dimensional solitary waves with heterogeneous density $\varrho$ which travel with constant speed $c$ under the influence of gravity. Changing to a moving reference frame enables us to eliminate time dependence from the system. The wave then occupies a steady fluid domain
\begin{equation*}
  \Omega = \{ (x,y) \in \R^2 : -d < y < \eta(x) \}, 
\end{equation*}
where the a priori unknown function $\eta$ is the free surface profile and $\{ y = -d\}$ is an impermeable flat bed.   We suppose that $\varrho > 0$ in $\overline{\Omega}$, and also that the fluid is continuously stratified in the sense that $\varrho$ is smooth. Moreover, the fluid is taken to be stably stratified in that heavier fluid elements lie below lighter elements, which translates to $y \mapsto \varrho(\cdot, y)$ being non-increasing.  

A stratified water wave is described mathematically by the fluid domain $\Omega$, density $\varrho \colon \Omega \to \R_+$, velocity field $(u,v) \colon \Omega \to \R^2$, and pressure $P \colon \Omega \to \R$. The governing equations are the incompressible steady Euler system, which consists of the conservation of mass
\begin{subequations}  \label{euler}
  \begin{equation} 
    (u-c) \varrho_x + v \varrho_y  =  0 \qquad \textrm{in } \Omega, \label{mass}  
  \end{equation}
  conservation of momentum 
  \begin{align} 
    \left\{ 
    \begin{alignedat}{2} \label{momentum}
      \varrho (u-c) u_x + \varrho v u_y  & =  -P_x  &  \\
      \varrho (u-c) v_x + \varrho v v_y & =  -P_y - g \varrho & 
    \end{alignedat} 
    \right. 
    \qquad 
    \textrm{in } \Omega,
  \end{align} 
  and incompressibility
  \begin{equation} 
    u_x + v_y =  0 \qquad \textrm{in } \Omega. \label{volume}  
  \end{equation}
\end{subequations}
Here $g > 0$ is the gravitational constant of acceleration. 

The free surface is assumed to be a material line, which results in the kinematic boundary condition
\begin{subequations} \label{eulerboundary} 
  \begin{alignat}{2}
    v &=  (u-c)  \eta_x &\qquad & \textrm{on } y = \eta(x). \label{kinematicsurface} 
  \end{alignat}
  The pressure is required to be continuous over the interface
  \begin{equation} 
    P =  \displaystyle P_{\textrm{atm}} \qquad \textrm{on } y = \eta(x), \label{dynamic} 
  \end{equation}
  where $P_{\textrm{atm}}$ is the (constant) atmospheric pressure.  Finally, the ocean bed is taken to be impermeable and thus
  \begin{equation}
    v =  0 \qquad \textrm{on } y = -d. \label{kinematicbed} 
  \end{equation}
\end{subequations}

It will be important for our later reformulations to require that there is no horizontal stagnation in the flow:
\begin{equation} 
  u-c < 0 \qquad \textrm{in } \overline{\Omega} \label{nostagnation}. 
\end{equation}
One consequence of this assumption is that the streamlines, which are the integral curves of the relative velocity field $(u-c,v)$, extend from $x = -\infty$ to $x = +\infty$; see Figure~\ref{streamline figure}(b). Indeed, a simple application of the implicit function theorem shows that each streamline is the graph of a function of $x$.  

In this work we are concerned with \emph{solitary waves}, which are traveling wave solutions satisfying the asymptotic conditions
\begin{equation} 
  \label{upstream condition} 
  (u,v) \to (\mathring{u},0),
  \quad 
  \varrho \to \mathring\varrho,
  \quad
  \eta \to 0 
  \qquad \textrm{as } |x| \to \infty 
\end{equation}
uniformly in $y$. Here $\mathring{u} = \mathring{u}(y)$ is a given far-field velocity profile, and $\mathring\varrho = \mathring\varrho(y)$ is a given density function.  We point out again that one of the primary contributions of our result is that $\mathring{u}$ is allowed to be \emph{arbitrary}.  This is in marked contrast to the existing literature which requires that the velocity is constant upstream and downstream.  

Rather than using $\mathring{u}$, however, it will prove more convenient to fix a (scaled) asymptotic relative velocity $u^*\colon [-d, 0] \to \R_+$ and consider the family
\begin{equation}
  \label{one parameter shear}
  \mathring{u}(y) = c - Fu^*(y),
\end{equation}
where $F > 0$ is a dimensionless parameter which we will call the \emph{Froude number} (cf., \eqref{Ustar normalization} and \eqref{normalize} for the complete definition).  The positivity of $u^*$ is consistent with the lack of horizontal stagnation \eqref{nostagnation}.  In Section~\ref{nondim sec}, we switch to dimensionless variables so that $F$ is the only parameter appearing in the problem; we think of it as a dimensionless wave speed.  It will later be proved that there exists a critical Froude number, denoted $\Fcr$, that 
plays an important role in determining the structure of solutions.  
For constant density irrotational solitary waves, $\Fcr = 1$, but in the present context its definition is given in \eqref{def Fcr}.  We say that a solution with $F > \Fcr$ is \emph{supercritical}.   

Observe that the conservation of mass \eqref{mass} implies that the density is transported by the flow.  By fixing $\mathring{\varrho}$, we therefore determine the density throughout the fluid region once the streamlines are known.  

\begin{figure}
  \includegraphics[scale=1.1]{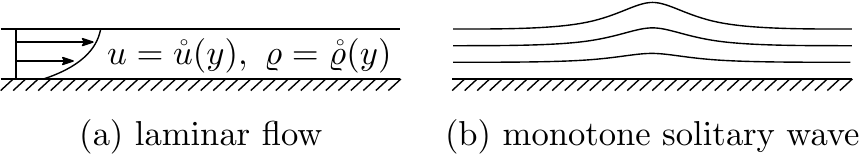}
  \caption{}
  \label{streamline figure}
\end{figure}

Finally, let us introduce some terminology for describing the qualitative features of these waves. A traveling wave is called \emph{laminar} or \emph{shear} if all of its streamlines are parallel to the bed. We say that a solitary wave is a \emph{wave of elevation} provided that at each vertical cross section of the domain, the height of every streamline (except the one corresponding to the bed) lies above its limiting height as $\abs x \to \infty$.  In particular, this means that $\eta$ is strictly positive.  A traveling wave is said to be \emph{symmetric} provided $u$ and $\eta$ are even in $x$ while $v$ is odd.  We say a symmetric wave of elevation is \emph{monotone} if the height of every streamline (except the bed) is strictly decreasing on either side of the crest line $\{x=0\}$; see Figure~\ref{streamline figure}.

\subsection{Statement of results}\label{results section}

\subsubsection*{Existence theory}
The contributions of this paper come in two parts.  The first of these is a complete large-amplitude existence theory for stratified solitary waves with an arbitrary (smooth and stable) density and smooth upstream velocity.
\begin{theorem}[Existence of large-amplitude solitary waves] \label{main existence theorem}
  Fix a H\"older exponent $\alpha \in (0,1/2]$, wave speed $c > 0$, gravitational constant $g >0$, asymptotic depth $d>0$, density function $\mathring{\varrho} \in C^{2+\alpha}([-d,0], \R_+)$, and positive asymptotic relative velocity $u^* \in C^{2+\alpha}([-d,0], \R_+)$. 
  There exists a continuous curve
  \begin{align*}
    \cm = \left\{ (u(s), v(s), \eta(s), F(s)) : s \in (0,\infty) \right\} 
  \end{align*}
of solitary wave solutions to \eqref{euler}--\eqref{upstream condition} with the regularity
  \begin{align}
    \label{(u,v,eta) regularity} 
    (u(s), v(s), \eta(s)) \in 
    C^{2+\alpha}(\overline{\Omega(s)}) \times C^{2+\alpha}(\overline{\Omega(s)}) \times C^{3+\alpha}(\R),
  \end{align}
  where $\Omega(s)$ denotes the fluid domain corresponding to $\eta(s)$.  
  The solution curve $\cm$ has the following properties.
  \begin{enumerate}[label=\rm(\alph*)] 
  \item {\rm (Extreme wave limit)} Following $\cm$, we encounter waves that are arbitrarily close to having points of (horizontal) stagnation:
    \begin{equation}
      \label{c-u to 0} 
      \lim_{s \to \infty} \inf_{\Omega(s)}  |c-u(s)| = 0.
    \end{equation}
  \item {\rm (Critical laminar flow)} The left endpoint of $\cm$ is a critical laminar flow, 
    \begin{align*}
      \lim_{s \to 0} (u(s),v(s), \eta(s), F(s)) = (c-\Fcr u^*,0,0,\Fcr).
    \end{align*}
    Here $\Fcr$ is the critical Froude number defined in \eqref{def Fcr}.  
  \item {\rm (Symmetry and monotonicity)} Every solution in $\cm$ is a wave of elevation that is symmetric, monotone, and supercritical.
  \end{enumerate}
\end{theorem}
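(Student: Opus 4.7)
The plan is to recast the free-boundary system in semi--Lagrangian coordinates, construct a small-amplitude bifurcation curve by center-manifold reduction, and then extend it to a global continuum using an abstract continuation scheme whose alternatives are all eliminated by the qualitative theory developed in the rest of the paper. As a first step I would introduce Yih's pseudo--stream function $\psi$ satisfying $\psi_y = \sqrt{\varrho}\,(c-u)$, $\psi_x = \sqrt{\varrho}\, v$, and then pass from $(x,y)$ to $(x,q)$ where $q$ is a rescaled $\psi$; by \eqref{nostagnation} this is a diffeomorphism flattening $\Omega$ onto the strip $\R \by (-1,0)$. The system \eqref{euler}--\eqref{upstream condition} becomes a quasilinear elliptic equation for the height function $h(x,q)$ on this strip, with a Bernoulli-type nonlinear boundary condition on $\{q=0\}$ encoding the dynamic condition \eqref{dynamic} and hence the parameter $F$. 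Because density is transported, $\varrho$ becomes a function of $q$ alone, and laminar solutions form an explicit one-parameter family $H = H(q;F)$.

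Second, the left endpoint of $\cm$ would be produced by center-manifold reduction with $x$ treated as a dynamical evolution variable. At $F = \Fcr$ the spatial linearization about $H(\,\cdot\,;\Fcr)$ has a nontrivial Jordan block at the origin, and for $F$ slightly above $\Fcr$ the two-dimensional reduced flow has a small homoclinic orbit to the trivial equilibrium. This long-wave bifurcation yields a smooth germ of symmetric, monotone, supercritical waves of elevation parametrized by a small amplitude variable $s$, and the limit as $s \to 0$ returns precisely the critical laminar flow in (b).

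Third, the small curve is extended to the global continuum $\cm$ by an abstract global continuation theorem tailored to the stratified problem, in the spirit of \cite{wheeler2013solitary,wheeler2015pressure}. One would work in a H\"older space of functions satisfying \eqref{(u,v,eta) regularity} that are symmetric, monotone, of elevation, and supercritical, and verify that the governing nonlinear operator is Fredholm of index zero on the open subset where \eqref{nostagnation} holds strictly. A Whyburn-type connectedness argument then produces a maximal connected continuum $\cm$ emerging from the bifurcation point, along which one of the standard alternatives must hold: blow-up of norm, loss of compactness, closure into a loop, or approach to the boundary of the admissible set (the stagnation locus $\inf|c-u|=0$).

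Finally, the qualitative theorems announced earlier in the paper are deployed to eliminate every alternative except the stagnation alternative, yielding \eqref{c-u to 0}. The a priori upper and lower Froude bounds, together with the velocity bound and the pressure lower bound, rule out blow-up in norm. The nonexistence of monotone bores forbids the wave from limiting to a nontrivial shear at infinity along $\cm$. Loops are excluded by combining the symmetry theorem for supercritical solitary waves of elevation with the fact that the only laminar flow in the monotone--elevation cone is the critical one at $s=0$. Uniform interior Schauder estimates plus the uniform decay to $(\mathring u,0)$ give compactness on bounded $x$-intervals, so the only way compactness can fail is through a stagnation point. The hardest step, and where the bulk of the analysis sits, will be organizing this last elimination on the unbounded domain: one must simultaneously propagate the nodal properties (symmetry, monotonicity, elevation, supercriticality) along the entire continuum and exploit the qualitative bounds in a uniform fashion, since any local loss of these properties would allow $\cm$ to escape the functional-analytic set on which the continuation theorem operates.
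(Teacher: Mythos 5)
Your overall architecture (Dubreil--Jacotin transform, center-manifold reduction for the small-amplitude germ, global continuation winnowed by the qualitative theory) matches the paper, and you correctly identify the roles of the Froude bounds and of the nonexistence of monotone bores. The genuine gap is in the continuation machinery itself. A Whyburn-type connectedness argument presupposes exactly the compactness that fails here: without local compactness of $\F^{-1}(0)$ (the broadening-crest scenario of Figure~\ref{noncompact figure}), neither degree theory nor the Whyburn limiting procedure applies, and a connected continuum is in any case weaker than the continuous, locally analytically reparametrizable \emph{curve} asserted in the theorem. The paper instead proves an abstract analytic global bifurcation theorem (Theorem~\ref{generic global theorem}, following Dancer and Buffoni--Toland) in which $\cm$ is built as a union of distinguished arcs; this tolerates both the singular bifurcation point (where $\F_w(0,\Fcr)$ is not Fredholm) and the possible loss of compactness, at the cost of an explicit ``loss of compactness'' alternative. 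In that framework a closed loop is structurally impossible (Remark~\ref{no loop remark}) --- the only unpaired endpoint lies on $\partial(\genU\by\genI)$ --- so your proposed exclusion of loops via symmetry is neither needed nor how one would argue. Relatedly, restricting the function space from the outset to monotone waves of elevation is not viable, since monotonicity is neither open nor closed in $C^{3+\alpha}$; the paper instead propagates the nodal properties \eqref{elevation nodal properties} along $\cm$ by an open/closed maximum-principle argument (Lemmas~\ref{nodal open lemma}, \ref{nodal closed lemma}, and \ref{global nodal}).

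Two of your eliminations are also logically off. First, the a priori velocity, pressure, and Froude bounds do \emph{not} rule out blow-up in norm --- blow-up of $\|w(s)\|_X$ is exactly what happens along $\cm$. Their role is to control $1/\inf_R(w_p+H_p)$, $F$, and $1/(F-\Fcr)$ in terms of $\|w\|_{C^1}$ (Lemmas~\ref{finite lemma} and \ref{asymptotical supercritical lemma}), after which the uniform regularity theorem (Theorem~\ref{uniform regularity theorem}, a Schauder/Lieberman bootstrap your sketch does not supply) shows that $\|w(s)\|_X\to\infty$ forces $\|w_p(s)\|_{C^0}\to\infty$, i.e.\ stagnation via \eqref{h to uv}. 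Second, the claim that ``the only way compactness can fail is through a stagnation point'' is false: compactness can a priori fail with all norms bounded and the flow uniformly away from stagnation, via the flattening crest. That alternative is excluded not by stagnation but by the translation argument of Lemma~\ref{lem:compact:gen}, which converts any loss of compactness for asymptotically monotone solutions into a monotone bore, contradicting Corollary~\ref{bores corollary}. Repairing your proof therefore requires replacing the Whyburn step by (or proving an equivalent of) Theorem~\ref{generic global theorem} and reorganizing the elimination of alternatives accordingly.
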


\begin{remark}
  (i) When we say that $\cm$ is continuous, we in particular mean that the mapping $(0,\infty) \ni s \mapsto \eta(s) \in C^{3+\alpha}(\R)$ is continuous. In fact, we will show in Section~\ref{global bifurcation section} that, near any parameter value $s_0$, we can reparameterize $\cm$ so that $s \mapsto \eta(s)$ is locally real-analytic.

  The situation is more subtle for the velocity field $(u(s),v(s))$, which is defined on the $s$-dependent domain $\Omega(s)$. In Section~\ref{height equation section}, we will introduce a diffeomorphism (the Dubreil-Jacotin transform) depending on $(u(s),v(s),\eta(s))$ that sends $\Omega(s)$ to a fixed rectangular strip $R$. Composing the velocity field with the inverse of this diffeomorphism gives a continuous map  from the interval $(0,\infty)$ to the fixed function space $C^{2+\alpha}(\overline R) \times C^{2+\alpha}(\overline R)$. One consequence of this regularity is that $u(s)(x,y)$ and $v(s)(x,y)$ are jointly continuous in $(x,y,s)$ on the domain $\{(x,y,s) \in \R^3 : s >0,\ -d \le y \le \eta(s)(x)\}$. 

  (ii) In the special case of constant density, Theorem~\ref{main existence theorem} recovers and improves upon the main result of \cite{wheeler2013solitary,wheeler2015froude}.  For some choices of $u^*$, those earlier papers were forced to include the possibility that $F(s) \to \infty$ while $c-u(s)$ remains uniformly bounded away from $0$.  Here, using the qualitative results described below, we are able to give the definitive statement that \eqref{c-u to 0} holds for any $u^*$.
\end{remark}

\subsubsection*{Qualitative theory}

The second part of our results concerns the qualitative properties of solitary stratified waves.  These are used at critical junctures in the argument leading to Theorem~\ref{main existence theorem}, but are of considerable interest in their own right.  Several of them, in fact, improve substantially on the state-of-the-art for homogeneous flows.   For the time being, we give slightly weaker statements than what is eventually proved because the optimal versions are best made using a reformulation of the problem introduced in the next section.  

First, we establish a lower bound on $P$ and an upper bound on $(u,v)$ in terms of the given quantities $F$, $u^*$, $\mathring{\varrho}$, $g$, and $d$.  To our knowledge, these are the only estimates of this type for stratified steady waves; for constant density rotational waves, analogous bounds were obtained by Varvaruca in~\cite{varvaruca2009extreme}.    

\begin{proposition}[Bounds on velocity and pressure] \label{intro: bound v and P theorem} 
  The pressure and velocity fields for any solitary wave satisfy the bounds:
  \begin{gather*}
    P - P_{\mathrm{atm}} + M F\psi
    \geq 0
    \quad 
    \textup{and}
    \quad
    (u-c)^2 + v^2 \leq CF^2
    \qquad \textup{ in } \overline{\Omega},
  \end{gather*}
  where $\psi$ is the 
  pseudo stream function defined uniquely by $\nabla^\perp \psi = \sqrt{\varrho} (u-c, v)$ and $\psi|_{y = \eta} = 0$, and the constants $C$ and $M$ depend only on $u^*$, $\mathring{\varrho}$, $g$, $d$, and a lower bound for $F$.
\end{proposition}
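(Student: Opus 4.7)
The plan is to prove the pressure lower bound first and then obtain the velocity bound directly from Bernoulli's law. I begin by noting that, for steady stratified flow, the energy $E := P + \tfrac{1}{2}\varrho\bigl[(u-c)^2 + v^2\bigr] + g\varrho y$ is conserved along streamlines. This follows by dotting the momentum equations \eqref{momentum} with $(u-c,v)$, using the transport equation \eqref{mass} for $\varrho$, and invoking incompressibility \eqref{volume}. Since $\psi$ is a pseudo stream function for the relative flow and each streamline extends from $x = -\infty$ to $x = +\infty$ under \eqref{nostagnation}, the asymptotic conditions \eqref{upstream condition}--\eqref{one parameter shear} force $E = B(\psi)$ for a single-variable function $B$ determined explicitly in terms of $u^*$, $\mathring{\varrho}$, $g$, and $F$.

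For the pressure inequality I introduce the auxiliary function $\Phi := P - P_{\mathrm{atm}} + MF\psi$. Combining the Bernoulli identity $E = B(\psi)$ with $|\nabla\psi|^2 = \varrho\bigl[(u-c)^2 + v^2\bigr]$ yields
\begin{equation*}
  \Phi = B(\psi) - P_{\mathrm{atm}} - \tfrac{1}{2}|\nabla\psi|^2 - g\varrho y + MF\psi,
\end{equation*}
where $\psi$ solves the Yih-type semilinear equation $\Delta\psi + g y\,\varrho'(\psi) = B'(\psi)$ in $\Omega$. On the free surface $y = \eta$ one has $\Phi \equiv 0$, and as $|x| \to \infty$ the function $\Phi$ tends to an explicit expression depending only on $y$ and the data, which is non-negative for a suitable choice of $M$ (determined by a lower bound $F_0$ on the Froude number). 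To rule out interior minima, I pass to the Dubreil-Jacotin semi-Lagrangian coordinates $(q,p) = (x, \psi)$ introduced in Section~\ref{height equation section}, which send the fluid region to the fixed rectangular strip $R$ bounded by $\psi = 0$ on top and $\psi = \psi_0$ on the bed, where $\psi_0$ is the constant value of $\psi$ at $y = -d$. In these coordinates the Euler system collapses to a single quasilinear elliptic equation for the height $h(q,p)$, and $\Phi$ becomes algebraic in $h$, $h_q$, $h_p$, and $p$. A careful differentiation using the height equation should then produce an elliptic inequality of the form $\mathcal{L}\Phi \leq 0$ for a uniformly elliptic operator $\mathcal{L}$, valid once $|M|$ is chosen large depending only on $u^*$, $\mathring{\varrho}$, $g$, $d$, and $F_0$. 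The minimum principle, supplemented by Hopf's boundary lemma at $p = \psi_0$---where the kinematic condition $v|_{y=-d} = 0$ reduces the normal-derivative sign check to a pointwise inequality in the data---then forces $\Phi \geq 0$ throughout $R$.

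Once the pressure bound is available, the velocity estimate is immediate. Bernoulli gives
\begin{equation*}
  \tfrac{1}{2}\varrho\bigl[(u-c)^2 + v^2\bigr] = B(\psi) - P - g\varrho y \leq B(\psi) - P_{\mathrm{atm}} + MF\psi - g\varrho y.
\end{equation*}
Expanding $B$ explicitly along the streamline with asymptotic height $y_0$, the right-hand side is a sum of a kinetic contribution $\tfrac{1}{2}\mathring{\varrho}(y_0)F^2 u^*(y_0)^2$, the mixing term $MF\psi$, and gravitational-hydrostatic pieces $g\mathring{\varrho}(y_0)(y_0-y) + g\int_{y_0}^0 \mathring{\varrho}\,dz$. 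Each is at most $O(F^2)$ uniformly in $y \in [-d,\eta(x)]$; here I use that the crest elevation itself obeys $\eta(0) \leq F^2 u^*(0)^2/(2g)$, which comes from applying Bernoulli at $(0,\eta(0))$ with $v = 0$ and $P = P_{\mathrm{atm}}$. Dividing by $\varrho$, which is bounded below by stable stratification, yields $(u-c)^2 + v^2 \leq CF^2$ for $F \geq F_0$.

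The main obstacle is the maximum principle step for $\Phi$: identifying the operator $\mathcal{L}$ and arranging its coefficients so that the sign of $\mathcal{L}\Phi$ is favorable, uniformly in the particular solution, once $|M|$ is chosen large relative to the data and $F_0$. Dubreil-Jacotin coordinates are essential here because they convert the free surface into a flat fixed boundary, turning the problem into a genuinely elliptic one on a rectangular strip. The delicate algebra of differentiating the explicit expression for $\Phi$ against the quasilinear height equation must produce coefficients controllable purely in terms of $u^*, \mathring{\varrho}, g, d, F_0$. A secondary technicality is the Hopf lemma application on the bed, where the normal-derivative sign condition reduces via $v|_{y=-d} = 0$ and the vertical momentum equation (which gives $P_y = -g\varrho$ on the bed) to a scalar inequality that can be absorbed into the choice of $M$.
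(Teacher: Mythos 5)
Your overall strategy -- a maximum-principle argument for the quantity $P-P_{\mathrm{atm}}+MF\psi$, followed by Bernoulli's law to convert the pressure bound into a velocity bound -- is exactly the paper's strategy (Proposition~\ref{bound on velocity prop}). However, the proposal has a genuine gap at its center: the elliptic inequality $\mathcal L\Phi\le 0$ is never derived; you only assert that ``a careful differentiation \ldots should then produce'' it. That differentiation is the entire content of the proof. The paper carries it out directly in Eulerian coordinates (equation~\eqref{eqn:elliptic}), obtaining $\Delta f-b_1f_x-b_2f_y$ equal to an explicit right-hand side whose sign must be arranged by hand; the drift coefficients $b_1,b_2$ degenerate as $|\nabla\psi|^2$ appears in their denominators, and the favorable sign requires both $2M+\Delta\psi\ge0$ (controlled via Yih's equation and the Bernoulli bound on $\eta$) \emph{and} $M^2\ge F^{-2}|\rho_p\psi_y|$. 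This second requirement is the subtlety you miss entirely: the natural choice of $M$ depends on $\|\psi_y\|_{L^\infty}$, i.e.\ on the solution itself, not only on the data. The paper closes this circle only at the velocity-bound stage, by substituting $M=M_2$ back into the Bernoulli inequality and absorbing the resulting $\tfrac12\|\psi_y\|_{L^\infty}^2$ term into the left-hand side. Your claim that ``$|M|$ is chosen large depending only on $u^*,\mathring\varrho,g,d,F_0$'' therefore cannot be taken for granted; it is a conclusion of the bootstrap, not a hypothesis one can impose at the outset.

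A secondary point: your insistence that the Dubreil-Jacotin coordinates are ``essential'' is misplaced. The free surface is simultaneously the zero level set of $P-P_{\mathrm{atm}}$ and of $\psi$, so $\Phi$ vanishes identically there no matter where the surface sits; the maximum principle applies perfectly well on the unknown physical domain $\Omega$, and this is what the paper does. Passing to $(q,p)$ variables would force you to re-express $\Delta$, $\nabla\psi$, and the Bernoulli function through $h$, $h_q$, $h_p$, making the ``tedious calculation'' strictly harder without removing any difficulty. The bed computation you sketch ($\Phi_y=-g\varrho+MF\psi_y<0$ since $\psi_y<0$ and $M>0$) is correct and matches the paper, and your reduction of the velocity bound to Bernoulli plus the crest estimate $\eta\le F^2u^*(0)^2/(2g)$ is also in line with the paper's argument -- but both rest on the unproven elliptic inequality and the unjustified data-only choice of $M$.
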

See Proposition \ref{bound on velocity prop}, where this is proved in a non-dimensional form.  For more on the 
pseudo stream function $\psi$, see Section~\ref{stream function section}.

Second, we provide estimates for the Froude number from above and below.  Results of this type have a long history, going back at least to Starr~\cite{starr1947momentum}, who found sharp bounds for $F$ in the setting of homogeneous irrotational solitary waves; we refer the reader to the introduction to \cite{wheeler2015froude} for a detailed discussion and further references.

\begin{theorem}[Upper bound on $F$] 
  Let $(u,v,\eta,F)$ be a solution of \eqref{euler}--\eqref{upstream condition}. Then the Froude number satisfies the bound 
  \begin{equation*}
    F
    \le 
    \frac 1\pi 
    \frac{gd}{\min (u^*)^2} 
    \;
    \frac{\max \varrho}{\min \varrho}
    \;
    \frac{\sqrt{gd} }{\min_{\{x=0\}} (c-u)}.
  \end{equation*}
  Eliminating $F$ and $u^*$ in favor of $\mathring u$ yields
  \begin{align}
    \label{no more F}
    \frac{\min[\sqrt{\mathring\varrho} (c-\mathring u)]^2}{gd \min \varrho}
    \;
    \frac{\displaystyle\min_{\{x=0\}}[\sqrt\varrho (c- u)]}
    {\displaystyle\frac 1d \int_{-d}^0 \sqrt{\mathring\varrho} (c - \mathring u)\, dy}
    \le \frac 1\pi \frac{\max\varrho}{\min\varrho}.
  \end{align}
\end{theorem}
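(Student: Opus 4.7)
The presence of the constant $1/\pi$ is a tell-tale sign of a sharp one-dimensional Poincaré/Wirtinger-type bound, so my plan is to reduce the upper-bound on $F$ to an eigenvalue estimate on the vertical cross-section after exploiting the conservation of flow force.

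First I would work with the pseudo stream function $\psi$ (defined by $\nabla^\perp \psi = \sqrt{\varrho}(u-c,v)$, $\psi|_{y=\eta}=0$) and pass to Dubreil-Jacotin variables, writing the fluid domain as a fixed strip in $(q,p) = (x,\psi)$ and letting $h(q,p)$ denote the height of the streamline $\{\psi = p\}$. Since the density is transported by the flow, $\varrho = \varrho(p)$ is determined by the upstream profile, and Bernoulli's theorem supplies a head function $B(p)$, again fixed by the far-field data $(\mathring{\varrho}, F u^*)$. In these variables $(u-c)^2 + v^2 = (1+h_q^2)/h_p^2$, and the asymptotic state corresponds to a $q$-independent height profile $H(p)$ with $H_p = -1/[\sqrt{\mathring\varrho}(c-\mathring u)]$.

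Next I would apply the conserved flow force $\mathscr{S}(q)$, whose asymptotic value $\mathscr{S}(\pm\infty)$ is computable purely from $u^*$, $\mathring\varrho$, $g$, $d$, while $\mathscr{S}(0)$ can be rewritten using the Bernoulli identity so that the kinetic term $(u-c)^2$ at the crest appears explicitly. Subtracting, the identity $\mathscr{S}(0) = \mathscr{S}(\pm\infty)$ collapses to an integral equation of the schematic form
\begin{equation*}
\int_{p_0}^0 \Bigl[\,\tfrac{1}{2}(c-u)^2\big|_{q=0} - \tfrac{1}{2}F^2(u^*)^2\,\Bigr]\,H_p\,dp \;=\; \int_{p_0}^0 g\,(\varrho(p))(h(0,p)-H(p))\,dp,
\end{equation*}
together with boundary/surface contributions that are non-negative for waves of elevation (or can be absorbed against the positive right-hand side using the hydrostatic lower bound on the pressure already established). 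The crucial observation is that $\zeta(p) := h(0,p) - H(p)$ vanishes at $p = p_0$ (the bed, where $h = -d$) and at $p = 0$ (the surface, where both heights are the respective surface values after a suitable normalization — or one has a Robin/Neumann-type condition to handle), so $\zeta$ is controlled by its derivative via a Sturm-Liouville inequality on the interval of length $|p_0| = \int_{-d}^0\sqrt{\mathring\varrho}(c-\mathring u)\,dy$.

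At this point I would invoke the sharp Wirtinger inequality: for $\zeta$ vanishing at the endpoints of an interval of length $L$, $\|\zeta\|_{L^2} \le (L/\pi)\|\zeta'\|_{L^2}$, with first eigenvalue $\pi^2/L^2$. Rewriting $\zeta_p$ in terms of $(u-c)|_{q=0}$ and $\mathring u$ via the definition of $h$, this inequality converts the integral identity into a pointwise upper bound of the claimed form, with the $1/\pi$ descended directly from the first Sturm-Liouville eigenvalue. Finally I would replace weighted integrals by $L^\infty$ bounds to extract the prefactors $\max\varrho/\min\varrho$ and $1/\min(u^*)^2$, and rearrange to isolate $F$; the second, $u^*$-free version follows from the relation $\mathring u = c - F u^*$ and the normalization choice for $u^*$.

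The main obstacle I anticipate is step two: extracting the correct, positive-definite integral identity from $\mathscr{S}(0) - \mathscr{S}(\pm\infty) = 0$. The flow force contains pressure, surface, and gravity terms that do not immediately organize into a clean quadratic form in $\zeta$; a careful integration-by-parts using the Bernoulli relation, combined with the monotonicity $\varrho_p \le 0$ of stable stratification, should be what yields an estimate with sign compatible with a Rayleigh quotient. Identifying exactly the Sturm-Liouville eigenvalue problem whose first eigenvalue produces the constant $\pi^2$ — rather than some weighted generalization with a less clean constant — is the subtle point that delivers the advertised sharp bound.
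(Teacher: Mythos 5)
Your overall strategy --- compare the flow force at the crest with its value at $x=\pm\infty$, reduce to an integral identity on the vertical cross-section $\{q=0\}$, and extract the constant $1/\pi$ from a sharp one-dimensional Poincar\'e inequality --- is exactly the route the paper takes (Lemma~\ref{second identity lemma} followed by the proof of Theorem~\ref{upper bound on F theorem}). The prefactors $\max\varrho/\min\varrho$ and $1/\min(u^*)^2$ do indeed come from crude $L^\infty$ replacements at the end, as you predict.

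There is, however, a genuine gap at the step you yourself flag as the obstacle, and it propagates into your eigenvalue identification. First, $\zeta(p)=h(0,p)-H(p)$ does \emph{not} vanish at the surface: for any nontrivial wave $\zeta(0)=\eta(0)\neq 0$, so the two-endpoint Wirtinger inequality with first eigenvalue $\pi^2$ is not available. The correct identity, obtained purely by algebraic manipulation and integration by parts of $\flowforce(h)=\flowforce(H)$ at $q=0$ (no Bernoulli substitution, no elevation assumption, and no pressure bound are needed), is
\begin{equation*}
  \frac 1{F^2}\left[\int_{-1}^0 \abs{\rho_p}\, w(0,p)^2 \, dp
  + \rho(0)\, \eta(0)^2\right]
  =
  \int_{-1}^0 \frac{w_p^2}{H_p^2 h_p}(0,p)\, dp ,
\end{equation*}
in which the surface value enters as the manifestly nonnegative term $\rho(0)\eta(0)^2$ (a boundary term from integrating $\int \rho\, w\, w_p\,dp$ by parts) rather than as a vanishing boundary condition; stable stratification $\rho_p\le 0$ makes the left side a genuine quadratic form. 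Note also that your schematic identity has $h(0,p)-H(p)$ appearing \emph{linearly} on one side, which is not sign-definite and would not close. Second, because $w(0,\cdot)$ vanishes only at the bed, the relevant sharp inequality is the mixed Dirichlet--Neumann Poincar\'e bound $\|w\|_{L^2}\le (2/\pi)\|w_p\|_{L^2}$ on an interval of unit length (first eigenvalue $\pi^2/4$, not $\pi^2$); the advertised $1/\pi$ then arises from this $2/\pi$ combined with the factor $1/2$ produced by the integration by parts $\int\rho w w_p = \tfrac12\int\abs{\rho_p}w^2 + \tfrac12\rho w^2\big|_{p=0}$. With these two corrections the argument closes exactly as you outline.
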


See Theorem~\ref{upper bound on F theorem}. Observe that the first two factors on the left-hand side of \eqref{no more F} are dimensionless measures of how close the flow comes to horizontal stagnation at $\pm\infty$ and along the crest line $\{x=0\}$, respectively. This is the first such estimate for waves with vorticity (with or without density stratification) that makes no additional assumptions on the shear profile $u^*$; in \cite{wheeler2015froude}, Wheeler established upper bounds for $F$ that are independent of $\inf_{\{x=0\}}(c-u)$, but impose further requirements on $u^*$. Thanks to Theorem~\ref{upper bound on F theorem}, we can avoid making similar restrictions in Theorem~\ref{main existence theorem}. 
Under different hypotheses on $u^*$ than in \cite{wheeler2015froude}, Kozlov, Kuznetsov, and Lokharu also prove estimates which imply upper bounds for $F$ for constant density waves \cite{kozlov2015bounds}. Their bounds involve the amplitude $\max \eta$, and apply not just to solitary waves but also to periodic waves as well as waves which are neither periodic nor solitary. 

We also mention that, in the special case of homogeneous irrotational waves, the argument leading to Theorem~\ref{upper bound on F theorem} can be modified 
to prove the upper bounds obtained by Starr in \cite{starr1947momentum}.

As a lower bound on $F$, in part we prove the following.

\begin{theorem}[Critical waves are laminar]\label{intro crit theorem}
  Let $(u,v,\eta,F)$ be a solitary wave solution of \eqref{euler}--\eqref{upstream condition}. If $F = \Fcr$, then $(u,v,\eta) = (c-\Fcr u^*, 0, 0)$. That is, there exist no nonlaminar solitary waves with critical Froude number.   
\end{theorem}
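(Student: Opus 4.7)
The plan is to combine conservation of \emph{flow force} across vertical cross-sections with the variational characterization of the critical Froude number $\Fcr$. Working in the Dubreil-Jacotin (height function) reformulation promised in Section~\ref{height equation section}, the Euler system \eqref{euler}--\eqref{upstream condition} becomes a quasilinear elliptic boundary-value problem for a height function $h = h(q,p)$ on a fixed rectangular strip, where the laminar state $\mathring{u} = c - Fu^*$ corresponds to a $q$-independent profile $H = H(p;F)$, and solitary waves are those $h$ with $w := h - H \to 0$ as $\abs{q} \to \infty$ uniformly in $p$.

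Next, I would integrate the momentum equation \eqref{momentum} across the vertical cross-section at $q = \mathrm{const}$ and use the boundary conditions \eqref{kinematicsurface}--\eqref{kinematicbed} to show that the flow force $\flowforce(q)$ is independent of $q$; passing to the limit $\abs{q} \to \infty$ then gives $\flowforce(q) \equiv \flowforce_{\mathrm{lam}}(F)$ identically. Expanding the excess flow force $\mathcal{E}[H+w](q) := \flowforce(q) - \flowforce_{\mathrm{lam}}(F)$ around the laminar state yields, for every $q$,
\[
  0 = \mathcal{E}[H+w](q) = \tfrac{1}{2}\mathcal{B}_F[w(\cdot,q),w(\cdot,q)] + \mathcal{N}[w](q),
\]
where $\mathcal{B}_F$ is the second variation at the laminar solution (a quadratic form on functions of $p$) and $\mathcal{N}$ collects the cubic and higher-order terms.

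The decisive observation is that $\mathcal{B}_F$ should be exactly the quadratic form of the Sturm-Liouville problem appearing in the definition \eqref{def Fcr} of $\Fcr$: at $F = \Fcr$, $\mathcal{B}_{\Fcr}$ is positive semi-definite with one-dimensional kernel spanned by a $p$-only mode (the principal eigenfunction). Assuming the higher-order remainder $\mathcal{N}$ has a definite sign — as one expects for gravity-driven stratified flows, reflecting the convexity of the pressure-head term in $h_p$ — the identity $\mathcal{E}[H+w] \equiv 0$ first forces $w(\cdot,q) \in \ker\mathcal{B}_{\Fcr}$ for each $q$, and then the solitary-wave decay $w \to 0$ as $\abs{q}\to\infty$ forces $w \equiv 0$, which translates back to $(u,v,\eta) = (c - \Fcr u^*, 0, 0)$. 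I expect the main obstacle to be the algebraic matching between $\mathcal{B}_F$ and the spectral problem defining $\Fcr$, together with the sign analysis of $\mathcal{N}$ without assuming $w$ small; for the latter one would combine the a priori bounds of Proposition~\ref{intro: bound v and P theorem} with the convexity of the pressure in the height variables, in the spirit of the constant-density analogue \cite{wheeler2015froude}.
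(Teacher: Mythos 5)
Your overall strategy---an integral identity combined with the Sturm--Liouville characterization of $\Fcr$---is the right family of ideas, but the specific identity you choose does not close, and the gap you flag as ``the main obstacle'' is in fact fatal to this route. If you expand the conserved flow force \eqref{DJ S} about the laminar state cross-section by cross-section, the quadratic part is indeed (minus) the Sturm--Liouville form, but the higher-order remainder is \emph{not} signed: carrying out the algebra (as in the proof of Lemma~\ref{second identity lemma}) shows that the cubic correction is proportional to $\int_{-1}^0 w_p^3/(H_p^2\, h_p^2)\,dp$, and $w_p$ genuinely changes sign for a nontrivial wave, so no convexity of the pressure-head term rescues you. Concretely, combining $\flowforce(h)=\flowforce(H)$ at $q=0$ with the variational characterization of $\mucr$ only yields the one-sided constraint $\int w_p^3/(H_p^3 h_p)\,dp \ge 0$, not a contradiction. (A secondary gap: even if you could conclude $w(\cdot,q)\in\ker\mathcal{B}_{\Fcr}$ for every $q$, so that $w(q,p)=a(q)\Phicr(p)$, the decay $a(q)\to 0$ alone does not force $a\equiv 0$; you would still have to feed this ansatz back into the equation.)

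The paper avoids the remainder problem entirely by choosing a different test object: instead of expanding the scalar conserved quantity $\flowforce$, one multiplies the height equation \eqref{height equation} itself by the Sturm--Liouville solution $\Phi(p;1/F^2)$ of \eqref{def Phi} and integrates by parts over $[-M,M]\times[-1,0]$ (Lemma~\ref{froude lower lemma}). The resulting identity \eqref{froude lower identity} is \emph{exact}, with no Taylor expansion: the bulk integrand is
$\tfrac{1}{2h_p^2 H_p^3}\bigl(H_p^3 w_q^2+(H_p+2h_p)w_p^2\bigr)\Phi_p$,
which is manifestly nonnegative because $\Phi_p>0$ (Lemma~\ref{lem_minimality}) and $h_p>0$ (no stagnation), while the only term without a definite sign is $\qprime(1/F^2)\int\eta$, and $\qprime(\mucr)=0$ precisely at criticality. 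Since the nonnegative, nondecreasing-in-$M$ integral must tend to zero, it vanishes identically, giving $w_p\equiv w_q\equiv 0$ and hence $w\equiv 0$ with no smallness or sign hypotheses. If you want to salvage your approach, replace the flow-force expansion by this weighted energy identity; the ``algebraic matching'' you anticipate then happens automatically through the ODE \eqref{def Phi} satisfied by $\Phi$.
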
 

In addition, we provide a partial characterization of waves of elevation in terms of the Froude number; for the complete statement, see Theorem~\ref{froude lower theorem} and the remark afterward.  Theorem~\ref{intro crit theorem} in particular implies that a continuous curve of supercritical waves cannot limit to a subcritical wave without passing through a laminar flow. The nonexistence of critical solitary waves with constant density was shown by Wheeler~\cite{wheeler2015froude} and then generalized to arbitrary constant density waves by Kozlov, Kuznetsov, and Lokharu~\cite{kozlov2015bounds}. The bound $F > \Fcr$ for waves of elevation was obtained by Wheeler~\cite{wheeler2015froude} under the assumption of constant density and Amick and Toland~\cite{amick1981solitary} and McLeod~\cite{mcleod1984froude} under the further assumption of irrotationality. Very recently, Kozlov, Kuznetsov, and Lokharu have given an essentially complete description of all constant density waves with near-critical Bernoulli constant \cite{kozlov2015conjecture}.

\begin{figure}
  \includegraphics[scale=1.1]{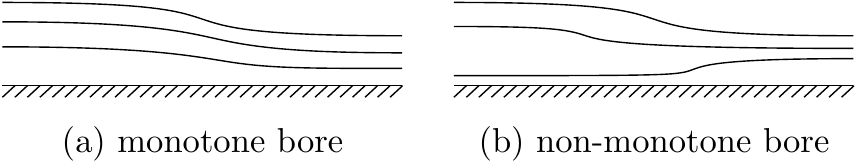}
  \caption{}
  \label{bores figure}
\end{figure}

A \emph{bore} is a traveling wave that limits to distinct laminar flows as $x \to \pm \infty$; see Figure~\ref{bores figure}. They are observed in nature (see, e.g., \cite{stoker1992water}) and have been computed numerically in various regimes (cf., e.g., \cite{turner1988broadening,lamb1998conjugate,grue2000breaking,grue2002solitary}).  Rigorous existence results for bores in multi-fluid channel flows have been obtained by Amick \cite{amick1989small}, Makarenko \cite{makarenko1992bore}, and Tuleuov \cite{tuleuov1997bore}.  
Bores play a special role in the global bifurcation theory analysis leading to Theorem~\ref{main existence theorem}.  In particular, many previous studies of stratified waves include the existence of bores as an alternative to the extreme wave limit (cf., e.g., 
\cite{amick1986global}
where they are referred to as ``surges''); this should be viewed as an indication of the system's lack of compactness.   
However, with a free upper surface, no bores exist with the property that the asymptotic height of all streamlines upstream lie at or below their asymptotic height downstream.  This is the content of the next theorem.

\begin{theorem}[Nonexistence of monotone bores] \label{intro: bores theorem}  
  Suppose that $(u,v, \eta)$ is a solution of \eqref{euler}--\eqref{nostagnation} which is a bore in the sense that 
  \begin{equation*}
    (u(x,\placeholder), v(x,\placeholder), \eta(x)) \to (\mathring{u}_\pm(\placeholder), 0, \eta_\pm), \qquad \textup{as } x \to \pm \infty
  \end{equation*}
  pointwise, where $\eta_\pm > -d$ are constants and $\mathring{u}_\pm \in C^1([-d, \eta_\pm])$.  If the limiting height of each streamline at $x = -\infty$ is no greater (or no less) than the limiting height of the same streamline at $x = \infty$, then in fact $\eta_+ = \eta_-$ and $\mathring{u}_+ \equiv \mathring{u}_-$.  
In particular, for any bounded solution of \eqref{euler}--\eqref{nostagnation}, $v$ must change signs unless it vanishes identically.  
\end{theorem}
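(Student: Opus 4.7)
The plan is to combine flow-force conservation with Bernoulli's principle on each streamline, working in semi-Lagrangian coordinates, and to extract a single sign-definite identity that collapses under the monotone bore hypothesis. First, I would verify that the flow force
$$\flowforce(x) := \int_{-d}^{\eta(x)}\bigl[P - P_{\mathrm{atm}} + \varrho(u-c)^2\bigr]\,dy$$
is independent of $x$. A direct computation using \eqref{momentum}--\eqref{volume} gives $\p_x[P+\varrho(u-c)^2] = -\p_y[\varrho v(u-c)]$ in $\Omega$, and the boundary term generated by differentiating the moving upper limit is cancelled against the surface/bed flux contributions by \eqref{kinematicsurface}--\eqref{kinematicbed} together with \eqref{dynamic}. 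Passing $x \to \pm\infty$ via the pointwise convergence in the bore hypothesis (and the hydrostatic law $P_\pm(y) = P_{\mathrm{atm}} + \int_y^{\eta_\pm} g\mathring\varrho_\pm(s)\,ds$ at the laminar ends) yields $\flowforce_+ = \flowforce_-$.

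Next, I would change to streamline coordinates using a pseudo stream function $\psi$ normalized by $\psi|_{y=\eta}=0$; since $v|_{y=-d}=0$, the value $\psi|_{y=-d}$ is a constant $p_0$, and \eqref{mass} lets us write $\varrho = \rho(\psi)$ with $\rho'\le 0$ by stable stratification. Let $H_\pm(p)$ denote the asymptotic height of the streamline $\{\psi=p\}$ as $x\to\pm\infty$; without loss of generality $\delta(p) := H_+(p) - H_-(p) \ge 0$, with $\delta(p_0)=0$ and $\delta(0)=\eta_+-\eta_-$. The Bernoulli function $E := \tfrac{1}{2}\varrho[(u-c)^2+v^2] + g\varrho y + P$ depends only on $\psi$; matching its two representations at $\pm\infty$ on streamline $p$, inserting the hydrostatic formula in $p$-coordinates, and integrating by parts yields
$$\frac{1}{H_-'(p)^2} - \frac{1}{H_+'(p)^2} = 2g\rho(0)\delta(0) + 2g\int_p^0 (-\rho'(q))\,\delta(q)\,dq =: B(p) \ge 0,$$
so $H_+'(p) \ge H_-'(p)$ and $\delta$ is non-decreasing.

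To couple this with flow force, I would change variables $y=H_\pm(p)$ in $\flowforce_\pm$ and integrate by parts to obtain
$$\flowforce_\pm = \int_{p_0}^0 \frac{dp}{H_\pm'(p)} + \frac{g\rho(0)(\eta_\pm+d)^2}{2} + \frac{g}{2}\int_{p_0}^0 (-\rho'(p))(H_\pm(p)+d)^2\,dp.$$
Writing $1/H_+'-1/H_-' = -B/(1/H_+'+1/H_-')$, factoring the remaining differences of squares, substituting the formula for $B$, and swapping the order of integration, the equality $\flowforce_+ = \flowforce_-$ collapses into the master identity
$$\rho(0)\,\delta(0)\bigl[4G(0) - \sigma_0\bigr] + \int_{p_0}^0 (-\rho'(p))\,\delta(p)\bigl[4G(p) - \sigma(p)\bigr]\,dp = 0,$$
with $\sigma(p) := (H_+(p)+d)+(H_-(p)+d)$, $\sigma_0:=\sigma(0)$, and $G(p) := \int_{p_0}^p \frac{H_+'(q)H_-'(q)}{H_+'(q)+H_-'(q)}\,dq$. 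The AM--GM inequality $\frac{ab}{a+b}\le\frac{a+b}{4}$ gives $4G(p)\le\sigma(p)$ pointwise, with equality on $[p_0,p]$ iff $H_+'\equiv H_-'$ there. Both bracketed terms are therefore non-positive, so each must vanish. If $\delta(0)>0$ then $4G(0)=\sigma_0$ forces $H_+'\equiv H_-'$ and hence $\delta\equiv 0$, a contradiction; so $\delta(0)=0$, and the non-decreasing function $\delta$ vanishes at both endpoints of $[p_0,0]$, whence $\delta\equiv 0$. This gives $H_+\equiv H_-$, so $\eta_+=\eta_-$, and $B\equiv 0$ then gives $H_+'\equiv H_-'$ and so $\mathring u_+\equiv\mathring u_-$.

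The corollary on sign-changing of $v$ in bounded solutions follows because $u-c<0$ together with a one-signed $v$ forces every streamline $y(x)$ to be monotone in $x$; boundedness then yields pointwise asymptotic limits satisfying the monotone bore hypothesis, and the bore result above forces $v\equiv 0$. I expect the main obstacle to be the algebraic rearrangement in the third step: one must substitute the Bernoulli formula for $B$ into the flow-force equality in precisely the right way so that everything collapses into the sign-definite pairing $\delta\cdot(4G-\sigma)$ weighted by $-\rho'$, at which point the AM--GM comparison $4G\le\sigma$ is the only nontrivial inequality needed. Without this precise combination, neither conservation law alone seems to close the argument.
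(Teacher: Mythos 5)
Your argument is correct in its essentials and reaches the right conclusion, but the key identity you derive is a genuinely different rearrangement of the same two conservation laws than the one the paper uses. The paper (Theorem~\ref{bores theorem} via Lemma~\ref{bore identity}) multiplies the laminar ODE satisfied by $J := K - H$ by $J$ itself, integrates by parts, and combines with $\flowforce(K)=\flowforce(H)$ to get the cubic identity $\int_{-1}^0 J_p^3/(H_p^2K_p^2)\,dp = 0$; separately it shows $J_p \ge 0$ by observing that $-1/(2K_p^2)+1/(2H_p^2)$ is nonincreasing in $p$ and nonnegative at $p=0$, so the cubic integrand is sign-definite and must vanish. You instead integrate the ODE once (i.e.\ use the Bernoulli matching, which is exactly the first integral the paper's monotonicity step produces), substitute $1/(H_+')^2-1/(H_-')^2=-B$ into the flow-force difference written as differences of squares, and after Fubini obtain the pairing $\rho(0)\delta(0)[\sigma_0-4G(0)]+\int(-\rho')\delta[\sigma-4G]\,dp=0$, closed by the harmonic--arithmetic mean inequality. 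I checked the algebra: your formulas for $B$, for $\flowforce_\pm$ in streamline coordinates, and the Fubini step are all correct, and the case analysis ($\delta(0)>0$ forces $H_+'\equiv H_-'$ and hence $\delta\equiv 0$; otherwise $\delta$ is nondecreasing and vanishes at both endpoints) does close. The two identities are not the same expression — in the constant-density case yours degenerates to a pure boundary term while the paper's remains a bulk integral — so this is a legitimate alternative proof of the conjugate-flow rigidity. What the paper's version buys is a cleaner single integrand and a form that survives layer-wise discontinuous $\rho$ (Corollary~\ref{weak bore corollary}) with jump terms cancelling; your version makes the role of the stratification weight $-\rho'$ and the free-surface boundary term more transparent.

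The one step you should not wave away is the passage to the limits $x\to\pm\infty$. The hypothesis gives only pointwise convergence of $(u,v,\eta)$; it does not by itself give convergence of derivatives or of $P$, nor that the limiting states are laminar solutions with hydrostatic pressure, positive $H_\pm'$, and flow force equal to $\flowforce$. The paper isolates exactly this in Lemma~\ref{flow force lemma}: one translates the solution, extracts a $C^{1+1/2}_\loc$-convergent subsequence by Schauder/Arzel\`a--Ascoli, identifies the limit with $H_\pm$ using the pointwise hypothesis, and only then evaluates $\flowforce$ and the Bernoulli relation on the limit. Your proof needs the same lemma (both for the main statement and for the ``$v$ changes sign'' corollary, where the limits of $u$ and $v$ are not supplied by streamline monotonicity alone); with it inserted, the argument is complete.
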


A stronger version of Theorem~\ref{intro: bores theorem} is given in Theorem~\ref{bores theorem}; we also generalize it to include multiple fluid flows in Corollary~\ref{weak bore corollary}.  To the best of our knowledge, the nonexistence of monotone bores with a free upper surface has never been previously recorded, which is somewhat surprising in view of the large number of works devoted to studying bores in similar physical regimes.  Even in the relatively simple case of two homogeneous irrotational layers with a free surface, the usual calculations seem highly intractable 
(cf.,  \cite[Appendix~A]{dias2001free}). It was proved by Wheeler that bores (not necessarily monotone) do not exist for free surface solitary waves with constant density \cite{wheeler2015pressure}, but this argument seems to break down entirely for highly stratified waves. Moreover, even in the special case of constant density, our method is considerably more direct for monotone bores.  

Finally, we prove the following theorem characterizing the monotonicity and symmetry properties of stratified waves of elevation. 

\begin{theorem}[Symmetry] \label{intro: symmetry theorem}   
  Let $(u,v, \eta,F)$ be a supercritical wave of elevation that solves \eqref{euler}--\eqref{nostagnation} with $\n u_{C^2(\Omega)}, \n v_{C^2(\Omega)}, \n \eta_{C^3(\R)} < \infty$. Suppose that 
  \begin{equation*}
    (u,v) \to (\mathring{u},0),
    \quad
    (\nabla u,\nabla v) \to (\nabla \mathring{u},0)
     \qquad \textup{uniformly as } x \to +\infty \textup{ (or as $x \to -\infty$)}.
  \end{equation*}
  Then, after a translation, the wave is a symmetric and monotone solitary wave.
\end{theorem}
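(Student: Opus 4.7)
The plan is to apply the method of moving planes in the semi-Lagrangian (Dubreil-Jacotin) formulation of the problem. First I would transform the free boundary problem to a fixed domain by passing to coordinates $(q,p) := (x,-\psi)$, where $\psi$ is the pseudo stream function of Proposition~\ref{intro: bound v and P theorem}, and working with the height function $h(q,p)$ as the unknown. The system \eqref{euler}--\eqref{nostagnation} then becomes a quasilinear elliptic PDE on a fixed strip $R = \R \by (p_0, 0)$ with a Dirichlet condition on $p = p_0$ and a nonlinear Bernoulli-type boundary condition on $p = 0$. The wave-of-elevation hypothesis translates to the pointwise inequality $h(q,p) > H(p)$, where $H$ is the asymptotic height profile associated with $\mathring u$; the one-sided asymptotic hypothesis translates to $h(q,\placeholder) \to H$ and $\nabla(h - H) \to 0$ uniformly as $q \to +\infty$.

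Next I would run the classical moving planes procedure. For $\lambda \in \R$, set $h^\lambda(q,p) := h(2\lambda - q, p)$ and $w^\lambda := h^\lambda - h$ on the half-strip $R_\lambda := (-\infty, \lambda) \by (p_0, 0)$. Subtracting the height equations gives a homogeneous linear uniformly elliptic equation for $w^\lambda$, with $w^\lambda = 0$ on $\{p = p_0\} \cup \{q = \lambda\}$ and a linearized oblique boundary condition on $\{p=0\}$. Using the decay at $+\infty$ together with the strict inequality $h > H$, one shows that $w^\lambda \ge 0$ in $R_\lambda$ for all sufficiently large $\lambda$. I would then define
\begin{equation*}
  \lambda^* := \inf\bigl\{ \lambda \in \R : w^{\lambda'} \ge 0 \text{ in } R_{\lambda'} \text{ for every } \lambda' \ge \lambda \bigr\},
\end{equation*}
and rule out $\lambda^* = -\infty$ using the wave-of-elevation bound together with the one-sided asymptotics (in that case $h$ would be monotone in $q$ yet tend to $H$ at $+\infty$, forcing triviality). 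At $\lambda = \lambda^*$, a normal-families argument extracts a touch-point for $w^{\lambda^*}$ in the interior, on the bed, on the free surface, or at the corner $(\lambda^*, 0)$. In each case, the strong maximum principle, the Hopf boundary point lemma, or the Serrin corner lemma forces $w^{\lambda^*} \equiv 0$, giving symmetry of $h$ about $q = \lambda^*$; monotonicity on either side of the axis is already built into the procedure. Translating $x \mapsto x - \lambda^*$ and passing back through the Dubreil-Jacotin transform yields evenness of $u$ and $\eta$, oddness of $v$, and the required monotonicity.

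The principal obstacle is the application of the maximum principle on the free surface $\{p=0\}$. The linearized Bernoulli condition is an oblique-derivative condition whose coefficient sign is controlled by the criticality threshold from \eqref{def Fcr}: the supercriticality assumption $F > \Fcr$ is precisely what is needed to ensure this operator has the structure required by the Hopf lemma and the Serrin corner lemma, and without it the whole scheme breaks down (consistent with Theorem~\ref{intro crit theorem}, which shows that $F = \Fcr$ forces the trivial laminar solution). A secondary technical difficulty is the noncompactness of $R_\lambda$ in the $q$ direction, which requires a Phragm\'en--Lindel\"of-type argument together with careful use of the one-sided decay to prevent a touch-point from escaping to $q = -\infty$, and to guarantee that the infimum defining $\lambda^*$ is actually attained.
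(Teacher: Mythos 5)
Your overall strategy coincides with the paper's: pass to semi-Lagrangian variables, run moving planes on half-strips for $v^\lambda = h^\lambda - h$, and use Hopf/Serrin at the stopping value of $\lambda$. However, there are two genuine gaps, one of which is exactly the point where the one-sided decay hypothesis makes this theorem harder than the standard moving-planes results.

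The serious gap is your dismissal of the degenerate case in which the plane never stops. You assert that if $\lambda^*$ runs off to infinity then ``$h$ would be monotone in $q$ yet tend to $H$ at $+\infty$, forcing triviality.'' It does not. A bounded function that is monotone in $q$ and tends to $H$ at one end tends to \emph{some} limit $H_\pm$ at the other end, and nothing in the moving-planes machinery prevents that limit from being a different conjugate laminar flow with $H_\pm \ge H$: precisely a monotone bore. Since only one-sided decay is assumed, this scenario cannot be excluded by soft arguments (indeed, for channel flows with a rigid lid the analogous statement is false, which is why Maia's theorem carries the monotone-in-the-whole-strip alternative; see Remark~\ref{remark symmetry}). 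Killing this case requires the separate nonexistence theorem for monotone bores (Theorem~\ref{bores theorem}), whose proof is a flow-force/conjugate-flow identity (Lemma~\ref{bore identity}) having nothing to do with moving planes. Without that input your argument proves only the weaker dichotomy ``symmetric or a monotone front.''

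A secondary gap: you locate the role of supercriticality solely in the sign of the oblique boundary coefficient on $\{p=0\}$, but for stratified flow the \emph{interior} operator $\mathscr{L}$ carries the zeroth-order term $-\rho_p/F^2 \ge 0$, which violates the hypotheses of the maximum principle. This already obstructs the very first step (showing $v^\lambda \ge 0$ for the extreme planes), where the sign of $v^\lambda$ is not yet known and so the ``$\sup u = 0$'' variant of Theorem~\ref{max principle} is unavailable. The paper removes this obstruction by substituting $v^\lambda = \Psi u^\lambda$ with $\Psi$ the auxiliary Sturm--Liouville solution of \eqref{aux problem Psi} (Lemma~\ref{lem Psi}), and it is there --- not only on the boundary --- that $F > \Fcr$ enters; even then the resulting zeroth-order coefficient is negative only where $h-H$ is small, which is why the starting step must be localized in the far field. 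Finally, a bookkeeping slip: with decay assumed at $q\to+\infty$, the half-strips on which $w^\lambda \ge 0$ can be established for extreme $\lambda$ are $\{q>\lambda\}$ with $\lambda$ large (or equivalently the paper's $\{q<\lambda\}$ with $\lambda\to-\infty$ and decay at $-\infty$); on $(-\infty,\lambda)$ with $\lambda$ large the reflected function is close to $H$ while $h>H$, so the claimed inequality has the wrong sign.
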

The symmetry of steady water waves has been a very active subject of research.   Indeed, the reformulated problem will turn out to be an elliptic PDE, and hence Theorem \ref{intro: symmetry theorem} falls into the larger category of results on the symmetry and monotonicity of 
positive solutions to elliptic systems.   More direct antecedents for water waves are given by Craig and Sternburg~\cite{craig1988symmetry}; Maia~\cite{maia1997symmetry}; Constantin and Escher~\cite{constantin2004symmetry}; Hur~\cite{hur2008symmetry}; Constantin, Ehrnstr\"om and Wahl\'en~\cite{constantin2007symmetry};  and Walsh~\cite{walsh2009symmetry}.  These papers are set in different physical regimes, but are all built around the method of moving planes (cf.\ \cite{gidas1979symmetry,li1991monotonicity}).  

Compared to this body of work, Theorem \ref{intro: symmetry theorem} has two distinctive features. First, we only impose asymptotic conditions upstream (or downstream), but nevertheless can conclude evenness and monotonicity. Typical moving-plane arguments begin with the far more restrictive assumption that the solutions decay in both directions. Moreover, due to the stratification, the elliptic problem that we are forced to consider has a zeroth order term with adverse sign, which significantly complicates the procedure. For the full statement and further discussion, see Theorem~\ref{symmetry theorem}.

In addition to the above theorems, a number of propositions concerning more refined monotonicity properties of solitary stratified waves are given in Section~\ref{nodal section}.  These may be of some broader interest, but are primarily important for their application in proving Theorem~\ref{main existence theorem}.  

\subsection{History of the problem} \label{history section}

The first rigorous constructions of exact nonlinear steady water waves were made in the 1920s by Nekrasov \cite{nekrasov1921steady} and Levi-Civita \cite{levi1924determinazione}. They considered the case of irrotational periodic water waves of infinite depth using conformal mappings and power series expansions. Their techniques rely heavily on both irrotationality and the smallness of the amplitude. Large-amplitude periodic irrotational waves were first constructed by Krasovski\u{\i} \cite{krasovskii1962theory}, and later by Keady and Norbury \cite{keady1978existence}, who used global bifurcation theory. These solutions were further studied by Toland \cite{toland1978existence} and McLeod \cite{mcleod1997stokes}, eventually leading to the proof of the Stokes conjecture (\cite{stokes1880theory}) by Amick, Fraenkel, and Toland \cite{amick1982stokes}. 

Solitary waves present a greater technical challenge to study. This is apparent even in the small-amplitude regime: the linearized operator at a critical laminar flow fails to be Fredholm for solitary waves, making the local bifurcation theory analysis much more subtle than in the periodic case. The existence theory for small-amplitude irrotational solitary waves begins in the mid 1950s and early 60s. Lavrentiev~\cite{Lavrentiev1954} and Ter-Krikorov~\cite{terkrikorov1960existence} used a construction based on long-wavelength limits of periodic waves. Friedrichs and Hyers~\cite{friedrichs1954existence} introduced a more direct iteration method. Later Beale~\cite{beale1977existence} provided an alternative proof using the Nash--Moser implicit function theorem. Mielke \cite{mielke1988reduction} subsequently used spatial dynamics methods, in particular the center manifold reduction technique.  Compared to Nash--Moser iteration, and other similar approximation schemes, spatial dynamics gives a fuller qualitative description of the small-amplitude solutions.  For that reason, we shall use it in this paper for the small-amplitude existence theory.  

The construction of large-amplitude solitary waves is further complicated by the loss of compactness coming from the unbounded domain. Amick and Toland \cite{amick1981periodic,amick1981solitary} circumvent this difficulty by considering a sequence of approximate problems with better compactness properties,
 constructing global curves of solutions to these approximate problems, and then taking a limit using the Whyburn lemma \cite{whyburn1964topological}. Large-amplitude irrotational solitary waves are also constructed in \cite{benjamin1990solitary}.
 
Unlike the above works, our interest in this paper lies in stratified flows, which are typically rotational. This is a consequence of the fact that vorticity is generated by the component of the density gradient that is orthogonal to the pressure gradient. Let us first discuss the constant density rotational theory, which is already significantly more involved than the irrotational case. In the presence of non-constant vorticity, the complex analytic machinery underlying much of the irrotational theory is no longer applicable; instead, one is required to explore the dynamics inside the fluid domain. In 1934 Dubreil-Jacotin \cite{dubreil1934determination} was able to use a nonconformal coordinate transformation to construct small-amplitude periodic waves with vorticity. Much later, Constantin and Strauss obtained large-amplitude periodic solutions with a general vorticity distribution through a global-bifurcation-theoretic approach \cite{constantin2004exact}. 

The first rigorous construction of small-amplitude rotational solitary waves is due to Ter-Krikorov \cite{ter1962solitary,ter1963theorie}, followed by Hur~\cite{hur2008solitary}, who generalized the method of Beale~\cite{beale1977existence}, and Groves and Wahl\'en~\cite{groves2008vorticity} using spatial dynamics.  Quite recently, Wheeler 
developed an existence theory for large-amplitude rotational solitary waves by starting from the small-amplitude solutions of Groves and Wahl\'en, and then continuing them globally \cite{wheeler2013solitary,wheeler2015froude,wheeler2015pressure}. 
In contrast to Amick and Toland~\cite{amick1981solitary,amick1981periodic}, this is done without recourse to approximate problems, relying instead on a new 
global-bifurcation-theoretic technique that permits a lack of compactness at the cost of 
additional alternatives for the structure of the solution set.  These alternatives are then winnowed down using qualitative theory, specifically the symmetry and monotonicity properties of the waves and the nonexistence of monotone bores.
We further generalize this result in the present paper, obtaining an abstract global bifurcation principle that allows us to construct large-amplitude waves.  It is important to note that, compared to the homogeneous problem considered by Wheeler, the qualitative theory here presents significant additional difficulties.  Despite this, we are able to get a result that, specialized to the constant density case, is in fact \emph{stronger} than the main theorem of \cite{wheeler2013solitary,wheeler2015froude}.

Let us now discuss the literature on stratified steady waves.  While this paper is interested in the free upper surface problem, a large part of the study of inhomogeneous waves has focused on channel flows where the fluid domain is confined between two impermeable horizontal boundaries. Two-layered systems of this type have been considered in detail in numerous studies, e.g., Amick and Turner \cite{amick1986global}
and Sun \cite{sun1995existence},
while small-amplitude channel flows with continuous stratification were constructed by Ter-Krikorov \cite{ter1963theorie}, Turner \cite{turner1981internal}, Kirchg\"assner \cite{kirchgassner1982wavesolutions}, Kirchg\"assner and Lankers~\cite{kirchgassner1993structure}, James~\cite{james1997small}, and Sun~\cite{sun2002solitary}. 
Large-amplitude existence theory for continuously stratified channel flows was provided by Bona, Bose, and Turner~\cite{bona1983finite}, Amick~\cite{amick1984semilinear}, and Lankers and Friesecke \cite{lankers1997fast}.

The subject of the present work is the free surface problem where there is no rigid lid and the upper boundary is instead a surface of constant pressure. This has many implications for the qualitative properties of the waves, and complicates the governing equations by introducing a fully-nonlinear boundary condition (see \eqref{bernoulli}). 
The first rigorous small-amplitude existence results for the free surface periodic stratified water wave problem are due to Dubreil-Jacotin~\cite{dubreil1937theoremes};
Yanowitch~\cite{yanowitch1962gravity} used a different approach to obtain similar results. The existence of large-amplitude periodic stratified waves was proved much more recently by Walsh~\cite{walsh2009stratified}, adapting the ideas of Constantin and Strauss~\cite{constantin2004exact}. 
To our knowledge, there are no prior existence results for large-amplitude solitary stratified waves with a free upper surface.

We reiterate that all of the above existence theory for solitary stratified waves is restricted to the case where the flow is uniform upstream and downstream, corresponding to $\mathring{u}$ being a constant, which precludes many physically interesting situations that involve wave-current interactions in the far field. As far as we are aware, Theorem~\ref{main existence theorem} is the first existence result of any kind for solitary stratified flows with a free upper surface and a general $\mathring{u}$.

\subsection{Plan of the article}

Let us now briefly explain the overall structure of the paper and the main challenges ahead. 

The first of these is already apparent:  we are considering a free boundary problem.  In Section~\ref{formulation section}, therefore, we begin (after non-dimensionalizing) by performing a change of variables that fixes the domain.  This is done using the Dubreil-Jacotin transformation, which recasts the Euler system as a scalar quasilinear elliptic PDE with fully nonlinear boundary conditions; we can write it abstractly
as an operator equation 
\begin{equation}
  \F(w,F) = 0,\label{intro: operator equation} 
\end{equation}
where $w$ describes the deviation of the streamlines from their asymptotic heights, and $F$ is the Froude number.  In these new coordinates, the fluid domain $\Omega$ is mapped to a fixed infinite strip $R$.  Recall that the presence of vorticity (and stratification) mean that one cannot simply consider a nonlocal problem posed on the boundary, as is typical for irrotational waves.  

Density stratification is manifested in \eqref{intro: operator equation} as a zeroth order term whose sign violates the hypotheses of the maximum principle (cf.\ \eqref{height equation} and Theorem~\ref{max principle}).  As we discuss below, maximum principle arguments are crucial to proving the existence of large-amplitude waves, and thus the ``bad sign'' is a serious issue. Curiously, this means that it is actually simpler to work with unstable densities rather than the more physically relevant kind we consider here (see also \cite{kirchgassner1993structure} where this observation is made). 

The second major difficulty is the ``singularity'' of the bifurcation point, in particular that the linearized operator at the critical laminar flow $\F_w(0,\Fcr)$ is not Fredholm (see \cite{hur2008solitary}). This completely rules out using a standard Lyapunov--Schmidt reduction to construct small-amplitude waves as in the periodic case~\cite{walsh2009stratified}. 

Perhaps the most serious obstacle, though, is a lack of compactness. For periodic waves, one can use Schauder estimates to show that bounded sets of solutions are compact (and even more, that $\F$ is ``locally proper'') \cite[Section~4]{walsh2009stratified}. With our unbounded domain $R$, however, Schauder estimates are no 
longer sufficient. In particular, there is the possibility that there exists a sequence of solutions to \eqref{intro: operator equation} for which the crest becomes progressively flatter and longer, and which therefore has no convergent subsequences; see Figure~\ref{noncompact figure}.
\begin{figure}
  \includegraphics[scale=1.1]{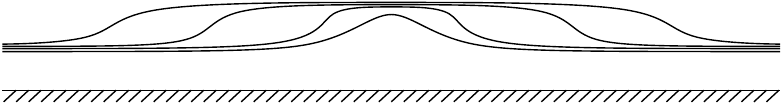} 
  \caption{A sequence of waves with increasingly broad and flat crests.}
  \label{noncompact figure}
\end{figure}

Proving Theorem~\ref{main existence theorem} therefore requires us to overcome the potential loss of the maximum principle due to the stratification, the singularity of the bifurcation point, and the loss of compactness. This process begins in Section~\ref{sec linearized}, where we establish some important preparatory results concerning the properties of the linearized operator $\F_w(w,F)$.  First, we investigate the linearized equation at a laminar flow (corresponding to $w \equiv 0$) and restricted to functions that are independent of $x$.  This leads to a Sturm--Liouville problem that, at the critical Froude number $\Fcr$, will have $0$ as its lowest eigenvalue with the remainder of the spectrum lying on the positive real axis.   Physically, $\Fcr$ divides the regimes of fast-moving (supercritical) waves that outrun all linear periodic waves and slow-moving (subcritical) waves that do not. We show that, in the supercritical regime, maximum principle arguments can be successfully carried out in many instances despite the adverse sign of the zeroth order term. More generally, we also prove that $\F_w(w,F)$ is a Fredholm operator with index $0$ when $F > \Fcr$.  This fact follows from the observation that the operator with coefficients evaluated at $x = \pm\infty$ is invertible (see \cite{wheeler2013solitary,volpert2003degree}).

Section~\ref{qualitative section} is devoted to the qualitative theory.    A priori bounds on the pressure, velocity, and Froude number are established using maximum principle arguments and several new integral identities.  
Later, these will be key to winnowing the alternatives that arise in the global bifurcation argument.  
The nonexistence of monotone bores is proved using a conjugate flow analysis, while the symmetry follows from an adapted moving planes argument.
Anticipating that these results may have interest beyond their applications to the existence theory, we 
have gathered them together in a single section rather than leaving them dispersed throughout the paper.

The next task, taken up in Section~\ref{small-amplitude section}, is to construct a family of small-amplitude solitary waves bifurcating from the background laminar flow.  As discussed above, the fact that $\F_w(0,F)$ is not Fredholm presents a serious obstruction.  Various strategies have been devised to get around this; see the discussion in Section~\ref{history section}. 
In this paper, we choose to employ spatial dynamics and the center manifold reduction method.  By treating the horizontal variable $x$ as time-like, we are able to further reformulate problem \eqref{intro: operator equation} as an infinite-dimensional Hamiltonian system.  As $F$ increases past $\Fcr$, the linearized operator has a pair of purely imaginary eigenvalues that collide at the origin and then become real. This gives rise to a two-dimensional center manifold and a corresponding reduced ODE system whose solutions can be lifted to bounded solutions of the full problem.  
For Froude numbers just above $\Fcr$, the reduced problem is, to leading order, the ODE satisfied by the solitons of the Korteweg--de Vries equation.  When $F$ is slightly subcritical, an analogous argument gives the existence of periodic stratified waves with periods limiting to infinity as $F \nearrow \Fcr$; these are heterogeneous waves of cnoidal type.  
Here we are working very much in the spirit of Groves and Wahl\'en \cite{groves2007spatial,groves2008vorticity}, who considered homogeneous density solitary waves with vorticity, and also \cite{wheeler2013solitary}, where the construction in \cite{groves2008vorticity} is linearized step by step to show that the resulting solutions are nondegenerate.

Ultimately, the center manifold analysis furnishes us with a family of small-amplitude solitary waves, $\cm_\loc$.  In Section~\ref{global bifurcation section}, we complete the proof of Theorem~\ref{main existence theorem} by continuing $\cm_\loc$ to a global curve $\cm$  using an adaptation of the method of Dancer \cite{dancer1973bifurcation,dancer1973globalstructure} and its generalization by Buffoni--Toland~\cite{buffoni2003analytic}.  
However, as mentioned above, one cannot directly apply this theory because it fundamentally requires that closed and bounded subsets of $\F^{-1}(0)$ be compact, and also that $\F_w$ be Fredholm of index $0$ at the bifurcation point.
Looking at the proofs in \cite{buffoni2003analytic}, we are first able to extract an abstract global bifurcation result, Theorem~\ref{generic global theorem}, that applies to a wider class of systems for which $\F_w$ may not be Fredholm at the bifurcation point and where $\F^{-1}(0)$ may not be locally compact (at the cost, of course, of a weaker conclusion). 
Following the strategy of \cite{wheeler2013solitary,wheeler2015pressure}, we show that, for general elliptic problems in cylinders, compactness can fail for a sequence of 
asymptotically monotone solutions only if a translated subsequence is locally converging to a monotone bore-type solution with different limits as $x \to \pm\infty$ (see Lemma~\ref{lem:compact:gen}). By applying our qualitative theory, we are finally able to exclude most of the possibilities in Theorem~\ref{generic global theorem} in our case, showing that $\cm$ can be continued up to stagnation.

Two appendices are also included.   Appendix~\ref{appendix calculations} contains  a number of proofs and calculations that are either largely standard or merely technical.  To keep the presentation self-contained, in Appendix~\ref{appendix quotes} we list several results from the literature that are used throughout the paper.   

\section{Formulation} \label{formulation section}

\subsection{Non-dimensionalization}\label{nondim sec}

In this subsection, we will choose characteristic length, velocity, and density scales in terms of the data $\mathring{\varrho}, u^*, d, g$.

The natural choice for the length scale is $d$. For the density scale we will use the density along the free surface $\{y=\eta(x)\}$:
\begin{align}
  \label{defvarrho0}
  \varrho_0 := \mathring{\varrho}(0).
\end{align}
To determine the velocity scale, we first normalize the $u^*$ appearing in \eqref{one parameter shear} so that it satisfies
\begin{align}
  \label{Ustar normalization}
  \int_{-d}^0 \sqrt{\mathring\varrho(y)}\, u^*(y) \, dy &= \sqrt{g\varrho_0 d^3}.
\end{align}
Next we consider the 
\emph{(relative) pseudo-volumetric mass flux} $m>0$ defined by
\begin{equation} 
  \label{defp0} 
  m := \int_{-d}^{\eta(x)} \sqrt{\varrho(x,y)}\left[c-u(x,y) \right] \, dy, 
\end{equation}
which is a constant independent of $x$. Sending $\abs x \to \infty$ in \eqref{defp0} and then using \eqref{upstream condition}, we find that $m$ is given in terms of $\mathring\varrho$, $F$, and $u^*$ by
\begin{align}
  \label{solitary m}
  m 
  &= 
  \int_{-d}^0 \sqrt{\mathring\varrho(y)} [c-\mathring{u}(y)] \, dy
  =
  F\int_{-d}^0 \sqrt{\mathring\varrho(y)} u^*(y) \, dy.
\end{align}
From \eqref{Ustar normalization} we see that $F$, $m$, and $\varrho_0$ are related by the simple formula
\begin{align}  \label{normalize}
  \frac{g\varrho_0 d^3}{m^2} = \frac 1{F^2},
\end{align}
and consequently the velocity scale can be chosen to be $m/(d\sqrt{\varrho_0})$.

\begin{subequations}\label{dimensionless}  
  Rescaling lengths we set
  \begin{align}
    (\tilde x, \tilde y) := \frac 1d (x,y),
    \qquad 
    \tilde \eta(\tilde x) := \frac 1d \eta(x).
  \end{align}
  Rescaling the density we define 
  \begin{align}
    \tilde \varrho(\tilde x, \tilde y) := \frac 1{\varrho_0} \varrho(x,y),
    \qquad 
    \tilde{\mathring \varrho}(\tilde y) := \frac 1{\varrho_0} \mathring \varrho(y),
  \end{align}
  and rescaling velocities we set
  \begin{gather} \label{velocity scalings}
    \begin{gathered}
      \tilde u(\tilde x, \tilde y)  := \frac {\sqrt{\varrho_0}d}m  u(x,y),
      \qquad
      \tilde v(\tilde x, \tilde y)  := \frac {\sqrt{\varrho_0}d}m v(x,y),
      \\
      \tilde c   := \frac {\sqrt{\varrho_0}d}m c,
      \qquad 
      \tilde{\mathring{u}}(\tilde y)  := \frac {\sqrt{\varrho_0}d}m \mathring{u}(y).
    \end{gathered}
  \end{gather}
  Finally, combining the length, density, and velocity scalings we set
  \begin{align}
    \tilde P(\tilde x, \tilde y) := \frac{d^2}{m^2} (P(x,y) - P_{\textrm{atm}}).
  \end{align}
\end{subequations}
In these variables, \eqref{euler} becomes
\begin{equation}
 \label{rescaled euler}
 \left\{   \begin{alignedat}{2}
   (\tilde u-\tilde{c}) \tilde\varrho_{\tilde x} + \tilde v \tilde \varrho_{\tilde y}  &=  0 \\
   \tilde\varrho (\tilde u-\tilde{c}) \tilde u_{\tilde x} + \tilde\varrho \tilde v \tilde u_{\tilde y}  & =  -\tilde P_{\tilde x}    \\ 
   \tilde\varrho (\tilde u-\tilde c) \tilde v_{\tilde x} + \tilde\varrho \tilde v \tilde v_{\tilde y} & =  -\tilde P_{\tilde y} - \frac 1{F^2} \tilde\varrho  \\
   \tilde u_{\tilde x} + \tilde v_{\tilde y} &=  0
 \end{alignedat} \right. \qquad \textrm{in } \tilde\Omega,
\end{equation}
where the rescaled fluid domain is
\begin{equation} 
  \label{def rescaled Omega} 
  \tilde\Omega := \{ (\tilde x,\tilde y) \in \R^2 : -1 < \tilde y < \tilde \eta(\tilde x) \}.
\end{equation}
Here the particularly simple formula for the dimensionless parameter appearing in 
\eqref{rescaled euler} is thanks to \eqref{normalize}.
The boundary conditions \eqref{eulerboundary} take the dimensionless form
\begin{equation}
 \label{rescaled eulerboundary} 
 \left\{ \begin{alignedat}{2}
   \tilde v &=  (\tilde u-\tilde c)  \tilde\eta_{\tilde x} &\qquad & \textrm{on } \tilde y = \tilde \eta(\tilde x),  \\
   \tilde v &=  0 && \textrm{on } \tilde y = -1,  \\
   \tilde P &=  0 &\qquad& \textrm{on } \tilde y = \tilde \eta(\tilde x),  
 \end{alignedat} \right. 
\end{equation}
and the asymptotic condition \eqref{upstream condition} becomes
\begin{equation} 
  \label{rescaled upstream condition} 
  (\tilde u,\tilde v) \to (\tilde{\mathring{u}},0),
  \quad 
  \tilde \varrho \to \tilde{\mathring\varrho},
  \quad
  \tilde \eta \to 0 
  \qquad \textrm{as } |\tilde x| \to \infty.
\end{equation}
For the solutions we construct, analogous convergence in fact holds for the first derivatives of the velocity field and density, and up to the second derivative for the surface profile.   
Combining \eqref{normalize} with \eqref{one parameter shear}, we see that 
\begin{align}
  \label{rescaled U}
  \tilde{\mathring{u}}(\tilde y) - \tilde c
  = - \frac 1{\sqrt{gd}} u^*(y),
\end{align}
so the asymptotic conditions \eqref{rescaled upstream condition} are independent of $F$.

To simplify notation we will from now on \emph{drop the tildes} on the dimensionless variables defined in \eqref{dimensionless} and the fluid domain $\tilde \Omega$.

\subsection{Stream function formulation} \label{stream function section}
The dimensionless Euler system \eqref{rescaled euler} can be turned into a scalar equation by introducing the {\it pseudo (relative) stream function} $\psi$ defined up to a constant by 
\begin{equation} 
  \psi_x = -\sqrt{\varrho}v,
  \qquad 
  \psi_y = \sqrt{\varrho} (u-c). 
  \label{defpsi} 
\end{equation}
It is apparent from these equations that the streamlines are level sets of $\psi$.  The kinematic condition  and impermeability condition in 
\eqref{rescaled eulerboundary} then imply that $\psi$ is constant on the free surface and bed; we normalize $\psi$ by setting $\psi = 0$ on $\{ y = \eta(x) \}$.  
Thanks to the definition of $m$ in \eqref{defp0} and the scaling \eqref{dimensionless}, we then have $\psi = 1$ on the bed $\{y = -1\}$.  Note also that the no stagnation condition \eqref{nostagnation} translates to
\begin{equation} 
  \psi_y < 0 \qquad \textrm{in } \overline{\Omega}. 
  \label{nostagnationpsi} 
\end{equation}

Conservation of mass \eqref{mass} implies that the density is constant on each streamline.  This permits us to introduce a \emph{streamline density function} $\rho\colon[-1,0] \to \R^{+}$ such that 
\begin{equation} 
  \label{defrho} 
  \varrho(x,y) = \rho(-\psi(x,y)) \qquad \textrm{in } \Omega.  
\end{equation}
For solitary waves, $\rho$ is determined by the flow upstream (or downstream) and hence, and soon as $u^*$ is fixed, we can recover $\rho$ from $\mathring{\varrho}$ and conversely.  In the remainder of the paper, we will use $\rho$ in place of $\mathring{\varrho}$, as it is more convenient when working in the new coordinates described below in Section~\ref{height equation section}.  Notice that stable stratification corresponds to assuming that $\rho^\prime \leq 0$.

Bernoulli's theorem states that the quantity 
\begin{equation} 
  \label{defE} 
  E := P + \frac{\varrho}{2}\left( (u-c)^2 + v^2\right) + \frac 1{F^2}\varrho y 
\end{equation}
is constant along streamlines, as can be verified by differentiation.
This and the absence of stagnation allows us to define the so-called \emph{Bernoulli function} $\beta\colon[0,1] \to \R$ by 
\begin{equation} 
  \label{defbeta} 
  \frac{dE}{d\psi}(x,y) = -\beta(\psi(x,y)) \qquad \textrm{in } \Omega. 
\end{equation}

It can be shown (see, for instance \cite[Lemma~A.2]{chen2016continuous}) that the Euler system \eqref{euler} under the assumption of no stagnation \eqref{nostagnation} is equivalent to Yih's equation (cf., \cite{yih1965dynamics}):
\begin{equation} 
  \label{yih equation} 
  \Delta \psi - \frac 1{F^2} y \rho^\prime(-\psi) + \beta(\psi) = 0 \qquad \textrm{in }  \Omega.
\end{equation}
Earlier versions of this equation were found by Dubreil-Jacotin~\cite{dubreil1937theoremes} and Long~\cite{long1953some}. The elegant form of \eqref{yih equation} is the main argument for using the pseudo-stream function defined by \eqref{defpsi} instead of the usual stream function where the factors of $\sqrt\varrho$ are absent. If $\varrho$ is constant, then \eqref{yih equation} reduces to the well-known semilinear equation $\Delta \psi = -\beta(\psi)$ for steady rotational water waves (with $\beta \equiv 0$ for irrotational waves).

Evaluating $E$ on the free surface, and recalling the normalization \eqref{velocity scalings}, the boundary conditions \eqref{eulerboundary} become
\begin{align} \label{bernoulli} 
  \left\{
  \begin{alignedat}{2}
    |\nabla \psi|^2 + \frac 2{F^2} \varrho (y+1) &= Q &\qquad& \textrm{on } y = \eta(x), \\
    \psi & = 0 && \textrm{on } y = \eta(x),  \\
    \psi &= 1 && \textrm{on } y = -1, 
  \end{alignedat} 
  \right.
\end{align}
where the constant
\begin{equation} 
  Q := 2\left(E + \frac 1{F^2} \varrho\right)\Big|_{y=\eta(x)}
  = \left( \varrho  (\mathring{u} - c)^2 + \frac 2{F^2}\varrho \right)\Big|_{y=0}.
  \label{defQ} 
\end{equation}
Finally, the asymptotic conditions \eqref{upstream condition} become
\begin{equation}
  \label{stream formulation asymptotics}
  \nabla \psi \to \left(0, \sqrt{\mathring{\varrho}}(\mathring{u} - c)\right), 
  \quad 
  \eta \to 0, 
  \quad
  \varrho \to \mathring\varrho,
  \qquad 
  \text{as }\ |x| \to \infty.
\end{equation}

\begin{remark}\label{remark bernoulli}
  For solitary waves, the behavior at infinity strongly constrains the near-field structure as well. In particular, the Bernoulli function $\beta$ can be explicitly computed in terms of the asymptotic data at $x = \pm\infty$.  
  Here, to compare our results to previous works in the literature, specifically those for periodic solutions, we
   derive an expression for the Bernoulli function $\beta$ in terms of $\mathring{u}$ and $\rho$.

  Let $\mathring{y}(p)$ be the limiting $y$-coordinate of the streamline $\{ \psi = -p \}$, and define
  \begin{align}
    \label{limiting u}
    \mathring{U}(p) := \mathring{u}(\mathring{y}(p)).
  \end{align}
  From \eqref{stream formulation asymptotics}, we see that $\mathring y$ and $\mathring U$ are related by
  \begin{equation}\label{limiting h}
    \mathring{y}(p) = \int_{-1}^p \frac{1}{\sqrt{\rho(s)}(c - \mathring{U}(s))} \, ds - 1.
  \end{equation}
  Sending $\abs x \to \infty$ in \eqref{yih equation} and applying \eqref{stream formulation asymptotics}, we see that $\beta$ is given by
  \begin{align*}
    \label{Beta explicit}
    \begin{aligned}
      \beta(-p)
      &= \frac 1{F^2} \mathring y \rho_p - \partial_y [ \sqrt\rho (\mathring U - c) ] = \left( \frac 1{F^2} \mathring y - \frac 12 (\mathring U - c)^2 \right) \rho_p
      + \rho (\mathring U - c) \mathring U_p.
    \end{aligned}
  \end{align*}
\end{remark}

\subsection{Height function formulation}\label{height equation section}

Although Yih's equation is scalar, it is still posed on an a 
priori unknown domain.  This presents a serious technical challenge for the existence theory (though not necessarily the qualitative theory) and thus we will make a change of variables that fixes the domain.  Specifically, we use the  Dubreil-Jacotin transformation \cite{dubreil1937theoremes} 
\begin{equation}
  (x,y) \longmapsto (x, -\psi(x,y)) =: (q,p),
\end{equation}
which sends the fluid domain $\Omega$ to a rectangular strip
\[
  R := \{ (q, p) \in \R^2 : \ p \in (-1, 0) \}.
\]
The free (``top'') surface $\{y=\eta(x)\}$ is mapped to $T := \{ p = 0 \}$ and the image of the bed is $B := \{ p = -1 \}$. The new coordinates $(q,p)$ are often called \emph{semi-Lagrangian variables}.  

It is also convenient to work with the new unknown $h = h(q,p)$ which gives the height above the bed of the point $(x,y) \in \overline{\Omega}$ with $x = q$ and lying on the streamline $\{\psi = -p\}$, 
\begin{equation} 
  h(q,p) := y + 1. 
  \label{defheight} 
\end{equation}
We call $h$ the \emph{height function}.  From \eqref{defheight}, it is clear that $h$ must be positive in $R \cup T$ and vanish on the bed $B$.  
In terms of $h$, the no stagnation condition now reads
\begin{equation}
  \label{no stag h}
  h_p > 0.
\end{equation}
Likewise, the asymptotic conditions for $h$ have the form
\begin{equation}\label{h asymptotics}
  h(q, p) \to H(p)
  \quad 
  h_q(q, p) \to 0, 
  \quad 
  h_p(q, p) \to H_p(p), 
  \quad 
  \text{as } |q|\to \infty.
\end{equation}

A simple calculation shows that we have the change-of-variables identities
\begin{align}
  \label{h to uv}
  h_q = \frac v{u-c}, 
  \quad 
  h_p = \frac 1{\sqrt{\varrho}(c-u)},
\end{align}
where the left-hand side is evaluated at $(q,p)$ while the right-hand side is evaluated at $(x,y) = (q,h-1)$. Combining \eqref{limiting h} with \eqref{limiting u} and \eqref{rescaled U}, we see that the upstream (and downstream) height function $H(p)$ is the solution to the ODE
\begin{equation}\label{ODE H}
  \left\{
  \begin{aligned}
    &H_p(p) = \left. 
    \frac 1
    {\sqrt{\mathring\varrho}(c-\mathring{u})}
    \right|_{y = H(p) - 1} 
    \quad \text{in }  -1 < p < 0 ,\\
    &H(-1) = 0, \qquad H(0) = d.
  \end{aligned}
  \right.
\end{equation}

One can show that Yih's equation \eqref{yih equation} and the boundary conditions \eqref{bernoulli} are equivalent to the following quasilinear PDE for $h$:
\begin{equation} \label{height equation}
  \left\{ 
  \begin{alignedat}{2}
    \left(-\frac{1+h_q^2}{2h_p^2} + \frac{1}{2H_p^2}\right)_p + \left( \frac{h_q}{h_p} \right)_q - \frac{1}{F^2}  \rho_p (h-H) &= 0 
    &\qquad& \textrm{in } R, \\
    \frac{1+h_q^2}{2h_p^2} - \frac{1}{2H_p^2} + \frac{1}{F^2}  \rho (h-1) &= 0 
    && \textrm{on } T, \\
    h &= 0 && \textrm{on } B,
  \end{alignedat} 
  \right. 
\end{equation}
see \cite{chen2016continuous}.
One can check that \eqref{height equation} is a uniformly elliptic PDE when $\inf_R h_p > 0$, with a uniformly oblique boundary condition. The sign of the zeroth order coefficient in the first equation means that \eqref{height equation} does not satisfy the hypotheses of the maximum principle (Theorem~\ref{max principle}).

\subsection{Flow force}\label{flowforce section}
The \emph{flow force}
\begin{equation}
  \flowforce(x) := \int_{-1}^{\eta(x)} \left[ P+ \varrho (u-c)^2 \right] \, dy \label{Eulerian S} 
\end{equation}
is an important quantity to consider for steady waves.
It is straightforward to check that $\flowforce$ is independent of $x$.
In semi-Lagrangian variables, we can equivalently write
\begin{equation}
  \label{DJ S}
  \flowforce := 
  \int_{-1}^0
  \left[ \frac{1-h_q^2}{2h_p^2} + \frac 1{2H_p^2}
  - \frac 1{F^2}\rho(h-H) - \frac 1{F^2} \int_0^p \rho H_p\,
  dp' \right]\, h_p\, dp.
\end{equation}
Since $H$ is fixed throughout the paper, we will view $\flowforce=\flowforce(h)$ as a functional acting on $h$.

In the spatial-dynamics formulation presented in Section~\ref{small-amplitude section},  $\flowforce$ essentially serves as the Hamiltonian.
Later, in Section~\ref{bore section}, we will be interested in $q$-independent height functions $h = K(p)$ which solve \eqref{height equation} and have $\flowforce(K) = \flowforce(H)$.
In this case we will say that $K$ and $H$ are
\emph{conjugate} (cf., \cite{benjamin1971unified, keady1975conjugate, keady1978conjugate}). 

\subsection{Function spaces and the operator equation}\label{subsec_spaces}

Finally, let us introduce the precise formulation of the problem and in particular the function spaces we will be working in. For a (possibly unbounded) domain $D\subset \R^n$, $k$ a nonnegative integer and $\alpha\in [0,1)$, we denote
\begin{align*}
  C^\infty_{\mathrm c}(D) &:= \left\{ \phi\in C^\infty(D):\ \textrm{the support of }\phi \text{ is a compact subset of } D \right\},\\
  C^{k+\alpha}(D) &:= \left\{ u\in C^k(D):\ \|\phi u\|_{C^{k+\alpha}}<\infty \ \text{ for all } \phi\in C^\infty_{\mathrm c}(D) \right\}.
\end{align*}
The spaces $C^{\infty}_{\mathrm c}(\overline{D})$ and $C^{k+\alpha}(\overline D)$ are defined analogously. 
Furthermore, let
\[
  C^{k+\alpha}_\bdd(\overline D) := \left\{ u\in C^k(D):\ \|u\|_{C^{k+\alpha}}<\infty \right\},
\]
which is a Banach space when equipped with the obvious norm. We also consider the closed subspace
\[
  C^{k+\alpha}_0(\overline D) := \left\{ u\in C^{k+\alpha}_\bdd(\overline D):\ \lim_{r\to\infty}\sup_{|x| = r} |D^j u(x)| = 0 \text{ for } 0\leq j \leq k \right\},
\]
of $C^{k+\alpha}_\bdd(\overline D)$, which is a Banach space under the $C^{k+\alpha}_\bdd(\overline D)$ norm. Finally, we write 
\[
u_n \to u \text{ in } C^{k+\alpha}_\loc(\overline D) 
\ \iff\ 
 \|\phi(u_n - u)\|_{C^{k+\alpha}(D)} \to 0 \text{ for all } \phi\in C^{\infty}_{\mathrm c}(\overline{D}).
\]

Introducing the difference 
\[
w(q, p) := h(q, p) - H(p)
\]
between the height function $h$ and its asymptotic value $H$ at $|q| = \infty$,
the asymptotics conditions \eqref{h asymptotics} become simply $w \in C^1_0(\overline R)$. Note also that $\eta(q) = w(q, 0)$. 
\begin{subequations}\label{w equation}
  In terms of $w$, the height equation \eqref{height equation} becomes
  \begin{equation} 
    \left\{ 
    \begin{alignedat}{2}
      \left(
      -\frac{1+w_q^2}{2(H_p+w_p)^2} 
      + \frac{1}{2H_p^2}
      \right)_p 
      + \left( \frac{w_q}{H_p+w_p} \right)_q 
      - \frac{1}{F^2}  \rho_p w &= 0 &\qquad& \textrm{in } R, \\
      \frac{1+w_q^2}{2(H_p+w_p)^2} - \frac{1}{2H_p^2} + \frac{1}{F^2}  \rho w &= 0 && \textrm{on } T, \\
      w &= 0 && \textrm{on } B.
    \end{alignedat} 
    \right. 
  \end{equation}
  The no stagnation assumption \eqref{nostagnation} and the boundedness of $u$ translate to
  \begin{equation}\label{lower bound w_p}
    0 < \inf_R(H_p + w_p) < \infty.
  \end{equation}
\end{subequations}
 
Define the Banach spaces $X$ and $Y = Y_1 \times Y_2$ by 
\begin{equation}
  \label{def X Y spaces} \begin{split}
    & X := \left\{ w \in C^{3+\alpha}_{\bdd,\even}(\overline R) \cap C^2_0(\overline R) : w = 0 \text{ on } B \right\},
    \\
    & Y_1 := C^{1+\alpha}_{\bdd,\even}(\overline R) \cap C^0_0(\overline R),\qquad 
    Y_2 := C^{2+\alpha}_{\bdd,\even}(T) \cap C^1_0(\overline R),
  \end{split} 
\end{equation}
where the subscript ``e'' denotes evenness in $q$. From~\eqref{h to uv} it is clear that the evenness of $w$ in $q$ is equivalent to the symmetry discussed in the paragraph before Section~\ref{results section}. 
We write \eqref{w equation} as an operator equation
\begin{subequations}\label{operator equation}
\begin{equation}\label{F equation}
\F(w,F) = 0,
\end{equation} 
where 
\begin{align*}
  \F = (\F_1,\F_2)  \maps U \subset X\times \R  \longrightarrow Y
\end{align*}
is given by 
\begin{align}
  \F_1(w,F) &:= 
  \left(
  -\frac{1+w_q^2}{2(H_p+w_p)^2} 
  + \frac{1}{2H_p^2}
  \right)_p 
  + \left( \frac{w_q}{H_p+w_p} \right)_q 
  - \frac{1}{F^2}  \rho_p w, \label{F_1}\\
  \F_2(w,F) &:= 
  \left(
\frac{1+w_q^2}{2(H_p+w_p)^2} - \frac{1}{2H_p^2} + \frac{1}{F^2}  \rho w \right)\bigg|_T. \label{F_2}
\end{align}
\end{subequations}
Here we seek solutions in the open subset 
\begin{equation}\label{F domain}
  U := 
  \left\{(w,F) \in X \times \R : 
  \inf_R(H_p + w_p) > 0,\ F > \Fcr \right\} \sub X \by \R,
\end{equation}
where ${\Fcr}$, the {\it critical Froude number}, will be defined in \eqref{def Fcr}. 
Since $\F$ is a rational function of $w$ and its derivatives, one easily checks that it is (real-)analytic with domain $U$ and values in $Y$.
All of this remains true when $\Fcr$ in the definition of $U$ is replaced by $0$; 
we will occasionally use this extended definition of $\F$.

\section{Linearized operators} \label{sec linearized}

In this section we prove several lemmas about the linearized operators $\F_w(w,F)$. The results in Section~\ref{sec SL} will be needed in Section~\ref{qualitative section}, while the work done in Section~\ref{fredholm section} lays the foundation for the existence theory in Sections~\ref{small-amplitude section} and \ref{global bifurcation section}.

\subsection{Sturm--Liouville problems} \label{sec SL}
We begin by studying the eigenvalue problem for the 
linearized operator $\F_w(0,F)$ around a laminar flow. 
Restricting to $q$-independent functions, this becomes the Sturm--Liouville problem
\begin{align}
  \label{SLprob2} \left\{
  \begin{alignedat}{2}
    \Big( \frac{\dot w_p}{H_p^3} \Big)_p
    - \mu \rho_p \dot w
    &= - \nu \frac{ \dot w}{H_p}
    &\qquad& \text{in } -1 < p < 0, \\
    \dot w &= 0 && \text{on } p=-1, \\
    -\frac{\dot w_p}{H^3_p} + \mu\rho \dot w &= 0 
    && \text{on } p = 0,
  \end{alignedat} \right.
\end{align}
where we have introduced the shorthand $\mu := 1/F^2$ and $\nu$ is the eigenvalue.

First, we consider the situation for $\nu = 0$, characterizing the smallest positive value of $\mu$ for which the system
\begin{equation}\label{SLprob} \left\{
  \begin{alignedat}{2}
    \Big(\frac{\dot w_p}{H_p^3}\Big)_p - \mu\rho_p \dot w &=0
    &\qquad& \text{in }-1 < p <0,\\
    \dot w &= 0 && \text{on } p = -1,\\
    -\frac{\dot w_p}{H_p^3} + \mu\rho \dot w &= 0 && \text{on }p = 0,
  \end{alignedat} \right.
\end{equation}
has a nontrivial solution $\dot w$. One can approach this task using variational methods, but for later convenience we instead give an argument in terms of the (unique) solution $\Phi = \Phi(p;\mu)$ of the initial value problem
\begin{equation}\label{def Phi} \left\{
  \begin{alignedat}{2}
    \Big(\frac{\Phi_p}{H_p^3}\Big)_p - \mu\rho_p \Phi &=0
    &\qquad& \text{in } -1 < p <0,\\
    \Phi &= 0 && \text{on } p = -1,\\
    \Phi_p &= 1 && \text{on } p = -1,
  \end{alignedat} \right.
\end{equation}
together with the related function
\begin{align*}
  \label{qprime def}
  \qprime(\mu) := 
  -\frac{\Phi_p(0;\mu)}{H_p^3(0)} + \mu\rho(0) \Phi(0;\mu),
\end{align*}
which is defined so that $\Phi$ solves \eqref{SLprob} for a given $\mu$ if and only if $\qprime(\mu) = 0$. Note that in the constant density case where $\rho_p \equiv 0$, $\Phi$ is independent of $\mu$ and hence $A$ is simply an affine function.

\begin{lemma}[Existence of critical Froude number] \label{lem_minimality}  There exists a unique $\mucr > 0$ such that the following holds.
  \begin{enumerate}[label=\rm(\alph*)] 
  \item \label{lem_minimality_existence} The Sturm--Liouville problem \eqref{SLprob} has a nontrivial solution $\dot w $ for $\mu = \mucr$. Moreover, we can take $\dot w = \Phicr(p) := \Phi(p;\mucr)$.
  \item \label{lem_minimality_pos} If $0 \le \mu \le \mucr$, then $\Phi(p;\mu) > 0$ for $-1 < p \le 0$ and $\Phi_p(p;\mu) > 0$ for $-1 \le p \le 0$.
  \item \label{lem_minimality_qprime} For $0 \le \mu < \mucr$, $\qprime(\mu) < 0$.
  \end{enumerate}
\end{lemma}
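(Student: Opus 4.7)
The plan is to establish (a)--(c) via the variational characterization of $\mucr$ together with a Picone-type identity for the initial-value solutions $\Phi(\cdot;\mu)$. The central observation that overcomes the adverse sign of the zeroth-order term $-\mu\rho_p\dot w$ in \eqref{SLprob} is that the stable stratification assumption $\rho_p \le 0$ places $-\rho_p$ on the favorable side of a suitable Rayleigh quotient.

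For part (a), I would define $\mucr$ as the infimum of
\[
\mathcal R[\dot w] := \frac{\displaystyle\int_{-1}^0 \dot w_p^2/H_p^3\, dp}{\displaystyle\rho(0)\dot w(0)^2 + \int_{-1}^0 (-\rho_p)\dot w^2\, dp}
\]
over admissible $\dot w \in H^1(-1,0)$ with $\dot w(-1) = 0$ and positive denominator. Poincar\'e's inequality (applicable thanks to $\dot w(-1) = 0$) together with the standard trace estimate bounds the denominator by a constant multiple of the numerator, giving $\mucr > 0$, while testing with $\dot w(p) = p+1$ shows $\mucr < \infty$. A direct-method argument, using the compact embedding $H^1(-1,0) \hookrightarrow C([-1,0])$, produces a minimizer $\phi$, whose Euler-Lagrange equations are exactly \eqref{SLprob} with $\mu = \mucr$. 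Since $\mathcal R[|\phi|] = \mathcal R[\phi]$, we may take $\phi \ge 0$; ODE uniqueness, combined with the initial data $\phi(-1) = 0$, then forces $\phi > 0$ on $(-1, 0]$ and $\phi_p(-1) > 0$, identifying $\phi$ with a positive multiple of $\Phi(\cdot;\mucr)$.

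The key to parts (b) and (c) is the Picone-type identity obtained by multiplying the ODEs for $\Phi(\cdot;\mu_1)$ and $\Phi(\cdot;\mu_2)$ by each other, subtracting, and integrating using $\Phi(-1;\mu)=0$:
\[
\frac{\Phi(p;\mu_2)\Phi_p(p;\mu_1) - \Phi(p;\mu_1)\Phi_p(p;\mu_2)}{H_p^3(p)} = (\mu_1 - \mu_2)\int_{-1}^p \rho_s\Phi(s;\mu_1)\Phi(s;\mu_2)\, ds.
\]
For the positivity claim in (b), if $\Phi(\cdot;\mu)$ had a first zero at some $p_0 \in (-1, 0]$ for $\mu \in (0, \mucr]$, then truncating $\Phi(\cdot;\mu)$ at $p_0$ and extending by zero gives a test function for $\mathcal R$; a direct calculation using the ODE shows its Rayleigh quotient equals exactly $\mu$, forcing $\mu \ge \mucr$ and hence $\mu = \mucr$, $p_0 = 0$, contradicting $\Phi(0;\mucr) > 0$ from (a). (In the degenerate case $\rho_p \equiv 0$, the ODE $(\Phi_p/H_p^3)_p = 0$ gives $\Phi_p = H_p^3/H_p^3(-1) > 0$ explicitly and $\Phi > 0$ is immediate.) Positivity of $\Phi_p$ on $[-1,0]$ then follows because the ODE gives $(\Phi_p/H_p^3)_p = \mu\rho_p\Phi \le 0$, so $\Phi_p/H_p^3$ is non-increasing; if $\Phi_p(0;\mu')$ vanished for some smallest $\mu' \in (0, \mucr]$, then $\qprime(\mu') = \mu'\rho(0)\Phi(0;\mu') > 0$, which together with $\qprime(0) < 0$ produces a zero of $\qprime$ in $(0, \mu')$ and hence a nontrivial \eqref{SLprob}-solution with eigenvalue strictly below $\mucr$, contradicting the infimum characterization. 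Part (c) is then immediate: any zero of $\qprime$ in $[0, \mucr)$ would, via (b), yield a positive solution of \eqref{SLprob} with eigenvalue less than $\mucr$.

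The main anticipated obstacle is the adverse sign of the zeroth-order term in \eqref{SLprob}, which blocks any direct maximum-principle or classical Sturm-Liouville argument; this is overcome by exploiting stability to move $-\rho_p$ into the denominator of $\mathcal R$, where it contributes favorably. A secondary subtlety is tracking where $\Phi_p$ (as opposed to $\Phi$) can first vanish as $\mu$ varies; resolving this requires combining the monotonicity of $\Phi_p/H_p^3$ along the ODE with the Robin boundary condition at $p = 0$, linking the potential vanishing of $\Phi_p$ directly to the sign of $\qprime$.
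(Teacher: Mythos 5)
Your proof is correct, but it takes a genuinely different route from the paper's. The paper deliberately sidesteps the variational characterization (it says so explicitly, ``for later convenience'') and works entirely with the shooting solution: it studies $B(\mu) := \Phi_p(0;\mu)/\Phi(0;\mu)$, proves $B_\mu < 0$ via a Green's identity pairing $\Phi$ with $\Phi_\mu$, and locates $\mucr$ as the first intersection of the decreasing curve $B$ with the increasing line $\mu\,\rho(0)H_p^3(0)$; positivity of $\Phi(\placeholder;\mu)$ for $0\le\mu\le\mucr$ is then obtained by an open-and-closed connectedness argument in $\mu$, with ODE uniqueness excluding interior double zeros. You instead define $\mucr$ as the infimum of the Rayleigh quotient $\mathcal R$ (placing both the Steklov-type term $\rho(0)\dot w(0)^2$ and the bulk term $\int(-\rho_p)\dot w^2\,dp$ in the denominator, which is precisely where the stable-stratification sign $\rho_p\le 0$ helps), and you obtain positivity of $\Phi(\placeholder;\mu)$ for subcritical $\mu$ by truncating at a hypothetical first zero and checking that the truncation's Rayleigh quotient equals $\mu$, forcing $\mu\ge\mucr$. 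The two arguments coincide only in the last step, the monotonicity of $\Phi_p/H_p^3$. Your route delivers the minimality of $\mucr$ among all eigenvalues essentially for free via the integration-by-parts identity, whereas the paper's route sets up machinery ($B(\mu)$ and the Green's identities) that is reused almost verbatim in Lemma~\ref{spectrum lemma}, where the poles of $B(\nu)$ at the Dirichlet eigenvalues organize the whole spectrum, and in Corollary~\ref{cor_minimality}. Two small points: the Picone-type identity you advertise as ``the key to (b) and (c)'' is never actually invoked---the truncation argument and the monotonicity of $\Phi_p/H_p^3$ do all the work---and in the truncation argument the sub-case $\mu=\mucr$, $p_0<0$ deserves a sentence (the truncation is then a minimizer vanishing on an open set, hence identically zero by the Euler--Lagrange equation and ODE uniqueness; alternatively that case already contradicts the positivity of $\Phi(\placeholder;\mucr)$ on $(-1,0]$ established in your part (a)).
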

Recalling the definition of $\mu$, this gives us at last the definition of the critical Froude number 
\begin{equation}
  \Fcr := \sqrt{\frac{1}{\mucr}}. \label{def Fcr} 
\end{equation}
A simple calculation confirms that in the case of constant density irrotational flow, $\Fcr = 1$, as in the classical theory.  

\begin{proof}[Proof of Lemma~\ref{lem_minimality}]
  \begin{figure}
    \includegraphics[scale=1.1]{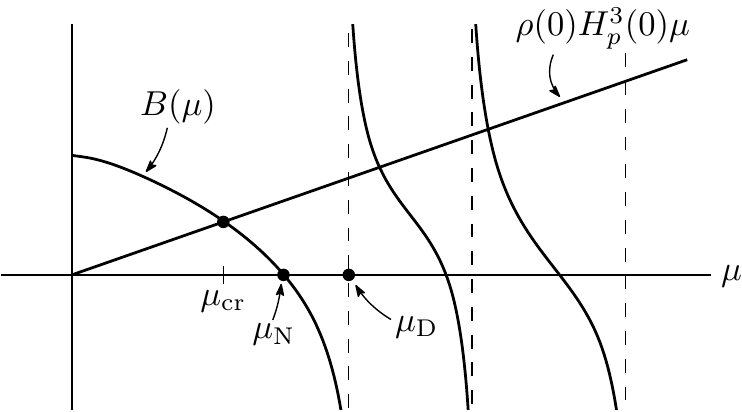}
    \caption{The function $B=B(\mu)$ from the proof of Lemma~\ref{lem_minimality}.}
    \label{figure minimality}
  \end{figure}
  Observe that \eqref{SLprob} has a nontrivial solution $\dot w \not\equiv 0$ if and only if $B(\mu) = \mu \rho(0) H_p^3(0)$, where
  \begin{align}
    \label{def B mu}
    B(\mu) := \frac{\Phi_p(0;\mu)}{\Phi(0;\mu)},
  \end{align}
  and that in this case we can take $\dot w = \Phi(\placeholder;\mu)$.
  (Note that, by uniqueness for the initial value problem, the numerator and denominator in \eqref{def B mu} cannot vanish simultaneously.)
  For $\mu = 0$ we compute
  \begin{align}
    \label{Phi 0}
    \Phi(p;0) &= \frac 1{H_p(-1)^3} \int_{-1}^p H_p(s)^3\, ds
  \end{align}
  and hence 
  \begin{align*}
    B(0) = \frac{H_p(0)^3}{\int_{-1}^0 H_p^3 \, dp} > 0.
  \end{align*}

  Clearly, $\Phi$ depends smoothly on $\mu$. Differentiating \eqref{def Phi} we see that its derivative $\Phi_\mu$ solves the inhomogeneous initial value problem
  \begin{equation}\label{Phi mu} \left\{
    \begin{alignedat}{2}
      \Big(\frac{\Phi_{\mu p}}{H_p^3}\Big)_p - \mu\rho_p \Phi _\mu &= \rho_p \Phi
      &\qquad& \text{in } -1 < p <0,\\
      \Phi_\mu &= 0 && \text{on } p = -1,\\
      \Phi_{p\mu} &= 0 && \text{on } p = -1.
    \end{alignedat} \right.
  \end{equation}
  Together \eqref{Phi mu} and \eqref{def Phi} lead to the Green's identity 
  \begin{equation} \label{Green identity Phi Phi mu} 
    \left( \frac{\Phi_{\mu p}\Phi}{H_p^3} - \frac{\Phi_p \Phi_\mu}{H_p^3} \right)\bigg|_{p=0} = \int_{-1}^0 \rho_p \Phi^2 \, dp. 
  \end{equation}

  Differentiating \eqref{def B mu} and using \eqref{Green identity Phi Phi mu}, we obtain
  \begin{align*}
    B_\mu = 
    \left( \frac{\Phi_{\mu p} \Phi - \Phi_p \Phi_\mu}{\Phi^2}\right)\bigg|_{p=0} 
    = \frac{H_p(0)^3}{\Phi(0)^2} \int_{-1}^0 \rho_p \Phi^2 \, dp <  0,
  \end{align*}
  provided, of course, that $\Phi(0) = \Phi(0;\mu) \ne 0$. 
  From this and the positivity of $B(0)$ it follows that there is a unique smallest $\mu = \mucr > 0$ such that $B(\mu) = \mu \rho(0) H_p^3(0)$ (see Figure~\ref{figure minimality}), and hence that \eqref{SLprob} has a nontrivial solution $\dot w = \Phicr$. The inequality $\qprime(\mu) < 0$ is a consequence of the inequality $B(\mu) > \mu \rho(0) H_p^3(0)$ for $0 \le \mu < \mucr$. We observe that $B(\mu)$, $\Phi(0;\mu)$, and $\Phi_p(0;\mu)$ are all strictly positive for $0 \le \mu \le \mucr$. 

  Next we will show that
  \begin{align}
    \label{uniform bound Phi}
    \Phi(p;\mu) > 0 \textup{ for $-1 < p \le 0$ and $0 \le \mu \le \mucr$}.
  \end{align}
  Toward that purpose, define the set
  \begin{align*}
    \mathcal{N} := \{ \mu \in [0,\mucr] :
    \Phi(p;\mu) > 0 \text{ for $-1 < p < 0$}\}.
  \end{align*}
  From \eqref{Phi 0} it is clear that $0 \in \mathcal{N}$. The continuous dependence of $\Phi$ on $\mu$, together with the positivity $\Phi_p > 0$ for $p=-1,0$ and $0 \le \mu \le \mucr$, show that $\mathcal{N}$ is a relatively open subset of $[0,\mucr]$. Seeking a contradiction, suppose that it is not closed. Then there exists a limit point $\bar\mu \in [0,\mucr] \setminus \mathcal{N}$. By definition, this means that there must be a point $\bar p \in (-1,0)$ where $\Phi(\bar p,\bar \mu) = 0$. On the other hand, as $\bar\mu$ is a limit point, we have
  $\Phi(p;\bar\mu) \ge 0$  for $-1 \le p \le 0$. 
  This further implies that $\Phi$ attains a local minimum at $\bar p$ and hence $\Phi_p(\bar p;\bar \mu) = 0$. As before, $\Phi(\bar p;\bar \mu) = \Phi_p(\bar p; \bar \mu) = 0$ forces $\Phi(\placeholder;\bar\mu) \equiv 0$ by uniqueness for the initial value problem, a contradiction.
  Thus we have proved that $\mathcal{N}$ is both relatively open and closed as a subset of $[0, \mucr]$.  
  It follows that $\mathcal{N} = [0, \mucr]$, which, unraveling notation, gives the bound \eqref{uniform bound Phi}.

  Fix $\mu \in [0,\mucr]$. It remains to prove the second inequality in \ref{lem_minimality_pos}, that is, $\Phi_p(p;\mu) > 0$ for $-1 \le p \le 0$. Consider the function 
  \begin{align*}
    f(p) := \frac{\Phi_p(p;\mu)}{H_p^3(p)}.
  \end{align*}
  By construction $f(-1) > 0$, and we have shown that $f(0) > 0$ as well. 
  By \eqref{def Phi} and \eqref{uniform bound Phi}, $f_p(p) = \mu\rho_p \Phi(p;\mu) \le 0$
  so that $f$ is monotonically decreasing. Thus $f > 0$ and hence $\Phi_p(p;\mu) > 0$ for $-1 \le p \le 0$, as desired.
\end{proof}

An easy extension of the proof of Lemma~\ref{lem_minimality} gives the following corollary; see Figure~\ref{figure minimality}.
\begin{corollary} \label{cor_minimality}  
  There exist $\dirmu,\numu$ with $\mucr < \numu \le \dirmu \le +\infty$ such that the following hold.
  \begin{enumerate}[label=\rm(\alph*)] 
  \item If $0 \le \mu < \numu$, then $\Phi(p;\mu) > 0$ for $-1 < p \le 0$ while $\Phi_p(p;\mu) > 0$ for $-1 \le p \le 0$. Moreover, $\Phi_p(0;\numu) = 0$.
  \item $\Phi(0;\mu) > 0$ for $0 \le \mu \le \dirmu$, while $\Phi(0;\dirmu)=0$.
  \item For $\mucr < \mu \le \numu$, $\qprime(\mu) > 0$.
  \end{enumerate}
\end{corollary}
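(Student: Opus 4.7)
The natural candidates are
$$\dirmu := \inf\{\mu > 0 : \Phi(0;\mu) \le 0\}, \qquad \numu := \inf\{\mu > 0 : \Phi_p(0;\mu) \le 0\},$$
with the convention $\inf\emptyset = +\infty$. Lemma~\ref{lem_minimality} together with continuity of $\Phi$ and $\Phi_p$ in $\mu$ immediately forces $\dirmu,\numu > \mucr$, and whenever one of these values is finite, continuity of $\Phi(0;\placeholder)$ or $\Phi_p(0;\placeholder)$ at the respective infimum gives the equality $\Phi(0;\dirmu)=0$ or $\Phi_p(0;\numu)=0$ claimed in (a)--(b). What remains is the ordering $\numu \le \dirmu$, the strict positivity assertions, and the sign statement in (c).

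For the ordering the only nontrivial case is $\dirmu < +\infty$. The plan is to reuse the ratio $B(\mu) = \Phi_p(0;\mu)/\Phi(0;\mu)$ introduced in the proof of Lemma~\ref{lem_minimality}; it remains well-defined and strictly decreasing on $[0,\dirmu)$ with $B(\mucr) > 0$. Uniqueness for the initial value problem \eqref{def Phi} forbids $\Phi(0;\dirmu)$ and $\Phi_p(0;\dirmu)$ from vanishing simultaneously, since otherwise $\Phi(\placeholder;\dirmu) \equiv 0$, contradicting $\Phi_p(-1;\dirmu)=1$. Hence $\Phi_p(0;\dirmu) < 0$ and $B(\mu) \to -\infty$ as $\mu \nearrow \dirmu$, so the intermediate value theorem produces a zero of $B$ in $(\mucr,\dirmu)$, which by minimality is $\numu$. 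This blow-up of $B$ at the ``Dirichlet'' critical value is the step I expect to be the main technical point.

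The positivity claims in (a) then follow from a near-verbatim repetition of the connectedness argument at the end of the proof of Lemma~\ref{lem_minimality}. Define $\mathcal{N}:=\{\mu\in[0,\numu):\Phi(p;\mu)>0 \text{ for } -1<p\le 0\}$. It contains $0$ by \eqref{Phi 0}, is relatively open by continuity together with the fixed initial value $\Phi_p(-1;\mu)=1$, and is relatively closed because at any putative limit point $\bar\mu \in [0,\numu)\setminus\mathcal{N}$ an interior zero of $\Phi(\placeholder;\bar\mu)$ would force $\Phi \equiv 0$ by IVP uniqueness, while a zero at $p=0$ is ruled out by $\bar\mu < \numu \le \dirmu$ together with (b). The monotonicity argument for $f(p) := \Phi_p(p;\mu)/H_p^3(p)$ from the proof of Lemma~\ref{lem_minimality} then upgrades this to $\Phi_p > 0$ on all of $[-1,0]$.

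Finally, for (c) one rewrites
$$\frac{\qprime(\mu)}{\Phi(0;\mu)} = \mu\rho(0) - \frac{B(\mu)}{H_p^3(0)},$$
observing that the right-hand side vanishes at $\mu=\mucr$ by the defining identity $B(\mucr) = \mucr \rho(0) H_p^3(0)$, and that its $\mu$-derivative equals $\rho(0) - B_\mu(\mu)/H_p^3(0) > 0$ because $\rho(0) > 0$ and $B_\mu \le 0$. Consequently this ratio is strictly positive on $(\mucr,\dirmu)$, and multiplying through by the positive quantity $\Phi(0;\mu)$ yields $\qprime(\mu) > 0$ on $(\mucr,\numu]$.
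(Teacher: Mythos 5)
Your proof is correct and follows exactly the route the paper intends: the paper gives no explicit proof, saying only that the corollary is ``an easy extension of the proof of Lemma~\ref{lem_minimality},'' and your argument via the monotone ratio $B(\mu)=\Phi_p(0;\mu)/\Phi(0;\mu)$, its blow-up to $-\infty$ at the first Dirichlet value $\dirmu$, and the open/closed connectedness argument is precisely that extension (cf.\ Figure~\ref{figure minimality}). One cosmetic point: the sign $\Phi_p(0;\dirmu)<0$ follows from the non-simultaneous vanishing \emph{together with} the fact that the non-increasing $B$ is bounded above on $[0,\dirmu)$ and so cannot tend to $+\infty$ — both ingredients appear in your paragraph, but the word ``hence'' makes it read as if non-vanishing alone suffices.
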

We define the associated Froude numbers $\dirF := 1/\sqrt{\dirmu}$, $\nuF := 1/\sqrt{\numu}$. Their significance for the qualitative theory is discussed in Corollary~\ref{froude corollary}. The first, $\dirF$, is the critical Froude number for channel flow.

Now we return to the full eigenvalue problem~\eqref{SLprob2}, but with $\mu = \mucr$ fixed.
\begin{lemma}[Spectrum] \label{spectrum lemma}   
  Let $\Sigma$ denote the set of eigenvalues $\nu$ for the Sturm--Liouville problem \eqref{SLprob2} with $\mu = \mucr$.  
  \begin{enumerate}[label=\rm(\alph*)] 
  \item $\Sigma = \{ \nu_j \}_{j=0}^\infty$, where $\nu_j \to \infty$ as $j \to \infty$, and $\nu_j < \nu_{j+1}$ for all $j \geq 0$;  
  \item $\nu_0 = 0$; and  
  \item each $\nu \in \Sigma$ has geometric and algebraic multiplicity $1$.
  \end{enumerate}
\end{lemma}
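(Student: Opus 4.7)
The plan is to recognize \eqref{SLprob2} at $\mu=\mucr$ as a regular Sturm--Liouville eigenvalue problem on the bounded interval $[-1,0]$ in self-adjoint form, then invoke the classical theory to obtain (a) and (c), and finally use Lemma~\ref{lem_minimality} to pin down $\nu_0 = 0$.

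First, after multiplying through by $-1$ I would rewrite \eqref{SLprob2} as
\[
-\left(\frac{\dot w_p}{H_p^3}\right)_p + \mucr \rho_p \dot w = \nu\,\frac{\dot w}{H_p}
\qquad \text{in } (-1,0),
\]
subject to the Dirichlet condition $\dot w(-1)=0$ and the Robin-type condition $\dot w_p(0) = \mucr\rho(0)H_p(0)^3\dot w(0)$. Since $H_p > 0$ on $[-1,0]$ (no stagnation), the leading coefficient $1/H_p^3$ and the weight $1/H_p$ are smooth and strictly positive on a bounded interval, and the boundary conditions are separated. A routine integration by parts then shows that the associated operator is self-adjoint on the weighted space $L^2((-1,0);\,H_p^{-1}\,dp)$; the crucial check at $p=0$ is that the Robin condition makes the boundary bilinear form $(1/H_p^3)(u\dot v - \dot u v)|_{p=0}$ vanish for admissible $u,v$.

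With this in hand, the classical regular Sturm--Liouville theorem applies directly and yields parts~(a) and~(c): there is a strictly increasing sequence of real eigenvalues $\nu_0 < \nu_1 < \nu_2 < \cdots \to +\infty$, and the eigenfunction $\phi_n$ associated with $\nu_n$ has exactly $n$ zeros in $(-1,0)$. Geometric simplicity is in fact also immediate from uniqueness for the initial value problem, since any solution satisfying $\dot w(-1)=0$ is determined up to a scalar by $\dot w_p(-1)$; self-adjointness then promotes this to algebraic simplicity.

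For part~(b), Lemma~\ref{lem_minimality}\ref{lem_minimality_existence} provides the nontrivial solution $\Phicr$ of \eqref{SLprob2} with $\mu=\mucr$ and $\nu=0$, so $0 \in \Sigma$, while Lemma~\ref{lem_minimality}\ref{lem_minimality_pos} shows $\Phicr > 0$ on $(-1,0]$. By the nodal characterization just quoted, an eigenfunction with no sign changes in $(-1,0)$ must correspond to the lowest eigenvalue, hence $\nu_0 = 0$. I do not anticipate a genuine obstacle here; the only subtlety is that the zeroth-order coefficient $\mucr\rho_p$ is nonpositive (from stable stratification), so one cannot invoke Krein--Rutman-type arguments for positivity of the ground state, but the classical SL nodal theorem requires no sign hypothesis on $q$, so Lemma~\ref{lem_minimality} slots in cleanly.
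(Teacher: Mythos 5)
Your proof is correct, but it takes a genuinely different route from the paper. The paper does not invoke the classical regular Sturm--Liouville package; instead it runs a shooting argument, introducing the initial-value solution $M(p;\nu)$ of \eqref{Meqn} and the function $B(\nu)=M_p(0;\nu)/M(0;\nu)$, proving $B'(\nu)<0$ via a Green's identity, and locating exactly one eigenvalue $\nu_j$ between consecutive poles of $B$, i.e.\ between consecutive Dirichlet eigenvalues of \eqref{dirichlet eigenvalue problem}. The delicate step there is showing $\Sigma_{\mathrm D}\subset(0,\infty)$ despite the adverse sign of $\rho_p$, which the authors do with a maximum-principle argument applied to $\dot w/(\delta+\Phicr)$; this positivity is what places $\nu_0=0$ below the first pole and identifies it as the minimum of $\Sigma$. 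You sidestep all of that: once the problem is recognized as a regular self-adjoint SL problem on $[-1,0]$ with positive leading coefficient $1/H_p^3$, positive weight $1/H_p$, and separated boundary conditions (your check that the boundary form vanishes at $p=0$ under the Robin condition is the right one, and the ``wrong'' sign of the Robin coefficient is indeed harmless for the Pr\"ufer/oscillation theory), parts (a) and (c) are immediate, and the nodal characterization of the ground state combined with $\Phicr>0$ on $(-1,0]$ from Lemma~\ref{lem_minimality}\ref{lem_minimality_pos} gives $\nu_0=0$. Both arguments ultimately lean on Lemma~\ref{lem_minimality}, but in different ways: the paper uses $\Phicr$ as a positive supersolution to control the Dirichlet spectrum, you use its positivity directly through the nodal theorem. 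Your route is shorter and more off-the-shelf; the paper's is self-contained, reuses the same $B$-function machinery as Lemma~\ref{lem_minimality} and Figure~\ref{figure spectrum}, and yields the extra interlacing information $\nu_j\in(\nu_{\mathrm D}^{(j)},\nu_{\mathrm D}^{(j+1)})$.
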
 
\begin{proof}
As with \eqref{SLprob}, we will analyze \eqref{SLprob2} by introducing the solution $M = M(p;\nu)$ to the initial value problem
\begin{align}
  \label{Meqn} \left\{
  \begin{alignedat}{2}
    \Big( \frac{M_p}{H_p^3} \Big)_p
    - \mucr \rho_p M
    &= - \nu\frac{ M}{H_p}
    &\qquad& \text{in } -1 < p < 0, \\
    M &= 0 && \text{on } p=-1, \\
    M_p &= 1, && \text{on } p=-1,
  \end{alignedat} \right.
\end{align}
and the associated function
\begin{equation*}
  B(\nu) := \frac{M_p(0; \nu)}{M(0; \nu)}.
\end{equation*}
Note that $w = M(\placeholder; \nu)$ solves \eqref{SLprob2} for $\nu$ provided $B(\nu) = \mucr \rho(0) H_p(0)^3$.  Also, $B$ will have a pole at each $\dirnu$ that is an eigenvalue of the Dirichlet problem corresponding to \eqref{SLprob2}:
\begin{equation}
  -\left( \frac{\dot w_p}{H_p^3} \right)_p + \mucr \rho_p \dot w =  \dirnu\frac{ \dot w}{H_p}, \qquad \dot{w}(-1) = \dot{w}(0) = 0, \qquad \dot{w} \not\equiv 0.  \label{dirichlet eigenvalue problem} 
\end{equation}
By classical theory, the set of Dirichlet eigenvalues $\Sigma_{\mathrm{D}}$ is countably infinite, contains a sequence limiting to $+\infty$, and has no finite accumulation points.  
Moreover, each $\nu_{\mathrm{D}} \in \Sigma_{\mathrm{D}}$ is simple.  We can therefore enumerate $\Sigma_{\mathrm{D}} = \{ \nu_{\mathrm D}^{(j)} \}_{j=1}^\infty$ where $\nu_{\mathrm D}^{(j)} < \nu_{\mathrm D}^{(j+1)}$ and $\nu_{\mathrm D}^{(j)} \to \infty$ as $j \to \infty$.   
Due to the sign of $\rho_p$, a priori it is possible that several of the Dirichlet eigenvalues are negative.  
However, the definition of $\mucr$ ensures that this does not occur, as the following simple argument demonstrates.  

Suppose that $\dirnu \leq 0$ and let $\dot{w}$ be a corresponding solution of \eqref{dirichlet eigenvalue problem}.  For $\delta > 0$, consider a new function 
\begin{equation}
  \dot{v}^\delta  := \frac{\dot{w}}{\delta + \Phicr},
\end{equation}
which will then satisfy
\begin{align}
  \label{v delta}
  -\left( \frac{\delta+ \Phicr}{H_p^3} \dot{v}^\delta_p \right)_p - \frac{(\Phicr)_p}{H_p^3} \dot{v}^\delta_p + \left(\mucr \rho_p \delta -  \dirnu \frac{\delta+\Phicr}{H_p} \right) \dot{v}^\delta = 0,
\end{align}
where we have used the equation \eqref{def Phi} solved by $\Phicr$ to simplify some terms.

For $\dirnu < 0$, taking $\delta$ small enough ensures that the coefficient of the zeroth order term is positive, while for $\dirnu = 0$, this coefficient vanishes as $\delta \searrow 0$.  We may therefore apply the maximum principle to the above equation if $\delta$ is sufficiently small.  But then, because $\dot{v}^\delta = 0$ on the top and bottom, it follows that $\dot{v}^\delta \equiv 0$ and hence $\dot{w} \equiv 0$. Thus $\Sigma_{\mathrm{D}} \subset (0, \infty)$ as desired.

\begin{figure}
  \includegraphics[scale=1.1]{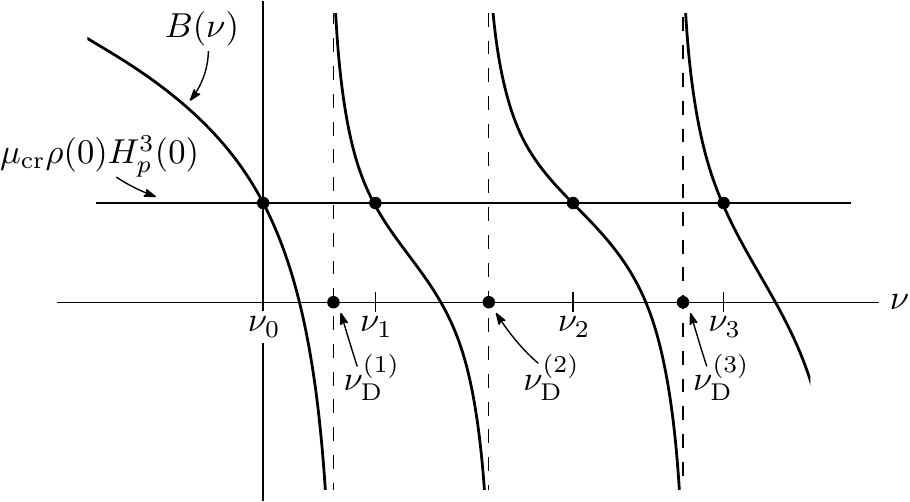}
  \caption{The function $B=B(\nu)$ from the proof of Lemma~\ref{spectrum lemma}.}
  \label{figure spectrum}
\end{figure}

Next, we differentiate \eqref{Meqn} with respect to $\nu$ to find
\begin{equation}\label{Mnueqn}
  -\left( \frac{M_{\nu p} }{ H^3_p} \right)_p 
  + \mucr \rho_p M_\nu 
  = \frac{M}{ H_p} + \nu \frac{ M_\nu }{ H_p}, \qquad M_\nu(-1; \nu) = M_{\nu p}(-1; \nu) = 0.
\end{equation}
Using this and the original equation \eqref{Meqn}, we obtain the Green's identity
\begin{equation*}
  \left.\left( \frac{M_p M_{\nu} }{ H^3_p} -\frac{MM_{\nu p} }{ H^3_p} \right) \right|^0_{p = -1} = \int^0_{-1} \frac{M^2}{ H^p} \ dp.
\end{equation*}
From this it follows that 
\begin{equation}\label{B'}
  B'(\nu) 
  = \frac{MM_{\nu p} - M_pM_\nu }{ M^2} 
  \bigg|_{p=0}
  = - \frac{H^3_p(0)}{ M^2(0)} \int^0_{-1} \frac{M^2 }{ H_p}\ dp < 0
\end{equation}
whenever $M(0;\mu) \ne 0$. Thus $B$ is a strictly decreasing function of $\nu$.

The presence of the poles at the Dirichlet eigenvalues then implies
$B(\nu) \to \pm\infty$ as $\nu \to \nu_{\mathrm D}^{(j)} \pm$ for all $j$.
Thus, for each $j \geq 1$, there is a unique $\nu_j \in (\nu_{\mathrm D}^{(j)}, \nu_{\mathrm D}^{(j+1)})$ with $B(\nu_j) = \mucr \rho(0) H_p(0)^3$; see Figure~\ref{figure spectrum}.  Moreover, since $B$ is strictly decreasing and smooth on $(-\infty, \nu_{\mathrm D}^{(1)})$, there exists at most one $\nu_0 \in (-\infty, \nu_{\mathrm D}^{(1)})$ for which the same holds true.    In fact, taking $\nu = 0$, \eqref{SLprob2} becomes \eqref{SLprob}, and hence has the nontrivial solution $\Phicr$ by Lemma~\ref{lem_minimality}.  We infer that $\nu_0$ exists and is simply $0$. Recalling the definition of $B$, we see that $\{ \nu_j \}_{j=0}^\infty$ are precisely the eigenvalues of \eqref{SLprob2}.  This proves parts (a) and (b).  

Finally, the simplicity of these eigenvalues is derived from the fact that the Sturm--Liouville problem \eqref{SLprob2} is formally self-adjoint.  
\end{proof}

\begin{remark}  
  The critical Froude numbers $\dirF, \nuF, \Fcr$ can also be defined as follows. 
  Set
  \[  L \dot{w} := -\left( \frac{\dot{w}_p}{H_p^3} \right)_p + \frac{1}{F^2} \rho_p \dot{w}\]
  and let
  $L_{\mathrm{D}}, L_\mathrm{N},L_\mathrm{R}$ denote $L$ with domain 
  \begin{align*}
    \mathcal{D}(L_{\mathrm{D}}) &:= \left\{ \dot{w} \in C^2([-1,0]) : \dot{w}(-1) = \dot{w}(0) = 0 \right\}, \\
    \mathcal{D}(L_{\mathrm{N}}) &:= \left\{ \dot{w} \in C^2([-1,0]) : \dot{w}(-1) = \dot{w}_p(0) = 0 \right\},\\
    \mathcal{D}(L_{\mathrm{R}}) &:= \left\{ \dot{w} \in C^2([-1,0]) : \dot{w}(-1) = - \frac{\dot w_p(0)}{H_p^3(0)} + \frac 1{F^2} \rho(0) \dot w(0) = 0 \right\}.
  \end{align*}
  It is easy to see that  the eigenvalues of $L_{\mathrm{D}}, L_{\mathrm{N}}, L_{\mathrm{R}}$ are strictly positive when $F$ is sufficiently large.  We can therefore define 
  \begin{align*} 
    \dirF &:= \inf{\left\{ F > 0 : \textrm{eigenvalues of $L_{\mathrm{D}}$ all positive} \right\}}, \\ 
    \nuF &:= \inf{\left\{ F > 0 : \textrm{eigenvalues of $L_{\mathrm{N}}$ all positive} \right\}}, \\
    \Fcr &:= \inf{\left\{ F > 0 : \textrm{eigenvalues of $L_{\mathrm{R}}$ all positive} \right\}}. 
  \end{align*}
  Note that if $\rho$ is a constant, then $\dirF = \nuF = 0$.  

  As a simple concrete example, consider the situation when $H_p$ and $\rho_p$ are both constants, in which case our normalization \eqref{Ustar normalization} together with \eqref{ODE H} forces $H_p \equiv 1$. Then the solution $\Phi$ of \eqref{def Phi} can be computed explicitly,
  \begin{align*}
    \Phi(p;\mu) = \frac{\sin\left( \sqrt{\mu\abs{\rho_p}} (p+1)\right)}{\sqrt{\mu\abs{\rho_p}}}.
  \end{align*}
  Using this, we easily calculate 
  \begin{align*}
    \dirF = \frac 2\pi \sqrt{\abs{\rho_p} },
    \qquad 
    \nuF &= \frac 12 \dirF = \frac 1\pi \sqrt{\abs{\rho_p} },
  \end{align*}
  while $\Fcr=1/\sqrt{\mucr}$ where $\mucr > 0$ is the smallest positive solution of 
  \begin{align*}
    \frac {\sqrt{\abs{\rho_p}}}{\sqrt{\mucr}}
    &= 
    \tan \Big( \sqrt{\mucr \abs{\rho_p} } \Big).
  \end{align*}
  In the limit as $\rho_p \to 0$, these formulas simplify to $\dirF = \nuF = 0$ and $\Fcr = 1$. 

  Likewise, the solution $M$ of \eqref{Meqn} has the explicit form
  \begin{align*}
    M(p;\nu)
    = \frac{\sin\left(\sqrt{\nu + \mucr \abs{\rho_p} }(p+1)\right)}
    {\sqrt{\nu  + \mucr \abs{\rho_p} }},
  \end{align*}
  so that the eigenvalues $\nu \ge 0$ of the related Sturm--Liouville
  problem \eqref{SLprob2} are 
  solutions of the algebraic equation
  \begin{align*}
    \sqrt{\nu + \mucr \abs{\rho_p} }
    = \mucr \tan \Big(\sqrt{\nu + \mucr \abs{\rho_p}}\Big).
  \end{align*}
\end{remark}

\subsection{Fredholm and invertibility properties} \label{fredholm section}

We now move on to the linearized operators $\F_w(w,F)$ for $(w,F) \in U$. Not surprisingly, the operators $\F_w(0,F)$ obtained by linearizing about the trivial solution $w=0$ play a special role; here it is crucial that $F > \Fcr$ so that we can use Lemma~\ref{lem_minimality}.

Consider the problem $\F_w(0,F)\dot w = (f_1,f_2)$, i.e.,
\begin{align}\label{linear} \left\{
  \begin{alignedat}{2}
    \Big( \frac{\dot w_p}{H_p^3} \Big)_p
    + \Big( \frac {\dot w_q}{H_p} \Big)_q
    + \frac {1}{F^2}\rho_p \dot w
    &= f_1
    &\quad& \textup{ in } R,\\
    - \frac{\dot w_p}{H_p^3} + \frac{1}{F^2} \rho \dot w
    &= f_2
    &\quad& \textup{ on } T,\\
    \dot w &= 0 && \textup{ on } B.
  \end{alignedat} \right.
\end{align}
We will view $\F_w(0,F)$ as a map $X \to Y$ but also as a map $\Xb \to \Yb$, where 
the spaces
\begin{align*}
  \Xb := \left\{ u\in C^{3+\alpha}_\bdd(\overline R):\ u|_B = 0 \right\},
  \quad 
  \Yb := C^{1+\alpha}_\bdd(\overline R) \times C^{2+\alpha}_\bdd(T).
\end{align*}
are like $X$ and $Y$ but without evenness or decay at infinity.

Both of the coefficients in front of $\dot w$ in \eqref{linear} have the ``bad'' sign in the sense that they do not satisfy the hypotheses of the maximum principle (cf.\ Theorem~\ref{max principle}). We can get around this, however, by using a slight variation of the function $\Phi$ from Lemma~\ref{lem_minimality}, namely the function $\Phia$ defined by 
\begin{equation}\label{def Phia} \left\{
  \begin{alignedat}{2}
    \Big(\frac{\Phia_p}{H_p^3}\Big)_p - \frac{1}{F^2} \rho_p \Phia &=0
    &\qquad& \text{in } -1 < p <0,\\
    \Phia &= \epsilon && \text{on } p = -1,\\
    \Phia_p &= 1 && \text{on } p = -1,
  \end{alignedat} \right.
\end{equation}
where $0 < \epsilon \ll 1$ is a constant (depending on $F$) to be determined.
\begin{lemma}\label{lem Phia}
  If $F > \Fcr$, then, for $\epsilon > 0$ sufficiently small,
  \begin{align}
    \label{Phia pos}
    \Phia > 0 \textup{ for } {-1} < p \le 0,
    \qquad 
    \Phia_p > 0 \textup{ for } {-1} \le p \le 0,
  \end{align}
  and 
  \begin{align}
    \label{Phia top}
    -\frac{\Phia_p}{H_p^3} + \frac {1}{F^2}\rho \Phia < 0 
    \textup{ on } p=0.
  \end{align}
  \begin{proof}
    Comparing \eqref{def Phia} to \eqref{def Phi}, we see that $\Phia = \Phi$ when $\epsilon = 0$. Thus, from Lemma~\ref{lem_minimality} we know that \eqref{Phia top} and the second inequality in \eqref{Phia pos} hold for $\epsilon$ sufficiently small. Indeed, since $\Phi_p > 0$ for $-1 \le p \le 0$, we in fact have $\Phi_p \ge \delta > 0$ for some constant $\delta$, and therefore $\Phia_p \ge \delta/2$, say, for sufficiently small $\epsilon$. Because $\Phia(-1) = 0$, the first inequality in \eqref{Phia pos} follows by integrating the second.
  \end{proof}
\end{lemma}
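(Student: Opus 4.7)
The plan is to treat $\Phia$ as a small perturbation of $\Phi$, obtained by perturbing only the initial value at $p=-1$. Since the hypothesis $F > \Fcr$ means $\mu := 1/F^2 < \mucr$, all three of the strict inequalities we need have already been established for $\Phi(\placeholder;\mu)$ in Lemma~\ref{lem_minimality}: the positivity of $\Phi$ and $\Phi_p$ on the appropriate intervals, and the strict inequality $\qprime(\mu) < 0$. The goal is therefore to show that these strict inequalities are stable under the $\epsilon$-perturbation, which is pure ODE continuous dependence.

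First I would note that $\Phi_p$ is continuous on the compact interval $[-1,0]$ and strictly positive there by Lemma~\ref{lem_minimality}\ref{lem_minimality_pos}, so there exists $\delta > 0$ with $\Phi_p \ge \delta$ on $[-1,0]$. Next, observe that $\Phia - \Phi$ satisfies the same linear homogeneous ODE as $\Phi$ with initial data $(\Phia - \Phi)(-1) = \epsilon$, $(\Phia - \Phi)_p(-1) = 0$. By standard continuous dependence for linear ODEs with smooth coefficients, there is a constant $C$ (depending on $F$, $H_p$, and $\rho$ but not on $\epsilon$) with
\begin{equation*}
\|\Phia - \Phi\|_{C^1([-1,0])} \le C\epsilon.
\end{equation*}
Taking $\epsilon$ small enough that $C\epsilon < \delta/2$ gives $\Phia_p \ge \delta/2 > 0$ throughout $[-1,0]$, which is the second half of \eqref{Phia pos}. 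Since $\Phia(-1) = \epsilon > 0$, integrating this lower bound yields $\Phia(p) \ge \epsilon + (\delta/2)(p+1) > 0$ on $(-1,0]$, giving the first half of \eqref{Phia pos}.

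For \eqref{Phia top}, Lemma~\ref{lem_minimality}\ref{lem_minimality_qprime} gives $-\Phi_p(0)/H_p^3(0) + \mu\rho(0)\Phi(0) = \qprime(\mu) < 0$, and the above $C^1$ estimate shows the left-hand side of \eqref{Phia top} differs from $\qprime(\mu)$ by $O(\epsilon)$, so shrinking $\epsilon$ further if necessary preserves the strict inequality. There is no real obstacle: once Lemma~\ref{lem_minimality} supplies the strict inequalities for $\Phi$, everything follows from the fact that the $\epsilon \mapsto \Phia$ map is continuous into $C^1([-1,0])$ and agrees with $\Phi$ at $\epsilon = 0$.
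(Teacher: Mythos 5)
Your proof is correct and follows essentially the same route as the paper's: both observe that $\Phia$ reduces to $\Phi$ at $\epsilon=0$, import the strict inequalities from Lemma~\ref{lem_minimality} (valid since $F>\Fcr$ gives $1/F^2<\mucr$), and propagate them to small $\epsilon>0$ by continuous dependence, recovering the positivity of $\Phia$ by integrating $\Phia_p>0$ from $p=-1$. Your version merely makes the continuous-dependence step explicit via the $C^1$ estimate on $\Phia-\Phi$, and correctly uses $\Phia(-1)=\epsilon$ where the paper's proof contains a harmless slip writing $\Phia(-1)=0$.
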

    
Fix $F > \Fcr$ and let $\Phia$ be the function whose existence is guaranteed by Lemma~\ref{lem Phia}. Making the change of dependent variable
\begin{align*}
  \dot w =: \Phia v,
\end{align*}
a calculation shows that $\F_w(0,F)\dot w = (f_1,f_2)$ is equivalent to
\begin{equation} \label{v equation} \left\{
  \begin{alignedat}{2}
    \left( \frac{v_p}{H_p^3} \right)_p +  \left( \frac{v_q}{H_p} \right)_q &= \frac{f_1}{\Phia} 
    &\qquad& \textrm{in } R,\\
    - \frac{v_p}{H_p^3} + \frac{1}{\Phia} \left( - \frac{\Phia_p}{H_p^3} + \frac{1}{F^2} \rho \Phia \right) v  &= \frac{f_2}{\Phia} 
    && \textrm{on } T,\\
    v &= 0 && \textrm{on } B.
  \end{alignedat} \right.
\end{equation}
The elliptic operator in \eqref{v equation} has no zeroth order term, and from \eqref{Phia top} the coefficient in front of $v$ in the 
boundary condition on $T$ has the ``good'' sign. 
At this point we can essentially follow the plan of \cite[Appendix A]{wheeler2013solitary}, and hence defer the proofs to Appendix~\ref{appendix calculations}.
Indeed, the arguments in that earlier work actually allow for the ``bad'' sign in the boundary condition, and so some of the analysis can even be simplified for our case.

\begin{lemma}[Strong invertibility] \label{strong invertibility lemma}
  For $F > \Fcr$, $\F_w(0,F) \colon \Xb \to \Yb$ is invertible.  
\end{lemma}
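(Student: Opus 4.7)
The plan is to reduce to the transformed problem \eqref{v equation} via the substitution $\dot w = \Phia v$, prove injectivity using the maximum principle together with a translation-compactness argument to handle the unbounded direction $q$, and then prove surjectivity by solving approximate problems on truncated strips and passing to a limit.

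Because $\Phia$ is smooth on $[-1,0]$ and, by Lemma~\ref{lem Phia}, bounded above and below by positive constants, multiplication by $\Phia$ is a Banach space isomorphism of $\Xb$ onto itself and of the corresponding target space onto $\Yb$. It therefore suffices to show that \eqref{v equation} is uniquely solvable from the relevant bounded H\"older space into $\Yb$. The structural point is that, after the substitution, the interior elliptic operator has no zeroth-order term, and by \eqref{Phia top} the coefficient of $v$ in the boundary condition on $T$ is strictly negative. This places the problem squarely within the scope of Theorem~\ref{max principle}, so the ``bad sign'' issue that plagued $\F_w(0,F)$ directly has been removed.

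For injectivity, suppose $v$ is a bounded $C^{2+\alpha}$ solution of the homogeneous version of \eqref{v equation}. The strong maximum principle forbids a positive interior maximum, and the good sign on $T$ combined with the Hopf boundary point lemma forbids a positive maximum on $T$; since $v = 0$ on $B$, the only remaining possibility is that $M := \sup_R v > 0$ is approached along a sequence $(q_n,p_n)$ with $\abs{q_n} \to \infty$. I would then translate $v_n(q,p) := v(q + q_n, p)$, use interior and boundary Schauder estimates (applied to $\dot w = \Phia v$, whose equation has $q$-independent coefficients) to get uniform $C^{2+\alpha}_\loc$ bounds, and extract a subsequential limit $v_\infty$ in $C^2_\loc(\overline R)$. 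The limit still solves the homogeneous problem, is bounded, and attains the value $M$ somewhere in $\overline R$; the maximum principle then forces $v_\infty \equiv M$, which is incompatible with $v_\infty|_B = 0$ unless $M = 0$. Applying the argument to $-v$ yields $v \equiv 0$, proving injectivity.

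For surjectivity, given $(f_1, f_2) \in \Yb$, I would solve \eqref{v equation} on the truncated strip $R_N := R \cap \{\abs q < N\}$ with the additional Dirichlet condition $v = 0$ on $\{q = \pm N\}$. Existence and uniqueness of $v_N \in C^{2+\alpha}(\overline{R_N})$ follow from the bounded-domain maximum-principle theory. The decisive estimate is the uniform bound
\[
\|v_N\|_{L^\infty(R_N)} \le C\bigl(\|f_1\|_{C^0(R)} + \|f_2\|_{C^0(T)}\bigr)
\]
with $C$ independent of $N$. If this failed, I would normalize $\tilde v_N := v_N/\|v_N\|_{L^\infty}$, translate so that a near-maximum sits at $q=0$, and extract a $C^{2+\alpha}_\loc$ limit as in the injectivity step to produce a nontrivial bounded solution of the homogeneous equation on all of $R$, contradicting the previous step. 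Once the uniform $L^\infty$ bound is established, boundary Schauder estimates upgrade it to uniform $C^{2+\alpha}$ control on every compact subset, a diagonal extraction as $N \to \infty$ produces a solution $v \in C^{3+\alpha}_\bdd$ on $R$, and setting $\dot w = \Phia v \in \Xb$ completes the proof. \textbf{Main obstacle:} the delicate step is the uniform $L^\infty$ bound for the truncated problems; the direct maximum principle only gives constants that deteriorate with the domain size, so the $N$-independent estimate has to be pulled back from injectivity on the full strip via the renormalize-translate-compactness argument.
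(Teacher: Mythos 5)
Your reduction to \eqref{v equation} via $\dot w = \Phia v$ is exactly the paper's starting point, but from there you diverge. The paper proves injectivity by multiplying a putative bounded kernel element by $\sech(\delta q)$ to place it in the Hilbert space $\mathbb H = \{\dot w \in H^1(R): \dot w|_B = 0\}$ and invoking the coercivity of the associated bilinear form (Lemmas~\ref{weak invertibility lemma} and \ref{strong injectivity lemma}); you instead run a maximum-principle argument on the full strip, handling a supremum escaping to $\abs q = \infty$ by translation and $C^2_\loc$ compactness. For surjectivity the paper solves the weak problem for the damped data $\sech(\delta q)(f_1,f_2) \in L^2$ by Lax--Milgram, upgrades by elliptic regularity, and passes to the limit $\delta \to 0$ using an improved Schauder estimate $\n{\dot w}_\Xb \le C \n{\F_w(0,F)\dot w}_\Yb$ that is itself extracted from injectivity plus local properness (Lemma~\ref{local properness lemma}); you instead truncate the domain, prove an $N$-independent $L^\infty$ bound by a renormalize--translate--blow-up argument, and diagonalize. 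Both routes are sound and rest on the same two pillars (the good sign \eqref{Phia top} on $T$ and translation invariance of the coefficients); yours avoids the Hilbert-space machinery entirely at the cost of needing solvability of the mixed Dirichlet/oblique problem on a rectangle (a standard fact, though the Dirichlet--oblique corners deserve a word) and of a slightly more delicate compactness step.

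Two points in your write-up need attention. First, in the blow-up argument for the uniform $L^\infty$ bound, the translated near-maximum points may remain within bounded distance of the lateral Dirichlet boundaries $\{q = \pm N\}$, in which case the limiting domain is a half-strip rather than $R$; you must also exclude nontrivial bounded solutions of the homogeneous problem on the half-strip with zero lateral Dirichlet data, which follows from the same maximum-principle/Hopf/translation reasoning you already use but is not automatic from injectivity on $R$ alone. Second, in your injectivity step the conclusion ``the maximum principle forces $v_\infty \equiv M$'' should be split into cases: if the limit point is interior the strong maximum principle gives $v_\infty \equiv M$, contradicting $v_\infty|_B = 0$ for $M>0$; if it lies on $T$ the Hopf lemma contradicts the strictly negative coefficient in \eqref{Phia top}; if it lies on $B$ then $M = 0$. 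Neither issue is a flaw in the strategy, only in the level of detail.
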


\begin{lemma}[Fredholm index $0$] \label{fredholm lemma} 
  For $(w,F) \in U$, $\F_w(w,F)$ is Fredholm with index $0$ both $\Xb \to \Yb$ and $X \to Y$.
\end{lemma}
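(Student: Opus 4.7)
My plan is to reduce the Fredholm property of $\F_w(w,F)$ to that of $\F_w(0,F)$ by treating their difference as a compact perturbation, then invoke Lemma~\ref{strong invertibility lemma} together with the stability of the Fredholm index under compact perturbations.

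First, I would write $\F_w(w,F) = \F_w(0,F) + B(w,F)$ and compute $B(w,F)$ explicitly from the formulas \eqref{F_1}--\eqref{F_2}. Each coefficient of this second-order differential operator is a smooth rational function of $H_p, w_p, w_q$ that vanishes identically when $w = 0$. Since $(w,F) \in U$ gives $H_p + w_p$ bounded away from zero and $w \in X \subset C^2_0(\overline R)$, the coefficients of $B(w,F)$ lie in $C^{1+\alpha}_\bdd(\overline R)$ (with $C^{2+\alpha}_\bdd(T)$ on the top) and, crucially, each tends to zero as $\abs q \to \infty$ together with derivatives up through the orders required.

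Second, I claim $B(w,F)$ is compact both $\Xb \to \Yb$ and $X \to Y$. Given a bounded sequence $\{\dot w_n\}$ in the domain space, a standard Arzela--Ascoli and diagonal extraction gives a subsequence converging in $C^{3+\alpha'}_\loc(\overline R)$ for any $\alpha' < \alpha$. Applying $B(w,F)$ yields local convergence in the target Holder norms of exponent $\alpha'$. To upgrade to global convergence in $\Yb$ (or $Y$), I split the strip $R$ into $\{\abs q \le R_0\}$ and its complement: on the bounded piece, interior Schauder estimates and the local convergence give convergence in the $C^{1+\alpha}$ norm; on the complement, the decay of the coefficients of $B(w,F)$ forces $\n{B(w,F)\dot w_n}_{\Yb}$ to be uniformly small once $R_0$ is large. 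This tail estimate, which is the only place decay is used, establishes compactness.

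Third, Lemma~\ref{strong invertibility lemma} yields that $\F_w(0,F) \colon \Xb \to \Yb$ is invertible, hence Fredholm of index $0$; by stability of the Fredholm index under compact perturbations, $\F_w(w,F) \colon \Xb \to \Yb$ is Fredholm of index $0$ as well. For the symmetric statement $X \to Y$, I would note that $\F_w(0,F)$ has coefficients independent of $q$, so it commutes with the reflection $q \mapsto -q$ and with translation; by uniqueness from the $\Xb \to \Yb$ invertibility, even data with decay produce even, decaying preimages (the decay being recovered from the invertibility on $\Xb \to \Yb$ applied to cutoffs, or equivalently an exponential dichotomy for the associated spatial dynamics), so $\F_w(0,F) \colon X \to Y$ is invertible. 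Repeating the compact perturbation argument in this symmetric setting finishes the proof.

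The main obstacle is verifying the global tail estimate needed for compactness of $B(w,F)$ in the non-decaying space $\Xb$; here the function space itself enforces no decay and everything must come from the decay of the coefficients of $B(w,F)$, which in turn must be read off the rational expressions arising from \eqref{F_1}--\eqref{F_2} at the level of $C^{1+\alpha}$ rather than merely $C^0$. This is routine but slightly technical, which is presumably why the author defers these details to Appendix~\ref{appendix calculations} in the spirit of \cite[Appendix A]{wheeler2013solitary}.
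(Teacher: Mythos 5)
There is a genuine gap at the heart of your argument: the difference $B(w,F) := \F_w(w,F) - \F_w(0,F)$ is \emph{not} compact from $\Xb$ to $\Yb$ (nor from $X$ to $Y$), and the step where you "upgrade to global convergence" conceals this. The problem is already present on the bounded piece $\{\abs q \le R_0\}$: a bounded sequence in $C^{3+\alpha}$ on a compact set has subsequences converging only in $C^{3+\alpha'}$ for $\alpha' < \alpha$, never in $C^{3+\alpha}$ itself, and multiplication by a fixed nonzero coefficient does not repair this — even multiplication by a smooth compactly supported cutoff fails to be compact from $C^{k+\alpha}$ to $C^{k+\alpha}$. Your appeal to "interior Schauder estimates" does not apply here, because $B(w,F)\dot w_n$ is merely the image of a differential operator, not a solution of an elliptic equation, so there is no equation from which to borrow regularity. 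A secondary issue afflicts the tail: to make $\n{B(w,F)\dot w_n}$ small on $\{\abs q > R_0\}$ in the \emph{full} norms $C^{1+\alpha}$ and $C^{2+\alpha}(T)$ (not just in sup norm), you need the coefficients of $B(w,F)$ to decay in those same Hölder norms; since $X = C^{3+\alpha}_{\bdd,\even} \cap C^2_0$ only forces decay of $w$ through second derivatives, the top-order Hölder seminorms of the coefficients need not vanish at infinity, and interpolation recovers decay only up to exponent $\alpha' < \alpha$. These are exactly the reasons why Fredholm theory on unbounded cylinders in Hölder spaces is not done by compact perturbation.

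The paper's proof avoids this entirely by working in the limiting-operator framework: since $w \in X$, the coefficients of $\F_w(w,F)$ converge as $\abs q \to \infty$ to those of the $q$-independent operator $\F_w(0,F)$, which is invertible $\Xb \to \Yb$ by Lemma~\ref{strong invertibility lemma}; one then invokes \cite[Lemmas~A.12 and A.13]{wheeler2015pressure}, which establish local properness (via Schauder estimates and a translation argument, as in Lemma~\ref{local properness lemma}) and deduce semi-Fredholmness and the index-zero property from the invertibility of the limiting operator, for both the $\Xb \to \Yb$ and $X \to Y$ settings. Your instinct to reduce everything to the operator at $q = \pm\infty$ is the right one, but the mechanism must be properness plus limiting operators rather than compactness of the perturbation.
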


\section{Qualitative properties} \label{qualitative section} 

\subsection{Bounds on the velocity and pressure} \label{bounds velocity pressure section}

Very little is known about the distribution of pressure in traveling waves with vorticity, even less so for those with density stratification.  Our first result gives an a priori lower bound on the pressure in a stratified steady wave.  This generalizes the best currently available lower bound for constant density rotational waves, 
due to Varvaruca~\cite{varvaruca2009extreme}.  
Bernoulli's law then furnishes an upper estimate on the magnitude of the relative velocity.

\begin{proposition}[Bounds on velocity and pressure] \label{bound on velocity prop} 
  The pressure and velocity fields for any solitary wave satisfy the bounds
  \begin{equation}
    \label{pressure and velocity bound}
    P + M \psi \geq 0 
    \qquad \textup{and} \qquad 
    (u-c)^2 + v^2 \leq C
    \qquad \textup{in } \overline{\Omega},
  \end{equation}
  where the constants $C$ and $M$ depend only on $u^*$, $\mathring\varrho$, $g$, $d$, and a lower bound for $F$.
\end{proposition}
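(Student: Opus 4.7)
The plan is to prove the pressure inequality $P+M\psi\ge 0$ via a maximum principle argument applied to the auxiliary function $\Pi:=P+M\psi$, and then derive the velocity bound as a direct consequence. Using Bernoulli's law $E=P+\tfrac12|\nabla\psi|^2+\varrho y/F^2$, with $E=E(\psi)$ constant along streamlines, we may write
\[
\Pi \;=\; E(\psi)+M\psi-\tfrac12|\nabla\psi|^2-\frac{\varrho\,y}{F^2},
\]
where the positive constant $M$ is to be chosen later in terms of the prescribed data $u^*,\mathring\varrho, g, d$ and a lower bound for $F$.

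The boundary and asymptotic behaviour of $\Pi$ is favourable. On the free surface $T=\{y=\eta\}$ we have $P=0$ and $\psi=0$, so $\Pi\equiv 0$. At $|x|\to\infty$, $\Pi$ converges to $\Pi_\infty(y)=P_\infty(y)+M\psi_\infty(y)\ge 0$ on $[-1,0]$, since both the hydrostatic limit $P_\infty$ and the asymptotic pseudo stream function $\psi_\infty$ are nonnegative (this uses $P_\infty'=-\mathring\varrho/F^2<0$ and $\psi_\infty'<0$ together with $\psi_\infty(0)=P_\infty(0)=0$). On the bed $B=\{y=-1\}$, impermeability gives $v=0$ and therefore $v_x=0$; the vertical momentum equation then yields $P_y=-\varrho/F^2<0$, and the no-stagnation condition gives $\psi_y=\sqrt\varrho(u-c)<0$, so $\Pi_y<0$ on $B$ for every $M\ge 0$.

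Combining the Bernoulli expression for $\Pi$ with Yih's equation $\Delta\psi=(y/F^2)\rho'(-\psi)-\beta(\psi)$, a direct calculation yields the identity
\[
\Delta\Pi \;=\; 2\det(\nabla^2\psi) \;+\; \frac{\rho'(-\psi)\,\psi_y}{F^2} \;+\; M\Delta\psi.
\]
The sign of the right-hand side is not immediate: $\det(\nabla^2\psi)$ is indefinite, while $\rho'(-\psi)\psi_y/F^2\ge 0$ by stable stratification and no stagnation. Instead of trying to arrange $\Delta\Pi\le 0$ directly, we introduce a drift $\vec b=\lambda\nabla\psi$ and use the gradient formula $\nabla P=-\beta(\psi)\nabla\psi-(\nabla^2\psi)\nabla\psi-\nabla(\varrho y/F^2)$, obtained by differentiating Bernoulli's law, to verify an inequality of the form
\[
L\Pi \;:=\; \Delta\Pi-\vec b\cdot\nabla\Pi \;\le\; 0\quad\text{in }\Omega
\]
for $\lambda$ and $M$ chosen appropriately. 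The verification uses the a priori bounds $0\le\psi\le 1$, $\min\mathring\varrho\le\varrho\le\max\mathring\varrho$, and the uniform bounds on $\beta$ and $E$ inherited from the asymptotic data via Remark~\ref{remark bernoulli}.

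With the elliptic inequality in hand, the conclusion is classical: the strong maximum principle rules out an interior minimum of $\Pi$; Hopf's lemma on $B$ (where the outward conormal is $-\partial_y$) is incompatible with $\Pi_y<0$; and $\Pi=0$ on $T$, $\Pi_\infty\ge 0$ at infinity. Hence $\Pi\ge 0$ in $\overline\Omega$, which is the pressure bound. The velocity bound then follows immediately from Bernoulli,
\[
\tfrac12\varrho\bigl[(u-c)^2+v^2\bigr] \;=\; E(\psi)-P-\frac{\varrho y}{F^2} \;\le\; \max E+M+\frac{\max\varrho}{F^2} \;\le\; C,
\]
upon dividing by $\min\mathring\varrho>0$. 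The principal technical challenge lies in the verification of the elliptic inequality for $\Pi$: unlike the constant-density case treated by Varvaruca~\cite{varvaruca2009extreme}, Yih's equation contains the destabilizing term $y\rho'(-\psi)/F^2$ with the ``bad'' sign from the point of view of the maximum principle, so one must carefully choose $\lambda$ and $M$ to produce an operator $L$ whose coefficients depend only on the prescribed asymptotic data and are uniform over $F\ge F_0$ for any fixed $F_0>0$.
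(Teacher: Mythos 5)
Your overall strategy (maximum principle for $P+M\psi$ with the boundary analysis on $T$, $B$, and at infinity, followed by Bernoulli for the velocity bound) is the paper's, and your identity $\Delta\Pi = 2\det(\nabla^2\psi)+F^{-2}\rho'(-\psi)\psi_y+M\Delta\psi$ is correct. But the proposal has two genuine gaps, both located exactly where you defer the work. First, the differential inequality is not actually established, and the drift you propose, $\vec b=\lambda\nabla\psi$, is not of the right form. To control the indefinite term $2\det(\nabla^2\psi)=(\Delta\psi)^2-|\nabla^2\psi|^2$ one bounds $|\nabla^2\psi|^2$ from below by $|(\nabla^2\psi)\nabla\psi|^2/|\nabla\psi|^2$ and rewrites $(\nabla^2\psi)\nabla\psi$ via $\nabla\Pi$; this inevitably produces drift coefficients with $|\nabla\psi|^{-2}$ factors and an inhomogeneous piece (compare the paper's $b_1,b_2$, where $b_2$ contains the extra term $-4F^{-2}\rho/|\nabla\psi|^2$), not a constant multiple of $\nabla\psi$. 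More importantly, the stratification term $F^{-2}\rho'(-\psi)\psi_y\ge 0$ is a zeroth-order \emph{inhomogeneity} (it involves neither $\Pi$ nor $\nabla\Pi$), so no choice of drift can absorb it; it must be dominated by a genuinely negative term, which in the paper is $-M^2$ coming from $-(2M+\Delta\psi)M$.

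Second, and consequently, there is a circularity you do not address: dominating $F^{-2}\rho_p\psi_y$ forces $M\gtrsim F_0^{-1}\bigl(\|\rho_p\|_{L^\infty}\|\psi_y\|_{L^\infty}\bigr)^{1/2}$, so $M$ depends on $\sup|\psi_y|$ --- which is precisely (part of) the velocity bound you are trying to prove. Your claim that $M$ ``depends only on the prescribed data,'' and your final estimate $\tfrac12\varrho[(u-c)^2+v^2]\le \max E+M+\max\varrho/F^2$, are therefore circular as written. The paper closes this loop with an absorption step: in the case $M=M_2:=F_0^{-1}\|\psi_y\|_{L^\infty}^{1/2}\|\rho_p\|_{L^\infty}^{1/2}$, substituting back into the Bernoulli inequality and applying Young's inequality yields $\|\psi_y\|_{L^\infty}^2\le \tfrac12\|\psi_y\|_{L^\infty}^2+C(\text{data})$, whence $\|\psi_y\|_{L^\infty}$, and then $M$, are controlled by the data alone. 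Without this self-improvement argument (or some substitute), your proof does not deliver constants depending only on $u^*$, $\mathring\varrho$, $g$, $d$, and a lower bound for $F$.
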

Converting to dimensional variables we obtain Proposition~\ref{intro: bound v and P theorem}.
\begin{proof}[Proof of Proposition~\ref{bound on velocity prop}]
  Recall from Remark~\ref{remark bernoulli} that $\beta$ and hence $E$ are completely determined by $u^*$, $\rho$, $g$, and $d$.

  Let $f := P + M \psi$, where $M$ is a constant to be determined, and assume that $F > F_0$ for some fixed lower bound $F_0$. A tedious calculation using Yih's equation~\eqref{yih equation} shows that $f$ satisfies the elliptic equation
  \begin{equation}
    \begin{split}
      \label{eqn:elliptic} 
      \Delta f - b_1 f_x - b_2 f_y &=
      \frac{2F^{-2} \rho (2M + \Delta \psi) \psi_y - 2 F^{-4} \rho^2}{|\nabla \psi|^2} -
      \left[
      (2M +\Delta \psi) M - \frac 1{F^2} \rho_p\psi_y  \right],
    \end{split} 
  \end{equation}
  where the coefficients $b_1$ and $b_2$ are given by
  \begin{equation}
    b_1 := 
    2\frac{ \psi_x (2M + \Delta \psi)}{ |\nabla \psi|^2},
    \qquad 
    b_2 
    :=
    2\frac{\psi_y \left(2M + \Delta \psi \right) - 2 F^{-2}\rho}{|\nabla \psi|^2}.
  \end{equation} 

  From Bernoulli's law \eqref{bernoulli} and \eqref{defQ}, we see that 
  \begin{equation}
    \frac 1{F^2}\eta < \frac{(\mathring{u}(0)-c)^2}2 
    = \frac{u^*(0)^2}{2gd}.
  \end{equation}
  Yih's equation \eqref{yih equation} then gives the estimate
  \begin{equation}
    \Delta \psi = -\beta + \frac1{F^2} y \rho_p 
    \geq
    -\|\beta_+\|_{L^\infty} -  
    \frac {u^*(0)^2}{2gd} \| \rho_p \|_{L^\infty}.
  \end{equation}
  Thus, for 
  \begin{equation}
    M > M_1 := 
    \frac 12 \|\beta_+\|_{L^\infty} +  
    \frac {u^*(0)^2}{4gd}\| \rho_p \|_{L^\infty},
  \end{equation}
  we have $\Delta \psi + 2M \geq 0$. Using this fact in \eqref{eqn:elliptic} and estimating quite crudely yields the inequality  
  \begin{equation}
    \Delta f - b_1 f_x - b_2 f_y \leq - \left[ M^2 - \frac 1{F^2} \rho_p \psi_y \right].
  \end{equation} 
  Putting 
  \begin{equation}
    M_2 := \frac 1{F_0}\|  \psi_y \|_{L^\infty}^{1/2} \| \rho_p \|_{L^\infty}^{1/2},
  \end{equation}
  and taking $M :=  \max\{M_1, M_2\}$ therefore guarantees that 
  \begin{equation}
    \Delta f - b_1 f_x - b_2 f_y \leq 0 \qquad \textrm{in } \Omega.\label{f supersolution} 
  \end{equation}
  We claim that, with this choice of $M$, $f \ge 0$. At $x = \pm\infty$, the pressure is hydrostatic and hence positive, so $\liminf_{|x| \to \infty} f(x, \cdot) \ge 0$. On the surface, $f= 0$, and on the bed $\{y=-1\}$,
  \begin{align*}
    f_y = P_y + M \psi_y = -\frac 1{F^2}\rho + M \psi_y < 0 \qquad \textrm{on }  y = - 1,
  \end{align*}
  so that $f$ cannot be minimized there. As $f$ is a supersolution of the elliptic problem \eqref{f supersolution}, the claim follows from the maximum principle.

  Using Bernoulli's law again, the inequality $f \ge 0$ means
  \begin{align*}
    -\frac 12  |\nabla \psi|^2 - \frac 1{F^2}\rho y + E = P \ge -M \psi \ge -M,
  \end{align*}
  and hence, after rearranging,
  \begin{align}
    \label{eqn:nice}
    |\nabla \psi|^2 
    \le 2M - \frac{2}{F^2} \rho y +  2E 
    \le 2M + \frac{2}{F^2_0} + 2E,
  \end{align}
  where we have used the inequalities $y > -1$ and $0 < \rho \le 1$ satisfied by the dimensionless variables $\rho$ and $y$.

  First suppose that $M_2 \geq M_1$ so that $M = M_2$.  Then taking the supremum of the left-hand side of \eqref{eqn:nice} and dropping the $\psi_x$ term, we find 
  \begin{align*}
    \| \psi_y \|_{L^\infty}^2 
    & \leq 
    \left(\frac 12\| \psi_y \|_{L^\infty}^2 + \frac{3}{2F^{4/3}_0} \| \rho_p \|_{L^\infty}^{2/3} \right)
    + \frac{2 }{F^2_0} + 2\| E\|_{L^\infty}
  \end{align*}
  and hence 
  \begin{equation}
    \label{case 1 estimate psi_y} 
    \| \psi_y \|_{L^\infty}^2 < 
    \frac{3}{F^{4/3}_0} \| \rho_p \|_{L^\infty}^{2/3} + \frac{4 }{F_0^2} + 4\| E \|_{L^\infty}.
  \end{equation}
  In particular, $M_2$ is controlled by
  \begin{align*}
    M_2
    &\le 
    \frac 1{F_0}  \| \rho_p \|^{1/2}_{L^\infty}
    \left(
    \frac{3}{F^{4/3}_0} \| \rho_p \|_{L^\infty}^{2/3} + \frac{4 }{F_0^2} + 4\| E \|_{L^\infty}\right)^{1/4}.
  \end{align*}
  Plugging $M=M_2$ back into \eqref{eqn:nice} then yields a bound on the full gradient:
  \begin{align}
    \|\nabla \psi\|_{L^\infty}^2 
    \le 
    \frac 2{F_0}  \| \rho_p \|^{1/2}_{L^\infty}
    \left(
    \frac{3}{F^{4/3}_0} \| \rho_p \|_{L^\infty}^{2/3} + \frac{4 }{F_0^2} + 4\| E \|_{L^\infty}\right)^{1/4}
    + \frac{2}{F^2_0} + \n E_{L^\infty},
  \end{align}
  On the other hand, if $M_1 \geq M_2$ so that $M = M_1$, \eqref{eqn:nice} gives immediately that 
  \begin{equation*}
    \| \nabla \psi \|_{L^\infty}^2 \leq 2 M_1 + \frac 2{F_0^2} + 2 \| E \|_{L^\infty}.
    \qedhere
  \end{equation*}
\end{proof}

The estimates \eqref{pressure and velocity bound} can be translated into bounds on the semi-Lagrangian quantities.

\begin{corollary}[Bounds on $w$ and $\nabla w$] \label{bound on w and w_q cor}  
  There exist positive constants $C_*$ and $\delta_*$ so that any solitary wave with $F \ge \Fcr$ satisfies
  \begin{equation}
    \label{uniform bound w nabla w} 
    \inf_{R} \left( w_p + H_p \right) > \delta_* \qquad \textup{and} \qquad 
    \| w \|_{C^1(R)} < C_*(1+\| w_p \|_{C^0(R)}).
  \end{equation}
\end{corollary}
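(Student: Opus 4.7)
The plan is to translate the Eulerian bounds of Proposition~\ref{bound on velocity prop} into the semi-Lagrangian setting by means of the change-of-variables identities \eqref{h to uv}. Since $F \ge \Fcr$ furnishes the uniform lower bound on $F$ required by Proposition~\ref{bound on velocity prop}, we may freely invoke the resulting estimates $(u-c)^2+v^2 \le C$ throughout $\overline\Omega$. Note also that the streamline density $\rho$ is non-increasing on $[-1,0]$ and satisfies $\rho(0)=1$ in the dimensionless variables, so $1 \le \rho \le \rho(-1)$ uniformly.

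For the first estimate, I would simply note that $w_p + H_p = h_p$, and then from the second identity in \eqref{h to uv},
\begin{equation*}
  h_p(q,p) = \frac{1}{\sqrt{\varrho}\,(c-u)} \;\ge\; \frac{1}{\sqrt{\rho(-1) \cdot C}} =: \delta_*,
\end{equation*}
since $\sqrt\varrho(c-u) \le \sqrt{\rho(-1)}\,|c-u| \le \sqrt{\rho(-1) \cdot C}$ by Proposition~\ref{bound on velocity prop}.

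For the $C^1$ bound on $w$, I would combine the two identities in \eqref{h to uv} to obtain
\begin{equation*}
  w_q \;=\; h_q \;=\; \frac{v}{u-c} \;=\; -\,v\sqrt{\varrho}\, h_p \;=\; -\,v\sqrt{\varrho}\,(w_p+H_p).
\end{equation*}
Using $|v| \le \sqrt C$ and the uniform bound on $\sqrt\varrho$, together with the boundedness of $H_p$ (from \eqref{ODE H} and the regularity of $u^*$, $\mathring\varrho$), this yields $\|w_q\|_{C^0(R)} \le C_1(1 + \|w_p\|_{C^0(R)})$. For the sup-norm of $w$ itself, the boundary condition $w = 0$ on $B = \{p=-1\}$ lets me integrate vertically: for $(q,p) \in \overline R$,
\begin{equation*}
  |w(q,p)| \;=\; \Big| \int_{-1}^p w_p(q,s)\,ds \,\Big| \;\le\; \|w_p\|_{C^0(R)}.
\end{equation*}
Adding these three estimates gives $\|w\|_{C^1(R)} \le C_*(1+\|w_p\|_{C^0(R)})$ for a suitable constant $C_*$ depending only on the data.

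There is no real obstacle here beyond Proposition~\ref{bound on velocity prop} itself; the corollary is essentially a change-of-variables bookkeeping exercise, with the uniform lower bound on $h_p$ being what permits us to bound $|w_q|$ by a constant multiple of $h_p$ in the first place.
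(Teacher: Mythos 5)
Your proof is correct and follows essentially the same route as the paper: both arguments feed the Eulerian bound $(u-c)^2+v^2\le C$ of Proposition~\ref{bound on velocity prop} through the identities \eqref{h to uv} to get the lower bound on $h_p$ and the estimate $|w_q|\le \sqrt{C'}\,h_p$, then integrate $w_p$ from the bed for the sup-norm of $w$. The only difference is cosmetic — the paper packages the two identities into the single relation $(1+h_q^2)/h_p^2=\varrho\,[(u-c)^2+v^2]$ and drops one term at a time, whereas you treat the two estimates separately.
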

\begin{proof}
  Letting $F_0 = \Fcr$ in Proposition~\ref{bound on velocity prop} and using \eqref{h to uv}, we have
  \begin{align*}
    \frac 1{h_p^2} + \frac {h_q^2}{h_p^2} ={\rho \over F^2} \LB (u-c)^2 + v^2 \RB < C,
  \end{align*}
  for some constant $C$. 
  Dropping the second term on the left-hand side yields
  \begin{align*}
    \inf_R (H_p+w_p)
    = \inf_R h_p 
    \ge \frac 1{\sqrt C} =: \delta_*,
  \end{align*}
  while dropping the first yields
  \begin{align}
    \label{wq vs wp}
    \abs{w_q} = \abs{h_q} < \sqrt{C} h_p = \sqrt{C} (H_p + w_p)
    \le C_2(1 + \abs{w_p}).
  \end{align}
  Taking the supremum over $R$ of \eqref{wq vs wp}, we conclude that $\n{w_q}_{C^0(R)} \le C_2 (1+ \n{w_p}_{C^0(R)})$. The full bound on $\n w_{C^1(R)}$ then follows from writing $w(q,p) = \int_{-1}^p w_p(q,p')\, dp'$.
\end{proof}

\subsection{Bounds on the Froude number} \label{bounds on F section} 
The objective of this subsection is to derive a priori estimates from above and below for the Froude number.  These are of general interest to studies of solitary waves, but are of special importance to our arguments in Section~\ref{global bifurcation section}.  The earliest work on this topic we are aware of was carried out by Starr \cite{starr1947momentum}, who formally derived sharp lower and upper bounds for homogeneous irrotational solitary waves.
We refer to \cite[Section~1.2]{wheeler2015froude} for a detailed historical discussion, but emphasize that our upper bound is new even for constant density waves with vorticity. In particular, while our estimate involves a measure of stagnation that does not appear in \cite{wheeler2015froude}, it does not require any additional assumptions on $u^*$.

\subsubsection{Lower bound}
First we will prove that there are no nontrivial solutions $(w,F)$ of \eqref{w equation} with critical Froude number $F = \Fcr$. We call this a lower bound because it will imply that the continuous curve of solutions that we construct in Section~\ref{global bifurcation section} can only reach a subcritical wave with $F < \Fcr$ by first passing through the critical laminar flow $(0,\Fcr)$. 
In the special case of constant density, our argument reduces to that in \cite{wheeler2015froude}, and further implies the inequality $F > \Fcr$ for waves of elevation. To get a similar result with non-constant density, we need to assume $F > \nuF$ where $\nuF < \Fcr$; see Corollary~\ref{froude corollary}. This extra assumption is related to the additional complexity of the Sturm--Liouville problem studied in Section~\ref{sec SL}. We note that the proof that $F \ne \Fcr$ in \cite{wheeler2015froude} was extended in \cite{kozlov2015conjecture} to constant density waves which are not necessarily solitary or even periodic.

The main ingredient in our argument is the following integral identity involving the functions $\Phi$ and $\qprime$ defined at the start of Section~\ref{sec SL}, as well as the free surface profile $\eta(\placeholder) = w(\placeholder,0)$. For two homogeneous and irrotational layers, this identity yields (B.8) in \cite{dias2001free}, at least formally. 
\begin{lemma} \label{froude lower lemma}
  For any solution $(w,F) \in X \times \R$ of the height equation \eqref{w equation} we have
  \begin{align}
    \label{froude lower identity}
    \begin{aligned}
      \int_{-M}^M \int_{-1}^0
      \frac{H_p^3 w_q^2 + (H_p+2h_p)w_p^2}{2h_p^2H_p^3}
      \Phi_p\Big(p; \frac 1{F^2}\Big) \, dp\, dq
      + 
      \qprime\Big( \frac 1{F^2}\Big)
      \int_{-M}^M \eta \, dx
      \longrightarrow 0
    \end{aligned}
  \end{align}
  as $M \to \infty$. 
  \begin{proof}
    Multiplying \eqref{height equation} by $\Phi$, integrating by parts, and using the equation \eqref{def Phi} satisfied by $\Phi$, we obtain
    \begin{align*}
      0
      &=
      \int_{-M}^M \int_{-1}^0
      \bigg[
      \Big( - \frac{1+h_q^2}{2h_p^2}
      + \frac 1{2H_p^2} \Big)_p \Phi
      + \Big( \frac{h_q}{h_p} \Big)_q \Phi
      - \frac 1{F^2}\rho_p (h-H) \Phi\bigg] \, dp\, dq
      \\
      &=
      \int_{-M}^M \int_{-1}^0
      \bigg[
      \Big(  \frac{1+h_q^2}{2h_p^2}
      - \frac 1{2H_p^2} \Big) \Phi_p
      - \Big(\frac{\Phi_p}{H_p^3} \Big)_p(h-H) 
      \bigg]
      \, dp\, dq\\
      &\qquad + \int_{-\infty}^\infty 
      \Big( - \frac{1+h_q^2}{2h_p^2}
      + \frac 1{2H_p^2} \Big) \Phi
      \bigg|_{p=0}
      \, dq
      + \int_{-1}^0 \frac{h_q}{h_p} \Phi\, dp \bigg|_{q=-M}^{q=M},
    \end{align*}
    so that integrating by parts once more yields
    \begin{align}
      \label{after once more}
      \begin{aligned}
        0
        &=
        \int_{-M}^M \int_{-1}^0
        \bigg[
        \Big( \frac{1+h_q^2}{2h_p^2}
        - \frac 1{2H_p^2} \Big) \Phi_p
        + \frac{\Phi_p}{H_p^3} (h_p-H_p) 
        \bigg]
        \, dp\, dq\\
        &\qquad+ \int_{-M}^M \bigg[
        \Big( - \frac{1+h_q^2}{2h_p^2}
        + \frac 1{2H_p^2} \Big) \Phi
        - \frac{\Phi_p}{H_p^3} (h-H) 
        \bigg]_{p=0}\, dq
        + \int_{-1}^0 \frac{h_q}{h_p} \Phi\, dp \bigg|_{q=-M}^{q=M}.
      \end{aligned}
    \end{align}
    Rewriting the first integrand in \eqref{after once more} as
    \begin{align*}
      \left[
      \Big(  \frac{1+h_q^2}{2h_p^2}
      - \frac 1{2H_p^2} \Big) 
      + \frac{1}{H_p^3} (h_p-H_p) \right]
      \Phi_p 
      &= 
      \frac{H_p^3 w_q^2 + (H_p+2h_p)w_p^2}{2h_p^2H_p^3}
      \Phi_p,
    \end{align*}
    and applying the boundary conditions in \eqref{height equation} and \eqref{def Phi}, we are left with 
    \begin{align*}
      &\int_{-M}^M \int_{-1}^0
      \frac{H_p^3 h_q^2 +  (H_p+2h_p)w_p^2}{2h_p^2H_p^3}
      \Phi_p \, dp\, dq
      + 
      \qprime\Big(\frac 1{F^2}\Big)
      \int_{-M}^M \eta \, dx\\
      & \qquad\qquad = - \int_{-1}^0 \frac{h_q}{h_p} \Phi\, dp \bigg|_{q=-M}^{q=M}
      \longrightarrow 0 
    \end{align*}
    as $M \to \infty$ as desired.
  \end{proof}
\end{lemma}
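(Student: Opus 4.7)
The strategy is to test the bulk equation in \eqref{height equation} against the explicit function $\Phi(\placeholder; 1/F^2)$ introduced in \eqref{def Phi}, exploiting the ODE that $\Phi$ satisfies to absorb the sign-indefinite zeroth-order term $(1/F^2)\rho_p(h-H)$. This is natural because that term has precisely the ``wrong'' sign for a direct energy estimate, whereas $\Phi$ was designed so that $\rho_p\Phi$ is a perfect $p$-derivative.

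Concretely, I would multiply the bulk equation by $\Phi$ and integrate over $[-M,M]\times[-1,0]$. A first integration by parts in $p$ of the $\partial_p$ term transfers the outer derivative onto $\Phi$: the boundary contribution at $p=-1$ vanishes because $\Phi(-1)=0$, while at $p=0$ the surface condition in \eqref{height equation} lets me replace the bracketed quantity by $(1/F^2)\rho(0)\,\eta\,\Phi(0)$. To handle the $\rho_p(h-H)\Phi$ term I would then use \eqref{def Phi} to rewrite $\rho_p\Phi = F^2(\Phi_p/H_p^3)_p$ and integrate by parts in $p$ once more; the bed contribution vanishes (since $h=H=0$ at $p=-1$), while the surface contribution combines with the previous one to produce precisely $\qprime(1/F^2)\,\eta$ by the very definition of $\qprime$. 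The remaining $(h_q/h_p)_q\Phi$ term, integrated by parts in $q$, contributes only the lateral boundary $\int_{-1}^0 (h_q/h_p)\Phi\,dp\big|_{q=-M}^{q=M}$.

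What is left is to tidy the interior integrand into the quadratic-in-$(w_p,w_q)$ form advertised in \eqref{froude lower identity}; this is a routine algebraic identity obtained by putting the remaining expression over the common denominator $2h_p^2 H_p^3$ and using $h_p = H_p + w_p$ and $h_q = w_q$. Finally, the lateral remainder vanishes as $M\to\infty$ because $w\in C^1_0(\overline R)$ forces $w_q\to 0$ while Corollary~\ref{bound on w and w_q cor} keeps $h_p$ bounded away from zero, and $\Phi$ is bounded on $[-1,0]$. The key conceptual step is recognizing that the correct test function is exactly the $\Phi$ produced by the Sturm--Liouville analysis of Section~\ref{sec SL}; the rest is bookkeeping, and the only mild technical obstacle is matching up the boundary contributions so that they telescope into the single factor $\qprime(1/F^2)\,\eta$ whose sign is controlled by Lemma~\ref{lem_minimality}.
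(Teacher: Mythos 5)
Your proposal is correct and follows essentially the same route as the paper: test the bulk equation against $\Phi(\placeholder;1/F^2)$, integrate by parts twice in $p$ using \eqref{def Phi} to absorb the $\rho_p(h-H)$ term, combine the surface contributions into $\qprime(1/F^2)\,\eta$, and let the lateral boundary term vanish as $M\to\infty$. The only cosmetic difference is that you cite Corollary~\ref{bound on w and w_q cor} (which presupposes $F\ge\Fcr$) to control $h_p$ in the lateral term, whereas the hypothesis $w\in X\subset C^2_0(\overline R)$ already gives $h_p\to H_p>0$ as $|q|\to\infty$, which is all that is needed.
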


\begin{theorem}\label{froude lower theorem}
  Let $(w,F) \in X \times \R$ be a solution of the height equation \eqref{w equation}. 
  \begin{enumerate}[label=\rm(\roman*)]
  \item \label{elevation2froude} If $w > 0$ on $T$, and if $F$ is such that $\Phi_p \ge 0$, then $\qprime(1/F^2) < 0$.
  \item \label{no critical} If $F = \Fcr$, then $w \equiv 0$. That is, there exist no nontrivial solitary waves with critical Froude number.   
  \end{enumerate}
\end{theorem}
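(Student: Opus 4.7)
My plan centers on the integral identity from Lemma~\ref{froude lower lemma}. By the no-stagnation condition \eqref{lower bound w_p}, $h_p = H_p + w_p > 0$, so $H_p + 2h_p = 3H_p + 2w_p > 0$, and hence the rational factor $(H_p^3 w_q^2 + (H_p+2h_p)w_p^2)/(2h_p^2 H_p^3)$ in the integrand of \eqref{froude lower identity} is pointwise nonnegative, vanishing precisely where $w_p = w_q = 0$. Whenever $\Phi_p \ge 0$ on $[-1,0]$, the full integrand is therefore nonnegative, so the map $M \mapsto \int_{-M}^M\!\int_{-1}^0[\cdots]\Phi_p\,dp\,dq$ is monotone non-decreasing in $M$. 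This monotonicity, combined with the limit statement of Lemma~\ref{froude lower lemma}, drives both parts.

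For part~(ii), the definition of $\Fcr$ via Lemma~\ref{lem_minimality} gives $\qprime(\mucr) = 0$ and $\Phi_p(\,\cdot\,;\mucr) > 0$ uniformly on $[-1,0]$. Lemma~\ref{froude lower lemma} then reduces to $\int_{-M}^M\!\int_{-1}^0[\cdots]\Phi_p\,dp\,dq \to 0$, and by the monotonicity above this non-decreasing integral must in fact vanish for every $M$. Together with the strict positivity of $\Phi_p$, this forces $w_p \equiv w_q \equiv 0$ on $R$, so $w$ is constant; the boundary condition $w|_B = 0$ then yields $w \equiv 0$.

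For part~(i) I would argue by contradiction, supposing $\qprime(1/F^2) \ge 0$. Since $w > 0$ on $T$, the trace $\eta(q) = w(q,0)$ is strictly positive and $M\mapsto \int_{-M}^M\eta\,dq$ is positive and non-decreasing. If $\qprime > 0$, the second term in \eqref{froude lower identity} is eventually bounded below by a positive constant while the first is nonnegative, contradicting the overall limit $\to 0$. If $\qprime = 0$, the argument of part~(ii) again forces the first integrand to vanish identically. Since $\Phi_p(-1;1/F^2) = 1$ and $\Phi_p \ge 0$ by hypothesis, continuity produces a strip $\{-1 \le p < -1+\delta\}$ on which $\Phi_p > 0$; there $w_p = w_q = 0$, and $w|_B = 0$ gives $w \equiv 0$ on the strip. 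Applying unique continuation for the quasilinear elliptic equation~\eqref{height equation} then propagates $w \equiv 0$ to all of $R$, contradicting $w > 0$ on $T$.

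The principal obstacle is precisely this final unique-continuation step in part~(i): extending vanishing from a small strip near the bed to all of $R$ requires a unique-continuation principle adapted to the quasilinear operator $\F$. Part~(ii) sidesteps this subtlety entirely because $\Phi_p(\,\cdot\,;\mucr)$ is strictly positive across the whole cross-section, so the integrand's vanishing yields $w_p = w_q = 0$ on all of $R$ with no propagation argument needed.
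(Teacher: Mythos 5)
Your strategy is the paper's: everything rests on the integral identity of Lemma~\ref{froude lower lemma}, and your part~(ii) is essentially verbatim the paper's argument (monotonicity of the nonnegative first integral plus $\qprime(\mucr)=0$ and the strict positivity of $(\Phicr)_p$ from Lemma~\ref{lem_minimality} force $w_p\equiv w_q\equiv 0$, hence $w\equiv 0$).

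For part~(i), however, the sub-case $\qprime(1/F^2)=0$ of your contradiction argument is left genuinely open: you only extract $w\equiv 0$ on a strip near the bed and then appeal to an unproved unique-continuation principle for the quasilinear operator, which you yourself flag as the principal obstacle. This detour is unnecessary. The paper's proof does not split into cases at all: it observes that, with $\Phi_p$ strictly positive (which is what Corollary~\ref{cor_minimality} actually supplies on $[-1,0)$ for every $F\ge \nuF$, the only regime in which part~(i) is invoked), \emph{both} integrals in \eqref{froude lower identity} are strictly positive for every $M>0$ and nondecreasing in $M$ --- the second because $\eta>0$, the first because if its integrand vanished identically on $[-M,M]\times[-1,0]$ then $w_p\equiv w_q\equiv 0$ there, which together with $w|_B=0$ would give $w\equiv 0$ on that whole rectangle, contradicting $w>0$ on $T$; no propagation from a thin strip is needed because $\Phi_p>0$ across (almost) the entire cross-section, not just near the bed. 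Once both terms are bounded below by their values at $M=1$, a nonnegative coefficient $\qprime(1/F^2)\ge 0$ makes the sum bounded away from zero, contradicting the limit; hence $\qprime(1/F^2)<0$ in one stroke. So the gap you identified is real as your argument is written, but it closes by using the strict (rather than merely nonnegative) sign of $\Phi_p$ available in every application of the theorem, not by unique continuation.
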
 
\begin{proof}
  To prove \ref{elevation2froude}, suppose that $w > 0$ on $T$ (i.e.,~$\eta > 0$) and that $\Phi_p > 0$. Then both of the integrals in \eqref{froude lower identity} are strictly positive for all $M > 0$. Thus, in order for the limit in \eqref{froude lower identity} to hold, the coefficient $\qprime(1/F^2)$ must be negative. 
 
  To prove \ref{no critical}, observe that $A(1/\Fcr^2) = 0$ by Lemma~\ref{lem_minimality}(i) so that, for $F = \Fcr$, \eqref{froude lower identity} reduces to
  \begin{align}
    \label{monotone contradiction}
    \int_{-M}^M \int_{-1}^0
    \frac{H_p^3 w_q^2 +  (H_p+2h_p)w_p^2}{2h_p^2H_p^3}
    (\Phicr)_p \, dp\, dq
    \longrightarrow 0 
  \end{align}
  as $M \to \infty$. Since $(\Phicr)_p > 0$ by Lemma~\ref{lem_minimality}(ii), the left-hand side of \eqref{monotone contradiction} is a nonnegative, nondecreasing function of $M > 0$. Thus the limit in \eqref{monotone contradiction} forces the left-hand side to vanish for all $M$, which in turn forces $w_q, w_p \equiv 0$ and hence $w \equiv 0$.
\end{proof}
\begin{corollary}\label{froude corollary}
  Let $(w,F) \in X \times \R$ be a solution of the height equation \eqref{w equation}. If $w > 0$  on $T$, and if $F \ge \nuF$ then in fact $F > \Fcr > \nuF$. Here $\nuF^2 = 1/\numu$ is defined in Corollary~\ref{cor_minimality}. Thus waves of elevation are either supercritical with $F > \Fcr$ or quite subcritical in that $F < \nuF < \Fcr$.
  \begin{proof}
    This follows immediately from Theorem~\ref{froude lower theorem}\ref{elevation2froude} and Corollary~\ref{cor_minimality}.
  \end{proof}
\end{corollary}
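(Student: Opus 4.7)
The proof proposal is to combine the two cited results mechanically, after converting the assumption $F \geq \nuF$ into a statement about $\mu = 1/F^2$ relative to $\numu = 1/\nuF^2$.

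First, I would set $\mu := 1/F^2$ and observe that the assumption $F \geq \nuF$ translates into $\mu \leq \numu$. By Corollary~\ref{cor_minimality}(a), for $0 \leq \mu \leq \numu$ the function $\Phi_p(\,\cdot\,;\mu)$ is nonnegative on $[-1,0]$ (strictly positive when $\mu < \numu$, vanishing only at $p=0$ when $\mu=\numu$). Thus the hypothesis $\Phi_p \geq 0$ of Theorem~\ref{froude lower theorem}\ref{elevation2froude} is satisfied, and since $w > 0$ on $T$, that theorem yields
\begin{equation*}
  \qprime\!\left(\tfrac{1}{F^2}\right) = \qprime(\mu) < 0.
\end{equation*}

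Next I would use this sign information to squeeze $\mu$ from above. Corollary~\ref{cor_minimality}(c) asserts that $\qprime(\mu) > 0$ on the interval $(\mucr, \numu]$, which is incompatible with $\qprime(\mu) < 0$. Hence $\mu \leq \mucr$, i.e., $F \geq \Fcr$. The equality case $F = \Fcr$ is ruled out by Theorem~\ref{froude lower theorem}\ref{no critical}, which says that critical solutions must be trivial and therefore cannot satisfy $w > 0$ on $T$. (Alternatively, $\qprime(\mucr) = 0$ by the very definition of $\mucr$ in Lemma~\ref{lem_minimality}, again contradicting $\qprime(\mu) < 0$.) Combining, $F > \Fcr$, and since $\nuF < \Fcr$ by Corollary~\ref{cor_minimality}, we conclude $F > \Fcr > \nuF$ as claimed. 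The final dichotomy statement is just the contrapositive: for a wave of elevation, the Froude number cannot lie in $[\nuF, \Fcr]$, so it is either strictly supercritical or strictly below $\nuF$.

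There is no real obstacle here; the content is entirely in the two cited results, and my plan is only to chain them together carefully, making sure the boundary case $\mu = \numu$ (where $\Phi_p$ degenerates only at $p=0$) still lies in the range where Theorem~\ref{froude lower theorem}\ref{elevation2froude} applies, and that the boundary case $F = \Fcr$ is excluded by the nonexistence of nontrivial critical waves.
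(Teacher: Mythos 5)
Your proposal is correct and is exactly the argument the paper intends: the paper's proof is literally the one-line citation of Theorem~\ref{froude lower theorem}\ref{elevation2froude} and Corollary~\ref{cor_minimality}, and you have simply unwound that citation in the obvious way ($F\ge\nuF\Leftrightarrow\mu\le\numu$, hence $\Phi_p\ge0$, hence $\qprime(\mu)<0$, which is incompatible with $\qprime>0$ on $(\mucr,\numu]$ and with $\qprime(\mucr)=0$). The only micro-detail left implicit is that $\Phi_p(\placeholder;\numu)\ge0$ follows by continuity of $\Phi_p$ in $\mu$, but this is immediate.
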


\subsubsection{Upper bound}
Next we will prove an upper bound on the Froude number. Unlike \cite{wheeler2015froude} in the case of 
constant density, our estimate does not require any additional assumptions on the asymptotic height function $H$. Our argument is more closely related to those given by Starr \cite{starr1947momentum} and Keady and Pritchard \cite{keady1974bounds}. While they are able to obtain an upper bound $F < \sqrt 2$, the presence of vorticity seems to unavoidably introduce a term like $\n{h_p(0,\placeholder)}_{L^\infty}$ 
which measures how close the wave is to stagnation on the line beneath the crest. This additional term also means that our bound is not a strict generalization of \cite{wheeler2015froude}. For constant density waves, Kozlov, Kuznetsov, and Lokharu \cite{kozlov2015conjecture} have obtained yet another distinct upper bound on $F$ --- really on the
Bernoulli constant $Q$ defined in \eqref{defQ} --- in terms of the amplitude $\max \eta$. Their method involves a detailed characterization of one-parameter families of laminar flows with the same vorticity function, and hence seems particularly difficult to generalize to the stratified case.

Our upper bound is a consequence of the following integral identity.
\begin{lemma} \label{second identity lemma} 
  For any solution $(w,F) \in X \times \R$ of the height equation \eqref{w equation}, 
  \begin{align}
    \label{second F identity}
    \frac 1{F^2}\left[\int_{-1}^0 \abs{\rho_p} w(0,p)^2 \, dp
    + \rho(0) \eta(0)^2\right]
    = 
    \int_{-1}^0 
    \frac{w_p^2}{H_p^2 h_p}(0,p)\, dp.
  \end{align}
\end{lemma}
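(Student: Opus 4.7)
The plan is to exploit conservation of the flow force $\flowforce$ along with the evenness of $w$ in $q$. Since $w \in X$ is even in $q$, its derivative $w_q$ vanishes identically on $\{q=0\}$, hence $h_q(0,\cdot) \equiv 0$, greatly simplifying the expression \eqref{DJ S} for $\flowforce$ there. At $q = \pm\infty$ we simply have $h = H$, so $\flowforce$ takes its ``laminar'' value $\flowforce(H)$. Equating $\flowforce$ evaluated at $q=0$ with the value at infinity will produce the desired identity after some algebraic rearrangement.

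Concretely, let me write $\tilde h_p := h_p(0,p) = H_p + w_p(0,p)$. The identity $\flowforce(q=0) = \flowforce(H)$ reads
\begin{align*}
\int_{-1}^0\! \left[\frac{1}{2\tilde h_p} + \frac{\tilde h_p}{2H_p^2}\right] dp - \frac{1}{F^2}\!\int_{-1}^0\! \rho\, w(0,p)\, \tilde h_p\, dp - \frac{1}{F^2}\!\int_{-1}^0\! \tilde h_p\! \int_0^p\! \rho H_p\, dp'\, dp = \int_{-1}^0\!\frac{1}{H_p}\, dp - \frac{1}{F^2}\!\int_{-1}^0\! H_p\!\int_0^p\! \rho H_p\, dp'\, dp.
\end{align*}
The deterministic (non-$F^{-2}$) terms can be collapsed via the elementary identity
\begin{align*}
\frac{1}{2\tilde h_p} + \frac{\tilde h_p}{2H_p^2} - \frac{1}{H_p} = \frac{(\tilde h_p - H_p)^2}{2\tilde h_p H_p^2} = \frac{w_p^2}{2\tilde h_p H_p^2},
\end{align*}
which produces exactly the right-hand side of \eqref{second F identity} (up to a factor of $2$).

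For the $F^{-2}$ terms, observe that the difference $\int_{-1}^0 (\tilde h_p - H_p)\int_0^p \rho H_p\, dp'\, dp = \int_{-1}^0 w_p \int_0^p \rho H_p\, dp'\, dp$ can be integrated by parts in $p$; since the inner integral vanishes at $p=0$ and $w(\,\cdot\,, -1)=0$, this reduces to $-\int_{-1}^0 \rho w H_p\, dp$. Combining with the remaining $\rho w \tilde h_p$ term and using $\tilde h_p - H_p = w_p$ collapses everything to $\frac{1}{F^2}\int_{-1}^0 \rho\, w(0,p)\, w_p(0,p)\, dp$. A final integration by parts in $p$, using $w(\,\cdot\,,-1)=0$, $w(0,0)=\eta(0)$, and the sign convention $|\rho_p| = -\rho_p$ from stable stratification, converts this into $\frac{1}{2F^2}[\rho(0)\eta(0)^2 + \int_{-1}^0 |\rho_p| w(0,p)^2\, dp]$, which is exactly \eqref{second F identity}.

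The only subtle point is justifying that $\flowforce(q) \to \flowforce(H)$ as $|q| \to \infty$; this requires checking that the integrands in \eqref{DJ S} converge uniformly as $|q| \to \infty$, which follows from the regularity $w \in X \subset C^2_0(\overline R)$ together with the uniform bound $h_p > \delta_*$ furnished by Corollary~\ref{bound on w and w_q cor}. Beyond this, the proof is essentially bookkeeping, so I do not expect any substantive obstacle.
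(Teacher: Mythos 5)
Your proposal is correct and takes essentially the same route as the paper: both compare the flow force at $q=0$ (where $h_q$ vanishes by evenness of $w$) with its laminar value $\flowforce(H)$ at infinity, and then integrate by parts in $p$ to produce the $\abs{\rho_p}w^2$ and boundary terms. The only difference is cosmetic bookkeeping --- you collapse the non-$F^{-2}$ terms in one step via the identity $\tfrac{1}{2\tilde h_p}+\tfrac{\tilde h_p}{2H_p^2}-\tfrac{1}{H_p}=\tfrac{w_p^2}{2\tilde h_p H_p^2}$, whereas the paper reaches the same expression through two successive integrations by parts.
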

\begin{proof} 
  Comparing the flow force $\flowforce$  \eqref{DJ S} at $q=0$ and $q=\pm\infty$, we find
  \begin{align*}
    \flowforce(h) 
    &= 
    \int_{-1}^0
    \left( \frac 1{2h_p^2} + \frac 1{2H_p^2}- \frac{1}{F^2} \rho (h-H) - \int_0^p \frac{1}{F^2} \rho H_p\,
    dp' \right)\, h_p\, dp\\
    &= 
    \int_{-1}^0
    \left( \frac 1{2H_p^2} + \frac 1{2H_p^2}- \int_0^p \frac{1}{F^2} \rho H_p\,
    dp' \right)\, H_p\, dp
    = \flowforce(H),
  \end{align*}
  where, here and in what follows, all integrals are taken at $q=0$. Grouping terms and integrating by parts, we get 
  \begin{align}
    \notag
    0&=
    \int_{-1}^0
    \left [
    \left( \frac 1{2h_p^2} - \frac 1{2H_p^2} \right) H_p
    + \left( \frac 1{2h_p^2} + \frac 1{2H_p^2} \right) w \right] \, dp \\
    \notag
    & \qquad -\int_{-1}^0 \left[ 
    \frac{1}{F^2} \rho ww_p
    +\frac{1}{F^2} \rho wH_p
    +\left(\int_0^p \frac{1}{F^2} \rho H_p\, dp'\right) w_p
    \right ] \, dp\\
    &=
    \label{rearrange me}
    \int_{-1}^0
    \left [
    \left( \frac 1{2h_p^2} - \frac 1{2H_p^2} \right) H_p
    + \left( \frac 1{2h_p^2} + \frac 1{2H_p^2} \right) w_p
    \right ] \, dp
    -\frac{1}{F^2} \int_{-1}^0\rho ww_p \, dp.
  \end{align}
  Integrating by parts again, the second integral in \eqref{rearrange me} becomes \begin{align*}
    \frac 1{F^2} \int_{-1}^0\rho ww_p \, dp
    &= 
    \frac 1{2F^2} \int_{-1}^0 \abs{\rho_p} w^2\, dp
    + \frac 1{2F^2} \rho w^2 \bigg|_{p=0},
  \end{align*}
  while some algebra shows that the first integral collapses to
  \begin{align*}
    \int_{-1}^0
    \left [
    \left( \frac 1{2h_p^2} - \frac 1{2H_p^2} \right) H_p
    + \left( \frac 1{2h_p^2} + \frac 1{2H_p^2} \right) w_p
    \right ] \, dp
    &= 
    -\int_{-1}^0 
    \frac{w_p^2}{2H_p^2 h_p}\, dp,
  \end{align*}
  leaving us with \eqref{second F identity} as desired.
\end{proof}

\begin{theorem}[Upper bound on $F$] \label{upper bound on F theorem} 
  Let $(w,F) \in X \times \R$ be a solution of the height equation \eqref{w equation} and set $h = w + H$.  Then the Froude number $F$ satisfies the bound 
  \begin{align}
    \label{eqn:hurray}
    F^2 
    \le 
    \frac 1\pi
    \n{H_p}_{L^\infty}^2
    \n\rho_{L^\infty}
    \n{h_p(0,\placeholder)}_{L^\infty}.
  \end{align}
\end{theorem}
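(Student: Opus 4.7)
The entire argument is built on Lemma~\ref{second identity lemma}. Writing $\phi(p) := w(0,p)$, which vanishes at $p=-1$ by the bottom boundary condition on $w$, that lemma reads
\[
\frac{1}{F^2}\Bigl[\rho(0)\phi(0)^2 + \int_{-1}^{0}|\rho_p|\phi^2\,dp\Bigr] = \int_{-1}^{0}\frac{\phi_p^2}{H_p(p)^2\,h_p(0,p)}\,dp.
\]
The plan is to solve this identity for $F^2$ and then separately bound the bracketed quantity from above and the right-hand integral from below, both in terms of $\int_{-1}^{0}\phi_p^2\,dp$; rearranging will then produce the desired estimate.

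For the right-hand integral, I would simply pull the positive weights out in $L^\infty$, giving
\[
\int_{-1}^{0}\frac{\phi_p^2}{H_p^2\,h_p(0,p)}\,dp \ \ge\ \frac{1}{\|H_p\|_{L^\infty}^2\,\|h_p(0,\cdot)\|_{L^\infty}}\int_{-1}^{0}\phi_p^2\,dp.
\]
For the bracketed expression on the left, the natural move is to integrate by parts: since $\phi(-1)=0$, the Dirichlet boundary term vanishes and one obtains the compact identity
\[
\rho(0)\phi(0)^2 + \int_{-1}^{0}|\rho_p|\phi^2\,dp = 2\int_{-1}^{0}\rho\,\phi\,\phi_p\,dp.
\]
Combined with the previous lower bound, the entire theorem reduces to a one-dimensional weighted Poincar\'e-type inequality of the form
\[
2\int_{-1}^{0}\rho\,\phi\,\phi_p\,dp \ \le\ \frac{1}{\pi}\,\|\rho\|_{L^\infty}\int_{-1}^{0}\phi_p^2\,dp,
\]
valid for every $\phi\in H^1((-1,0))$ with $\phi(-1)=0$.

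The hard part will be pinning down the sharp constant $1/\pi$. A blunt application of Cauchy--Schwarz together with the one-sided Dirichlet Poincar\'e inequality $\|\phi\|_{L^2}\le (2/\pi)\|\phi_p\|_{L^2}$ produces the weaker constant $4/\pi$, which is already enough to obtain an upper bound on $F^2$ of the correct shape but with a non-optimal coefficient. To sharpen to $1/\pi$, I would view the supremum of the left-hand side as the inverse of the smallest eigenvalue of a Sturm--Liouville problem with the Robin-type condition $\rho(0)\phi(0)+\lambda\phi_p(0)=0$ at $p=0$ and Dirichlet at $p=-1$; comparing with an appropriate sine test function adapted to that mixed boundary condition, and exploiting the monotonicity $\rho_p\le 0$ to control the $|\rho_p|\phi^2$ term, should yield the sharp factor. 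Once that weighted inequality is established, combining it with the lower bound on the right-hand side of the flow-force identity and rearranging gives \eqref{eqn:hurray}.
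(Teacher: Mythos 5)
Your plan coincides with the paper's proof in every structural respect: both start from the flow-force identity of Lemma~\ref{second identity lemma}, both rewrite the bracketed terms via the integration by parts $\int_{-1}^0|\rho_p|w^2\,dp+\rho(0)\eta(0)^2=2\int_{-1}^0\rho\,w\,w_p\,dp$ (which simply undoes the step used to prove that lemma), and both bound the right-hand integral from below by pulling the weights $H_p^{-2}h_p^{-1}$ out in $L^\infty$. Your accounting of the constants in the ``blunt'' route is also correct: Cauchy--Schwarz together with the one-sided Poincar\'e inequality $\|\phi\|_{L^2}\le(2/\pi)\|\phi_p\|_{L^2}$ --- the relevant one, since $w(0,\cdot)$ vanishes only at $p=-1$ --- yields $4/\pi$, not $1/\pi$.

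The gap is in the proposed sharpening. The one-dimensional inequality you reduce the theorem to,
\[
2\int_{-1}^{0}\rho\,\phi\,\phi_p\,dp\ \le\ \frac1\pi\,\|\rho\|_{L^\infty}\int_{-1}^{0}\phi_p^2\,dp
\qquad\text{for all }\phi\in H^1(-1,0)\text{ with }\phi(-1)=0,
\]
is false: take $\rho$ constant and $\phi(p)=p+1$, so that the left-hand side equals $\rho\,\phi(0)^2=\|\rho\|_{L^\infty}\|\phi_p\|_{L^2}^2$. The supremum of the relevant ratio over admissible $\phi$ and nonincreasing $\rho>0$ is therefore at least $1$, and in fact exactly $1$ (use $\phi(p)^2\le\|\phi_p\|_{L^2}^2$ together with $\int_{-1}^0|\rho_p|\,dp=\rho(-1)-\rho(0)$), so no Sturm--Liouville comparison can produce $1/\pi$; the best constant accessible to this proof structure is $1$. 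You should also know that the paper's own arrival at $1/\pi$ rests on two slips that your computation avoids: the identity is restated with coefficient $\tfrac1{2F^2}$ where the integration by parts actually gives $\tfrac{2}{F^2}$, and the Poincar\'e constant is taken to be $1/\pi$, which would require $w(0,\cdot)$ to vanish at both endpoints, whereas $w(0,0)=\eta(0)\ne0$. What this argument genuinely proves is \eqref{eqn:hurray} with $4/\pi$ (or even $1$) in place of $1/\pi$; that weaker form is all that is used downstream (e.g., in Lemma~\ref{finite lemma}), so the correct resolution is to state the bound with the constant your estimates actually deliver rather than to chase $1/\pi$.
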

\begin{proof}
  Rewriting \eqref{second F identity} slightly as 
  \begin{align*}
    \frac 1{2F^2} 
    \int_{-1}^0 \rho w w_p\, dp
    = 
    \int_{-1}^0 
    \frac{w_p^2}{H_p^2 h_p}\, dp,
  \end{align*}
  we crudely estimate the left-hand side by
  \begin{align*}
    \frac 1{2F^2} 
    \int_{-1}^0 \rho w w_p\, dp
    \le \frac 1{F^2} \n\rho_{L^\infty} 
    \n {w(0,\placeholder)}_{L^2} \n{w_p(0,\placeholder)}_{L^2}
    \le \frac 1{\pi F^2} \n\rho_{L^\infty} \n{w_p(0,\placeholder)}_{L^2}^2
  \end{align*}
  and the right-hand side by
  \begin{align*}
    \int_{-1}^0 
    \frac{w_p^2}{H_p^2 h_p}\, dp
    \ge \left(\min_p \frac 1{H_p^2} \right)
    \left(\min_{\{q=0\}} \frac 1{h_p}\right)
    \n{w_p(0,\placeholder)}_{L^2}^2.
  \end{align*}
  Together, these estimates imply
  \begin{align*}
    \frac 1{\pi F^2} \n\rho_{L^\infty} \n{w_p(0,\placeholder)}_{L^2}^2
    \ge 
    \left(\min_p \frac 1{H_p^2} \right)
    \left(\min_{\{q=0\}} \frac 1{h_p}\right)
    \n{w_p(0,\placeholder)}_{L^2}^2,
  \end{align*}
  and hence
  \begin{align*}
    F^2 
    \le 
    \frac 1\pi
    \n{H_p}_{L^\infty}^2
    \n\rho_{L^\infty}
    \n{h_p(0,\placeholder)}_{L^\infty},
  \end{align*}
  which completes the proof.  
\end{proof}

The identity \eqref{second F identity} can also be used in the case of homogeneous and irrotational flow to recover Starr's and Keady--Pritchard's upper bound of $\sqrt{2}$ for the Froude number.  This is possible simply because the right-hand side of \eqref{second F identity} can be simplified for such waves.  
The presence of non-constant stratification forces us to do more crude estimates when proving Theorem \ref{upper bound on F theorem}.

\subsection{Nonexistence of monotone bores} \label{bore section}

By a bore we mean a solution $h$ of the height equation \eqref{height equation} satisfying
\begin{equation}
  h(q, p) \to H_\pm(p) \textrm{ as } q \to \pm\infty, 
  \label{bore asymptotics} 
\end{equation}
pointwise in $p$, where $H_-$ and $H_+$ represent distinct laminar flows. 
Outside of hydrodynamics, traveling waves of this type are often called fronts.  If one adopts the spatial dynamics viewpoint, thinking of $q$ as the time variable, this is equivalent to having a heteroclinic orbit connecting the rest points $H_-$ and $H_+$.  

Bores are of particular importance to this work because they represent a barrier to proving that $\F$ has certain compactness properties.  Observe that, a priori, 
there may be a sequence of even solitary waves of elevation where the crest flattens and expands into an infinite shelf; see Figure~\ref{noncompact figure}.  Naturally, this scenario precludes the existence of a subsequential limit in $C_0^2(R)$, and hence would mean that $\F^{-1}(0)$ is not locally compact.   Using a translation argument, it is shown in Section~\ref{global bifurcation section} that this would also imply the existence of a bore which is \emph{monotone} in that $H \le H_+ \le H_-$ (cf.\ Lemmas~\ref{lem:compact:gen} and \ref{lem:compact}); see Figure~\ref{bores figure}.

As mentioned in the introduction, bores exist for stratified waves in many regimes. However, as we prove in this subsection, one can completely rule out the existence of \emph{monotone} bores for \emph{free surface} stratified waves. 

\begin{theorem}[Nonexistence of monotone bores] \label{bores theorem}  
  Suppose that $h \in C^2_\bdd(\overline R)$ is a bore solution of the height equation \eqref{height equation} with $\inf_R h_p > 0$, and let $H_\pm$ be as in \eqref{bore asymptotics}. If 
  \begin{align*}
    \textup{$H_+ \ge H_- = H$ on $[-1,0]$}
    \quad  \textup{ or } \quad
    \textup{$H_+ \le H_- = H$ on $[-1,0]$},
  \end{align*}
  then $H_+ = H_- = H$. The same result holds with the roles of $H_-$ and $H_+$ reversed.
\end{theorem}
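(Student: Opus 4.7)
The plan is a conjugate-flow analysis: write $H := H_-$, $K := H_+$, and $W := K - H$, derive two integral identities for $W$, combine them so that the stratification terms cancel exactly, and then close the argument using the sign hypothesis on $W$.

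First I would show that $H_\pm$ are themselves laminar solutions of \eqref{height equation}. Since $h \in C^2_\bdd(\overline R)$, the translates $h(\placeholder + q_n, \placeholder)$ are precompact in $C^1_\loc$, and any subsequential limit must agree with $H_\pm$ by \eqref{bore asymptotics}; elliptic regularity then upgrades the convergence enough that $H_\pm$ satisfy the bulk and boundary equations classically. The top condition at $q = -\infty$ forces $H(0) = 1$, and the laminar ODE for $K$ reads
\[
  \Bigl(-\tfrac{1}{2K_p^2} + \tfrac{1}{2H_p^2}\Bigr)_p = \tfrac{1}{F^2}\rho_p W \text{ on } (-1,0), \quad -\tfrac{1}{2K_p^2} + \tfrac{1}{2H_p^2} = \tfrac{1}{F^2}\rho(0) W(0) \text{ at } p=0, \quad W(-1)=0.
\]
Passing to $q \to \pm\infty$ in the conservation of $\flowforce$ gives $\flowforce(H_+) = \flowforce(H_-)$.

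Next I would extract two integral identities. The algebraic rearrangement $\tfrac{1}{2K_p} + \tfrac{K_p}{2H_p^2} - \tfrac{1}{H_p} = \tfrac{W_p^2}{2 K_p H_p^2}$, applied to \eqref{DJ S} together with one integration by parts on the $F^{-2}\int_0^p \rho H_{p'}\, dp'$ term (which vanishes at both endpoints because $W(-1) = 0$ and the $\int$ is $0$ at $p=0$), turns $\flowforce(K) = \flowforce(H)$ into
\[
  \int_{-1}^0 \frac{W_p^2}{2 K_p H_p^2}\, dp \;=\; \frac{\rho(0) W(0)^2}{2F^2} \;-\; \frac{1}{2F^2}\int_{-1}^0 \rho_p W^2\, dp. \qquad (\mathrm{FF})
\]
Multiplying the laminar ODE by $W$ and integrating by parts, using $W(-1) = 0$ and the top boundary condition, gives
\[
  \int_{-1}^0 \Bigl(\tfrac{1}{2K_p^2} - \tfrac{1}{2H_p^2}\Bigr) W_p \, dp \;=\; -\frac{\rho(0) W(0)^2}{F^2} \;+\; \frac{1}{F^2}\int_{-1}^0 \rho_p W^2\, dp. \qquad (\mathrm{WF})
\]
Forming $2(\mathrm{FF}) + (\mathrm{WF})$ cancels both the boundary contributions and the stratification integral, and using $K_p - H_p = W_p$ the remainder collapses to
\[
  \int_{-1}^0 \frac{W_p^3}{2 K_p^2 H_p^2}\, dp = 0. \qquad (\dagger)
\]

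Finally I would invoke the monotone ordering. Setting $\phi(p) := -\tfrac{1}{2K_p^2(p)} + \tfrac{1}{2H_p^2(p)}$, the laminar ODE reads $\phi_p = F^{-2}\rho_p W$ with $\phi(0) = F^{-2}\rho(0) W(0)$. In case $W \ge 0$, stable stratification $\rho_p \le 0$ forces $\phi_p \le 0$ while $\phi(0) \ge 0$, so $\phi \ge 0$ on $[-1,0]$; since $K_p, H_p > 0$, this is equivalent to $W_p \ge 0$. Combined with $(\dagger)$ the integrand is nonnegative, hence $W_p \equiv 0$, and $W(-1) = 0$ then forces $W \equiv 0$. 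The case $W \le 0$ is analogous with all signs reversed. The main obstacle here is producing the clean cubic identity $(\dagger)$: the boundary term $F^{-2}\rho(0) W(0)^2$ and the stratification integral $F^{-2}\int \rho_p W^2$ both carry the ``adverse'' sign inherited from stable stratification, and they appear in both $(\mathrm{FF})$ and $(\mathrm{WF})$; neither identity alone forces a definite sign for $W_p$, so $W$ could a priori oscillate. It is only in the specific combination $2(\mathrm{FF}) + (\mathrm{WF})$ that these terms cancel and the one-signed cubic structure in $W_p$ emerges, and this cancellation depends crucially on the free-surface top boundary condition, consistent with bores existing in the rigid-lid stratified regime but not here.
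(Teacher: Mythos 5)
Your proposal is correct and follows essentially the same route as the paper: first showing via a translation/compactness argument that $H_\pm$ are laminar solutions with $\flowforce(H_+)=\flowforce(H_-)$, then deriving the cubic identity $\int_{-1}^0 W_p^3/(K_p^2H_p^2)\,dp=0$ by combining the flow-force identity with the one obtained from multiplying the laminar ODE by $W=K-H$ (the paper performs the same cancellation by substituting one identity into the other rather than forming the explicit combination $2(\mathrm{FF})+(\mathrm{WF})$), and finally deducing $K_p\ge H_p$ from the monotonicity of $-\tfrac{1}{2K_p^2}+\tfrac{1}{2H_p^2}$ and its sign at $p=0$. All the individual computations check out, so no gaps.
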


The proof of Theorem~\ref{bores theorem} relies on the following integral identity.
\begin{lemma} \label{bore identity}
  Suppose that $h = K(p)$ is a solution of the height equation \eqref{height equation} with $K \in C^2([-1,0])$ and $K_p > 0$. If $H$ and $K$ are conjugate in that $\flowforce(K) = \flowforce(H)$, then
  \begin{align}
    \int_{-1}^0 \frac{(K_p-H_p)^3}{H_p^2 K_p^2} \, dp = 0.
    \label{auxiliary condition} 
  \end{align}
\end{lemma}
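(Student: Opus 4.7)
The plan hinges on combining two independent identities satisfied by the laminar profile $K$: the flow-force equality $\flowforce(K) = \flowforce(H)$, and the reduced ODE obtained by substituting $h = K(p)$ (so $h_q \equiv 0$) into the height equation \eqref{height equation}. Each of these relations individually carries density-dependent terms (involving $\rho$ or $\rho_p$), whereas the target \eqref{auxiliary condition} is purely geometric; accordingly, the strategy is to extract from both a common expression for the stratification contribution and then cancel the density pieces against one another.

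First I would unpack $\flowforce(K) - \flowforce(H) = 0$ from \eqref{DJ S} with $h_q = 0$. Setting $I(p) := \int_0^p \rho H_p\, dp'$, the difference contains a hydrostatic term $\frac{1}{F^2}\int_{-1}^0 I(K_p - H_p)\, dp$ that I would integrate by parts; its boundary contributions vanish because $I(0) = 0$ and $K(-1) = H(-1) = 0$. Combining the result with the remaining $-\frac{1}{F^2}\int \rho(K-H)K_p\, dp$ term and using the pointwise identity $\frac{1}{2K_p} + \frac{K_p}{2H_p^2} - \frac{1}{H_p} = \frac{(K_p-H_p)^2}{2 K_p H_p^2}$ yields the intermediate relation
\begin{equation*}
  \int_{-1}^0 \frac{(K_p-H_p)^2}{2K_p H_p^2}\, dp
  \;=\; \frac{1}{F^2}\int_{-1}^0 \rho(K-H)(K_p-H_p)\, dp.
\end{equation*}

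Next I would extract a second expression for the right-hand side above using the reduced height equation $\bigl(\tfrac{1}{2H_p^2} - \tfrac{1}{2K_p^2}\bigr)_p = \tfrac{1}{F^2}\rho_p(K-H)$. Multiplying this by $(K-H)$ and integrating by parts over $(-1,0)$, the boundary term at $p=-1$ vanishes, while at $p=0$ the Bernoulli condition (combined with $H(0)=1$) contributes $\tfrac{1}{F^2}\rho(0)(K(0)-1)^2$. In parallel, I would rewrite $\rho(K-H)(K_p-H_p) = \tfrac{\rho}{2}\bigl((K-H)^2\bigr)_p$ and integrate by parts the right-hand side of the intermediate relation, producing the same boundary term with coefficient $\tfrac{1}{2}$ together with $-\tfrac{1}{2F^2}\int \rho_p(K-H)^2\, dp$. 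The $\rho_p$-integrals and Bernoulli boundary contributions then cancel pairwise, leaving
\begin{equation*}
  \frac{1}{F^2}\int_{-1}^0 \rho(K-H)(K_p-H_p)\, dp
  \;=\; \frac{1}{2}\int_{-1}^0 (K_p-H_p)\left(\frac{1}{2H_p^2} - \frac{1}{2K_p^2}\right) dp.
\end{equation*}

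Substituting back and factoring $\tfrac{1}{2H_p^2} - \tfrac{1}{2K_p^2} = \tfrac{(K_p-H_p)(K_p+H_p)}{2H_p^2 K_p^2}$, the resulting equality between the two integrals, placed over the common denominator $4 H_p^2 K_p^2$, collapses to
\begin{equation*}
  \int_{-1}^0 \frac{(K_p-H_p)^2 \bigl[2K_p - (K_p + H_p)\bigr]}{4 H_p^2 K_p^2}\, dp
  \;=\; \frac{1}{4}\int_{-1}^0 \frac{(K_p-H_p)^3}{H_p^2 K_p^2}\, dp
  \;=\; 0,
\end{equation*}
which is precisely \eqref{auxiliary condition}. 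The main subtlety is orchestrating the two integration-by-parts steps so that the Bernoulli boundary term at $p=0$ and the adverse-sign stratification integral $\int \rho_p(K-H)^2\, dp$ appear with matching factors of two in both derivations; once that cancellation is arranged, the remainder is routine algebra.
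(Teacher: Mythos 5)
Your proposal is correct and follows essentially the same route as the paper: the paper likewise combines the flow-force equality (its analogue of \eqref{rearrange me}, whose integrand equals your $\frac{(K_p-H_p)^2}{2K_pH_p^2}$) with the ODE \eqref{K eq} multiplied by $J=K-H$, and integrates $\rho J J_p$ by parts so that the $\rho_p$ integral and the Bernoulli boundary term cancel, leaving the cubic identity. The only difference is cosmetic — you invoke the pointwise algebraic simplification earlier than the paper does.
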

\begin{proof}
  Let $K$ be given as above. Since $K$ is a $q$-independent solution of the height equation, it satisfies the ODE
  \begin{equation}
    -\left( \frac{1}{2K_p^2} \right)_p + \left( \frac{1}{2H_p^2} \right)_p - \frac{1}{F^2} \rho_p (K-H) = 0 \qquad \textrm{in } (-1,0)\label{K eq} 
  \end{equation}
  and vanishes on the bed, and on the top 
  \begin{equation}
    -\frac{1}{2K_p(0)^2} + \frac{1}{2H_p(0)^2} = \frac{1}{F^2} \rho(0)(K(0) - H(0)).\label{K top cond} 
  \end{equation}
  Multiplying \eqref{K eq} by $J:= K-H$ and integrating by
  parts, we find
  \begin{align*}
    0 &= 
    \int_{-1}^0 \left[\left(
    - \frac 1{2K_p^2}
    + \frac 1{2H_p^2} \right)_p J
    - \frac{1}{F^2} \rho_p J^2\right]\, dp \\
    &= 
    \int_{-1}^0 \left(
    \frac 1{2K_p^2}
    - \frac 1{2H_p^2} \right) J_p\, dp
    - \int_{-1}^0 \frac{1}{F^2} \rho_p J^2 \, dp
    + \left(
    - \frac 1{2K_p^2}
    + \frac 1{2H_p^2} \right) J\bigg|_{p=0}.
  \end{align*}
  Using the boundary conditions \eqref{K top cond}, this simplifies to 
  \begin{align}
    \label{eqn:useme}
    \int_{-1}^0 \frac{1}{F^2} \rho_p J^2\, dp &= 
    \int_{-1}^0 \left(
    \frac 1{2K_p^2}
    - \frac 1{2H_p^2} \right) J_p\, dp
    + \frac{1}{F^2}  \rho J^2 \Big|_{p=0}.
  \end{align}

  Since $K$ and $H$ have the same flow force, we can argue as in the proof of Lemma~\ref{second identity lemma} but with $h(q,\placeholder)$ replaced by $K$ to obtain an analogue of \eqref{rearrange me}:
  \begin{align*}
    0
    &= 
    \int_{-1}^0
    \left [
    \left( \frac 1{2K_p^2} - \frac 1{2K_p^2} \right) K_p
    + \left( \frac 1{2K_p^2} + \frac 1{2K_p^2} \right) J_p
    \right ] \, dp
    -\frac{1}{F^2} \int_{-1}^0\rho JJ_p \, dp.
  \end{align*}
  Integrating by parts in the last integral we get
  \begin{align*}
    0
    &=
    \int_{-1}^0
    \left [
    \left( \frac 1{2K_p^2} - \frac 1{2H_p^2} \right) H_p
    + \left( \frac 1{2K_p^2} + \frac 1{2H_p^2} \right) J_p
    \right ] \, dp
    +\frac{1}{2F^2} \int_{-1}^0 \rho_p J^2\, dp
    -\frac{1}{2F^2} \rho J^2 \bigg|_{p=0}.
  \end{align*}
  Substituting \eqref{eqn:useme}, the boundary terms cancel, leaving us
  with
  \begin{align}
    0
    &=
    \int_{-1}^0
    \left [
    \left( \frac 1{2K_p^2} - \frac 1{2H_p^2} \right) H_p
    + \left( \frac 1{2K_p^2} + \frac 1{2H_p^2} \right) J_p
    +\frac 12 \left( \frac 1{2K_p^2} - \frac 1{2H_p^2} \right) J_p 
    \right ] \, dp \nonumber \\
    &=
    \frac 14\int_{-1}^0
    \frac{J_p^3}{H_p^2 K_p^2}
    \, dp
  \end{align}
  as desired.
\end{proof}

In order to apply Lemma~\ref{bore identity} to prove Theorem~\ref{bores theorem}, we need to know that the asymptotic states $H_\pm$ in the statement of the theorem are themselves solutions to the height equation with the expected regularity and flow force, based solely on the pointwise limit in \eqref{bore asymptotics}. This is the content of the following technical lemma.
\begin{lemma}\label{flow force lemma}
  Let $h \in C^2_\bdd(\overline R)$ be a solution of the height equation \eqref{height equation} with $\inf_R h_p > 0$ which is a bore in that \eqref{bore asymptotics} holds for some functions $H_\pm$. Then $H_\pm \in C^2([-1,0])$ are $q$-independent solutions to the height equation \eqref{height equation} and $\flowforce(H_+) = \flowforce(H_-)$.
\end{lemma}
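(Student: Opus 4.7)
The plan is a translation--compactness argument. For sequences $q_n^\pm \to \pm\infty$, set $h_n^\pm(q,p) := h(q + q_n^\pm, p)$; each $h_n^\pm$ again solves \eqref{height equation} by $q$-translation invariance. The hypotheses $h \in C^2_\bdd(\overline R)$ and $\inf_R h_p > 0$ ensure that the quasilinear equation is uniformly elliptic and that the Bernoulli boundary condition on $T$ is uniformly oblique, with coefficients (which are smooth functions of $h$, $h_q$, $h_p$, and the fixed $H$) bounded uniformly in some H\"older norm. Interior and boundary Schauder estimates therefore yield uniform bounds on $\|h_n^\pm\|_{C^{2+\alpha}(K)}$ for every compact $K\subset\overline R$. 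A diagonal Arzel\`a--Ascoli extraction produces a subsequence converging in $C^2_\loc(\overline R)$ to a limit $h_\pm^*$. The pointwise hypothesis \eqref{bore asymptotics} forces $h_\pm^*(q,p)=H_\pm(p)$ for every $(q,p)$, so $H_\pm$ is automatically independent of $q$ and lies in $C^2([-1,0])$. Passing to the limit in \eqref{height equation} then shows that $H_\pm$ solves the $q$-independent height equation together with the boundary conditions $H_\pm(-1)=0$ and the Bernoulli condition at $p=0$.

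For the flow-force equality, recall from Section~\ref{flowforce section} that $\flowforce(h(q,\placeholder))$ is constant in $q$ along any solution of \eqref{height equation}; call this common value $\flowforce_0$. The $C^2_\loc$ convergence above, specialized to the slice $\{q=0\}$, supplies uniform convergence of $h(q_n^\pm,\placeholder)$, $h_p(q_n^\pm,\placeholder)$, and $h_q(q_n^\pm,\placeholder)$ on $[-1,0]$, the last of these to zero. Passing to the limit under the integral in \eqref{DJ S} therefore gives $\flowforce(H_+) = \flowforce(H_-) = \flowforce_0$, as required.

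The main obstacle is the compactness step: the pointwise convergence in \eqref{bore asymptotics} alone is far too weak to pass to the limit either in the PDE or in the flow-force integral, and one really needs $C^2_\loc$ convergence of the translates. Securing this hinges on having Schauder estimates with constants uniform in $n$, which is where the two standing assumptions $h\in C^2_\bdd(\overline R)$ and $\inf_R h_p>0$ play an indispensable role --- the latter preventing degeneracy of both the ellipticity constant and the obliqueness constant in the limit. Once the limit $H_\pm$ has been identified as a $C^2$ function on $[-1,0]$, the remaining assertions reduce to routine limit-passage arguments.
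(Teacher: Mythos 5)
Your proof is correct and follows essentially the same route as the paper's: translate toward $\pm\infty$, extract a locally convergent subsequence, identify the limit with $H_\pm$ via \eqref{bore asymptotics}, and pass to the limit in the equation and in the flow-force integral. The only (harmless) difference is that you upgrade to uniform $C^{2+\alpha}$ bounds via Schauder estimates to get $C^2_\loc$ convergence, whereas the paper extracts only $C^{1+1/2}_\loc$ convergence and first identifies the limit as a weak solution of the divergence-form equation before invoking its $C^2$ regularity.
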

\begin{proof}
  Consider the left-translated sequence $\{h_n\}$ defined by
  $h_n:= h(\placeholder+n,\placeholder)$. Via a standard argument, we can extract a subsequence so that $h_n \to h_+$ in $C^{1+1/2}_\loc(\overline R)$ for some $h_+ \in C^2_\bdd(\overline R)$. Letting $\inf_R h_p =: \delta > 0$, we easily check that $\inf_R (h_+)_p \ge \delta$ and $\flowforce(h_+) = \flowforce(H)$. Moreover, $h_+$ solves the (divergence form) height equation \eqref{height equation} in the weak sense. Since $h_+$ has the additional regularity $h_+ \in C^2_\bdd(\overline R)$, it is also a classical solution. Finally, comparing $h_n \to h_+$ in $C^{1+1/2}_\loc(\overline R)$ with \eqref{bore asymptotics}, we see that $h_+(q,p) = H_+(p)$, which completes the proof for $H_+$. Arguing similarly with right translations we obtain the same results for $H_-$.
\end{proof}

\begin{proof}[Proof of Theorem~\ref{bores theorem}]
  Set $K = H_+$, and assume first that $K \ge H$. By Lemma~\ref{flow force lemma}, we know that $K \in C^2([-1,0])$ solves the height equation \eqref{height equation} with $K_p > 0$ and that $\flowforce(K) = \flowforce(H)$. In particular, as in the proof of Lemma~\ref{bore identity}, $K$ satisfies \eqref{K eq} and \eqref{K top cond}. A simple consequence of \eqref{K eq} is that  
  \begin{equation}
    \left( -\frac{1}{2K_p^2} + \frac{1}{2H_p^2} \right)_p = \frac{1}{F^2} \rho_p (K-H) \leq 0,
  \end{equation}
  hence the quantity in parentheses on the left-hand side above is nonincreasing in $p$. From the boundary condition \eqref{K top cond} on $T$, 
  we then have
  \begin{equation}
    -\frac{1}{2K_p^2} + \frac{1}{2H_p^2} \geq \frac{1}{F^2} \rho(0)(K(0) -H(0)) \geq 0
  \end{equation}
  for all $p$ and hence 
  $K_p \geq H_p$ on $[-1,0]$.
  But now Lemma~\ref{bore identity} implies that $K_p - H_p \equiv 0$. Since $H(-1) = K(-1) = 0$, it follows that $H \equiv K$. A similar argument shows that the same holds true if $K \leq H$.
\end{proof}

The above proof is built on two facts. First, since $K_p, H_p > 0$, the equation $J = K - H$ satisfies is elliptic. Thus  if $J$ is nonnegative (or nonpositive) it is a supersolution (or subsolution).
For the free surface problem, the boundary condition on $\{ p = 0\}$ then allows us to infer that $J_p$ cannot change signs.  By contrast, for a channel flow, the boundary condition at the top will be inhomogeneous Dirichlet, which does not permit us to draw the same conclusion.  Indeed, by Rolle's theorem, $J_p$ \emph{must} change signs in the interior.  The second building block is the integral identity \eqref{auxiliary condition}.
An analogous identity was discovered by Lamb and Wan \cite[Appendix A]{lamb1998conjugate} for solitary stratified flows in a channel with uniform velocity at infinity, and for general stratified solitary waves in a channel by Lamb \cite[Appendix A]{lamb2000three}.  Again, this does not lead to a contradiction because  $J_p$ is not single-signed in the regimes studied by these authors.

A look at the proof of Theorem~\ref{bores theorem} shows that we do not need to assume $H_- = H$, but only that $\flowforce(H)=\flowforce(H_-)=\flowforce(H_+)$ and that either $H_+ \ge H$ or $H_+ \le H$, and similarly for $H_-$. This leads to the following corollary, which we will need in Section~\ref{global bifurcation section}. 
\begin{corollary}\label{bores corollary}
  Suppose that $h \in C^2_\bdd(\overline R)$ is a bore solution of the height equation \eqref{height equation} with $\inf_R h_p > 0$, and let $H_\pm$ be as in \eqref{bore asymptotics}. If $\flowforce(H_+)=\flowforce(H_-)=\flowforce(H)$, and if 
  \begin{align*}
    \textup{$H_+ \ge H$ on $[-1,0]$}
    \quad  \textup{ or } \quad
    \textup{$H_+ \le H$ on $[-1,0]$},
  \end{align*}
  then $H_+ \equiv H$. The same result holds with $H_+$ replaced by $H_-$.
\end{corollary}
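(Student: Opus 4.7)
The plan is to observe that the proof of Theorem~\ref{bores theorem} never really used the hypothesis $H_-=H$ directly, only the consequence that $\flowforce(H_+) = \flowforce(H)$ (via Lemma~\ref{flow force lemma}) together with the one-sided comparison $H_+ \ge H$ or $H_+ \le H$. Since Corollary~\ref{bores corollary} hands us the flow force equality as an explicit assumption, we can run the same argument with $K := H_+$ playing the role of the ``conjugate'' flow compared to $H$.

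More precisely, first I would invoke Lemma~\ref{flow force lemma} to conclude that $K := H_+ \in C^2([-1,0])$ is a $q$-independent solution of the height equation \eqref{height equation}, with $K(-1) = 0$ and $K_p > 0$ on $[-1,0]$ (the latter inherited from $\inf_R h_p > 0$). The assumption $\flowforce(H_+) = \flowforce(H)$ then places $K$ and $H$ in the setting of Lemma~\ref{bore identity}, yielding the identity
\[
\int_{-1}^0 \frac{(K_p - H_p)^3}{H_p^2 K_p^2}\, dp = 0.
\]

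Next, assume without loss of generality that $H_+ \ge H$ on $[-1,0]$, so that $J := K - H \ge 0$. Since $K$ is a $q$-independent solution, it satisfies \eqref{K eq} and \eqref{K top cond}. The sign condition $\rho_p \le 0$ together with $J \ge 0$ applied to \eqref{K eq} shows that $p \mapsto -1/(2K_p^2) + 1/(2H_p^2)$ is nonincreasing, and \eqref{K top cond} combined with $J(0) \ge 0$ shows this quantity is nonnegative at $p=0$, hence nonnegative on all of $[-1,0]$. Therefore $K_p \ge H_p$ pointwise, i.e., $J_p \ge 0$. Plugging this into the integral identity from Lemma~\ref{bore identity} forces $J_p \equiv 0$, and then $K(-1) = H(-1) = 0$ gives $K \equiv H$, that is, $H_+ \equiv H$. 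The case $H_+ \le H$ is handled identically with all inequalities reversed, and the statement for $H_-$ follows by reflecting $q \mapsto -q$ (or by re-running the same argument with $K := H_-$).

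There is no serious obstacle here beyond bookkeeping; the substantive analytical content, namely Lemmas~\ref{bore identity} and~\ref{flow force lemma} and the monotonicity argument exploiting the free surface condition \eqref{K top cond} with the ``good'' sign, has already been established. The only point that warrants a careful check is that $K_p > 0$ and $K \in C^2$ follow from Lemma~\ref{flow force lemma} and the uniform lower bound $\inf_R h_p > 0$, which indeed they do, so that \eqref{K eq} holds in the classical pointwise sense and the maximum-principle-flavored monotonicity step is legitimate.
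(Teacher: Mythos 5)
Your proposal is correct and is exactly the argument the paper intends: the paper states the corollary by observing that the proof of Theorem~\ref{bores theorem} only uses $H_-=H$ through the flow-force equality supplied by Lemma~\ref{flow force lemma}, and your write-up faithfully re-runs that proof (regularity and $K_p>0$ from Lemma~\ref{flow force lemma}, monotonicity of $-1/(2K_p^2)+1/(2H_p^2)$ from \eqref{K eq}, the sign at $p=0$ from \eqref{K top cond}, and then Lemma~\ref{bore identity}) with the flow-force hypothesis taken as given. No gaps.
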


Theorem~\ref{bores theorem} can also be generalized to the case of a free surface flow with multiple layers.  These are not the main subject of this paper, but given that many authors have performed conjugate flow analysis for multi-fluid flows, we wish to emphasize that the nonexistence result above is not a consequence of the smoothness of $\rho$.  

Let us briefly recall the governing equations for the layer-wise continuous stratification; a more thorough discussion is given, for example, in \cite{chen2016continuous}.  The fluid domain is assumed to be partitioned into finitely many immiscible strata
\[
  \label{def Omega_i} {\Omega} = \bigcup_{i=1}^N {\Omega_i}, \quad \Omega_i := \{ (x, y) \in \Omega : \eta_{i-1}(x) < y < \eta_i(x) \} 
\]
with the density $\varrho$ smooth in each layer:
\begin{equation}
  \varrho \in  C^{1+\alpha}(\overline{\Omega_1}) \cap \cdots \cap C^{1+\alpha}(\overline{\Omega_N}).\label{regularity weak rho} 
\end{equation}
The indexing convention is  that $\eta_0 := -1$, $\eta_N := \eta$ and $\Omega_i$ lies beneath $\Omega_{i+1}$ for $i = 1, \ldots, N-1$. Euler's equation will hold in the strong sense in each interior layer and the pressure is assumed to be continuous in $\overline{\Omega}$.  

It is possible even in this setting to perform the Dubreil-Jacotin transform. Letting $R_i := \{ (q,p) \in \R \times (p_{i-1}, p_i) \}$ be the image of $\Omega_i$, one finds that the height function $h$ will have the regularity 
\begin{equation}
  h \in C^{0+\alpha}(\overline{R}) \cap C^{1+\alpha}(\overline{R_1}) \cap \cdots \cap C^{1+\alpha}(\overline{R_N})\label{regularity weak h} 
\end{equation}
and solve the height equation \eqref{height equation} in the sense of distributions.  Notice that $\rho_p$ will include Dirac $\delta$ masses on the internal interfaces; the continuity of the pressure precisely guarantees that the quantity on the left-hand side of the height equation is equal to $0$ in the distributional sense.  

In particular, a laminar flow $K$ will satisfy \eqref{K eq} in the strong sense on each $[p_i, p_{i+1}]$, along with top boundary condition \eqref{K top cond}.  The continuity of the pressure translates to the requirement that 
\begin{equation}
  \label{weak K jump cond}
  \jump{{1\over 2K^2_p} - {1\over 2H^2_p}}_i +  \frac 1{F^2}\jump{\rho}_i (K-H) = 0 \qquad \textrm{on } \{p = p_i\},
\end{equation} 
for $i = 1, \ldots, N-1$.  Here, $\jump{f}_i := f(p_i+) - f(p_i - )$ denotes the jump across the $i$-th layer.
One can see quite easily that \eqref{weak K jump cond} is equivalent to viewing \eqref{K eq} as being satisfied in the sense of distributions.  

A bore in this context means that \eqref{bore asymptotics} holds for distinct laminar flows $H_\pm$ with regularity \eqref{regularity weak h}. 

\begin{corollary}\label{weak bore corollary}  
  Suppose that $h$ has the regularity \eqref{regularity weak h} and is a distributional solution of the height equation \eqref{w equation} for an Eulerian density as in \eqref{regularity weak rho}.  If $h$ is a bore with $H_\pm$ as in \eqref{bore asymptotics}, $\inf_{R_i }h_p > 0$ for $i = 1, \ldots, N$,  and 
  \begin{equation}
    H_+ \geq H_- = H ~\textrm{on } [-1,0] 
    \quad \textrm{or} \quad 
    H_+ \leq H_- = H ~ \textrm{on } [-1,0],
  \end{equation} 
  then $H_+ = H_- = H$. 
\end{corollary}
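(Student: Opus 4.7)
The plan is to follow the proof of Theorem~\ref{bores theorem} line by line, treating the Dirac masses that $\rho_p$ acquires at the internal interfaces $\{p = p_i\}$ as additional boundary contributions governed by the jump conditions \eqref{weak K jump cond}. Throughout, the analogues of Lemmas~\ref{flow force lemma} and \ref{bore identity} will do almost all of the work; the only new ingredient is a careful accounting of interface terms.

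First, I would establish a weak analogue of Lemma~\ref{flow force lemma}: the asymptotic state $H_+$ enjoys the piecewise regularity \eqref{regularity weak h}, solves the height equation distributionally (i.e.\ the strong form of \eqref{K eq} within each layer together with \eqref{weak K jump cond} at each interface), and satisfies $\flowforce(H_+) = \flowforce(H)$. The argument is the obvious adaptation of the smooth case: extract a subsequential limit of the left-translates $h(\placeholder + n, \placeholder)$ using interior Schauder estimates on each $R_i$ and a uniform bound at the interfaces coming from \eqref{regularity weak h}, then transfer the pointwise limit \eqref{bore asymptotics} and the layerwise bound $\inf_{R_i} h_p > 0$ to this limit; $\flowforce$ and the jump conditions are preserved.

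Next, set $K := H_+$ and $J := K - H$. I would rederive the identity of Lemma~\ref{bore identity},
\begin{equation*}
  \sum_{i=1}^N \int_{p_{i-1}}^{p_i} \frac{(K_p - H_p)^3}{H_p^2 K_p^2}\, dp = 0,
\end{equation*}
by multiplying the strong form of \eqref{K eq} on each layer by the continuous function $J$ and integrating by parts, and in parallel reorganizing $\flowforce(K) = \flowforce(H)$ the way that \eqref{rearrange me} is obtained. At each interior interface $p_i$ the integration by parts contributes a boundary term proportional to $\jump{-1/(2K_p^2) + 1/(2H_p^2)}_i J(p_i)$; by \eqref{weak K jump cond} these cancel precisely the Dirac contributions $\jump{\rho}_i J(p_i)^2/F^2$ coming from $\int \rho_p J^2\, dp$, so the bulk identity goes through verbatim. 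Finally, assuming the first alternative $K \ge H$, I would argue pointwise monotonicity: in the interior of each layer \eqref{K eq} gives $\bigl(-1/(2K_p^2) + 1/(2H_p^2)\bigr)_p = F^{-2}\rho_p J \le 0$ by stable stratification, while at each interface \eqref{weak K jump cond} gives $\jump{-1/(2K_p^2) + 1/(2H_p^2)}_i = F^{-2} \jump{\rho}_i J(p_i) \le 0$ since $\jump{\rho}_i \le 0$. Hence $-1/(2K_p^2) + 1/(2H_p^2)$ is globally nonincreasing on $[-1,0]$; its value at $p = 0$ equals $F^{-2}\rho(0)J(0) \ge 0$ by the top boundary condition; so the quantity is nonnegative throughout, which is exactly $K_p \ge H_p$. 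Combined with the integral identity this forces $K_p = H_p$ a.e., and then $K(-1) = H(-1) = 0$ yields $K \equiv H$. The second alternative $K \le H$ is handled symmetrically.

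The main obstacle is the bookkeeping in the middle step: one must check that every Dirac mass introduced by $\rho_p$ at an interface is precisely compensated by an interface term produced by integration by parts, so that the clean identity of Lemma~\ref{bore identity} survives. This is exactly what the jump condition \eqref{weak K jump cond} encodes, and it is essentially the statement that \eqref{K eq} holds distributionally; nevertheless, care is required because the natural weak formulation tests against functions in $C^{0+\alpha}(\overline R)$ and one must use the continuity of $J$ at each $p_i$ afforded by \eqref{regularity weak h} to justify evaluating $J(p_i)^2$ in the Dirac terms.
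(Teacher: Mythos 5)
Your proposal is correct and follows essentially the same route as the paper: the paper's proof likewise notes that $K\ge H$ makes $\rho_p(K-H)$ nonpositive as a distribution (giving $K_p\ge H_p$ by the same monotonicity argument), and that the interface boundary terms arising in the flow-force computation cancel exactly via the jump conditions \eqref{weak K jump cond}, recovering the identity \eqref{auxiliary condition}. Your explicit bookkeeping of the Dirac masses and the weak analogue of Lemma~\ref{flow force lemma} simply fills in details the paper leaves implicit.
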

\begin{proof}  
  This follows from a straightforward adaptation of the proof of Theorem~\ref{bores theorem} and Lemma~\ref{bore identity}. Let $K = H_+$, and assume that $K \ge H$. The inequality $K_p \ge H_p$ holds by the same reasoning, as $K \geq H$ implies that $\rho_p (K-H)$ is nonpositive as a distribution.  The computations of $\flowforce$ can be carried out as before, only there will now be boundary terms on the interior interfaces when integrating by parts.  However, the jump conditions \eqref{weak K jump cond} cause them to cancel out completely, and so we arrive again at \eqref{auxiliary condition}.
\end{proof}

\subsection{Symmetry} \label{sec:symmetry}

Next, we turn our attention to the question of even symmetry for solitary stratified water waves.  The tool we use for this is the classical moving planes method introduced by Alexandroff \cite{alexandrov1962spheres} and then further developed by Serrin \cite{serrin1971symmetry}, Gidas, Nirenberg, and Ni~\cite{gidas1979symmetry}, Berestycki and Nirenberg \cite{berestycki1988monotonicity}, and many others (see also \cite{berestycki1991method} for a survey).  Specifically, we use a version due to C.~Li~\cite{li1991monotonicity} that treats fully nonlinear problems on cylindrical domains, and base our approach on the work of Maia~\cite{maia1997symmetry}, who applied Li's ideas to study the case of multiple-layered channel flows with uniform velocity at infinity.

\begin{theorem}[Symmetry] \label{symmetry theorem} 
  Let $(w,F) \in C^2_\bdd(\overline{R}) \times \R$ be a solution of equation \eqref{w equation} that is a wave of elevation 
  \begin{equation} 
    w> 0 \qquad \textup{in } R \cup T, \label{symmetry:wave of elevation} 
  \end{equation}
  supercritical, and satisfies the upstream (or downstream) condition
  \begin{equation}\label{symmetry: w localized}
    w, Dw, D^2 w \to 0 \quad \text{uniformly as } \quad q \to -\infty ~(\text{or } +\infty). \\
  \end{equation}
  Then, after a translation, $w$ is a symmetric and monotone solitary; i.e., there
  exists $q_* \in \R$ such that $q \mapsto w(q,\cdot)$ is even about $\{q = q_*\}$ and 
  \begin{equation}  
    {\pm w_q} > 0 \qquad \textrm{for } {\pm(q_*-q)} > 0, ~ -1 < p \leq 0. \label{symmetry: monotonicity} 
  \end{equation}
\end{theorem}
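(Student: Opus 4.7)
The proof is by the method of moving planes, following the approach of Li~\cite{li1991monotonicity} and Maia~\cite{maia1997symmetry} for cylindrical domains, with two new wrinkles: the adverse zeroth-order coefficient coming from density stratification, and the fact that decay is only assumed on one end. Assume \eqref{symmetry: w localized} holds as $q \to -\infty$ (the other case is symmetric). For each $\lambda \in \R$, introduce the reflected function $w^\lambda(q,p) := w(2\lambda - q, p)$ and the difference $v^\lambda := w^\lambda - w$, considered on the left half-strip $\Sigma_\lambda := R \cap \{q < \lambda\}$. Subtracting the height equations \eqref{w equation} for $w^\lambda$ and $w$ and applying the mean value theorem, one obtains a linear uniformly elliptic equation for $v^\lambda$ in $\Sigma_\lambda$, together with an oblique derivative condition on $T \cap \{q \leq \lambda\}$ and the homogeneous data $v^\lambda = 0$ on the bed and on $\{q = \lambda\}$. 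The hypothesis $w \to 0$ as $q \to -\infty$, together with $w > 0$, yields $\liminf_{q \to -\infty} v^\lambda(q,p) \geq 0$ uniformly in $p$.

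The zeroth-order coefficient of the interior equation for $v^\lambda$ is exactly $-F^{-2}\rho_p \geq 0$, which violates the hypotheses of the classical maximum principle. To handle this, I would follow the strategy of Section~\ref{fredholm section} and introduce the positive function $\Phia(p)$ from Lemma~\ref{lem Phia}, whose existence hinges on the supercriticality $F > \Fcr$. Writing $v^\lambda =: \Phia \tilde v^\lambda$ and mimicking the calculation that produced \eqref{v equation}, the defining equation \eqref{def Phia} for $\Phia$ cancels the leading zeroth-order contribution, the remaining coefficients arising from the difference $w^\lambda - w$ and the boundedness of $w$ are controllable, and the boundary condition on $T$ acquires a zeroth-order coefficient of the correct sign by \eqref{Phia top}. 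The strong maximum principle and Hopf boundary point lemma on the unbounded cylindrical domain $\Sigma_\lambda$ therefore apply to $\tilde v^\lambda$.

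With these tools in hand, the moving planes procedure runs along classical lines. A starting step uses the decay of $w$ at $-\infty$ (which forces $w$ to be uniformly small on $\Sigma_\lambda$ when $\lambda$ is sufficiently negative) together with the boundary conditions above to yield $v^\lambda \geq 0$ in $\Sigma_\lambda$ via the maximum principle. One then defines
\begin{equation*}
  \lambda_* := \sup\{\lambda \in \R : v^\mu \geq 0 \text{ in } \Sigma_\mu \text{ for every } \mu \leq \lambda\},
\end{equation*}
which satisfies $v^{\lambda_*} \geq 0$ by continuity. Applying the strong maximum principle and Hopf's lemma to $\tilde v^{\lambda_*}$ produces the classical dichotomy: either $v^{\lambda_*} \equiv 0$, in which case $q_* := \lambda_*$ is the sought axis of symmetry, or $v^{\lambda_*}$ is strictly positive in the interior with $v^{\lambda_*}_q < 0$ on $\{q = \lambda_*\}$; the latter case, combined with a continuity argument that uses the decay of $w$ to control the tail of $\Sigma_\lambda$, allows one to extend the comparison slightly past $\lambda_*$, contradicting the maximality of $\lambda_*$. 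Hence $v^{\lambda_*} \equiv 0$.

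The chief obstacle lies in showing $\lambda_* < +\infty$, as one-sided decay does not a priori exclude this. If $\lambda_* = +\infty$, then $v^\lambda \geq 0$ for every $\lambda$ forces $w$ to be nondecreasing in $q$. Combined with $w > 0$, $w \in C^2_\bdd(\overline R)$, and $w \to 0$ at $-\infty$, this produces a pointwise limit $w(q, p) \to L(p) \geq 0$ as $q \to +\infty$ with $L \not\equiv 0$ (since $w$ is nontrivial). The associated height function $h = w + H$ would then be a bore with asymptotic profiles $H_+ := H + L \geq H = H_-$, which is forbidden by Theorem~\ref{bores theorem}. Hence $\lambda_* \in \R$, giving the axis of symmetry $q_* = \lambda_*$. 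The strict monotonicity \eqref{symmetry: monotonicity} follows by applying the strong maximum principle and Hopf's lemma to $\tilde v^\lambda$ for each $\lambda < q_*$: the alternative $v^\lambda \equiv 0$ is excluded since it would provide a second axis of symmetry and force $w \equiv 0$ by periodicity and decay, and hence strict positivity in the interior together with $v^\lambda_q < 0$ on $\{q = \lambda\}$ translates into $w_q > 0$ there; evenness then gives the corresponding inequality for $q > q_*$.
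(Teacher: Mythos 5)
Your overall strategy matches the paper's: moving planes from $q=-\infty$, a change of dependent variable by a positive Sturm--Liouville solution to repair the adverse zeroth-order sign (possible since $F>\Fcr$), and the nonexistence of monotone bores (Theorem~\ref{bores theorem}) to rule out $\lambda_*=+\infty$. But there are two genuine gaps. The first concerns your choice of auxiliary function: $\Phia$ from Lemma~\ref{lem Phia} is not sufficient. The equation satisfied by $v^\lambda=h^\lambda-h$ has coefficients depending on $h$ and $h^\lambda$, not on $H$; after substituting $v^\lambda=\Phia u^\lambda$, the zeroth-order coefficient is $\bigl[(\Phia_p/H_p^3)_p-F^{-2}\rho_p\Phia\bigr]+\mathcal{O}(\delta)=0+\mathcal{O}(\delta)$, where $\delta$ controls $\|h-H\|_{C^2}$ near $q=-\infty$, and the $\mathcal{O}(\delta)$ remainder has no sign. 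At an interior negative minimum one needs this coefficient to be \emph{strictly} negative to reach a contradiction, so a merely vanishing leading term cannot absorb the perturbation. This is exactly why the paper replaces $\rho_p$ by $\rho_p-\varepsilon$ in the ODE (the function $\Psi$ of \eqref{aux problem Psi}), making the transformed coefficient $-\varepsilon F^{-2}\Psi+\mathcal{O}(\delta)<0$. The fix is small but necessary.

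The second gap is more serious: you never treat the corner where the reflection line $\{q=\lambda\}$ meets the free surface $\{p=0\}$. When you attempt to continue past $\lambda_*$, the negative minima of $v^{\lambda_k}$ (which the interior maximum principle pushes onto the top boundary) may accumulate at the corner $(\lambda_*,0)$; there the interior-ball condition fails, so the Hopf lemma is unavailable and the oblique boundary condition yields no contradiction. The paper disposes of this case by showing that $v^{\lambda_*}$ together with all of its first and second derivatives vanishes at $(\lambda_*,0)$ --- using the $q$-differentiated Bernoulli condition and the PDE to solve for $v^{\lambda_*}_{qq}$ --- and then invoking the Serrin edge point lemma. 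The same issue recurs in your final monotonicity step: Hopf applied on $\{q=\lambda\}$ only gives $w_q>0$ for $-1<p<0$, whereas \eqref{symmetry: monotonicity} demands it up to and including $p=0$; the surface endpoint again requires the edge point argument. Without these corner analyses, neither the identification of $\lambda_*$ as the axis of symmetry nor the strict monotonicity on $T$ is established.
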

\begin{remark}\label{remark symmetry}
  It is worth emphasizing that Theorem~\ref{symmetry theorem} only assumes that $w \to 0$ as $q \to -\infty$ instead of the much stronger condition $w \in C^2_0(\overline R)$ which we impose elsewhere, and which is the standard hypothesis for moving planes arguments. This is in line with the hypotheses of Maia's symmetry result \cite{maia1997symmetry} for channel flows, which states that either there is an axis of symmetry or the solution is monotone in the entire strip. 
  We are able to rule out the second possibility in our setting using Theorem~\ref{bores theorem} on the nonexistence of monotone bores.
\end{remark}
\begin{remark}
  While Theorem~\ref{symmetry theorem} is of independent interest, we note that it is used in the proof of Theorem~\ref{main existence theorem} only in so far as it permits us to infer that the small-amplitude waves of elevation constructed in Section~\ref{small-amplitude section} are in fact monotone.
\end{remark}

To apply the moving plane method, we consider the usual reflected functions  
\[ h^\lambda(q,p) := h(2\lambda-q, p),\]
where $\{q=\lambda\}$ is the axis of reflection. We let $v^\lambda$ denote the difference
\[ v^\lambda := h^\lambda - h.\]
Thus $\{q=q_*\}$ is an axis of even symmetry if and only if $v^{q_*}$ vanishes identically.  We will work with the $\lambda$-dependent sets 
\begin{equation}
  R^\lambda := \{ (q,p) \in R : q < \lambda \}, 
  \quad 
  T^\lambda := \{ (q, 0) : q < \lambda \}, 
  \quad 
  B^\lambda := \{ (q,-1) : q < \lambda\}.
\end{equation}

If $h$ is a solution of the height equation \eqref{height equation}, then for each $\lambda$, $v^\lambda$ solves the PDE
\begin{equation}\label{difference equation v}
  \left\{ 
  \begin{alignedat}{2}
    \mathscr{L} v^{\lambda} &= 0 &\qquad& \textrm{in } R^\lambda, \\
    \mathscr{B} v^\lambda &= 0 && \textrm{on } T^\lambda, \\
    v^\lambda &= 0 && \textrm{on } B^\lambda,
  \end{alignedat}
  \right.
\end{equation}
where $\mathscr{L}$ is defined as 
\begin{equation}\label{defn L}
  \begin{split}
    \mathscr{L} := & {1\over h^\lambda_p}\p_q^2 - {2h^\lambda_q \over (h^\lambda_p)^2}\p_q\p_p + {1+ (h^\lambda_q)^2 \over (h^\lambda_p)^3}\p_p^2 + {h_{pp}(h^\lambda_q + h_q) - 2h^\lambda_p h_{pq} \over (h^\lambda_p)^3} \p_q \\
    & + \left( \beta(-p) -  \frac{1}{F^2} \rho_p h \right)\left[ (h^\lambda_p)^2 + h^\lambda_p h_p + h^2_p \right]  \frac{1}{(h^\lambda_p)^3} \p_p - \frac{1}{F^2} \rho_p,
  \end{split}
\end{equation}
and $\mathscr{B}$ is given by
\begin{equation}\label{defn B}
  \mathscr{B} := {h^\lambda_q + h_q \over 2h^2_p} \p_q - {(h^\lambda_p + h_p)(1+(h^\lambda_q)^2) \over 2h^2_p (h^\lambda_p)^2} \p_p + \frac{1}{F^2} \rho.
\end{equation}
See \cite{walsh2009symmetry} for the details of the calculation.  Indeed, from \cite[Lemma~1]{walsh2009symmetry} we know that $\mathscr{L}$ is uniformly elliptic. 

As in Section~\ref{fredholm section}, the coefficients in $\mathscr{L}$ and $\mathscr{B}$ may have ``bad'' signs which do not satisfy the hypotheses of the maximum principle (cf.\ Theorem~\ref{max principle}). Since the maximum principle is the main tool in classical moving planes arguments, this is a major obstacle. For supercritical waves, we can overcome this obstacle  by using a slight modification of the function $\Phi$ defined in \eqref{def Phi}. Fix $0 < \varepsilon \ll 1$ and let $\Psi = \Psi(p; F, \varepsilon)$ be 
the solution of the initial value problem
\begin{equation} 
  \label{aux problem Psi} 
  \left( \frac{\Psi_p}{H_p^3} \right)_p - \frac{1}{F^2} (\rho_p- \varepsilon) \Psi = 0 \textrm{ in } (-1, 0), 
  \qquad \Psi(-1) = \varepsilon, ~ \Psi_p(-1) = 1.
\end{equation}
\begin{lemma}\label{lem Psi}
  If $F > \Fcr$, then, for $\varepsilon > 0$ sufficiently small,
  \begin{align}
    \label{Psi pos}
    \Psi > 0 \textup{ for } {-1} < p \le 0,
    \qquad 
    \Psi_p > 0 \textup{ for } {-1} \le p \le 0,
  \end{align}
  and 
  \begin{align}
    \label{sign robin Psi}
    -\frac{\Psi_p}{H_p^3} + \frac {1}{F^2}\rho \Psi < 0 
    \textup{ on } p=0.
  \end{align}
  \begin{proof}
    The proof is identical to the proof of Lemma~\ref{lem Phia}.
  \end{proof}
\end{lemma}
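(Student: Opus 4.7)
The plan is to mimic, essentially verbatim, the proof of Lemma~\ref{lem Phia}: treat $\Psi(\,\cdot\,;F,\varepsilon)$ as a small perturbation (in $\varepsilon$) of the function $\Phi(\,\cdot\,;1/F^2)$ studied in Section~\ref{sec SL}, and then propagate the strict inequalities supplied by Lemma~\ref{lem_minimality}. The crucial observation is that setting $\varepsilon=0$ in \eqref{aux problem Psi} collapses both the coefficient $\rho_p-\varepsilon$ and the initial value $\Psi(-1)$, reducing the IVP to exactly \eqref{def Phi} with $\mu=1/F^2$. Hence $\Psi(\,\cdot\,;F,0)=\Phi(\,\cdot\,;1/F^2)$.

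Since $F>\Fcr$ forces $\mu=1/F^2<\mucr$, Lemma~\ref{lem_minimality}\ref{lem_minimality_pos} provides a uniform lower bound $\Phi_p(\,\cdot\,;1/F^2)\ge\delta>0$ on the compact interval $[-1,0]$, while Lemma~\ref{lem_minimality}\ref{lem_minimality_qprime} supplies the strict inequality
\[
\qprime\!\bigl(1/F^2\bigr)\;=\;-\frac{\Phi_p(0;1/F^2)}{H_p^3(0)}+\frac{1}{F^2}\rho(0)\Phi(0;1/F^2)\;<\;0.
\]
Standard continuous dependence for linear second-order ODEs (applied to the pair $(\Psi,\Psi_p)$, viewed as a solution of a first-order system whose coefficients and initial data depend smoothly on $\varepsilon$) then shows that $\varepsilon\mapsto(\Psi,\Psi_p)$ is continuous into $C([-1,0])\times C([-1,0])$. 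Consequently, for $\varepsilon>0$ sufficiently small, $\Psi_p\ge\delta/2$ on $[-1,0]$ (giving the second inequality in \eqref{Psi pos}), and $-\Psi_p(0)/H_p^3(0)+(1/F^2)\rho(0)\Psi(0)<0$ (giving \eqref{sign robin Psi}).

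Finally, to obtain the first inequality in \eqref{Psi pos}, I integrate $\Psi_p$ from $p=-1$: since $\Psi(-1)=\varepsilon>0$ and $\Psi_p>0$ on $[-1,0]$, we get $\Psi(p)=\varepsilon+\int_{-1}^{p}\Psi_p(s)\,ds>0$ for all $p\in[-1,0]$, which in particular yields $\Psi>0$ on $(-1,0]$ as claimed. No step poses a real obstacle; the only thing worth double-checking is that both the perturbation of the coefficient ($\rho_p\rightsquigarrow\rho_p-\varepsilon$) and the perturbation of the initial datum ($0\rightsquigarrow\varepsilon$) are jointly covered by the same continuity argument, which they are because the standard well-posedness theorem for the linear IVP handles continuous dependence on all data simultaneously.
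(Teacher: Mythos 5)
Your proposal is correct and follows essentially the same route as the paper, which simply invokes the proof of Lemma~\ref{lem Phia}: set $\varepsilon=0$ to recover $\Phi(\placeholder;1/F^2)$, use Lemma~\ref{lem_minimality} with $1/F^2<\mucr$ to get the uniform bound $\Phi_p\ge\delta$ and the strict sign $\qprime(1/F^2)<0$, perturb by continuous dependence, and integrate $\Psi_p$ from the nonnegative initial value to get $\Psi>0$. Your explicit remark that the perturbation now affects both the coefficient and the initial datum is the only (minor) point where this lemma differs from Lemma~\ref{lem Phia}, and you handle it correctly.
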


As in the discussion before Lemma~\ref{strong invertibility lemma}, the change of variables $v =: \Psi u$ transforms the linear elliptic equation $\F_w(0,F) v = (f_1,f_2)$ into an equation for $u$ whose zeroth order coefficients have the 
correct sign for the applications of the maximum principle and Hopf maximum principle (Theorem~\ref{max principle}). The change of variables $v =: \Phi u$ used in Section~\ref{fredholm section} makes the zeroth order coefficient in the elliptic operator vanish; here we define $\Psi$ in a slightly more complicated way in order for this coefficient to be strictly negative.

\begin{lemma} \label{first step moving planes lemma}  
  Under the hypotheses of Theorem~\ref{symmetry theorem}, there exists $K > 0$ such that 
  \begin{equation} 
    v^\lambda \ge 0 \textup{ in } R^\lambda \quad \textup{for all } \lambda < -K, \label{v lambda sign cond} 
  \end{equation}
  and 
  \begin{equation} 
    h_q \geq 0 \textup{ in } R^\lambda \quad \textup{for all } \lambda < -K. \label{v lambda monotonicity prop} 
  \end{equation}
\end{lemma}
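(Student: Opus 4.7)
The plan is to run the first step of a moving planes argument, coping with two challenges particular to this setting. First, the domain $R^\lambda$ is unbounded as $q\to-\infty$, while the hypothesis~\eqref{symmetry: w localized} provides only one-sided decay of $w$, so $v^\lambda$ itself need not vanish at $-\infty$ (since $w^\lambda(q,\placeholder) = w(2\lambda-q,\placeholder)$ probes $w$ in the non-decay direction). Second, the zeroth-order coefficients of both $\mathscr L$ and $\mathscr B$ have the wrong sign for a direct application of the maximum principle. I would address the sign issues via the substitution $u^\lambda := v^\lambda/\Psi$ with $\Psi$ from Lemma~\ref{lem Psi}, and address the non-decay issue via the elementary identity $v^\lambda = w^\lambda - w$, which combined with the elevation hypothesis~\eqref{symmetry:wave of elevation} forces the negative part of $v^\lambda$ to inherit the decay of $w$.

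Setting $u^\lambda := v^\lambda/\Psi$, a direct calculation using \eqref{aux problem Psi} and \eqref{sign robin Psi}, together with the $C^2$-smallness of $w$ in $R^\lambda$ when $\lambda<-K$ with $K$ large, transforms $\mathscr L v^\lambda = 0$ in $R^\lambda$ and $\mathscr B v^\lambda = 0$ on $T^\lambda$ into a uniformly elliptic problem for $u^\lambda$ whose zeroth-order coefficients in the interior equation and in the oblique condition on $T^\lambda$ both have the sign required by the maximum principle (Theorem~\ref{max principle}). This transformation is directly analogous to the one preceding Lemma~\ref{strong invertibility lemma}. The Dirichlet data for $u^\lambda$ are read off immediately: $u^\lambda = 0$ on $\{q=\lambda\}$, since $h^\lambda = h$ there, and $u^\lambda = 0$ on $B^\lambda$, since both $h$ and $h^\lambda$ vanish on the bed while $\Psi(-1) = \varepsilon > 0$. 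For the behavior as $q\to-\infty$, the bound
\begin{equation*}
  v^\lambda = w^\lambda - w \geq -w \quad \text{in } R^\lambda
\end{equation*}
together with \eqref{symmetry: w localized} gives $v^\lambda_- \leq w \to 0$ uniformly, and hence $u^\lambda_-\to 0$ uniformly, since $\Psi$ is bounded away from zero on $\overline{R^\lambda\cup T^\lambda}$.

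A Phragm\'en--Lindel\"of--type maximum principle on the semi-infinite strip $R^\lambda$ then yields $u^\lambda \geq 0$: if $u^\lambda$ attained a negative infimum it would have to be at an interior point of $R^\lambda$ or on $T^\lambda$ (since $u^\lambda_-$ vanishes on $\{q=\lambda\}\cup B^\lambda$ and at $q=-\infty$), but the good-signed interior zeroth-order coefficient excludes an interior minimum while Hopf's lemma applied to the transformed Robin condition excludes a boundary minimum on $T^\lambda$. This establishes~\eqref{v lambda sign cond}. The monotonicity~\eqref{v lambda monotonicity prop} then follows: the above argument applies verbatim to every $\lambda'\leq\lambda$, giving $v^{\lambda'}\geq 0$ in $R^{\lambda'}$ with $v^{\lambda'} = 0$ on $\{q=\lambda'\}$. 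Elementary one-sided monotonicity of $v^{\lambda'}$ in $q$ at the boundary $\{q=\lambda'\}$ yields $v^{\lambda'}_q(\lambda',p)\leq 0$ for $p\in(-1,0)$, and a direct calculation gives $v^{\lambda'}_q(\lambda',p) = -2\,h_q(\lambda',p)$, so $h_q(\lambda',p)\geq 0$. Letting $\lambda'$ range over $(-\infty,\lambda)$ gives~\eqref{v lambda monotonicity prop}.

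The chief obstacle is that the classical moving planes template requires two-sided decay, whereas we have only one-sided decay of $w$. The bound $v^\lambda \geq -w$ coming from the elevation hypothesis is the replacement that converts one-sided decay into enough control of $v^\lambda_-$ at $-\infty$ for the Phragm\'en--Lindel\"of maximum principle to close the argument, once the simultaneous bad signs in the interior and boundary zeroth-order coefficients are corrected by the tailored auxiliary function $\Psi$ of Lemma~\ref{lem Psi}.
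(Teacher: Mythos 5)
Your scaffolding matches the paper's: the substitution $u^\lambda = v^\lambda/\Psi$ with $\Psi$ from Lemma~\ref{lem Psi}, the use of elevation ($v^\lambda = w^\lambda - w \ge -w$, so the negative part of $v^\lambda$ inherits the decay of $w$ at $q=-\infty$), the interior/top-boundary dichotomy for a putative negative minimum, and the derivation of \eqref{v lambda monotonicity prop} from $v^{\lambda'}\ge 0$ together with $v^{\lambda'}_q(\lambda',p) = -2h_q(\lambda',p)$. But the central analytic step is asserted incorrectly. You claim that the $C^2$-smallness of $w$ in $R^\lambda$ makes the zeroth-order coefficients of the transformed interior operator and of the transformed oblique condition good-signed \emph{throughout} $R^\lambda$ and $T^\lambda$. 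That cannot be right: those coefficients (the quantity $Z$ in \eqref{difference equation w}, and the coefficient $\Psi F^{-2}\rho + \mathscr{B}_{\mathrm{P}}\Psi$ on $T^\lambda$) depend on $h^\lambda_p$ and $h^\lambda_q$, i.e., on derivatives of $h$ evaluated at the reflected points $2\lambda - q > \lambda$, which lie on the side of the strip where \eqref{symmetry: w localized} provides no smallness. At a generic point of $R^\lambda$ the reflected derivative $h^\lambda_p$ is merely bounded and bounded away from zero, not close to $H_p$, so the cancellation that turns $Z$ into $-\varepsilon F^{-2}\Psi + \mathcal{O}(\delta)$ via the ODE \eqref{aux problem Psi} does not occur, and $Z$ has no definite sign.

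The fix --- and what the paper actually does --- is a pointwise argument at the candidate minimum. At an interior negative minimum $(q_0,p_0)$, elevation gives $0 > v^{\lambda_0}(q_0,p_0) > H - h > -\delta$, and the first-order condition $\nabla u^{\lambda_0}(q_0,p_0)=0$ then yields $\lvert \nabla v^{\lambda_0}(q_0,p_0)\rvert \le C\delta$, hence $\lvert \nabla h^{\lambda_0} - \nabla H\rvert < C\delta$ \emph{at that one point}; only then is $Z(q_0,p_0) < 0$ and the maximum principle applicable. At a minimum on $T^{\lambda_0}$, the Hopf condition $u^{\lambda_0}_q(q_0,0)=0$ gives $h^{\lambda_0}_q = h_q$ there, and comparing the nonlinear Bernoulli boundary condition satisfied by $h$ and by $h^{\lambda_0}$ gives $\lvert h^{\lambda_0}_p - h_p\rvert < C\delta$ at $(q_0,0)$; only then does \eqref{sign robin Psi} make the coefficient of $u^{\lambda_0}$ in the boundary operator negative, producing the contradiction with $u^{\lambda_0}_p(q_0,0)<0$. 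Without these two pointwise controls on the reflected function, neither your interior maximum-principle step nor your Hopf step closes.
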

\begin{proof}
  Let $\Psi$ be defined as in \eqref{aux problem Psi}.  Recall that this means in particular that 
  \begin{equation}
    \left(  -\frac{1}{H_p^3} \Psi_p + \frac{1}{F^2} \rho \Psi\right)\bigg|_{p=0} < 0 \qquad \textrm{for } 0 \leq \frac{1}{F^2} < \frac{1}{(F_{\mathrm{cr}})^2}.
  \end{equation}
  Also, by taking $\varepsilon > 0$ sufficiently small, we may assume that $\Psi > \varepsilon$ on $(-1,0)$.

  We are therefore justified in defining $u^\lambda$ by $v^\lambda = u^\lambda \Psi$.  
  A calculation shows that $u^\lambda$ satisfies
  \begin{equation}\label{difference equation w}
    \left\{ 
    \begin{alignedat}{2}
      \tilde{\mathscr{L}} u^\lambda &= 0 &\qquad& \textrm{in } R^\lambda,\\
      \tilde{\mathscr{B}} u^\lambda &= 0 && \textrm{on } T^\lambda, \\
      u^\lambda &= 0 && \textrm{on } B^\lambda,
    \end{alignedat}
    \right.
  \end{equation}
  where
  \begin{align*}
    \tilde{\mathscr{L}} u^\lambda &:= \Psi (\mathscr{L}_{\textrm{P}} u^\lambda) + \left[ {2(1 + (h^\lambda_p)^2) \over (h^\lambda_p)^3} \Psi_p \right] u^\lambda_p - \left[ {2h^\lambda_p \over (h^\lambda_p)^2} \Psi_p \right] u^\lambda_q 
    + Z
    u^\lambda, \\
    \tilde{\mathscr{B}} u^\lambda &:= \Psi \mathscr{B}u^\lambda + (\mathscr{B}_{\textrm{P}}  \Psi) u^\lambda,
  \end{align*}
  with the zeroth order coefficient $Z$ given by
  \begin{align*}
    Z &:= {1+ (h^\lambda_q)^2\over (h^\lambda_p)^3}\Psi_{pp} - \frac{1}{F^2} \rho_p \Psi  \\ & \qquad +  \left(h_{qq}(h^\lambda_p + h_p) - 2h_q h_{pq} + \left(\beta(-p) - \frac 1{F^2} \rho_ph\right)\left[ (h^\lambda_p)^2 + h^\lambda_p h_p + h^2_p \right] \right) \frac{\Psi_p}{(h^\lambda_p)^3},
  \end{align*}
  and $\mathscr{L}_{\textrm{P}}$ and $\mathscr{B}_{\textrm{P}}$ are the principal parts of $\mathscr{L}$ and $\mathscr{B}$,
  \begin{align*}
    \mathscr{L}_{\textrm{P}} &:= {1\over h^\lambda_p}\p_q^2 - {2h^\lambda_q \over (h^\lambda_p)^2}\p_q\p_p + {1+ (h^\lambda_q)^2 \over (h^\lambda_p)^3}\p_p^2,\\
    \mathscr{B}_{\textrm{P}} &:=  {h^\lambda_q + h_q \over 2h^2_p} \p_q - {(h^\lambda_p + h_p)(1+(h^\lambda_q)^2) \over 2h^2_p (h^\lambda_p)^2} \p_p.
  \end{align*}

  We claim that there exists some $K>0$ large enough so that  $u^\lambda\ge0$ in $R^\lambda$ for all $\lambda \leq -K$, which then implies \eqref{v lambda sign cond}. 
  Assume on the contrary that no matter how large $K$ is, there exists some $\lambda_0 \leq -K$ such that $u^{\lambda_0}$ takes on a negative value in $R^{\lambda_0}$.  By hypothesis, $h$ is a wave of elevation, and so clearly the same is true of $h^\lambda$ for any $\lambda$.  Now $u^\lambda$ vanishes on the vertical line segment $\{ q = \lambda\}$, and
  \begin{equation}
    u^\lambda = {h^\lambda - h \over \Psi} >  {H - h\over \Psi},
  \end{equation}
  with the right-hand side of the above inequality limiting to $0$ as $q \to -\infty$.  Thus if $u^{\lambda_0}$ is negative somewhere in $R^{\lambda_0}$ then there must exist some $(q_0, p_0) \in R^{\lambda_0} \cup T^{\lambda_0} $ such that
  \begin{equation}\label{min u}
    u^{\lambda_0}(q_0, p_0) = \inf_{R^{\lambda_0}} u^{\lambda_0} < 0.
  \end{equation}
We consider separately two cases.

  \bigskip
  {\bf Case 1:} $(q_0, p_0) \in R^{\lambda_0}$.  
  Then $u^{\lambda_0}$ attains its (global) minimum at an interior point, 
  so
  \begin{equation*}
    \nabla u^{\lambda_0}(q_0, p_0) = 0,
  \end{equation*}
  and hence
  \begin{align}
    \label{nice grad}
    0 = \nabla u^{\lambda_0}(q_0,p_0)  = \left[{\nabla v^{\lambda_0} \over \Psi} - { \Psi_p \over \Psi^2} v^{\lambda_0}\right](q_0,p_0).
  \end{align}
    In light of \eqref{symmetry: w localized}, for each $\delta > 0$, we may take $K$ sufficiently large so that 
  \begin{equation}\label{asymp h}
    \|w\|_{C^2(R^{-K})} = \|h - H\|_{C^2(R^{-K})} < \delta,
  \end{equation}
  from which we have the chain of inequalities 
  \begin{align}
    \label{nice chain}
    H(p_0) < h^{\lambda_0}(q_0,p_0) < h(q_0,p_0) < H(p_0) + \delta.
  \end{align}
  Moreover \eqref{asymp h} and \eqref{nice grad} lead to the bounds on
  $v^{\lambda_0}$ and $\nabla v^{\lambda_0}$ at $(q_0, p_0)$:
  \begin{align*}
    | v^{\lambda_0}(q_0,p_0)| < \delta, \qquad | \nabla v^{\lambda_0}(q_0,p_0)| = \left| {\Psi_p(p_0) \over \Psi(p_0)}v^{\lambda_0}(q_0,p_0) \right| < C\delta,
  \end{align*}
  where $C$ depends only on $\varepsilon$. Therefore in terms of $h^{\lambda_0}$ we have that at $(q_0, p_0)$,
  \begin{equation}\label{asymp hlambda}
    \big\lvert h^{\lambda_0} - H \big\rvert < \delta, 
    \quad 
    \big\lvert \nabla h^{\lambda_0} - \nabla H \big\rvert < C\delta.
  \end{equation}
  From \eqref{asymp h} and \eqref{asymp hlambda} we conclude that 
  \begin{equation*}
    Z(p_0) =  \left( {1\over H^3_p}\Psi_{pp} - {3H_{pp} \over H^4_p} \Psi_p - \frac{1}{F^2} \rho_p \Psi\right)(p_0) +\mathcal{O}(\delta).
  \end{equation*}
  But, recalling the ODE satisfied by $\Psi$ \eqref{aux problem Psi}, we know that
  \[
  {1\over H^3_p}\Psi_{pp} - {3H_{pp} \over H^4_p} \Psi_p - \frac{1}{F^2} \rho_p\Psi = -\varepsilon \frac{1}{F^2}\Psi < 0.
  \]
  Therefore, by taking $K$ sufficiently large, and hence $\delta$ sufficiently small, we can guarantee that $Z < 0$ at $p_0$.  Applying the maximum principle to \eqref{difference equation w} at $(q_0, p_0)$ then leads to a contradiction.

  \bigskip
  {\bf Case 2:} $(q_0, p_0) \in T^{\lambda_0}$, that is $p_0 = 0$. Applying the Hopf lemma, we see that
  \begin{equation}\label{hopf}
  u_q^{\lambda_0}(q_0, 0) = 0, \quad  u_p^{\lambda_0}(q_0, 0) < 0.
  \end{equation}
  From the first of these equalities it follows that
  \begin{equation} 
    h^{\lambda_0}_q(q_0,0) = h_q(q_0,0).\label{h_q lambda = h_q} 
  \end{equation}
  Moreover, arguing as in the previous case, \eqref{nice chain} and \eqref{asymp h} show that for $K$ large enough, 
  \begin{equation}\label{asymp hlambda 2}
    \big\lvert h(q_0, 0) - H(0) \big\rvert < \delta, \quad \big\lvert h_p(q_0, 0) - H_p(0) \big\rvert < \delta, \quad \big\lvert h^{\lambda_0}(q_0, 0) - H(0) \big\rvert < \delta.
  \end{equation}
  Since both $h$ and $h^{\lambda_0}$ solve the height equation \eqref{height equation}, we can evaluate the boundary condition at $(q_0, 0)$ to obtain
  \begin{equation*}
    \frac{1+h_q^2}{2h_p^2}  + \frac{1}{F^2} \rho h = \frac{1+(h^\lambda_q)^2}{2(h^\lambda_p)^2}  + \frac{1}{F^2} \rho h^\lambda.
  \end{equation*}
  Together, \eqref{h_q lambda = h_q} and \eqref{asymp hlambda 2} furnish the estimate
  \begin{equation}\label{asymp hlambda 3}
    \big\lvert h_p^{\lambda_0}(q_0, 0) - h_p(q_0, 0) \big\rvert < C\delta.
  \end{equation}

    Combing \eqref{asymp hlambda 2} and \eqref{asymp hlambda 3}, the boundary condition in \eqref{difference equation w} can then be written as
    \begin{equation}\label{bc w}
      \tilde{\mathscr{B}}u^{\lambda_0} = \left( - {(h^\lambda_p + h_p)(1+(h^\lambda_q)^2) \over 2h^2_p (h^\lambda_p)^2} u^{\lambda_0}_p \right) \Psi + \left[  -\frac{1}{H_p^3} \Psi_p + \frac{1}{F^2} \rho \Psi + \mathcal{O}(\delta) \right] u^{\lambda_0} = 0
    \end{equation}
    on $T^\lambda$. In view of \eqref{sign robin Psi}, this means that for $K$ sufficiently large, the coefficient of $u^{\lambda_0}$ above is negative, and therefore from \eqref{min u} we know that $u_p^{\lambda_0}(q_0,0) > 0$, which is a contradiction to \eqref{hopf}.  

    \bigskip
    Thus we have confirmed that there exists a $K$ sufficiently large so that \eqref{v lambda sign cond} is satisfied.  
    The monotonicity property \eqref{v lambda monotonicity prop} then follows immediately:  for any $\lambda \leq -K$, we have $v^\lambda(\lambda, p) = 0$ and therefore $v^\lambda_q(\lambda, p) \leq 0$, which implies that  $h_q(\lambda, p) \geq 0$. 
  \end{proof}

Following the moving plane method, we complete the proof of Theorem \ref{symmetry theorem} by employing a continuation argument.  
The general procedure is quite standard, having been worked out in similar contexts by Hur~\cite{hur2008symmetry}, in the case of constant density water, and Maia~\cite{maia1997symmetry}, for stratified channel flow. Our situation is different in that, unlike Hur, we allow a priori for there to be no decay in one direction, and, unlike Maia, we have at our disposal Theorem~\ref{bores theorem}.
\begin{proof}[Proof of Theorem~\ref{symmetry theorem}]
  Let $\lambda_*$ be defined by
  \begin{equation*}
    \lambda_* := \sup{\left\{\lambda_0:\ 
    v^{\lambda} > 0 \text{ in } R^\lambda 
    \text{ for all } \lambda < \lambda_0 \right \}},
  \end{equation*}
  where the above set is nonempty in light of the previous lemma.  
  
\bigskip
  {\bf Case 1.} $\lambda_* < +\infty$. By continuity we know that $v^{\lambda_*} \geq 0$ on $R^{\lambda_*}$. Looking at the elliptic system \eqref{difference equation v} that $v^{\lambda_*}$ satisfies in $R^{\lambda_*}$, we can therefore apply the strong maximum principle to conclude that either $v^{\lambda_*} > 0$ or $v^{\lambda_*} \equiv 0$ in $R^{\lambda_*}$.

  We claim that $v^{\lambda_*} \equiv 0$ in $R^{\lambda_*}$. Seeking a contradiction, assume instead that  $v^{\lambda_*} > 0$ in $R^{\lambda_*}$.  
  The maximality of $\lambda_*$ implies that there exist sequences $\{ \lambda_k \}$ and $\{ (q_k, p_k) \}$ with  $\lambda_k \searrow \lambda_*$ and $(q_k, p_k) \in \overline{R^{\lambda_{k}}}$ such that
  \[
  v^{\lambda_k}(q_k, p_k) = \inf_{R^{\lambda_k}} v^{\lambda_k} < 0.
  \]
  Since $v^{\lambda_k} = 0$ on $B^{\lambda_k}$ and $\{ q = \lambda_k \}$, the strong maximum principle forces $(q_k, p_k) \in T^{\lambda_k}$. This trivially implies 
  \begin{equation}\label{deriv min}
    v^{\lambda_k}_p(q_k, 0) \ge 0, \qquad  \textrm{and} \qquad v^{\lambda_k}_q(q_k, 0) = 0.
  \end{equation}

  Next, we prove that $\{q_k\}$ is bounded from below.  Were this not true, then we have $q_k < -K$ for all $k$ sufficiently large, where $K$ is chosen as in \eqref{v lambda sign cond}. Consider once more the function $u^{\lambda_k} := v^{\lambda_k}/\Psi$ introduced in Lemma~\ref{first step moving planes lemma}. 
  Clearly $u^{\lambda_k}$ satisfies \eqref{difference equation w} in $R^{\lambda_k}$, but from \eqref{bc w} we see that $\tilde{\mathscr{B}} u^{\lambda_k}(q_k, 0) > 0$, a contradiction.

  Thus $\{q_k\}$ is indeed bounded from below by $-K$.    It is also obviously bounded from above by $\lambda_1$.  Up to a subsequence, therefore, 
  \begin{equation}
    (q_k, 0) \to (q_*, 0) \in \overline{T^{\lambda_*}} \quad \text{as } k \to \infty
  \end{equation}
  for some $q_* \in [-K, \lambda_*]$.   The fact that $v^{\lambda_*} > 0$ in $R^{\lambda_*}$ 
  forces
  \[
  \lim_{k\to\infty} v^{\lambda_k}(q_k, 0) = v^{\lambda_*}(q_*, 0) = 0.
  \]
  If $q_* < \lambda_*$, then we have by continuity that
  \begin{align}
    \label{good q*}
    v^{\lambda_*}(q_*, 0) =  v^{\lambda_*}_q(q_*, 0) = 0,
  \end{align}
  and, furthermore, from the Hopf lemma, $v^{\lambda_*}_p(q_*, 0) < 0$. Recalling the definition of the boundary operator $\mathscr{B}$ in \eqref{defn B}, these inequalities show that $\mathscr{B} v^{\lambda_*}(q_*,0) > 0$, which is impossible because $v^{\lambda_*}$ solves \eqref{difference equation v}.

  Therefore $(q_*,0)$ must be a corner point of $R^{\lambda_*}$, i.e., $q_* = \lambda_*$. From \eqref{good q*}, we then see $h^{\lambda_*}_q(\lambda_*, 0) = 0$. We can rewrite the top boundary condition in \eqref{difference equation v} as
  \begin{equation}\label{bc rewritten}
    {(h^\lambda_p)^2(h^\lambda_q + h_q)} v^{\lambda}_q - {(h^\lambda_p + h_p)(1+(h^\lambda_q)^2) } v^{\lambda}_p + 2\frac{1}{F^2} \rho h^2_p (h^\lambda_p)^2 v = 0.
  \end{equation}
  Letting $\lambda = \lambda_*$, differentiating the above equality with respect to $q$, and then evaluating it at $(\lambda_*, 0)$ we obtain
  \[
  2 h_p(\lambda_*,0)v^{\lambda_*}_{qp}(\lambda_*,0) = 0,
  \]
  where we have used the identities
  \[
  h^{\lambda_*}_q(\lambda_*, 0) = -h_q(\lambda_*, 0), \quad h^{\lambda_*}_p(\lambda_*, 0) = h_p(\lambda_*, 0), 
  \quad 
  h^{\lambda_*}_{qp}(\lambda_*, 0) = -h_{qp}(\lambda_*, 0).
  \]
  From the no stagnation assumption \eqref{no stag h}, we can therefore conclude
  \[
  v^{\lambda_*}_{qp}(\lambda_*, 0) = 0.
  \]
  Moreover, because $v^{\lambda_*}(\lambda_*, \cdot)$ vanishes identically, it follows that
  \[
  v^{\lambda_*}_p(\lambda_*, 0) = v^{\lambda_*}_{pp}(\lambda_*, 0) = 0.
  \]
  Finally, by using the PDE to express $v^{\lambda_*}_{qq}$ in terms of $v^{\lambda_*}, v^{\lambda_*}_q, v^{\lambda_*}_p, v^{\lambda_*}_{qp}$ and $v^{\lambda_*}_{pp}$, we see that
  \[
  v^{\lambda_*}_{qq}(\lambda_*, 0) = 0.
  \]

  The previous paragraph demonstrates that $v^{\lambda_*}$, together with all its derivatives up to second order,  vanishes at the corner point $(\lambda_*, 0)$. Since $v^{\lambda_*}$ solves the PDE in the domain $R^{\lambda_*}$, this violates the Serrin edge point lemma (see Theorem~\ref{max principle}). Therefore we must have $v^{\lambda_*} \equiv 0$ in $R^{\lambda_*}$, and hence $h$ (and $w$) are symmetric about the axis $q_* := \lambda_*$. This proves the first part of the theorem.

  Next, consider the strict monotonicity of $h$. For $\lambda < \lambda_*$ we have from the definition of $\lambda_*$ that $v^{\lambda} > 0$ in $R^\lambda$. As $v^{\lambda}$ vanishes on the right boundary of $R^\lambda$, it attains its minimum on $\overline{R^\lambda}$ there.  The Hopf maximum principle then implies 
  \begin{equation}
    h_q(\lambda, p) = -{1\over 2} v^{\lambda}_q(\lambda, p) > 0 \quad \text{ for } \lambda < \lambda_*, ~ -1  < p < 0.
    \label{symmetry: hq pos on right boundary} 
  \end{equation}

  Naturally, on the top boundary $T^{\lambda_*}$, $h_q \geq 0$ by continuity. Seeking a contradiction, suppose that $h_q$ vanishes at some point $(\lambda, 0) \in T^{\lambda_*}$.  Then, from differentiating the boundary condition \eqref{bc rewritten} with respect to $q$, evaluating the result at $(\lambda, 0)$, and using the identities
  \begin{gather*}
    h_q(\lambda, 0) = - h^{\lambda}_q(\lambda, 0) = 0, \qquad  v^\lambda(\lambda, 0) = v^\lambda_q(\lambda, 0) = 0,\\
    h_p(\lambda, 0) = h^\lambda_p(\lambda, 0), \qquad h_{qp}(\lambda, 0) = -h^\lambda_{qp}(\lambda, 0),
  \end{gather*}
  we conclude  
  \[
  2 h_p(\lambda,0)v^{\lambda}_{qp}(\lambda,0) = 0,
  \]
  and hence 
  \[
  v^{\lambda}_{qp}(\lambda,0) = 0.
  \]
  Note that, as before, we have
  \[
  v^{\lambda}_p(\lambda, 0) = v^{\lambda}_{pp}(\lambda, 0) = 0.
  \]
  Thus, using the PDE to solve for $v^\lambda_{qq}$, we get $v^\lambda_{qq}(\lambda,0) = 0$. But $v^{\lambda}$ satisfies \eqref{difference equation v} and so the Serrin edge point lemma is violated.  We infer, therefore, that $h_q > 0$ on $T^{\lambda_*}$.  Combining this with \eqref{symmetry: hq pos on right boundary} confirms that
  $h_q  > 0$ in $R^{\lambda_*} \cup T^{\lambda_*}$ as desired.
  The same holds for $w_q$. As $\{ q = \lambda_*\}$ is an axis of even symmetry, the proof \eqref{symmetry: monotonicity} is complete.

\bigskip
  {\bf Case 2.} If $\lambda_* = +\infty$, then $v^{\lambda} \ge 0 \text{ in } R^\lambda \text{ for all } \lambda$. Since $v^\lambda$ solves \eqref{difference equation v} in $R^\lambda$, an application of the strong maximum principle ensures that $v^\lambda > 0$ in $R^\lambda$ for all $\lambda$. Then, arguing as in the preceding paragraph, we see that $h_q > 0$ in $R^\lambda \cup T^\lambda$ for any $\lambda$, and hence $h_q>0$ in $R$, indicating that $h$ is a strictly monotone bore solution. In particular, the pointwise limits $h(q,p) \to H_\pm(p)$ as $q \to \pm\infty$ in \eqref{bore asymptotics} hold, with $H_- = H$ and $H_ + > H$ for $p > -1$, violating Theorem~\ref{bores theorem}. Therefore we must have $\lambda_* < +\infty$, and thus the previous argument completes the proof of the theorem.
\end{proof}

\subsection{Asymptotic monotonicity and nodal properties}\label{nodal section}

In Section~\ref{small-amplitude section}, we will establish the existence of a curve of small-amplitude supercritical waves of elevation bifurcating from the critical laminar flow. By the results in Section~\ref{sec:symmetry}, these waves are symmetric and monotone.
The global bifurcation theory will require us to show that these properties persist away from the point of initial bifurcation.  In particular, this is a critical component of the proof that the solution set $\F^{-1}(0)$ is locally compact (cf.\ Lemma~\ref{lem:compact}).  

Notice, however, that monotonicity is neither an open nor closed property in the natural topology of our function spaces.  It will therefore be necessary to exploit some structure of the equation itself, which will mostly come in the form of maximum principle arguments.  We will actually consider a collection of sign conditions \eqref{elevation nodal properties} that together imply monotonicity but also define a subset of $\mathscr{F}^{-1}(0)$ that is both open and closed in an appropriate sense. 

An analysis of this kind is a standard part of many global bifurcation arguments, where conditions of the type \eqref{elevation nodal properties} are referred to as ``nodal properties.''  Typically, they are used to rule out undesirable topological alternatives such as the existence of closed loops of solutions. In our theory, however, they play a much different role as we will see in Section~\ref{global bifurcation section}.

Before discussing the nodal properties, we first divide $R$ into two regions: a finite rectangle and two semi-infinite tails where $w$ is small.  The finite rectangle can be dealt with in the same way as periodic solutions were in~\cite[Section~5]{walsh2009stratified}.  For the tail, we will follow the strategy of~\cite[Section~2.2]{wheeler2015pressure} by considering small solutions of \eqref{w equation} in the half strip
\[ R^+ := \{ (q,p) \in R : q > 0 \}\]
with boundary components 
\[  L^+ := \{ (0, p) : p \in [-1,0] \}, \qquad T^+ := \R_+ \times \{ 0 \}, \qquad B^+ := \R_+ \times \{ -1 \}.\]

\begin{proposition}[Asymptotic monotonicity] \label{monotonicity small-amplitude proposition} 
  There exists $\delta >0$ such that, if $w\in C_{\mathrm{b}}^3({R^+}) \cap C_0^1(\overline{R^+})$ is a solution of equation \eqref{w equation} in $R^+$, $F > \Fcr$, and 
  \[   
    \| w \|_{C^2(R^+)} < \delta,
  \]
  then $w_q$ exhibits the following monotonicity property:
  \begin{equation} 
    \textup{If } {\pm w_q} \leq 0 \textup{ on } L^+, 
    \textup{ then } {\pm w_q} < 0 \textup{ in } R^+ \cup T^+. 
    \label{monotonicity property} 
  \end{equation}
\end{proposition}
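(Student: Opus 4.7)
The plan is to derive from the height equation a linear elliptic problem for $\dot w := w_q$ and then apply the maximum principle after a change of variables that restores the ``good'' sign of the zeroth-order terms. Since \eqref{operator equation} is translation invariant in $q$, differentiating $\F(w,F) = 0$ with respect to $q$ gives $\F_w(w,F)\dot w = 0$ in $R^+$, which is a system of the same form as \eqref{linear} but with coefficients depending on $w$, $\nabla w$, and $\nabla^2 w$. Because $\|w\|_{C^2(R^+)} < \delta$, these coefficients are $\mathcal{O}(\delta)$-close to those of $\F_w(0,F)$. The bed condition $w = 0$ on $B$ yields $\dot w = 0$ on $B^+$, and differentiating the free-surface condition produces an oblique derivative condition on $T^+$ that is itself an $\mathcal{O}(\delta)$ perturbation of the boundary condition in \eqref{linear}. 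On $L^+$ no equation is imposed; instead the hypothesis $\pm \dot w \le 0$ supplies a sign condition.

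Following the device introduced in Section~\ref{fredholm section}, I would substitute $\dot w =: \Phia v$, with $\Phia$ as in \eqref{def Phia} and $\epsilon > 0$ chosen small enough that Lemma~\ref{lem Phia} applies (possible since $F > \Fcr$). A calculation parallel to the derivation of \eqref{v equation} shows that $v$ satisfies a uniformly elliptic equation in $R^+$ whose zeroth-order coefficient vanishes up to an $\mathcal{O}(\delta)$ error, together with a boundary condition on $T^+$ whose zeroth-order coefficient equals $(-\Phia_p/H_p^3 + F^{-2}\rho\Phia)|_{p=0} + \mathcal{O}(\delta)$, which is strictly negative by \eqref{Phia top} for $\delta$ small. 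We also have $v = 0$ on $B^+$ and, by hypothesis, $\pm v \le 0$ on $L^+$.

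Assume $w_q \le 0$ on $L^+$; the reverse case follows by replacing $\dot w$ with $-\dot w$. Since $w \in C^1_0(\overline{R^+})$ and $\Phia$ is bounded below by a positive constant on $[-1,0]$, we have $v \to 0$ uniformly as $q \to +\infty$. Hence any positive supremum of $v$ over $\overline{R^+}$ is attained at a finite point. The residual positive zeroth-order term in the bulk is handled by a standard device --- for instance multiplying $v$ by $e^{\gamma p}$ with $\gamma > 0$ chosen to shift the zeroth-order coefficient by an $\mathcal{O}(1)$ negative amount that dominates the $\mathcal{O}(\delta)$ error --- after which the maximum principle (Theorem~\ref{max principle}) forbids any positive maximum in $R^+$, and Hopf's lemma combined with the favorable sign on $T^+$ rules out a positive maximum on $T^+$. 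Together with $v\le 0$ on $L^+$, $v = 0$ on $B^+$, and decay at infinity, this forces $v \le 0$ throughout $R^+ \cup T^+$. A second appeal to the strong maximum principle and Hopf's lemma upgrades this to $v < 0$ (equivalently $w_q < 0$) in $R^+ \cup T^+$, the only alternative being $v \equiv 0$, in which case $w$ is $q$-independent and hence identically zero by the asymptotic condition, a case excluded by the strict conclusion.

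The main technical difficulty is the handling of the small bad-sign zeroth-order coefficient left over in the bulk equation for $v$ after the $\Phia$-substitution: the maximum principle is not directly available, so one has to absorb this perturbation either by an exponential multiplier, as above, or by a direct comparison argument exploiting the uniform decay of $v$ at $q = +\infty$. The smallness threshold $\delta$ in the proposition is precisely what is required for this device to succeed while preserving the strict negativity of the boundary coefficient on $T^+$.
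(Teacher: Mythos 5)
Your overall strategy is the same as the paper's: differentiate the height equation in $q$, divide $w_q$ by a positive $p$-dependent auxiliary function built from the supercritical Sturm--Liouville theory to repair the signs, and then run the maximum principle with the sign hypothesis on $L^+$, the Dirichlet condition on $B^+$, the oblique condition on $T^+$, and decay as $q\to+\infty$. The one substantive difference is your choice of auxiliary function, and it is where your argument has a soft spot. The paper does not use $\Phia$ from \eqref{def Phia}; it uses $\Psi$ from \eqref{aux problem Psi}, whose defining ODE carries $\rho_p-\varepsilon$ in place of $\rho_p$. This makes the leading-order bulk zeroth-order coefficient equal to $-\varepsilon F^{-2}\Psi<0$ rather than $0$, so the $\mathcal O(\delta)$ perturbations coming from $\|w\|_{C^2}<\delta$ are absorbed directly, while Lemma~\ref{lem Psi} guarantees the boundary coefficient on $T^+$ stays strictly negative. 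No second multiplier is needed.

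Your proposed fix for the residual $\mathcal O(\delta)$ bad-sign term --- multiplying by $e^{\gamma p}$ --- does not work as literally stated. The substitution $v=e^{\gamma p}\tilde v$ changes the zeroth-order coefficient to $a^{22}\gamma^2+b_2\gamma+c$, where $b_2$ is the coefficient of $\partial_p$; since $b_2$ involves $(H_p^{-3})_p=-3H_{pp}H_p^{-4}$ and $H_{pp}$ has no definite sign for a general shear profile, no single nonzero $\gamma$ makes $a^{22}\gamma^2+b_2\gamma$ negative throughout the strip, and $\gamma=0$ leaves the possibly positive $\mathcal O(\delta)$ term untouched. A multiplier of the form $C-e^{\gamma p}$ with $\gamma$ large (to dominate $b_2$) and then $C$ large (to restore the negativity of the boundary coefficient on $T^+$, which the term $-\phi_p(0)/H_p^3=\gamma/H_p^3>0$ would otherwise threaten) does work, with $\delta$ chosen last; but you would need to carry out this three-parameter bookkeeping explicitly. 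The cleanest repair is simply to replace $\Phia$ by the paper's $\Psi$, which packages the strict bulk negativity and the boundary sign into a single object. Aside from this, your argument is sound; the degenerate alternative $w_q\equiv0$ forces $w\equiv0$ by the decay at $q=+\infty$ and is exactly the case excluded in the applications of the proposition.
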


\begin{proof}[Proof of Proposition~\ref{monotonicity small-amplitude proposition}]
We will use a maximum principle argument applied to $h_q=w_q$.  Due to the translation invariance, we can ``quasilinearize'' the height equation by differentiating it with respect to $q$. 
This leads to a linear elliptic PDE for $v := h_q$ with coefficients depending on $h$: 
\begin{equation} \label{hq height equation}
  \left\{ 
  \begin{alignedat}{2}
    \left(-\frac{h_q v_q}{h_p^2} + \frac{(1+h_q^2)v_p}{h_p^3}\right)_p + \left( \dfrac{v_q}{h_p} - \frac{h_q v_p}{h_p^2} \right)_q -  \frac{1}{F^2}  \rho_p v  
    &= 0 && \textrm{in } R^+ \\ 
    \frac{h_q v_q}{2h_p^2} - \frac{(1+h_q^2) v_p}{h_p^3} +  \frac{1}{F^2}  \rho v &= 0 &\quad& \textrm{on } T^+ \\
    v &= 0 &\quad& \textrm{on } B^+. 
  \end{alignedat} 
  \right. 
\end{equation}
Let $v =: \Psi u$, where $\Psi$ is chosen according Lemma~\ref{lem Psi}.
We can then rewrite \eqref{hq height equation} in terms of $u$.  For instance, the equation in the interior becomes
\begin{align} 
  \label{interior u}
  \begin{aligned}
    0 & =  \left( \left( \frac{\Psi}{h_p^3} + \frac{\Psi h_q^2}{h_p^3}\right) u_p - \frac{\Psi h_q}{h_p^2} u_q \right)_p + \left( \frac{\Psi}{H_p} u_q - \frac{\Psi h_q}{h_p^2} u_p \right)_q \\
    & \qquad +  \frac{(1+h_q^2) \Psi}{h_p^3} u_p - \frac{h_q \Psi_p}{h_p^2} u_q  + \left( \left( \frac{(1+h_q^2) \Psi_p}{h_p^3}\right)_p -\left( \frac{h_q \Psi_p}{h_p^2} \right)_q - \frac{1}{F^2}  \rho_p \Psi  \right) u.   
  \end{aligned}
\end{align}
Taking $\|h - H\|_{C^2(R^+)} = \|w\|_{C^2(R^+)}$ sufficiently small ensures that this represents a uniformly elliptic operator acting on $u$ on the set $R^+$.  The key point is the zeroth order coefficient, which can be rewritten as 
\begin{align*}  
  -\left( \frac{h_q \Psi_p}{h_p^2} \right)_q + \left( \left(\frac{1}{h_p^3} - \frac{1}{H_p^3}\right) \Psi_p + \frac{h_q^2}{h_p^3} \Psi_p\right)_p  -\frac{1}{F^2} \varepsilon \Psi,
\end{align*}
has the correct sign to apply the maximum principle
since the first two terms on the right-hand side can be controlled by the final term by taking $\| h - H \|_{C^2}$ small.  Thus \eqref{interior u} has
the form
\[ 
  \sum_{i,j} \partial_i (a_{ij} \partial_j u) + \sum_i b_i \partial_i u + c u = 0,
\]
where $a_{ij}$ is a symmetric positive definite matrix and $c < 0$, both with uniform bounds for $\| h - H\|_{C^2}$ sufficiently small.  

Likewise, the boundary conditions on $T^+$ in \eqref{hq height equation} can be written
\begin{align*} 
  - \frac{1+h_q^2}{h_p^3} \Psi u_p + \frac{h_q}{h_p^2} \Psi u_q 
  + \left( -\frac{\Psi_p}{H_p^3} + \frac 1{F^2} \rho \Psi - \left( \frac{1}{h_p^3} - \frac{1}{H_p^3}\right)\Psi_p \right) u
  = 0.
\end{align*}
By \eqref{sign robin Psi}, the coefficient of $u$ is negative for $\| h - H \|_{C^1}$ small enough.  Thus this represents a uniformly oblique boundary condition with the correct sign,
and we can apply the maximum principle to conclude that \eqref{monotonicity property} holds.  
\end{proof}

With Proposition~\ref{monotonicity small-amplitude proposition} in hand, we now consider 
the ``nodal properties''
\begin{subequations} \label{elevation nodal properties}
  \begin{alignat}{2}
    w_{q}  &< 0 &\qquad& \textrm{in } R^+ \cup T^+, \label{elevation nodal: hq} \\
    w_{qq}  &< 0 && \textrm{on } L^+, \label{elevation nodal: hqq} \\
    w_{qp}  &< 0 && \textrm{on } B^+,   \label{elevation nodal: hqp}\\
    w_{qqp} & < 0 && \textrm{at } (0,-1), \label{elevation nodal: corner hqqp} \\
    w_{qq}  &< 0 && \textrm{at } (0,0). \label{elevation nodal: corner hqq}
  \end{alignat} 
\end{subequations}

\begin{lemma}  \label{nodal: monotone implies nodal lemma} 
  Let $(w,F)$ be a solution of equation \eqref{w equation} where $w \in C^3_{\bdd,\even}(\overline{R}) \cap C^1_0(\overline{R})$ is monotone in the sense that $w_q < 0$ in $R^+ \cup T^+$. Then $w$ exhibits the nodal properties \eqref{elevation nodal properties}.  
\end{lemma}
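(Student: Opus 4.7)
The plan is to put $v:=w_q$ and read off the four remaining conclusions from the linear elliptic problem \eqref{hq height equation} that $v$ satisfies in $R^+$ together with the linearized Robin condition on $T^+$. The hypotheses provide the data I will need at the boundary: since $w\in C^3_{\bdd,\even}(\overline R)$ is even in $q$, its $q$-derivative is odd in $q$, forcing $v\equiv 0$ on $L^+$; since $w\equiv 0$ on $B$, $q$-differentiation gives $v\equiv 0$ on $B^+$; and the monotonicity assumption says $v<0$ throughout $R^+\cup T^+$. In particular $v\le 0$ in $\overline{R^+}$ and $v=0$ at every boundary point I will probe below. This is the crucial observation that lets me invoke the Hopf boundary-point lemma and Serrin's edge-point lemma (Theorem~\ref{max principle}) in their ``$M=0$'' forms, which require no sign hypothesis on the zeroth-order coefficient. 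The adverse sign of $-\rho_p/F^2$ in \eqref{hq height equation} is therefore irrelevant here, which I flag below as the only subtle point of the argument.

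\textbf{Edges \eqref{elevation nodal: hqq} and \eqref{elevation nodal: hqp}.} At any interior point of $L^+$ the outward normal is $-\partial_q$, and Hopf applied to $v$ gives $-v_q>0$, i.e.\ $w_{qq}=v_q<0$ on the open portion of $L^+$. At any interior point of $B^+$ the outward normal is $-\partial_p$, and the same argument yields $w_{qp}=v_p<0$ on the open portion of $B^+$. Continuity with the endpoints then gives \eqref{elevation nodal: hqq} and the portion of \eqref{elevation nodal: hqp} away from the corner $(0,-1)$.

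\textbf{Corner $(0,-1)$, equation \eqref{elevation nodal: corner hqqp}.} This is a right-angle convex corner between two Dirichlet sides for $v$. Tangential differentiation of the identity $v\equiv 0$ on $L^+$ gives $v_p=v_{pp}=0$ on $L^+$; tangential differentiation of $v\equiv 0$ on $B^+$ gives $v_q=v_{qq}=0$ on $B^+$. If $w_{qqp}(0,-1)=v_{qp}(0,-1)$ were zero, then every derivative of $v$ up to order two would vanish at the corner, and Serrin's edge-point lemma would force $v\equiv 0$ near $(0,-1)$, contradicting $v<0$ in $R^+$. Continuity with \eqref{elevation nodal: hqp} pins down the sign, giving $w_{qqp}(0,-1)<0$.

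\textbf{Corner $(0,0)$, equation \eqref{elevation nodal: corner hqq}.} Now the corner pairs the Dirichlet side $L^+$ with the oblique side $T^+$; differentiating the Bernoulli boundary condition in \eqref{w equation} with respect to $q$ gives the linearized Robin condition $(h_q/h_p^2)v_q-((1+h_q^2)/h_p^3)v_p+(\rho/F^2)v=0$ on $T^+$. Suppose for contradiction $w_{qq}(0,0)=v_q(0,0)=0$. From $v\equiv 0$ on $L^+$ I again have $v=v_p=v_{pp}=0$ at the corner. Evenness forces $h_q(0,0)=w_q(0,0)=0$, so the coefficient of $v_q$ in the Robin condition vanishes there. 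Differentiating that Robin identity tangentially (in $q$) and evaluating at $(0,0)$ kills every term except $-(1+h_q^2)/h_p^3\cdot v_{qp}$, whose prefactor is nonzero, so $v_{qp}(0,0)=0$. Substituting the now-known values $v=v_p=v_q=v_{pp}=v_{qp}=0$ into the interior PDE at $(0,0)$ leaves only the $v_{qq}$ term, so $v_{qq}(0,0)=0$ as well. All second derivatives of $v$ thus vanish at the corner, and Serrin's edge-point lemma again produces the contradiction $v\equiv 0$. Combined with $w_{qq}\le 0$ at $(0,0)$ by continuity from \eqref{elevation nodal: hqq}, this yields \eqref{elevation nodal: corner hqq}.

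\textbf{The main difficulty} is conceptual rather than computational: to make sure that Hopf and Serrin apply in the presence of the ``bad-sign'' zeroth-order coefficient in \eqref{hq height equation}. This is handled, as noted in the setup, by the fact that each application is at a point where $v$ attains the value zero, so no sign condition on $c$ is needed. No change of dependent variable $v=\Psi u$ of the kind used in Sections~\ref{fredholm section} and \ref{sec:symmetry} is required.
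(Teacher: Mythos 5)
Your proposal is correct and follows essentially the same route as the paper: set $v=w_q$, observe that $v$ vanishes on $L^+\cup B^+$ and is negative in $R^+\cup T^+$ so that Hopf and the Serrin edge-point lemma apply at maximum points of $v$ regardless of the sign of the zeroth-order coefficient, then treat the two corners by showing that all derivatives of $v$ up to second order would otherwise vanish (using two $q$-derivatives of the top boundary condition at $(0,0)$ and the interior PDE to kill $v_{qq}$). The only discrepancies are cosmetic (a dropped factor in the Robin coefficient of $v_q$, which is harmless since that term vanishes at the corner anyway).
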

\begin{proof}  
As in the proof of Proposition~\ref{monotonicity small-amplitude proposition}, $v := w_q = h_q$ satisfies the uniformly elliptic PDE \eqref{hq height equation}. The strategy is now to use various maximum principle arguments to derive \eqref{elevation nodal properties}.  Again, the zeroth order term in \eqref{hq height equation} comes with an adverse sign. Unlike in Proposition~\ref{monotonicity small-amplitude proposition}, we are not assuming that $w$ is small. Instead, we are saved by the following observation.   First, $w$ is even in $q$ and vanishes identically on the bed, hence
\[ 
  v = 0 \qquad \textrm{on } L^+ \cup B^+.
\]
Second, by assumption we have that 
\[ 
  v < 0 \qquad \textrm{in } R^+ \cup T^+.
\]
Together, these two statements mean that we \emph{can} apply the Hopf lemma, and the Serrin edge point lemma (Theorem~\ref{max principle}\ref{edge point}) 
at the points on $\overline{L^+ \cup B^+}$ where $v$ achieves its maximum value.

With that in mind, let us consider in order the nodal properties \eqref{elevation nodal properties}.  First, \eqref{elevation nodal: hq} is satisfied by hypothesis.  
The Hopf lemma applied to $L^+$ and $B^+$ shows that \eqref{elevation nodal: hqq} and \eqref{elevation nodal: hqp} hold, respectively.  

Now consider the corners.  Since $v$ vanishes identically on $L^+ \cup B^+$, we have that 
\[ 
  v = v_p = v_q = v_{pp} = v_{qq} = 0
  \qquad \text{at $(0,-1)$}.
\]
Thus, by the Serrin edge point lemma, \eqref{elevation nodal: corner hqqp} holds.  

Likewise, at the upper left corner point $(0,0)$ we have 
\[ v(0,0) = v_p(0,0) = 0.\]
Using these facts, differentiating the top boundary condition in \eqref{height equation} twice in $q$, and evaluating at $(0,0)$, one arrives at   
\[  \frac{1}{2h_p^2} v_q^2 + \frac{1}{h_p^3} v_{qp} + \frac 1{F^2} \rho v_q = 0 \qquad \textrm{at } (0,0).\]
Seeking a contradiction, suppose that $v_q(0,0) = w_{qq}(0,0) = 0$.  Then from the evenness of $w$ and the line above we have 
\[ v, \, v_p, \, v_q, \, v_{pp},  \, v_{qp} = 0 \qquad \textrm{at } (0,0).\]
Finally, we can use \eqref{hq height equation} to write $v_{qq}$ in terms of the quantities above, which reveals that $v_{qq}(0,0) = 0$   as well.  This is a clear contradiction of the Serrin edge point lemma. We infer, therefore, that \eqref{elevation nodal: corner hqq} holds and the proof is complete.  
\end{proof}

Notice that the hypotheses of Lemma~\ref{nodal: monotone implies nodal lemma} do not require that the solution be supercritical.  

\begin{lemma}[Open property] \label{nodal open lemma} 
  Let $(w,F), (\tilde w, \tilde F)$ be two supercritical solutions of \eqref{w equation} with $w,\tilde w \in C^3_{\bdd,\even}(\overline{R}) \cap C^1_0(\overline{R})$. If $w$ satisfies the nodal properties \eqref{elevation nodal properties}, then there exists $\varepsilon = \varepsilon(w) > 0$ such that 
  \begin{align*}
    \| w - \tilde{w} \|_{C^3(R)} + \abs{F-\tilde F}< \varepsilon 
  \end{align*}
  implies that $\tilde{w}$ also satisfies \eqref{elevation nodal properties}.  
\end{lemma}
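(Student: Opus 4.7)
The plan is to decouple the finitely many ``pointwise'' strict inequalities \eqref{elevation nodal: hqq}--\eqref{elevation nodal: corner hqq} from the genuinely global statement \eqref{elevation nodal: hq} on the unbounded half-strip. Conditions \eqref{elevation nodal: hqq}--\eqref{elevation nodal: corner hqq} are strict sign conditions for continuous functionals of the third-order jet of $w$ restricted to compact sets (a closed subinterval of $L^+$ or $B^+$, or an isolated corner point); each is strictly negative by hypothesis, so by uniform continuity in the $C^3(R)$ norm there is an $\varepsilon_0>0$, depending on $w$, such that $\|w-\tilde w\|_{C^3(R)}<\varepsilon_0$ forces each to persist for $\tilde w$. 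The real difficulty is to preserve \eqref{elevation nodal: hq} on all of $R^+\cup T^+$, which is not compact, and on which $w_q$ must decay to $0$ at $+\infty$.

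To handle this I would split $R^+$ into a finite rectangle $R^+_M := R^+\cap\{q\le M\}$ and a tail $R^+\cap\{q>M\}$, with $M$ chosen large. Since $w\in C^1_0(\overline R)$ solves the uniformly elliptic equation \eqref{w equation} (uniform ellipticity coming from $\inf_R(H_p+w_p)>0$), applying interior Schauder estimates on overlapping translates upgrades the $C^1$-decay to $C^{2+\alpha}_0$ decay of $w$. Consequently I can select $M$ so large that the translated function $w(\placeholder+M,\placeholder)$ has $C^2(R^+)$-norm strictly less than $\delta/2$, where $\delta>0$ is the threshold furnished by Proposition~\ref{monotonicity small-amplitude proposition}.

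On the compact set $\overline{R^+_M\cup T^+_M}$ the continuous function $w_q$ is bounded above by some $-c<0$; shrinking $\varepsilon\le\min(\varepsilon_0,c/2)$ then forces $\tilde w_q<0$ on this compact set, and in particular $\tilde w_q\le 0$ along the vertical segment $\{q=M\}$ (with equality only at the bed, where $\tilde w$ vanishes). Now the translate $\tilde w(\placeholder+M,\placeholder)$ solves \eqref{w equation} on $R^+$ by translation invariance, has $C^2(R^+)$-norm smaller than $\delta$ by the choice of $M$ together with the $C^3$-closeness of $\tilde w$ to $w$, and satisfies $\tilde w_q\le 0$ on its $L^+$-boundary. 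Proposition~\ref{monotonicity small-amplitude proposition} (with the minus sign) then yields $\tilde w_q<0$ on $R^+\cap\{q>M\}$ and on $T^+\cap\{q>M\}$, which combined with the finite-rectangle step gives \eqref{elevation nodal: hq}. Note that supercriticality of $\tilde F$ is automatic since $\{F>\Fcr\}$ is an open subset of $\R$.

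The main obstacle is precisely the noncompactness of $R^+\cup T^+$, coupled with the fact that the strict sign $w_q<0$ degenerates at $q=+\infty$, so a pure continuity argument cannot close. The observation that breaks the deadlock is that Proposition~\ref{monotonicity small-amplitude proposition} converts $C^2$-smallness at infinity into strict monotonicity in the tail; the role of the Schauder bootstrap from $C^1_0$ to $C^2_0$ is to produce enough tail decay to feed into that proposition, whereupon the global property is assembled from a compact piece handled by continuity and a small-data tail handled by the asymptotic monotonicity result.
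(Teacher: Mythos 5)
Your overall architecture --- a finite rectangle handled by perturbation plus a tail handled by Proposition~\ref{monotonicity small-amplitude proposition} after a $C^2$-decay bootstrap --- is exactly the paper's, and your treatment of the tail is correct. But the compact-rectangle step has a genuine gap. You claim that on $\overline{R^+_M\cup T^+_M}$ the function $w_q$ is bounded above by some $-c<0$, so that $C^3$-closeness preserves the sign. This is false: $w_q$ vanishes identically on $L^+$ (by evenness of $w$ in $q$) and on $B^+$ (since $w\equiv 0$ on the bed), both of which lie in the closure of the rectangle. So no uniform negative upper bound exists, and a pure continuity argument cannot show $\tilde w_q<0$ near $L^+$ and $B^+$. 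This degenerate region is precisely where the second-order nodal conditions \eqref{elevation nodal: hqq}--\eqref{elevation nodal: corner hqq} are needed: e.g.\ near $L^+$ one uses $\tilde w_q=0$ on $L^+$ together with the persistence of $w_{qq}<0$ to integrate in $q$ and recover $\tilde w_q<0$ just to the right of $L^+$, with analogous arguments near $B^+$ via $w_{qp}<0$ and at the corners via the edge conditions. This is the content of the argument in \cite[Section~5]{walsh2009stratified} that the paper invokes for the finite rectangle.

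A related problem infects your opening paragraph: conditions \eqref{elevation nodal: hqq}--\eqref{elevation nodal: corner hqq} are \emph{not} strict inequalities on compact sets that persist by uniform continuity. The set $B^+$ is unbounded (and $w_{qp}\to 0$ there as $q\to\infty$), and the quantities in \eqref{elevation nodal: hqq} and \eqref{elevation nodal: hqp} degenerate at the corner $(0,-1)$, where $w_{qq}=w_{qp}=0$. The paper sidesteps both difficulties by proving only $\tilde w_q<0$ in $R^+\cup T^+$ and then invoking Lemma~\ref{nodal: monotone implies nodal lemma}, which regenerates all of \eqref{elevation nodal properties} for $\tilde w$ from maximum-principle arguments rather than from perturbation. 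You should adopt that reduction and replace your continuity argument on the finite rectangle with the boundary analysis described above. (Your Schauder/interpolation remark upgrading $C^1_0\cap C^3_{\bdd}$ to $C^2_0$ decay is a correct and worthwhile observation, since the paper's proof quietly uses $w\in C^2_0$.)
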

\begin{proof}
  According to Lemma~\ref{nodal: monotone implies nodal lemma}, to prove that $\tilde{w}$ satisfies \eqref{elevation nodal properties} we need only confirm that $\tilde{w}_q < 0$ in $R^+ \cup T^+$. With that in mind, we begin by dividing $R^+$ into two overlapping regions, a finite extent rectangle and a ``tail'':
  \[ R_1^+ := \{ (q,p) \in R^+ : q < 2K \}, \qquad R_2^+ := \{ (q,p) \in R^+ : q > K \},\]
  where $K > 0$ is to be determined.  The top, bottom, and left boundary components we likewise write as $T_{1,2}^+$, $B_{1,2}^+$, and $L_{1,2}^+$.

  First consider the finite rectangle $R_1^+$.  Arguing as in \cite[Section~5]{walsh2009stratified}, it is possible to show that for each $K > 0$, there exists $\varepsilon_K > 0$ such that, if $\| w - \tilde{w} \|_{C^3(R^+)} + \abs{ F - \tilde F} < \varepsilon_K$, then $\tilde{w}_q < 0$ in $R_1^+ \cup T_1^+$.  This is simply because the finite rectangle behaves exactly the same as in the periodic case.  

  On the other hand, because $ w \in C_0^2(R)$, we can choose $K$ to be large enough so that 
  \[ \| w \|_{C^2(R_2^+)} < \frac{\delta}{2},\]
  where $\delta$ is given as in the hypotheses of Proposition~\ref{monotonicity small-amplitude proposition}.  
  By letting $\varepsilon := \min\{ \delta/2, \varepsilon_K\}$, we have $\tilde{w}_q < 0$ in $R_1^+ \cup T_1^+$, which in particular means that $\tilde{w}_q \leq 0$ on $L_2^+$.  Applying Proposition~\ref{monotonicity small-amplitude proposition} allows us to conclude that $\tilde{w}_q < 0$ in $R_2^+ \cup T_2^+$.  Since $R_1^+ \cup R_2^+ = R^+$, the proof is complete.   \end{proof}

\begin{lemma}[Closed property] \label{nodal closed lemma} Let $\{ (w_n, F_n) \} \subset U$ be a sequence of solutions to \eqref{w equation} and suppose that there exists a solution $(w, F) \in U$ of \eqref{w equation} with 
\begin{equation}
  (w_n, F_n) \to (w, F) \qquad \textup{in } C_{\mathrm{b}}^3(\overline{R}) \by \R.
\end{equation}
If each $w_n$ satisfies the nodal properties \eqref{elevation nodal properties}, then $w$ also satisfies \eqref{elevation nodal properties} unless $w \equiv 0$.  
\end{lemma}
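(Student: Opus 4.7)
The plan is to combine a strong maximum principle argument for the linearized equation satisfied by $v := w_q$ with the previously established Lemma~\ref{nodal: monotone implies nodal lemma}. By the $C_{\mathrm b}^3(\overline R) \times \R$ convergence, passing $n \to \infty$ immediately yields the non-strict ($\le 0$) versions of all five statements in \eqref{elevation nodal properties}. In view of Lemma~\ref{nodal: monotone implies nodal lemma}, it therefore suffices to prove the dichotomy: either $w \equiv 0$ on $R$, or $w_q < 0$ strictly in $R^+ \cup T^+$.

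To this end, I would set $v := w_q = h_q$ and note that it satisfies the uniformly elliptic linear equation \eqref{hq height equation} in $R^+$. Writing the interior equation as $\tilde L v + c v = 0$, where $\tilde L$ is obtained by dropping the zeroth order term and $c = -\rho_p / F^2$, the stable stratification hypothesis $\rho_p \le 0$ gives $c \ge 0$, which is the ``bad'' sign for classical maximum principle arguments. The decisive observation, however, is that this sign is compatible with the already known sign $v \le 0$: since $cv \le 0$, we have $\tilde L v = -cv \ge 0$, and $\tilde L$ has no zeroth order term. The classical strong maximum principle (Theorem~\ref{max principle}) thus yields the alternative that either $v < 0$ throughout the open connected set $R^+$, or $v \equiv 0$ in $R^+$.

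In the degenerate case $v \equiv 0$ in $R^+$, the function $w$ is $q$-independent on $R^+$; combined with $w \in C^2_0(\overline R)$ so that $w(q,p) \to 0$ as $q \to +\infty$, this forces $w \equiv 0$ on $R^+$, and then by the evenness of $w$ in $q$ on all of $R$. In the generic case $v < 0$ in $R^+$, I would next upgrade this to $v < 0$ on $T^+$. Fix $q_0 > 0$ and suppose for contradiction that $v(q_0,0) = 0$. Then $(q_0,0)$ is a boundary maximum of $v$ on $\overline{R^+}$, so in particular $v_q(q_0,0) = 0$. The Hopf boundary point lemma applied to $\tilde L v \ge 0$ then gives $v_p(q_0,0) > 0$, since the outer normal to $R^+$ at $T^+$ points in the $+p$ direction. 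On the other hand, substituting $v(q_0,0) = v_q(q_0,0) = 0$ into the boundary condition on $T^+$ in \eqref{hq height equation} reduces it to $v_p(q_0,0) = 0$, a contradiction. Hence $v < 0$ on $T^+$ as well, and an invocation of Lemma~\ref{nodal: monotone implies nodal lemma} delivers all of the properties \eqref{elevation nodal properties}.

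The main obstacle, as in Sections~\ref{fredholm section} and \ref{sec:symmetry}, is the adverse sign of the zeroth order coefficient introduced by stratification. Here, though, it is overcome with a lighter touch than in those earlier arguments, where the auxiliary functions $\Phia$ and $\Psi$ had to be constructed via the analysis of Lemma~\ref{lem_minimality}: the very sign constraint $v \le 0$ we are attempting to promote to a strict inequality is itself compatible with $c \ge 0$, so that moving $cv$ to the right-hand side produces an inequality of precisely the form required by the strong maximum principle and Hopf lemma for a zeroth-order-free operator.
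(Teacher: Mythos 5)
Your proposal is correct and follows essentially the same route as the paper: reduce to showing $w_q<0$ in $R^+\cup T^+$ via Lemma~\ref{nodal: monotone implies nodal lemma}, apply the strong maximum principle to $v=w_q$ in \eqref{hq height equation} (exploiting that the adverse zeroth-order sign is harmless once $v\le 0$ is known, which is exactly the ``$\sup u=0$'' clause of Theorem~\ref{max principle}), and rule out an interior zero of $v$ on $T^+$ by playing the Hopf lemma against the linearized top boundary condition. The only cosmetic difference is that you split the interior and top-boundary cases before invoking Hopf, while the paper treats them together; both are sound.
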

\begin{proof}  
  Let $\{ (w_n, F_n)\}$ and $(w,F)$ be given as above.  Again, Lemma~\ref{nodal: monotone implies nodal lemma} shows that it is enough to confirm that $w_q < 0$ on $R^+ \cup T^+$.   Let $v := w_q$.  Simply by continuity, we know that $v \leq 0$ on $\overline{R^+}$, and we have already seen that $v$ satisfies the uniformly elliptic PDE \eqref{hq height equation} in $R^+$.  As $v$ vanishes identically on $L^+ \cup B^+$, the maximum principle implies that either (i) $v < 0$ in $R^+ \cup T^+$, or else (ii) there exists some point $(q_0, 0) \in T^+$ such that $v(q_0, 0) = 0$.  Here we are once more using the facts that $\sup_{R^+} v = 0$ and $v \to 0$ as $q \to \infty$.

  Thanks to Lemma~\ref{nodal: monotone implies nodal lemma},
  possibility (i) implies that $w$ satisfies the nodal properties, so consider possibility (ii).  The boundary condition on the top in \eqref{hq height equation} written in terms of $v$ is 
  \[ \frac{v v_q}{2h_p^2} - \frac{(1+v^2) v_{p}}{h_p^3} +  \frac{1}{F^2} \rho v = 0 \qquad \textrm{on } T^+.\]
  Were (ii) to hold, then evaluating the above line at $(q_0,0)$ would give that $v_{p}(q_0,0) = 0$.   This contradicts the Hopf lemma unless $v$ vanishes identically in $\overline{R^+}$, but this is equivalent to saying $w\equiv 0$.  
\end{proof}

\section{Small-amplitude existence theory} \label{small-amplitude section}

In this section we construct small-amplitude waves with nearly critical Froude numbers $F \approx \Fcr$. It will be convenient to introduce a small parameter $\epsilon$ defined by 
\begin{equation}
  \label{def epsilon}
  \epsilon := 
   \frac 1{\Fcr^2} - \frac{1}{F^2} 
  =  \mucr - \frac 1{F^2}.
\end{equation}
Note that with this convention, $\epsilon > 0$ corresponds to a supercritical Froude number.  Solving \eqref{def epsilon} for $F$, we also introduce the notation
\begin{align}
  \label{def F epsilon}
  F = F^\epsilon := \left( \frac 1{\Fcr^2} - \epsilon \right)^{-1/2}.
\end{align}

The main results of this section are summarized in the following theorem. In addition, we prove the existence of a family of subcritical periodic (cnoidal) waves, see Remark~\ref{periodic remark}. 
\begin{theorem}[Small-amplitude solitary waves]\label{small amplitude theorem}
  There exists $\epsilon_* > 0$ and a curve
  \begin{align*}
    \cm_\loc = \{ (w^\epsilon,F^\epsilon) : \epsilon \in (0,\epsilon_*)\}
    \sub U
  \end{align*}
  of solutions to $\F(w,F) = 0$ with the following properties:
  \begin{enumerate}[label=\rm(\roman*)]
  \item {\rm (Continuity)} The map $\epsilon \mapsto w^\epsilon$ is continuous from $(0,\epsilon_*)$ to $X$, with $\n {w^\epsilon}_X \to 0$ as $\epsilon \to 0$.
  \item {\rm (Invertibility)} The linearized operator $\F_w(w^\epsilon,F^\epsilon)$ is invertible for each $\epsilon \in (0,\epsilon_*)$.
  \item {\rm (Uniqueness)} If $w \in X$ satisfies $w > 0$ on $T$ and if $\n w_X$ is sufficiently small, then, for any $\epsilon \in (0,\epsilon_*)$, $\F(w,F^\epsilon) = 0$ implies $w = w^\epsilon$.
  \item \label{small amplitude theorem elevation} {\rm (Elevation)} The waves $(w^\epsilon,F^\epsilon)$ are waves of elevation in that $w^\epsilon > 0$ on $R \cup T$.
  \item {\rm (Analyticity)} The curve $\cm_\loc$ is real analytic in the sense that
    $\epsilon \mapsto w^\epsilon$ is real analytic.
  \end{enumerate}
\end{theorem}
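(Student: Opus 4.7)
The plan is to use spatial dynamics and center manifold reduction, in the spirit of Groves--Wahl\'en \cite{groves2008vorticity} and Wheeler \cite{wheeler2013solitary}, treating the horizontal coordinate $q$ as time. First I would recast \eqref{w equation} as an evolution equation $W_q = \mathcal{G}(W,\epsilon)$ on a phase space of functions of $p \in [-1,0]$, using the state vector $W = (w,w_q,\ldots)$ determined by solving the top and bottom boundary conditions for the highest $q$-derivative. Because $\flowforce$ from Section~\ref{flowforce section} is conserved under $q$-translations, this system is Hamiltonian with respect to a suitable symplectic form, with $\flowforce$ serving essentially as the Hamiltonian; the trivial fixed point corresponds to $w \equiv 0$ (laminar flow $H$).

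Next I would analyze the spectrum of the linearization at $(w,\epsilon) = (0,0)$, which reduces exactly to the Sturm--Liouville problem \eqref{SLprob2}. By Lemma~\ref{spectrum lemma}, the eigenvalue $\nu_0 = 0$ is simple with eigenfunction $\Phicr$, while all other eigenvalues are strictly positive. Translated back into the spatial-dynamics spectral problem, this means that at $\epsilon = 0$ the linearization has a double $0$ eigenvalue (from $\nu_0 = 0$ giving a Jordan block due to the Hamiltonian structure) and the rest of its spectrum is bounded away from the imaginary axis. As $\epsilon$ increases through $0$, the calculation of $B_\mu$ from Lemma~\ref{lem_minimality}\ref{lem_minimality_qprime} shows the double $0$ splits into a pair of nonzero real eigenvalues $\pm\sqrt{c_1 \epsilon} + O(\epsilon)$ for some computable $c_1>0$. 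I would then invoke a parameter-dependent center manifold theorem (e.g.\ Mielke's) to produce a two-dimensional, smooth, reversible, Hamiltonian invariant manifold $\mathcal{M}^\epsilon$ for small $\epsilon$; all sufficiently small bounded solutions of \eqref{w equation} lie on $\mathcal{M}^\epsilon$.

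On $\mathcal{M}^\epsilon$, writing $W = A \,\xi_1 + B\,\xi_2 + \Psi(A,B,\epsilon)$ where $\xi_1,\xi_2$ span the generalized kernel and $\Psi$ is a higher-order correction taking values in the hyperbolic complement, the reduced Hamiltonian has the normal form $H_{\mathrm{red}}(A,B,\epsilon) = \tfrac{1}{2} B^2 - \tfrac{c_1}{2} \epsilon A^2 + \tfrac{c_2}{3} A^3 + O((|A|+|B|)^4 + \epsilon(|A|+|B|)^3)$, where $c_2$ is another explicitly computable coefficient (an integral of $\Phicr$ against the nonlinearity; its nonvanishing is standard and follows from the same calculations as in \cite{groves2008vorticity}, and I would verify it by direct expansion). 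After the rescaling $A = (c_1/c_2)\epsilon\, \tilde A$, $B = (c_1/c_2)\epsilon^{3/2}\, \tilde B$, $q = \epsilon^{-1/2}\,\tilde q$, the reduced system becomes a regular perturbation of the KdV soliton ODE $\tilde A'' = c_1\tilde A - c_1 \tilde A^2$, which has the explicit homoclinic $\tilde A(\tilde q) = \tfrac{3}{2}\sech^2(\tfrac{1}{2}\sqrt{c_1}\,\tilde q)$. A standard persistence argument for transverse homoclinics to a hyperbolic fixed point yields, for each small $\epsilon > 0$, a unique (up to translation) symmetric homoclinic orbit $(A^\epsilon, B^\epsilon)$; imposing evenness in $q$ picks out the translation. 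Lifting via $\Psi$ and the center manifold embedding produces $w^\epsilon \in X$ with $\n{w^\epsilon}_X = O(\epsilon)$.

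Finally I would verify the listed properties. Analyticity and continuity in $\epsilon$ come from working in analytic category for the center manifold theorem (the right-hand side $\mathcal{G}$ is a real-analytic function of $(W,\epsilon)$ on $U$). Elevation \ref{small amplitude theorem elevation} follows from $w^\epsilon(q,p) = \epsilon\,\tilde A(\sqrt{\epsilon}\,q)\, \Phicr(p) + O(\epsilon^2)$ together with positivity of $\Phicr$ from Lemma~\ref{lem_minimality}\ref{lem_minimality_pos} and positivity of the leading sech$^2$. Uniqueness in a neighborhood of $0$ in $X$ follows from the fact that all sufficiently small solutions lie on $\mathcal{M}^\epsilon$, combined with the uniqueness of the even homoclinic on $\mathcal{M}^\epsilon$; the elevation condition $w > 0$ on $T$ picks the homoclinic branch over the trivial solution. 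For invertibility of $\F_w(w^\epsilon,F^\epsilon)$, I would linearize the entire construction: the linearization of the reduced flow about $(A^\epsilon,B^\epsilon)$ has a one-dimensional kernel spanned by $\partial_q(A^\epsilon,B^\epsilon)$ (which is odd), so on the even subspace it is invertible with inverse of size $O(\epsilon^{-1})$; combining this with Lemma~\ref{fredholm lemma} (Fredholm index $0$) and the reduction-of-invertibility principle gives invertibility of $\F_w(w^\epsilon,F^\epsilon) \colon X \to Y$.

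The main obstacle I anticipate is the rigorous setup of the Hamiltonian spatial-dynamics formulation in the presence of both vorticity and non-constant stratification: one must choose the right phase space so that (a) the top boundary condition from \eqref{F_2} can be solved for the top-boundary value of the highest derivative, (b) the symplectic form and reverser are well-defined, and (c) the hypotheses of the center manifold theorem (smoothness, semigroup estimates, spectral gap) hold. The Sturm--Liouville spectral picture from Section~\ref{sec SL} is precisely what makes (c) work, but identifying the coefficient $c_2$ and checking $c_2 \ne 0$ for arbitrary stable $\rho$ and arbitrary $u^*$ requires a careful but explicit computation that extends the constant-density calculation of \cite{wheeler2013solitary}.
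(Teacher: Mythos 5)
Your proposal follows essentially the same route as the paper: Hamiltonian spatial dynamics with the flow force as Hamiltonian, the Sturm--Liouville spectral analysis of Section~\ref{sec SL} giving the double zero eigenvalue, a two-dimensional reversible center manifold, a KdV-type reduced ODE whose homoclinic persists for small $\epsilon>0$, and the same strategies for uniqueness (phase-portrait/elevation dichotomy) and invertibility (linearizing the reduction as in \cite{wheeler2013solitary}). One small refinement the paper makes that you should adopt: since $\Phicr(-1)=0$, elevation does not follow directly from positivity of $\Phicr$ and a $C^0$ remainder bound; instead one bounds $\mathcal R_p$, uses $(\Phicr)_p>0$ (which also makes your coefficient $c_2=\int_{-1}^0 (\partial_p\Phicr)^3 H_p^{-4}\,dp$ automatically positive), and integrates $w_p^\epsilon>0$ up from the bed.
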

 
We will prove Theorem~\ref{small amplitude theorem} incrementally.  First, in Lemma~\ref{small elevation lemma}, we use the center manifold reduction method to construct the family $(w^\epsilon, F^\epsilon)$ (cf.\ Theorem~\ref{center manifold theorem}).  
Doing this requires proving a number of preparatory lemmas that show the linearized problem has the requisite spectral behavior and the nonlinearity is quadratic near the origin. Continuity and invertibility then follow from a straightforward adaptation of the arguments in   \cite[Theorem~4.1]{wheeler2013solitary}.  Elevation and uniqueness are proved in Lemma~\ref{small elevation lemma} and Lemma~\ref{uniqueness lemma}, respectively.  These results are stitched together, along with an argument for the existence of an analytic reparameterization, in Section~\ref{small amplitude proof section}.

\subsection{Hamiltonian formulation}
Following \cite{groves2008vorticity}, we will relate our nonlinear operator equation to a Hamiltonian system $(\mathcal{M},\symp,\ham^\epsilon)$ in which the horizontal variable $q$ plays the role of time. The Hamiltonian function $\ham^\epsilon$ will turn out to be the flow force $\flowforce$ defined in Section~\ref{flowforce section} (cf.\ \cite{benjamin1966internal,benjamin1984impulse}).  For the position and momentum variables, we will use $w$ and $r := w_q/(w_p+H_p)$, respectively.  

With that in mind, in this section we think of $w = w(q,p)$ as a $C^1$ mapping $q \mapsto w(q,\cdot)$ taking values in a Hilbert space of $p$-dependent functions, and similarly for $r$. 
In particular, we will work with 
\begin{align*}
  \X &:= \{ (w,r) \in H^1(-1,0) \by L^2(-1,0) : w(-1) = 0 \}, \\
  \Y &:= \{ (w,r) \in H^2(-1,0) \by H^1(-1,0) : w(-1) = 0 \}.
\end{align*}
Here we are abusing notation somewhat by suppressing the dependence of $(w,r)$ on $q$; this will be a common practice throughout the section.  We also define  
\begin{align*}
  \man := \{ (w,r) \in \Y : w_p + H_p > 0 \text{ for } -1 \le p \le 0\}.
\end{align*}
Clearly $\man$ is an open subset of $\Y$ containing the origin.  Since $\Y$ is dense in $\X$ and the inclusion $\Y \hookrightarrow \X$ is smooth, $\man$ is therefore a so-called \emph{manifold domain} of $\X$. The symplectic form $\symp \maps \X \by \X \to \R$ is defined by 
\begin{align*}
  \symp\big( (w_1,r_1), (w_2,r_2) \big)
  :=
  \int_{-1}^0 (r_2 w_1 - r_1 w_2) \, dp,
\end{align*}
and finally the Hamiltonian $\ham^\epsilon \in C^\infty(\man,\R)$ is given by 
\begin{align}
  \notag
  \ham^\epsilon(w, r) 
  &:= \int_{-1}^0 \left[ 
  \int_0^p \frac{1}{(F^\epsilon)^2} \rho H_p \, dp^\prime 
  - \frac{1}{2H_p^2} 
  + \frac{1}{2} r^2 
  - \frac{1}{2(w_p+H_p)^2} 
  + \frac{1}{(F^\epsilon)^2} \rho w 
  \right] (w_p + H_p) \, dp \\
  \label{def normalized Hamiltonian}
  & \qquad 
  + \int_{-1}^0 \left[ 
  H_p \int_0^p  \frac{1}{(F^\epsilon)^2} \rho H_p \, dp^\prime 
  - \frac{1}{H_p} \right] \, dp.
\end{align}
The second integral in \eqref{def normalized Hamiltonian} does not depend on $w$ or $r$, and has been added to ensure $\ham^\epsilon(0,0) = 0$.  
Comparing with \eqref{DJ S}, we see that  $\ham^\epsilon$ is essentially $\flowforce$ viewed as a functional acting on $w$ and $r=w_q/(w_p+H_p)$.

A calculation shows that the domain $\D(\vf)$ of the Hamiltonian vector field $\vf$ corresponding to $(\man,\symp,\ham^\epsilon)$ is 
\begin{align}
  \label{Dvf}
  \begin{aligned}
    \D(\vf) = \bigg\{ (w, r ) \in \man &:
    r(-1) = 0,\ \\
    &\left(\frac{r^2}2 + \frac 1{2(w_p+H_p)^2}  - \frac 1{2H_p^2} 
    + \frac 1{(F^\epsilon)^2} \rho w\right)\bigg|_{p=0} = 0
    \bigg\},
  \end{aligned}
\end{align}
while Hamilton's equations are
\begin{equation}
  \label{hamiltons equation} \left\{ 
  \begin{aligned}
    w_q &= (w_p+H_p) r, \\
    r_q &= 
    \left( 
    \frac{r^2}2
    + \frac 1{2(w_p+H_p)^2}
    - \frac 1{2H_p^2}
    \right)_p
    + \frac 1{(F^\epsilon)^2} \rho_p w.
  \end{aligned} \right. 
\end{equation}
It is easy to see that \eqref{hamiltons equation} together with 
$(w,r) \in \D(\vf)$ and $r = w_q/(w_p+H_p)$
is formally equivalent to the height equation \eqref{w equation}. We note that these equations are \emph{reversible} in that, if $(w,r)(q)$ is a solution, then so is $\reverser(w,r)(-q)$, where 
\begin{align*}
  \reverser(w,r) := (w,-r) 
\end{align*}
is called the \emph{reverser}.

Linearizing the Hamiltonian system $(\man,\symp,\ham^0)$ with $\epsilon = 0$ about the equilibrium $(w,r) = (0,0)$ yields the linear problem
\begin{align*}
  (\dot w, \dot r)_q 
  = \linear (\dot w, \dot r)
\end{align*}
where $\linear \maps \D(\linear) \sub \X \to \X$ is the closed operator 
\begin{align*}
  \linear(\dot w, \dot r) := \left(
  H_p \dot r, \ 
  - \left(\frac{\dot w_p}{H_p^3} \right)_p + \frac 1{\Fcr^2} \rho_p \dot w 
  \right)
\end{align*}
with domain
\begin{align*}
  \D(\linear) := \left\{ (\dot w,\dot r) \in \Y :
  \dot r(-1) = 0,\ 
  \left(- \frac{\dot w_p}{H_p^3} + \frac 1{\Fcr^2} \rho \dot w\right)
  \bigg|_{p=0} = 0 \right\}.
\end{align*}

The operator $\linear$ is related to the Sturm--Liouville problem studied in Section~\ref{sec SL}. Using the results from that section we obtain the following.
\begin{lemma}[Spectral properties of $\linear$] \label{hamiltonian spectral lemma} 
  \hfill
  \begin{enumerate}[label=\rm(\roman*)]
  \item The spectrum of $\linear$ consists of an eigenvalue at $0$ with algebraic multiplicity 2, together with simple eigenvalues $\pm\sqrt{\nu_j}$, where $\nu_j$ are the nonzero eigenvalues of the corresponding Sturm--Liouville problem \eqref{SLprob2}. The eigenvector and generalized eigenvector associated with the $0$ eigenvalue are $(\Phicr,0)$ and $(0,\Phicr/H_p)$.
  \item \label{spectral bound} There exists $\Xi > 0$ and $C >0$ such that
    \begin{align*}
      \n u_\Y \le C \n{(\linear-i\xi I)u}_{\X},
      \qquad 
      \n u_\X \le \frac C{\abs \xi} \n{(\linear-i\xi I)u}_{\X},
    \end{align*}
    for all $u \in \D(\linear)$ and $\xi \in \R$ with $\abs \xi > \Xi$.
  \end{enumerate}
\end{lemma}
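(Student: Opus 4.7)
The plan is to handle (i) by reducing the eigenvalue problem for $\linear$ to the Sturm--Liouville problem \eqref{SLprob2} analyzed in Section~\ref{sec SL}, and to handle (ii) by eliminating $\dot r$ to obtain an elliptic problem for $\dot w$ with a large spectral parameter.

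For (i), I would write the eigenvalue equation $\linear(\dot w, \dot r) = \lambda (\dot w, \dot r)$ as the coupled system $H_p \dot r = \lambda \dot w$ and $-(\dot w_p/H_p^3)_p + \mucr \rho_p \dot w = \lambda \dot r$. When $\lambda \ne 0$, substituting $\dot r = \lambda \dot w/H_p$ gives precisely \eqref{SLprob2} with $\mu = \mucr$ and $\nu = \lambda^2$; the boundary conditions match, since $\dot r(-1)=0$ forces $\dot w(-1)=0$ and the Robin condition at $p=0$ is already in $\D(\linear)$. By Lemma~\ref{spectrum lemma}, the admissible $\nu$'s are $\{\nu_j\}_{j=0}^\infty$ with $\nu_0=0$ simple, so the nonzero eigenvalues of $\linear$ are exactly the simple pairs $\pm\sqrt{\nu_j}$ for $j \ge 1$. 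For $\lambda=0$, the first equation forces $\dot r \equiv 0$ and the second reduces to \eqref{SLprob}, whose only solution (up to scale) is $\Phicr$ by Lemma~\ref{lem_minimality}; the generalized eigenvector equation $\linear u = (\Phicr,0)$ is solved by $(0, \Phicr/H_p)$. To rule out a further Jordan block I would invoke the Fredholm alternative for the self-adjoint (with weight $1/H_p$) Sturm--Liouville operator: solvability of the next generalized equation would require $\Phicr/H_p$ to be orthogonal to $\Phicr$ in $L^2((-1,0);H_p^{-1}\,dp)$, but $\int_{-1}^0 \Phicr^2/H_p^2\,dp > 0$, contradiction. Hence the algebraic multiplicity at $0$ is exactly $2$.

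For (ii), I would invert $\linear - i\xi I$ explicitly. Given $(f_1,f_2) \in \X$, the first component of $(\linear - i\xi I)u = f$ yields $\dot r = (f_1 + i\xi \dot w)/H_p$; note $f_1(-1)=0$ is built into $\X$ and matches the BC $\dot r(-1)=0$. Substituting into the second component produces the scalar elliptic problem
\begin{equation*}
  -\bigl(\dot w_p/H_p^3\bigr)_p + \bigl(\mucr \rho_p + \xi^2/H_p\bigr)\dot w = f_2 + i\xi f_1/H_p \quad \text{in } (-1,0),
\end{equation*}
together with $\dot w(-1)=0$ and $-\dot w_p/H_p^3 + \mucr \rho \dot w = 0$ at $p=0$. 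The standard energy identity, obtained by pairing with $\overline{\dot w}$ and using the Robin BC to dispose of the $p=0$ boundary term, gives
\begin{equation*}
  \int_{-1}^0 \frac{|\dot w_p|^2}{H_p^3}\,dp + \int_{-1}^0 \Bigl(\mucr\rho_p + \frac{\xi^2}{H_p}\Bigr)|\dot w|^2\,dp - \mucr \rho(0)|\dot w(0)|^2 = \int_{-1}^0 \Bigl(f_2 + \frac{i\xi f_1}{H_p}\Bigr)\overline{\dot w}\,dp.
\end{equation*}
For $|\xi|$ large, the coercive term $\xi^2/H_p$ dominates the bad zeroth-order coefficient $\mucr \rho_p$ and the trace term (the latter by a standard interpolation/trace inequality); Young's inequality then yields $|\xi|\,\|\dot w\|_{L^2} + \|\dot w_p\|_{L^2} \lesssim \|f\|_\X$. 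Elliptic regularity applied to the scalar equation promotes this to $\|\dot w\|_{H^2} \lesssim \|f\|_\X$, and the expression $\dot r = (f_1 + i\xi \dot w)/H_p$ together with its $p$-derivative yields $\|\dot r\|_{H^1} \lesssim \|f\|_\X$ after noting that $i\xi \dot w_p$ is controlled in $L^2$ by the elliptic equation itself (rewritten so that $i\xi \dot r_p = -(\dot w_p/H_p^3)_{pp} + \cdots$ need not be estimated directly).

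The main obstacle is precisely the adverse sign of $\rho_p$ from stable stratification, which prevents a naive coercivity argument at $\xi = 0$; the saving grace is that the resolvent estimate is only required for $|\xi|$ larger than a threshold $\Xi$, for which the $\xi^2/H_p$ term absorbs everything. A secondary technical point is the $\xi$-dependence in the recovery of $\dot r$ in $H^1$: one must use the elliptic equation for $\dot w$ to trade the apparent $|\xi|$-loss in $\|\dot r_p\|_{L^2}$ for derivatives of $\dot w$ already controlled by $\|f\|_\X$ uniformly.
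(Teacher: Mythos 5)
Your part (i) follows the paper's route: the eigenvalue equation decouples into \eqref{SLprob2} with $\nu=\lambda^2$, and Lemmas~\ref{lem_minimality} and~\ref{spectrum lemma} do the rest. Your supplementary argument that the Jordan chain at $0$ stops at length two is a welcome addition (the paper is silent on this point), though the Fredholm solvability condition for the unweighted, formally self-adjoint Sturm--Liouville operator is $\int_{-1}^0 \Phicr^2/H_p\,dp\ne 0$ --- the constant $c_0$ of \eqref{def c0} --- rather than the $H_p^{-1}$-weighted pairing you wrote; both integrals are positive, so the conclusion is unaffected.

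Part (ii) has a genuine gap. The scalar reduction and the energy identity are correct and do give $\n{\dot w_p}_{L^2}+\abs\xi\n{\dot w}_{L^2}\le C\n f_\X$, but the step ``elliptic regularity promotes this to $\n{\dot w}_{H^2}\lesssim\n f_\X$'' fails: solving the scalar equation for $(\dot w_p/H_p^3)_p$ leaves the terms $\xi^2\dot w/H_p$ and $i\xi f_1/H_p$ on the right, each of which your bounds control only by $\abs\xi\,\n f_\X$ in $L^2$, so this route yields $\n{\dot w}_{H^2}\lesssim\abs\xi\,\n f_\X$ --- off by a full power of $\abs\xi$. The loss is not an artifact of crude estimation: using $i\xi\dot w=H_p\dot r-f_1$, the combination $\xi^2\dot w/H_p-i\xi f_1/H_p$ equals $-i\xi\dot r$, and bounding $\abs\xi\n{\dot r}_{L^2}$ from the second component of the system reintroduces $\n{\dot w_{pp}}_{L^2}$, so the argument is circular. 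The same loss infects $\n{\dot r_p}_{L^2}$ (which requires $\abs\xi\n{\dot w_p}_{L^2}\lesssim\n f_\X$, precisely the bound you lack) and the second resolvent estimate (which requires $\n{\dot w_p}_{L^2}+\n{\dot r}_{L^2}\lesssim\n f_\X/\abs\xi$). The root cause is that your energy identity never uses the $H^1$ regularity of $f_1$: already in the constant-coefficient model $-\dot w_{pp}+\xi^2\dot w=i\xi f_1$, the bound $\n{\dot w}_{H^2}\lesssim\n{f_1}_{H^1}$ holds only by trading one power of $\xi$ against one derivative of $f_1$ via $\abs k\abs\xi\le\tfrac12(k^2+\xi^2)$ on the Fourier side. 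The paper's proof in Appendix~B captures exactly this by expanding the pointwise quantity $\abs{f_p}^2/H_p^3+H_p\abs g^2$, which simultaneously produces $\n{\dot w_{pp}}^2+\n{\dot r_p}^2+\xi^2(\n{\dot w_p}^2+\n{\dot r}^2)$ on the good side (cross terms handled by integration by parts) and involves $\n{f_p}_{L^2}$ on the data side. To repair your approach you would need an additional higher-order energy estimate bringing $\partial_p f_1$ into play; once the first estimate $\n u_\Y\lesssim\n f_\X$ is established, the second does follow cheaply from the algebraic relations $i\xi\dot w=H_p\dot r-f_1$ and $i\xi\dot r=-(\dot w_p/H_p^3)_p+\mucr\rho_p\dot w-f_2$.
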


\begin{proof}
We begin with part (i).  Notice that $\lambda$ is an eigenvalue of $\linear$ provided that there exists a \ $(w, r) \in \D(\linear) \setminus \{0\}$ satisfying 
\[   H_p r = \lambda w, \qquad  -\left( \frac{{w}_p}{H_p^3} \right)_p + \frac{1}{\Fcr^2} \rho_p w  = \lambda r. \]
The boundary conditions encoded in the definition of $\D(\linear)$ along with the above equation imply that $w$ is a weak solution of the Sturm--Liouville problem \eqref{SLprob2} with $\nu = \lambda^2$.  By Lemma~\ref{spectrum lemma}, then, the eigenvalues of $\linear$ are precisely of the form $\pm \sqrt{\nu}$, for $\nu \in \Sigma$.  In particular, $0$ is an eigenvalue for $\linear$ with multiplicity $2$ while the rest of the spectrum consists of simple nonzero real eigenvalues.

We defer the proof of (ii) to Appendix~\ref{appendix: small amplitude}.
\end{proof}

\subsection{Further change of variables}

As in \cite{groves2008vorticity}, before applying a center manifold reduction to $(\man,\symp,\ham^\epsilon)$, we will perform a change of dependent variables (in a neighborhood of the origin in $\Y$) which flattens $\D(\vf)$. Set
\begin{align*}
  f(w,r) 
  &:=  
  \left(\frac{r^2}2 + \frac 1{2(w_p+H_p)^2}  - \frac 1{2H_p^2} 
  + \frac 1{(F^\epsilon)^2} \rho w\right)
  - 
  \left(- \frac{ w_p}{H_p^3} + \frac 1{(F^\epsilon)^2} \rho  w\right)\\
  &=  
  \frac{r^2}2 + \frac 1{2(w_p+H_p)^2}  - \frac 1{2H_p^2} 
  + \frac{ w_p}{H_p^3},
\end{align*}
so that the nonlinear boundary condition in the definition \eqref{Dvf} of $\D(\vf)$ can be written as 
\begin{align*}
  \left(- \frac{w_p}{H_p^3} + \frac 1{(F^\epsilon)^2} \rho w\right)
  \bigg|_{p=0} = f(w,r)\Big|_{p=0}.
\end{align*}
We replace $w$ with the new unknown $\xi$ defined by
\begin{align*}
  \xi := \zeta + \epsilon \rho(0) H_p^3(0) (1+p) \int_p^0 \zeta(s)\, ds,
\end{align*}
where
\begin{align*}
  \zeta := w + H_p^3(0) (1+p) \int_p^0 f(w,r)(s)\, ds.
\end{align*}
A calculation then shows that, at $p=0$,
\begin{align*}
  -\frac{\xi_p}{H_p^3}
  + \frac 1{\Fcr^2} \rho \xi
  = 
  -\frac{\zeta_p}{H_p^3}
  + \frac 1{(F^\epsilon)^2} \rho \zeta
  = 
  -\frac{w_p}{H_p^3} 
  + \frac 1{(F^\epsilon)^2} \rho w - f(w,r),
\end{align*}
and hence that $\xi$ satisfies the linearized boundary condition
\[  
-\frac{\xi_p(0)}{H_p^3(0)}
+ \frac 1{\Fcr^2} \rho(0) \xi(0) = 0 
\]
if and only if $(w,r) \in \D(\vf)$.

Denoting the change of variables mapping by $G^\epsilon(w,r) := (\xi,r)$, we have the following lemma.
\begin{lemma} \label{G lemma} Restricted to a sufficiently small neighborhood of the origin in $\R \by \Y$,
  \begin{enumerate}[label=\rm(\roman*)]
  \item $G^\epsilon$ is a diffeomorphism onto its image, with $G^\epsilon$ and $(G^\epsilon)^{-1}$ depending smoothly on $\epsilon$; 
  \item the derivative $DG^\epsilon(w,r) \maps \Y \to \Y$ extends to an isomorphism $\widehat{DG}^\epsilon(w,r) \maps \X \to \X$. This isomorphism and its inverse depend smoothly on $(w,r,\epsilon)$; and
  \item $G^\epsilon$ is a near-identity transformation in that $G^\epsilon(0)= 0$ for all $\epsilon$ and $DG^0(0)=\id$.
  \end{enumerate}
\end{lemma}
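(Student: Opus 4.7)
The plan is to verify the three claims directly from the explicit formula for $G^\epsilon$, exploiting the fact that $f$ is a smooth nonlinearity which vanishes quadratically at the origin of $\Y$ and that the auxiliary integral factor $(1+p)$ has been inserted precisely to respect the Dirichlet condition at $p=-1$.

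First I would rewrite $G^\epsilon(w,r) = (w,r) + N^\epsilon(w,r)$ where
\[
N^\epsilon(w,r) := \Big( H_p^3(0)(1+p)\!\int_p^0 f(w,r)\,ds
+ \epsilon\rho(0)H_p^3(0)(1+p)\!\int_p^0 \zeta(w,r)(s)\,ds,\ 0\Big).
\]
A short calculation using $\frac 1{2(w_p+H_p)^2} - \frac 1{2H_p^2} + \frac{w_p}{H_p^3}= \frac{3w_p^2}{2H_p^4} + O(w_p^3)$ shows that $f\maps \Y \to H^1(-1,0)$ is smooth with $f(0,0)=0$ and $Df(0,0)=0$. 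Integration against the kernel $(1+p)\chi_{[p,0]}(s)$ is bounded $L^2\to H^1$ (and $H^1 \to H^2$), and multiplication by the smooth factor $H_p^3(0)(1+p)$ preserves the Dirichlet condition at $p=-1$. Hence $N^\epsilon$ is a smooth map from a neighborhood of the origin in $\R \by \Y$ into $\Y$, and $N^\epsilon(0,0)=0$. Combined with $DN^0(0,0)=0$, this establishes (iii).

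The claim (i) then follows from the standard inverse function theorem in Banach spaces: because $DG^\epsilon(w,r) = I + DN^\epsilon(w,r)$ depends smoothly on $(w,r,\epsilon)$ and $DN^0(0,0)=0$, there is a neighborhood of $(0,0,0)$ on which $\|DN^\epsilon(w,r)\|_{\Y\to \Y}<1/2$, so $DG^\epsilon(w,r)$ is an isomorphism of $\Y$ by Neumann series and $G^\epsilon$ is a local diffeomorphism with smooth inverse depending smoothly on $\epsilon$.

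For (ii), I would carefully inspect the linearization. Differentiating $f$ gives
\[
Df(w,r)[\dot w,\dot r] = r\dot r + \Big(\frac{1}{H_p^3} - \frac{1}{(w_p+H_p)^3}\Big)\dot w_p,
\]
in which the coefficient of $\dot w_p$ is a smooth function of $w_p$ that vanishes at $w_p=0$. Since $(w,r)\in\Y$ is small, this coefficient has small $C^0$-norm, so the map $(\dot w,\dot r)\mapsto Df(w,r)[\dot w,\dot r]$ extends to a bounded linear map $\X \to L^2(-1,0)$ whose operator norm tends to $0$ as $(w,r)\to 0$ in $\Y$. Integrating against the kernel $(1+p)\chi_{[p,0]}(s)$ and multiplying by the smooth factor $H_p^3(0)(1+p)$ yields a bounded linear map $\X \to H^1(-1,0)$ that respects the Dirichlet condition at $p=-1$; the $\epsilon$-term is handled analogously. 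Writing the extended linearization as $\widehat{DG}^\epsilon(w,r) = I + \widehat{DN}^\epsilon(w,r)$ on $\X$, the estimate above shows $\|\widehat{DN}^\epsilon(w,r)\|_{\X\to\X} \to 0$ as $(w,r,\epsilon)\to(0,0,0)$, so another Neumann series argument gives invertibility and smooth dependence of the inverse on $(w,r,\epsilon)$.

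The only real subtlety is step (ii): one must verify that, even though the raw differential $Df$ loses a derivative, the loss is absorbed both by the integration operator and by the prefactor $(1+p)$ that enforces the boundary condition $\dot\xi(-1)=0$. Everything else reduces to routine applications of the inverse function theorem and Neumann series.
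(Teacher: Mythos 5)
Your proof is correct and is precisely the ``standard argument'' that the paper omits (it simply cites \cite[Lemma~4.1]{groves2001spatial}): write $G^\epsilon=\id+N^\epsilon$, check that the quadratic vanishing of $f$ and the factor $(1+p)$ make $N^\epsilon$ a smooth, near-identity, boundary-condition-preserving perturbation, and conclude via the inverse function theorem and Neumann series, with the one-derivative loss in $Df$ absorbed by the integration operator exactly as you say. No gaps.
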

This follows from a standard argument that we omit; see, for example, \cite[Lemma~4.1]{groves2001spatial}. 
Applying the change of variables $u = G^\epsilon(w,r)$ transforms our Hamiltonian system $(\man,\symp,\ham^\epsilon)$ into 
$(\man,\symp_*,\ham_*^{\epsilon})$, where now $\symp_*$ is the (position-dependent) symplectic form
\begin{align*}
  \symp_*|_u(v_1,v_2) 
  := \symp\left( 
  \widehat{DG}^\epsilon\big((G^\epsilon)^{-1}(u)\big) v_1,
  \ 
  \widehat{DG}^\epsilon\big((G^\epsilon)^{-1}(u)\big) v_2
  \right)
\end{align*}
while
\begin{align*}
  \ham_*^\epsilon(u) := \ham\big( (G^\epsilon)^{-1}(u)\big).
\end{align*}
Hamilton's equations can be written abstractly as 
\begin{align}
  \label{u equation}
  u_q = \linear u + N^\epsilon(u).
\end{align}
Note that the reverser $\reverser$ is unchanged in these coordinates,
\begin{align*}
  G^\epsilon \circ \reverser \circ (G^\epsilon)^{-1} = \reverser.
\end{align*}

The following technical lemma details how the spatial dynamics formulation of the problem in \eqref{u equation} relates to the original operator equation $\F(w,F) = 0$.
\begin{lemma}\label{change of vars lem}\hfill
  \begin{enumerate}[label=\rm(\roman*)]
  \item \label{elliptic to dynamic} Let $(w,F^\epsilon)$ be a solution of $\F(w,F^\epsilon)=0$ with $\n w_X$ and $\abs \epsilon$ sufficiently small, and set 
    \begin{align}
      \label{u from w}
      u = G^\epsilon\left(w, \frac{w_q}{H_p+w_p}\right).
    \end{align}
    Then $u \in C^2_0(\R,\X) \cap C^1_0(\R,\U)$ solves Hamilton's equations $u_q = Lu + N^\epsilon(u)$ and is reversible in that $u(-q)=\reverser u(q)$.
  \item \label{dynamic to elliptic} Conversely, suppose that $u \in C^4_0(\R,\X) \cap C^3_0(\R,\U)$ satisfies $u_q = Lu + N^\epsilon(u)$ and $u(-q)=\reverser u(q)$. Then $u$ is related by \eqref{u from w} to some $w \in X$ solving $\F(w,F^\epsilon) = 0$. Moreover, the correspondence $(\epsilon,u) \mapsto w$ is continuous $\R \by C^4_0(\R,\X) \cap C^3_0(\R,\U) \to X$.
  \item \label{linear elliptic to dynamic} With $u$ and $w$ as above, suppose that $\dot w \in X$ is a nontrivial solution of the linearized problem $\F_w(w,F^\epsilon)\dot w = 0$. Then there is an associated nontrivial solution $\dot u \in C^2_\bdd(\R,\X) \cap C^1_\bdd(\R,\D(\linear))$ of the linearized problem $\dot u_q = L \dot u + DN^\epsilon(u) \dot u$ satisfying $\dot u(-q) = \reverser \dot u(q)$.
  \end{enumerate}
\end{lemma}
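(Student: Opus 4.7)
The structure of the argument is that the spatial-dynamics Hamiltonian system $(\man,\symp,\ham^\epsilon)$ was constructed precisely so that Hamilton's equations \eqref{hamiltons equation} together with the constraint $(w,r)\in\D(\vf)$ in \eqref{Dvf} recover the height equation \eqref{w equation}; the change of variables $G^\epsilon$ of Lemma~\ref{G lemma} then straightens $\D(\vf)$ to $\D(\linear)$ without altering the dynamics (up to the nonlinearity $N^\epsilon$) or the reverser $\reverser$. The plan is to make this correspondence rigorous in both directions and at the linearized level, checking reversibility, nontriviality, and regularity.

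For part \ref{elliptic to dynamic}, given $w\in X$ with $\F(w,F^\epsilon)=0$ and $\n w_X$ small, I set $r:=w_q/(H_p+w_p)$, which is well defined since $(w,F^\epsilon)\in U$ gives $H_p+w_p>0$. The first component of \eqref{hamiltons equation} then holds by definition, while the second is a direct computation: expanding $(w_q/(H_p+w_p))_q$ and substituting $\F_1(w,F^\epsilon)=0$ yields exactly the second Hamilton equation. The boundary constraints defining $\D(\vf)$ reduce to $w|_B=0$ (which forces $r(-1)=0$) together with $\F_2(w,F^\epsilon)=0$ at $p=0$. Evenness of $w\in X$ in $q$ makes $r$ odd in $q$, so $(w,r)(-q)=\reverser(w,r)(q)$, and since $G^\epsilon$ commutes with $\reverser$ the same holds for $u$. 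The stated regularity $u\in C^2_0(\R,\X)\cap C^1_0(\R,\U)$ follows from $w\in C^{3+\alpha}_\bdd(\overline R)\cap C^2_0(\overline R)$ by feeding $(w(q,\cdot),r(q,\cdot))$ through the smooth map $G^\epsilon$.

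For part \ref{dynamic to elliptic}, I reverse the procedure: apply $(G^\epsilon)^{-1}$ to get $(w,r)$ satisfying \eqref{hamiltons equation} and $(w,r)\in\D(\vf)$ pointwise in $q$; the first Hamilton equation forces $r=w_q/(H_p+w_p)$, the second then reproduces $\F_1(w,F^\epsilon)=0$, the surface constraint in $\D(\vf)$ gives $\F_2(w,F^\epsilon)=0$, and $r(-1)=0$ combined with $w(-1,q)=0$ gives $w|_B=0$. Reversibility of $u$ passes to evenness of $w$. Continuous dependence on $(\epsilon,u)$ is inherited from the smooth dependence of $(G^\epsilon)^{-1}$ on $\epsilon$ together with the regularity bootstrap below. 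For part \ref{linear elliptic to dynamic}, I differentiate the construction of \ref{elliptic to dynamic} at the nontrivial $\dot w$: set $\dot u:=\widehat{DG}^\epsilon(w,r)\bigl(\dot w,\,(\dot w_q-r\dot w_p)/(H_p+w_p)\bigr)$. Since $\widehat{DG}^\epsilon$ is an isomorphism by Lemma~\ref{G lemma}(ii), $\dot u\not\equiv 0$; linearizing the computations above shows that $\dot u$ solves $\dot u_q=\linear \dot u+DN^\epsilon(u)\dot u$, and evenness of $\dot w\in X$ gives $\dot u(-q)=\reverser\dot u(q)$.

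The main obstacle is the regularity transfer in part \ref{dynamic to elliptic}: the hypothesis $u\in C^4_0(\R,\X)\cap C^3_0(\R,\U)$ only gives Sobolev regularity in $p$ on each horizontal slice, whereas the space $X$ demands classical $C^{3+\alpha}$ regularity on the full strip $\overline R$. I would first combine Sobolev embedding in the $p$ variable with the available $q$-derivatives of $u$ to place $w$ in, say, $C^{2+\alpha}_\loc(\overline R)$; then, viewing $\F_1(w,F^\epsilon)=0$ as a uniformly quasilinear elliptic equation for $w$ on the fixed strip with the uniformly oblique boundary condition $\F_2=0$ on $T$ and the Dirichlet condition on $B$, interior and boundary Schauder estimates upgrade $w$ to $C^{3+\alpha}_\bdd(\overline R)$. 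The decay $w\in C^2_0(\overline R)$ is inherited from the decay of $u$ in $\X$ at $\abs q\to\infty$, proving $w\in X$ and completing the correspondence.
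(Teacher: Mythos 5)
Your proposal is correct and, for parts \ref{elliptic to dynamic} and \ref{linear elliptic to dynamic}, matches what the paper does (the paper omits (i) as straightforward and defers (iii) to the analogous lemma in \cite{wheeler2013solitary}; your formula $\dot r = (\dot w_q - r\dot w_p)/(H_p+w_p)$ is exactly the correct linearization of $r = w_q/(H_p+w_p)$, and invoking the isomorphism $\widehat{DG}^\epsilon$ for nontriviality is the right move).

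The one place where you genuinely diverge from the paper is the regularity upgrade in part \ref{dynamic to elliptic}, which is the only step the paper writes out in full. You propose first getting $w \in C^{2+\alpha}_\loc$ and then running quasilinear Schauder estimates on the strip with the oblique boundary condition. The paper instead avoids elliptic theory entirely: after using Sobolev embedding in $p$ (this is where the standing hypothesis $\alpha \le 1/2$ enters, since $H^1(-1,0)\hookrightarrow C^{1/2}$ and $H^2(-1,0)\hookrightarrow C^{1+1/2}$ — a point your sketch does not flag) and the three available $q$-derivatives to conclude $w, w_q, w_{pq}, w_{qq}, w_{pqq}, w_{qqq} \in C^\alpha_0(\overline R)$, it \emph{solves the interior equation algebraically for} $w_{pp}$ as a smooth function $f(W)$ of lower-order data $W=(H,H_p,H_{pp},w,w_p,w_{pq},w_{qq})$, and then obtains $w_{ppq}=f_W(W)W_q$ and $w_{ppp}=f_W(W)W_p$ by differentiating that identity. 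This is more elementary (no Schauder estimate on an unbounded domain, no freezing of coefficients, no separate treatment of the boundary condition) and gives the decay $C^\alpha_0$ of all third derivatives for free from the decay of $W$ and $W_q$. Your Schauder route would also work — the paper has the requisite estimate on the infinite strip available as Theorem~\ref{schauder theorem} — but you would still need the Sobolev-embedding step with $\alpha\le 1/2$ to get Hölder coefficients to start the bootstrap, and you would need to argue separately that the Schauder estimate preserves decay as $\abs q\to\infty$ (e.g.\ by applying it on translated windows), which the algebraic argument sidesteps.
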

\begin{proof}
  Given Lemma~\ref{G lemma}, the proof of \ref{elliptic to dynamic} is straightforward and hence omitted. Likewise the proof of \ref{linear elliptic to dynamic} proceeds as in \cite[Lemma~4.4]{wheeler2013solitary}. The proof of \ref{dynamic to elliptic} is similar to that in \cite[Lemma~4.3]{wheeler2013solitary}, but is given in Appendix~\ref{appendix calculations} for the reader's convenience.
\end{proof}

\subsection{Center manifold reduction}
Up to this point, we have succeeded in transforming the original Hamiltonian system into one that is suitable for analysis via the center manifold reduction method.  In particular, by transitioning to $(\man,\symp_*,\ham_*^{\epsilon})$, we have obtained a reversible Hamiltonian system with linear boundary conditions.  Moreover, because $G^\epsilon$ is a near-identity mapping, the spectral properties of the linearized operator established in Lemma~\ref{hamiltonian spectral lemma} translate to the new system as well.  

Let $\X^\cs \sub \X$ be the two-dimensional center subspace associated with the eigenvalue $0$ of $\linear$. We denote by $P^\cs$ the associated spectral projection, and write $P^\hs := I-P^\cs$, $\X^\hs := P^\hs \X$. We will write $u^\cs \in P^\cs \D(\linear)$ as $u^\cs = z_1 e_1 + z_2 e_2$, where 
\begin{align} \label{def e1 and e2}
  e_1 := c_0^{-1/2} (\Phicr,0),
  \qquad 
  e_2 := c_0^{-1/2} (0,\Phicr/H_p),
\end{align}
are the eigenvector and generalized eigenvector from Lemma~\ref{hamiltonian spectral lemma}, and the normalization constant $c_0 > 0$ is given by 
\begin{align}
  \label{def c0}
  c_0 := \symp\big( (\Phicr,0), (0,\Phicr/H_p) \big)
  = \int_{-1}^0 \frac {\Phicr^2}{H_p} \, dp > 0.
\end{align}

\begin{lemma}[Center manifold reduction] \label{center manifold lemma}
  For any integer $k \ge 2$, there exists a neighborhood $\Lambda \by \U$ of the origin in $\R \by \D(\linear)$ such that, for each $\epsilon \in \Lambda$, there exists a two-dimensional manifold $\cman \sub \U$ together with an invertible coordinate map 
  \begin{align*}
    \chi^\epsilon := P^\cs |_\cman \maps \cman \to \U^\cs := P^\cs \U 
  \end{align*}
  with the following properties:
  \begin{enumerate}[label=\rm(\roman*)]
  \item 
    Defining $\Psi^\epsilon \maps \U^\cs \to \U^\hs := P^\hs\U$ by 
    \begin{align*}
      u^\cs + \Psi^\epsilon(u^\cs) := (\chi^\epsilon)^{-1}(u^\cs),
    \end{align*}
    the map $(\epsilon,u) \mapsto \Psi^\epsilon(u)$ is $C^k(\Lambda \by \U^\cs, \U^\hs)$. Moreover $\Psi^\epsilon(0) = 0$ for all $\epsilon \in \Lambda$ and $D\Psi^0(0) = 0$.
  \item Every initial condition $u_0 \in \cman$ determines a unique solution $u$ of $u_q = \linear u + N^\epsilon(u)$ which remains in $\cman$ as long as it remains in $\U$.
  \item If $u$ solves $u_q = \linear u + N^\epsilon(u)$ and lies in $\U$ for all $q$, then $u$ lies entirely in $\cman$.
  \item If $u^\cs \in C^1((a,b),\U^\cs)$ solves the reduced system
    \begin{align}
      \label{reduced ode}
      u^\cs_q = 
      f^\epsilon(u^\cs) :=
      L u^\cs + P^\cs N^\epsilon ( u^\cs + \Psi^\epsilon(u^\cs)),
    \end{align}
    then $u = (\chi^\epsilon)^{-1} (u^\cs)$ solves the full system $u_q = \linear u + N^\epsilon(u)$.
  \item 
    With $u^\cs$ and $u$ as above, if $\dot u^\cs \in C^1(\R,\U^\cs)$ solves the linearized reduced equation $\dot u^\cs_q = Df^\epsilon(u^\cs)\dot u^\cs$, then $\dot u = \dot u^\cs + D_u \Psi^\epsilon(u^\cs) \dot u^\cs$ solves the full linearized system $\dot u_q = \linear \dot u + D_u N^\epsilon(u)\dot u$.
  \item \label{best reduced}
    The reduced system \eqref{reduced ode} can be transformed via a $C^{k-1}$ change of variables into a Hamiltonian system $(U^\cs,\csymp,K^\epsilon)$, where  $U^\cs$ is a neighborhood of the origin in $\mathbb{R}^2$,  $\csymp$ is the canonical symplectic form 
    \begin{align*}
      \csymp((z_1,z_2),(z_1^\prime ,z_2^\prime)) := z_1 z_2^\prime - z_1^\prime z_2, \qquad (z_1, z_2), \, (z_1^\prime, z_2^\prime) \in \mathbb{R}^2,
    \end{align*}
    and the reduced Hamiltonian is
    \begin{align}
      \label{def cham}
      K^\epsilon(z_1, z_2) := \ham^\epsilon(z_1 e_1 + z_2 e_2 +\Theta^\epsilon(z_1 e_1 + z_2 e_2 )). 
    \end{align}
    Here $(\epsilon, u^\cs) \mapsto \Theta^\epsilon(u^\cs)$ is of class $C^{k-1}(\Lambda \times \U^\cs, \U)$ and satisfies $\Theta^\epsilon(0) = 0$ for all $\epsilon \in \Lambda$, and $D_{u^\cs} \Theta^0(0) = 0$. 
    The system is reversible with reverser $\creverser(z_1,z_2) = (z_1,-z_2)$.
  \end{enumerate}
\end{lemma}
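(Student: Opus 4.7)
The plan is to deduce Lemma~\ref{center manifold lemma} from an abstract center manifold theorem for infinite-dimensional, quasilinear, reversible Hamiltonian systems, such as the version in Mielke \cite{mielke1988reduction} or Buffoni--Groves \cite{groves2001spatial} (see also the formulation used by Groves--Wahl\'en \cite{groves2008vorticity}). Once all the abstract hypotheses are checked, the statements (i)--(v) are direct consequences of that theorem, and only (vi), the reduction to canonical Hamiltonian form on the two-dimensional center manifold, requires an additional (and essentially standard) Darboux-type argument.

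I would first check the linear spectral hypotheses for $L \maps \D(L) \sub \X \to \X$. Lemma~\ref{hamiltonian spectral lemma}(i) shows that $\sigma(L)$ decomposes as a finite set $\{0\} \cup \sigma_{\mathrm{hyp}}(L)$, with $0$ having algebraic multiplicity two and $\sigma_{\mathrm{hyp}}(L) \sub \R \setminus \{0\}$ bounded away from $i\R$. This gives the required splitting $\X = \X^{\cs} \oplus \X^{\hs}$ via the spectral projection $P^\cs$. Lemma~\ref{hamiltonian spectral lemma}\ref{spectral bound} supplies precisely the resolvent estimates
\[
\n{(L-i\xi I)^{-1}}_{\X \to \X} \lesssim |\xi|^{-1}, \qquad
\n{(L-i\xi I)^{-1}}_{\X \to \Y} \lesssim 1,
\]
for $|\xi| > \Xi$, which is the hypothesis needed to apply the quasilinear center manifold theorem. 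Lemma~\ref{G lemma} shows that $N^\epsilon \in C^\infty(\U \by \Lambda,\X)$ with $N^\epsilon(0) = 0$, and the definition of $G^\epsilon$ ensures that $DN^0(0) = 0$ (since the quadratic structure of the original nonlinearity is preserved by the near-identity transformation $G^\epsilon$). Thus the nonlinearity is smooth and at least quadratic at the origin, with the required tame dependence on $\epsilon$.

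With these hypotheses verified, the abstract theorem produces, for any $k \ge 2$, a $C^k$ reduction function $\Psi^\epsilon \maps \U^\cs \to \U^\hs$ whose graph $\cman$ is locally invariant under the flow of $u_q = Lu + N^\epsilon(u)$ and contains every globally bounded solution staying in $\U$; this gives (i)--(iv). Since our symplectic form $\omega_*$ and reverser $\reverser$ are both $G^\epsilon$-equivariant and $\reverser L = -L \reverser$ with $\reverser \X^\cs = \X^\cs$, the standard uniqueness/equivariance arguments (cf.~\cite{mielke1988reduction}) transfer both structures to the reduced flow: the reduced system \eqref{reduced ode} is Hamiltonian with respect to the pull-back of $\omega_*$ and reversible under $\creverser(z_1,z_2) = (z_1,-z_2)$, giving (v).

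The main remaining task, and the only genuinely additional step, is part (vi). In the $(z_1,z_2)$ coordinates defined by \eqref{def e1 and e2}--\eqref{def c0}, the pulled-back symplectic form $\csymp^\epsilon$ on $U^\cs$ agrees with the canonical form $dz_1 \wedge dz_2$ at $\epsilon = 0$ and at the origin, because of the normalization $c_0^{-1/2}$ chosen so that $\omega_*(e_1,e_2) = 1$. A parametric Darboux lemma on $\R^2$ (e.g.~Moser's trick applied to the family $\csymp^\epsilon_t := t\,dz_1\wedge dz_2 + (1-t)\csymp^\epsilon$, which is non-degenerate near the origin) then produces a $C^{k-1}$ family of diffeomorphisms $\Theta^\epsilon$ of a neighborhood of $0 \in \R^2$ with $\Theta^0(0) = 0$, $D_{u^\cs}\Theta^0(0) = 0$, and $(\Theta^\epsilon)^*\csymp^\epsilon = dz_1 \wedge dz_2$; equivariance with respect to $\creverser$ can be imposed by averaging the interpolating vector field over the $\Z_2$-action of the reverser, which preserves non-degeneracy since $\creverser^*\csymp^\epsilon = -\csymp^\epsilon$. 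The transformed Hamiltonian is then $K^\epsilon$ as defined in \eqref{def cham}, completing (vi). The principal obstacle in the whole argument is really the verification of the resolvent estimate in Lemma~\ref{hamiltonian spectral lemma}\ref{spectral bound}; everything else follows by invoking or lightly adapting existing machinery.
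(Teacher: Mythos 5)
Your proposal is correct and follows essentially the same route as the paper: verify the spectral hypotheses (H1)--(H2) via Lemma~\ref{hamiltonian spectral lemma} and the quadratic-nonlinearity hypothesis via Lemma~\ref{G lemma}, invoke the abstract reversible-Hamiltonian center manifold theorem of Buffoni--Groves--Toland for (i)--(v), and then perform a parameter-dependent, reversibility-compatible Darboux transformation for (vi). One small imprecision: the $\Theta^\epsilon$ appearing in \eqref{def cham} is not just your finite-dimensional Darboux diffeomorphism of a neighborhood of $0\in\R^2$ --- it must take values in $\U$, since $K^\epsilon$ is evaluated through the \emph{original} Hamiltonian $\ham^\epsilon$ rather than the transformed one $\ham_*^\epsilon$. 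The paper obtains it as the composition $\id+\Theta^\epsilon = (\id+\Upsilon^\epsilon)\circ(\id+\Xi^\epsilon)^{-1}$, where $\Xi^\epsilon$ is the Darboux map and $\Upsilon^\epsilon$ is the lift to the center manifold with $G^\epsilon$ undone; your argument should make this composition explicit, and note that the anti-symplectic character of the reverser ($\creverser^*\omega=-\omega$) is handled by citing the reversible Darboux construction (e.g.\ Mielke) rather than a naive averaging.
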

\begin{remark}\label{reduced remark}
  For a solution $(z_1,z_2)$ to the reduced Hamiltonian system from part (vi), the corresponding solution $(w,r)$ of the original Hamiltonian system is given by $(w,r) = z_1 e_1 + z_2 e_2 + \Theta^\epsilon(z_1 e_1 + z_2 e_2)$. 
\end{remark}
\begin{proof}   
  Observe that $L$ satisfies (H1) and (H2) of Theorem~\ref{center manifold theorem} in light of Lemma~\ref{hamiltonian spectral lemma}, 
  and the only part of its spectrum lying on the imaginary axis is the eigenvalue $0$, which has algebraic multiplicity $2$.
  The nonlinearity $N^\epsilon$ is, in fact, $C^\infty$ in its dependence on $\epsilon$ and $u$ in any small neighborhood of the origin.  Inspecting \eqref{hamiltons equation}, and keeping in mind that $G^\epsilon$ is a near-identity transformation, it is easy to confirm that $N^0(0) = 0$ and $D_u N^0(0) = 0$.    We may therefore apply Theorem~\ref{center manifold theorem} to the system $(\man,\sympep_*,\ham_*^{\epsilon})$, obtaining a two-dimensional center manifold $\cman$ satisfying (i)--(iv).  Part (v) requires a small additional argument that is discussed, for example, in \cite[Lemma~4.4]{wheeler2013solitary}.

  Now consider statement (vi). We begin by undoing the transformation $G^\epsilon$, since it is considerably simpler to work in the original variables.  Define 
  $\Upsilon^\epsilon \in C^k(\Lambda \by \U^\cs, \U)$ by the relation 
  \[ 
  u^\cs + \Upsilon^\epsilon( u^\cs) := (G^\epsilon)^{-1}\left( u^\cs + \Psi^\epsilon(u^\cs) \right).
  \]
  Then $\Upsilon^\epsilon(0) = 0$ for all $\epsilon \in \Lambda$, and $D_{u^\cs} \Upsilon^0(0) = 0$.  Here we are simply relying on the fact that $G^\epsilon$ is near-identity.  

  From Theorem~\ref{center manifold theorem}(v), we know that the center manifold $(\cman,{\sympep_*}|_{\cman}, \ham_*^\epsilon|_{\cman})$ is a symplectic submanifold of $(\man,\sympep_*,\ham_*^{\epsilon})$. Changing variables using $G^\epsilon$ we obtain a new manifold $\cmang$
  that we equip with the chart $(\operatorname{id}+\Upsilon^\epsilon)^{-1} \maps \cmang \to \U^\cs$. Working in these coordinates, the symplectic form $\sympep_*|_{\cman}$ becomes $\csympg$, which, for $u^\cs \in \U^\cs$ and $v_1^\cs, v_2^\cs \in \X^\cs$, is given by
    \begin{align}
    \label{csympg}
    \begin{aligned}
      \csympg|_{u^\cs}(v_1^\cs, v_2^\cs) &= 
      \sympep_*|_{u^\cs + \Psi^\epsilon(u^\cs)}(v_1^\cs + D_{u^\cs}\Psi^\epsilon(u^\cs)v_1^\cs, v_2^\cs + D_{u^\cs}\Psi^\epsilon(u^\cs)v_2^\cs) \\
    & = \omega(v_1^\cs, v_2^\cs) + \mathcal{O}(|\epsilon| \| u^\cs \|_{\mathcal{X}}  \| v_1^\cs \|_{\mathcal{X}}  \| v_2^\cs \|_{\mathcal{X}} ).
        \end{aligned}
  \end{align}
  Since $\chi^\epsilon$ and the reverser $\reverser$ are linear maps which commute, it is easy to check that, in these coordinates, it is given simply by $S|_{\U^\cs}$.

  Thanks to \eqref{csympg}, we can now employ a parameter-dependent Darboux transformation 
  \begin{equation}
    u^\cs \mapsto u^\cs + \Xi^\epsilon(u^\cs) 
  \end{equation}
  which is $C^{k-1}$ and transforms $\csympg$ into $\omega$ in a neighborhood of the origin (see \cite[Theorem~4]{buffoni1999multiplicity}). 
  This map is near-identity in that $\Xi^\epsilon(0) = 0$ and $D_{u^\cs} \Xi^0(0) = 0$. We now equip $\cmang$ with the new chart $(\operatorname{id}+\Theta^\epsilon)^{-1} \maps \cmang \to \U^\cs$ where $\Theta^\epsilon$ is defined by 
  \begin{equation}
    \id +
    \Theta^\epsilon :=  
    (\operatorname{id}+\Upsilon^\epsilon) \circ 
    (\id + \Xi^\epsilon)^{-1},
    \label{def darboux theta} 
  \end{equation}
  and $\id + \Theta^\epsilon$ is also near-identity. In these coordinates, our Hamiltonian system becomes $(\U^\cs, \omega, \cham)$, where the Hamiltonian $\cham$ is
  \begin{align*}
    \cham(u^\cs) = \ham^\epsilon(u^\cs + \Theta^\epsilon(u^\cs)).
  \end{align*}
  The Darboux transformation $\id + \Xi^\epsilon$ can be chosen so that the action of the reverser in these coordinates is still given by $S|_{\U^\cs}$; see the arguments leading to \cite[Theorem~5.17]{mielke1991book}.

  We identify $\U^\cs$ with a neighborhood of the origin $U^\cs \subset \mathbb{R}^2$ via the mapping $U^\cs\ni (z_1, z_2)\mapsto z_1 e_1 + z_2 e_2 \in  \U^\cs$.  Notice that 
  \begin{align*} 
    \omega(z_1 e_1 + z_2 e_2, z_1^\prime e_1 + z_2^\prime e_2) &= z_1 z_2^\prime \omega(e_1, e_2) + z_1^\prime z_2 \omega(e_2, e_1) \\
    & = \csymp((z_1, z_2), (z_1^\prime, z_2^\prime)),
  \end{align*}
  for all $(z_1, z_2), (z_1^\prime, z_2^\prime) \in \mathbb{R}^2$, 
  and that 
  \begin{align*}
    S(z_1 e_1 + z_2 e_2) = z_1 e_2 - z_2 e_2.
  \end{align*}
  Thus, we obtain the reversible and canonical Hamiltonian system
  $(U^\cs,\gamma,K^\epsilon)$, proving (vi).
\end{proof}

Thanks to the clever choice of coordinates outlined in \cite{groves2008vorticity} and leading to the system described in Lemma~\ref{center manifold lemma}(vi), we can Taylor expand the reduced system \eqref{reduced ode} by using \eqref{def cham} directly, and avoid dealing with the implicitly defined intermediate Hamiltonian $\ham_*^\epsilon$ entirely. The result of this calculation, which is presented in Appendix~\ref{appendix: reduced system calc}, is that the reduced system \eqref{reduced ode} is equivalent to the following ODE set in $\mathbb{R}^2$: 
\begin{equation*}
  \left\{ 
  \begin{aligned}
    {z}_{1q}  &= z_2 + \mathcal{R}_1(z_1, z_2, \epsilon) \\
    {z}_{2q}  &= \epsilon c_0^{-1} c_1 z_1 - \frac{3}{2} c_0^{-3/2}  c_2 z_1^2 + \mathcal{R}_2(z_1, z_2, \epsilon), 
  \end{aligned} 
  \right.
\end{equation*}
where $c_0$ was defined in \eqref{def c0},
\begin{align}
  \label{def c1 c2}
  c_1 := \rho(0) \Phicr(0)^2 - \int_{-1}^0 \rho_p \Phicr^2 \, dp,
  \qquad
  c_2 := \int_{-1}^0 \frac{(\partial_p \Phicr)^3}{H_p^4} \, dp,
\end{align}
and
\begin{align*} 
  \mathcal{R}_1(z_1,z_2,\epsilon) &= \mathcal{O}(|(z_1, z_2)|^2 + |z_2| |(\epsilon, z_1, z_2)|^2 + |\epsilon|  \lvert(z_1, z_2)\rvert ), \\
  \mathcal{R}_2(z_1,z_2,\epsilon) &= \mathcal{O}(|z_1||(\epsilon, z_2)|^2 + |z_2| |(\epsilon, z_1)|), 
\end{align*}
are higher order remainder terms.  
The reversal symmetry implies that $\mathcal{R}_1$ is odd in $z_2$ and $\mathcal{R}_2$ is even in $z_2$.   We can simplify things even further by introducing the scaled variables $(Z_1, Z_2)$ and $Q$ defined as
\begin{gather} 
  \label{ode scaling}
    z_1 =: |\epsilon| c_0^{1/2} c_1 c_2^{-1} Z_1 ,  
    \qquad
    z_2 =: |\epsilon|^{3/2} c_1^{3/2} c_2^{-1}  Z_2, 
    \qquad
    q =: |\epsilon|^{-1/2} c_0^{1/2} c_1^{-1/2} Q.
\end{gather}
This transforms the system into 
\begin{equation}
  \label{rescaled ode}
  \left\{ 
  \begin{aligned}
    Z_{1Q}  
    &= Z_2 + \mathcal{R}_3(Z_1, Z_2, \epsilon) \\ 
    Z_{2Q}  
    &=  Z_1 - 
    \frac{3}{2} (\signum \epsilon) Z_1^2 + \mathcal{R}_4(Z_1, Z_2, \epsilon), 
  \end{aligned} 
  \right.
\end{equation}
where $\mathcal{R}_3$ and $\mathcal{R}_4$ are new remainder terms.  From the estimates of $\mathcal{R}_1, \mathcal{R}_2$ and the change of variable formulas above, it is clear that 
\begin{align*}
  \mathcal R_3, 
  ~ 
  \mathcal R_4,
  ~ 
  D_{(Z_1,Z_2)}\mathcal R_3,
  ~ 
  D_{(Z_1,Z_2)}\mathcal R_4 
  = 
  \mathcal{O}(|\epsilon|^{1/2}).
\end{align*}

These calculations lead directly to the following result.

\begin{lemma}[Existence of $w^\epsilon$] \label{small existence lemma}  
  There exists $\epsilon_* > 0$ such that, for each $\epsilon \in (0, \epsilon_*)$, there is a solution  $(w^\epsilon,F^\epsilon) \in X \times \R$ to the height equation \eqref{w equation}.  
  Moreover, the map $\epsilon \mapsto w^\epsilon$ is continuous $(0,\epsilon_*) \to X$, and $\n{w^\epsilon}_X \to 0$ as $\epsilon \to 0$.
\end{lemma}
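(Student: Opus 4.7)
The plan is to construct $(w^\epsilon, F^\epsilon)$ by exhibiting a reversible homoclinic orbit for the rescaled reduced ODE \eqref{rescaled ode} with $\epsilon > 0$, and then lifting this orbit back through the chain of transformations built in Lemma~\ref{center manifold lemma} and Lemma~\ref{change of vars lem}. Everything hinges on the fact that, at leading order, the reduced system is the ODE satisfied by the KdV soliton.

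First, I would isolate the unperturbed problem. Setting the remainders $\mathcal{R}_3, \mathcal{R}_4$ to zero in \eqref{rescaled ode} with $\signum\epsilon = +1$ yields the planar Hamiltonian system
\begin{equation*}
  Z_{1Q} = Z_2, \qquad Z_{2Q} = Z_1 - \tfrac{3}{2} Z_1^2,
\end{equation*}
which has the explicit reversible homoclinic
\begin{equation*}
  Z_1^0(Q) = \sech^2(Q/2), \qquad Z_2^0(Q) = \partial_Q Z_1^0(Q).
\end{equation*}
The origin is a hyperbolic saddle with eigenvalues $\pm 1$, so its stable and unstable manifolds are one-dimensional and transverse to the section $\{Z_2 = 0\}$; in particular, $Z_1^0(\cdot)$ crosses this section transversally at $Q = 0$ at the point $(1,0)$.

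Next, I would persist this homoclinic for small $\epsilon > 0$ using reversibility. By roughness of hyperbolicity, the origin remains a saddle for the full system \eqref{rescaled ode} (with remainders $\mathcal{R}_3, \mathcal{R}_4 = \mathcal{O}(|\epsilon|^{1/2})$) for $\epsilon$ in a small interval $(0,\epsilon_*)$, with a one-dimensional stable manifold $W^s_\epsilon$ depending continuously (in fact $C^{k-1}$) on $\epsilon$. For $\epsilon = 0$ the orbit $(Z_1^0, Z_2^0)$ meets $\{Z_2 = 0\}$ transversally, so by the implicit function theorem $W^s_\epsilon$ continues to meet $\{Z_2 = 0\}$ at a unique point $(Z_1^\epsilon(0), 0)$ near $(1,0)$, giving a trajectory $(Z_1^\epsilon, Z_2^\epsilon)$ defined for $Q \ge 0$ and tending to $0$ as $Q \to +\infty$. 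Since the reverser for the reduced system is $\creverser(Z_1, Z_2) = (Z_1, -Z_2)$ and the orbit hits its fixed set at $Q = 0$, the reflected trajectory $\creverser(Z_1^\epsilon(-Q), Z_2^\epsilon(-Q))$ extends it to a globally defined reversible homoclinic orbit. Continuous dependence in $\epsilon$ of both $W^s_\epsilon$ and the transverse intersection yields continuity of $\epsilon \mapsto (Z_1^\epsilon, Z_2^\epsilon)$ into $C^j_0(\R, \R^2)$ for any $j$, together with $\|(Z_1^\epsilon, Z_2^\epsilon)\|_\infty = \mathcal{O}(1)$ as $\epsilon \to 0^+$.

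Finally, I would unwind the transformations. Undoing the rescaling \eqref{ode scaling} gives a solution $(z_1^\epsilon, z_2^\epsilon)$ of the reduced Hamiltonian system from Lemma~\ref{center manifold lemma}\ref{best reduced} with $\|(z_1^\epsilon, z_2^\epsilon)\|_\infty = \mathcal{O}(\epsilon)$ as $\epsilon \to 0^+$. Applying Remark~\ref{reduced remark}, the associated solution of the full system \eqref{u equation} is $u^\epsilon = z_1^\epsilon e_1 + z_2^\epsilon e_2 + \Theta^\epsilon(z_1^\epsilon e_1 + z_2^\epsilon e_2)$, which inherits reversibility and decay; taking $k$ sufficiently large in Lemma~\ref{center manifold lemma} ensures $u^\epsilon \in C^4_0(\R, \X) \cap C^3_0(\R, \U)$, which is exactly the regularity required to invoke Lemma~\ref{change of vars lem}\ref{dynamic to elliptic}. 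That lemma then produces $w^\epsilon \in X$ with $\F(w^\epsilon, F^\epsilon) = 0$, and the continuity/smallness statements there combined with continuity of the reduced orbit yield $\epsilon \mapsto w^\epsilon \in X$ continuous with $\|w^\epsilon\|_X \to 0$ as $\epsilon \to 0^+$.

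The main obstacle will be bookkeeping: verifying the transversality and hyperbolicity estimates survive uniformly in $\epsilon$ at the $C^{k-1}$ level, and checking that enough smoothness is available from Lemma~\ref{center manifold lemma}\ref{best reduced} (with $k$ chosen large at the outset) to furnish $u^\epsilon \in C^4_0 \cap C^3_0$ so that Lemma~\ref{change of vars lem}\ref{dynamic to elliptic} applies. The dynamical persistence itself is essentially a standard reversible perturbation argument for the KdV soliton.
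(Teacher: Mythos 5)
Your proposal is correct and follows essentially the same route as the paper: identify the KdV soliton as the homoclinic of the rescaled reduced system at $\epsilon=0$, persist it as a reversible homoclinic for small $\epsilon>0$, and lift it back through Lemma~\ref{center manifold lemma} and Lemma~\ref{change of vars lem}\ref{dynamic to elliptic}. The only difference is that you spell out the standard stable-manifold/reverser-fixed-set transversality argument where the paper simply cites Kirchg\"assner's persistence result.
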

\begin{proof}  
  \begin{figure}
    \includegraphics[scale=1.1]{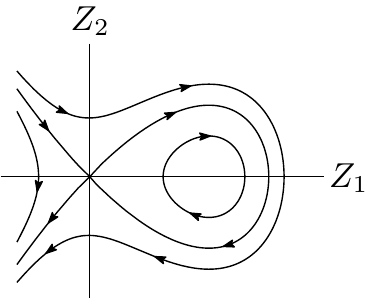} 
    \caption{Phase portrait for the reduced system \eqref{rescaled ode} when $\epsilon=0$.}
    \label{phase figure}
  \end{figure}
  Observe that, when $\epsilon = 0$, the reduced and rescaled system \eqref{rescaled ode} becomes 
  \begin{align}
    \label{rescaled z}
    -Z_1 + \frac{3}{2} Z_1^2 + Z_{1 QQ} = 0,
  \end{align}
  whose phase portrait is shown in Figure~\ref{phase figure}. This is exactly the equation satisfied by the Korteweg--de Vries soliton $Z_1 = \sech^2(Q/2)$ (with unit wave speed).  
  We conclude, therefore, that at $\epsilon = 0$, \eqref{rescaled ode} has an orbit $(Z_1^0,Z_2^0)$ homoclinic to $0$.  The reversibility of the system guarantees that there is a nearby reversible homoclinic orbit $(Z_1^\epsilon,Z_2^\epsilon)$
  for $\epsilon > 0$ sufficiently small
  (see, e.g., \cite[Proposition~5.1]{kirchgassner1988resonant} and the surrounding comments). 
  By choosing the $k$ in Lemma~\ref{center manifold lemma} appropriately, we can ensure that the map $\epsilon \mapsto (Z_1^\epsilon,Z_2^\epsilon)$ is continuous 
  with values in $C^4_0(\R,\R^2)$.

  By Lemma~\ref{center manifold lemma}, for each $\epsilon > 0$ sufficiently small, there is a corresponding solitary wave solution \[ (w^\epsilon, r^\epsilon, F^\epsilon) \in \left(C_0^4(\mathbb{R}, \mathcal{X}) \cap C_0^3(\mathbb{R}, \U) \right) \times \R\] of the full system \eqref{hamiltons equation}.
  By Lemma~\ref{change of vars lem}(ii), there exists a corresponding solution  $(w^\epsilon, F^\epsilon) \in X \times \mathbb{R}$ of the original height equation \eqref{w equation}.  Thus, there is an $\epsilon_* > 0$ and a local curve of solutions $\cm_\loc = \{(w^\epsilon, F^\epsilon) : \epsilon \in (0, \epsilon_*)\}$.  
  Finally, undoing the scaling and the various changes of variable, we see that $\epsilon \mapsto w^\epsilon$ is continuous $(0,\epsilon_*) \to X$ with $\n{w^\epsilon}_X \to 0$ as $\epsilon \to 0$.
\end{proof}

\begin{remark}\label{periodic remark}
  The waves constructed in Lemma~\ref{small existence lemma} are solitary waves. As in \cite{groves2008vorticity}, for $\epsilon < 0$ with $\abs \epsilon$ sufficiently small, a similar argument guarantees the existence of a family of symmetric periodic waves of cnoidal type with period $\mathcal O(\abs\epsilon^{1/2})$.
\end{remark}

\subsection{Uniqueness and elevation}

In this subsection we show that the waves $(w^\epsilon,F^\epsilon)$ constructed in Lemma~\ref{small existence lemma} are waves of elevation.  Moreover, they are the unique waves of elevation with $w$ and $\epsilon$ sufficiently small. 

\begin{lemma}[Elevation] \label{small elevation lemma}
  After possibly shrinking the range of $\epsilon$, all of the waves $w^\epsilon$ constructed above are waves of elevation in that $w^\epsilon > 0$ on $R \cup T$.
  \begin{proof}
    For each $\epsilon \geq 0$, consider the rescaled solution $(Z_1^\epsilon,Z_2^\epsilon)$ of \eqref{rescaled ode}. Since $Z_1^0(Q) > 0$ for all $Q$, the continuous dependence of $Z_1^\epsilon$ as well as the stable and unstable manifolds of \eqref{rescaled ode} at the origin on $\epsilon$ imply that $Z_1^\epsilon(Q) > 0$ for all $Q$ when $\epsilon$ is sufficiently small. Since 
    \begin{align}
      \label{ratio limit}
      \lim_{Q \to \pm\infty} \frac{Z_2^\epsilon(Q)}{Z_1^\epsilon(Q)}
      =
      \pm 1 + \mathcal{O}(\epsilon^{1/2}),
    \end{align}
    (see Figure~\ref{phase figure}) this continuous dependence also means that, shrinking $\epsilon$ further,
    \begin{align*}
      \abs{Z_2^\epsilon(Q)}
      \le C \abs{Z_1^\epsilon(Q)} \qquad \textrm{for all } Q,
    \end{align*}
    where $C$ is independent of $\epsilon$. Undoing the scaling in \eqref{ode scaling}, we find that $z_1^\epsilon(q) > 0$ and
    \begin{align}
      \label{ratio bounds}
      \abs{z_2^\epsilon(q)}
      \le C \epsilon^{1/2} \abs{z_1^\epsilon(q)}.
    \end{align}

    Going back through the many changes of variables (see Remark~\ref{reduced remark}), we see that we can write \begin{align}
      \label{unwinding changes}
      w^\epsilon(q,p) &= c_0^{-1/2} z_1^\epsilon(q)\Phicr(p) + \mathcal R(q,p),
    \end{align}
    where the remainder term satisfies
    \begin{align*}
      \n{\mathcal R(q,\placeholder)}_{H^2}
      \le C \big(\epsilon + \abs{z_1^\epsilon(q)} + \abs{z_2^\epsilon(q)}\big)\big(\abs{z_1^\epsilon(q)}+\abs{z_2^\epsilon(q)}\big).
    \end{align*}
    In particular, taking $\epsilon$ small enough, \eqref{ratio bounds} ensures that
    \begin{align}
      \label{better R bound}
      \n{\mathcal R_p(q,\placeholder)}_{L^\infty}
      \le \frac {c_0^{-1/2}}2 \min[ (\Phicr)_p]  z_1^\epsilon(q),
    \end{align}
    where we recall from Lemma~\ref{lem_minimality}\ref{lem_minimality_pos} that $(\Phicr)_p > 0$. Differentiating \eqref{unwinding changes} with respect to $p$ then yields 
    \begin{align*}
      w^\epsilon_p(q,p) \ge
      c_0^{-1/2} 
      z_1^\epsilon(q)(\Phicr)_p(p)
      - \frac {c_0^{-1/2}}2 \min[ (\Phicr)_p]  z_1^\epsilon(q) > 0
    \end{align*}
    for all $(q,p) \in \overline R$. Integrating with respect to $p$ using the fact that $w=0$ on $B=\{p=-1\}$, we conclude that $w^\epsilon > 0$ on $R \cup T$ as desired.
  \end{proof}
\end{lemma}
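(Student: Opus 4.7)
The plan is to exploit the near-explicit description of the small-amplitude waves coming from the center manifold reduction of Lemma~\ref{center manifold lemma}. At $\epsilon = 0$, the rescaled reduced system \eqref{rescaled z} admits the KdV soliton $Z_1^0(Q) = \sech^2(Q/2) > 0$ as its reversible homoclinic orbit. Since the reversible homoclinic orbit of \eqref{rescaled ode} depends continuously (in an appropriate $C^k_0(\R,\R^2)$ topology) on $\epsilon$, the perturbed orbit $(Z_1^\epsilon, Z_2^\epsilon)$ remains pointwise strictly positive in its first component once $\epsilon > 0$ is small enough. Undoing the scaling \eqref{ode scaling} yields $z_1^\epsilon(q) > 0$ for all $q \in \R$.

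Next I would control $z_2^\epsilon$ in terms of $z_1^\epsilon$. From the phase portrait in Figure~\ref{phase figure}, along the unperturbed homoclinic one has $Z_2^0/Z_1^0 \to \pm 1$ as $Q \to \pm\infty$, while $Z_2^0/Z_1^0$ remains bounded for intermediate $Q$; by continuous dependence and uniform decay at infinity, a similar uniform bound $\abs{Z_2^\epsilon(Q)} \le C \abs{Z_1^\epsilon(Q)}$ persists for small $\epsilon$. Translating back via \eqref{ode scaling} then gives $\abs{z_2^\epsilon(q)} \le C \epsilon^{1/2} z_1^\epsilon(q)$.

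Now I would unwind the chain of coordinate transformations---the Darboux transform, the reduction chart $\Theta^\epsilon$, and the flattening map $G^\epsilon$---each of which is near-identity with at least quadratic error at the origin. This yields an expansion of the form
\begin{equation*}
w^\epsilon(q,p) = c_0^{-1/2}\, z_1^\epsilon(q)\,\Phicr(p) + \mathcal{R}(q,p),
\end{equation*}
where the remainder satisfies (in the $H^2$ norm in $p$) the estimate
$\n{\mathcal{R}(q,\placeholder)}_{H^2} \le C\bigl(\abs\epsilon + \abs{z_1^\epsilon(q)} + \abs{z_2^\epsilon(q)}\bigr)\bigl(\abs{z_1^\epsilon(q)} + \abs{z_2^\epsilon(q)}\bigr)$.

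The main obstacle is that $\Phicr$ vanishes at the bed $p=-1$, so the strict positivity of $w^\epsilon$ near the bed cannot be read off directly from the positivity of the leading term. To finish, I would differentiate in $p$ and invoke Lemma~\ref{lem_minimality}\ref{lem_minimality_pos}, which gives $(\Phicr)_p \ge \delta > 0$ uniformly on $[-1,0]$. Sobolev embedding converts the $H^2$ bound on $\mathcal{R}$ into an $L^\infty$ bound on $\mathcal{R}_p$, and combined with the bound on $\abs{z_2^\epsilon}$, this allows $\mathcal{R}_p$ to be absorbed into, say, half of the positive leading contribution $c_0^{-1/2} z_1^\epsilon(q)(\Phicr)_p(p)$ once $\epsilon$ is small enough. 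Thus $w^\epsilon_p > 0$ on $\overline R$. Since $w^\epsilon = 0$ on the bed $B$, integrating $w^\epsilon_p$ from $p = -1$ delivers $w^\epsilon > 0$ on $R \cup T$ as required.
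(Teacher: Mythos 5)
Your proposal is correct and follows essentially the same route as the paper's proof: positivity of $Z_1^\epsilon$ by continuous dependence on the KdV soliton, the bound $\abs{z_2^\epsilon} \le C\epsilon^{1/2} z_1^\epsilon$, the expansion $w^\epsilon = c_0^{-1/2} z_1^\epsilon \Phicr + \mathcal R$, and then passing to $w^\epsilon_p$ to handle the vanishing of $\Phicr$ at the bed before integrating up from $B$. No gaps.
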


\begin{lemma}[Uniqueness] \label{uniqueness lemma}
  Suppose that $(w,F^\epsilon) \in X \times \R$ is a solution of \eqref{w equation} with $F^\epsilon$ defined as in \eqref{def F epsilon}. If $\epsilon > 0$ and $\n w_{C^2(\Omega)}$ are sufficiently small and $w > 0$ on $T$, then $w=w^\epsilon$.
  \begin{proof}
    Suppose that we have a solution $(w, F^\epsilon)$ of \eqref{w equation} with $\epsilon + \n w_{C^2(\Omega)} < \delta$, where $\delta > 0$ is to be determined, and assume that $w \not\equiv w^\epsilon$. We will show that $w$ is not a wave of elevation.

    By the properties of the center manifold, $w$ is determined by a homoclinic orbit $(z_1,z_2)$ of \eqref{reduced ode}. Since this equation already has a homoclinic orbit, namely $(z_1^\epsilon,z_2^\epsilon)$, and $w$ cannot be a translate of $w^\epsilon$, $(z_1,z_2)$ is not a translate of $(z_1^\epsilon,z_2^\epsilon)$. Looking at the phase portrait at the origin (see Figure~\ref{phase figure}), we conclude that $z_1 < 0$ for $\abs q$ sufficiently large, and also that (compare with the signs in \eqref{ratio limit})
    \begin{align*}
      \lim_{q \to \pm\infty} \frac{z_2(q)}{z_1(q)}
      =
      \mp \epsilon^{1/2} + \mathcal{O}(\epsilon).
    \end{align*}
    Thus, as in the proof of Lemma~\ref{small elevation lemma} above, we can shrink $\delta$ so that
    \begin{align}
      \label{unwinding changes again}
      w(q,p) &= c_0^{-1/2} z_1(q)\Phicr(p) + \mathcal R(q,p).
    \end{align}
    where the remainder term has the bound
    \begin{align}
      \label{better R bound again}
      \n{\mathcal R_p(q,\placeholder)}_{L^\infty}
      \le \frac {c_0^{-1/2}}2 \min[ (\Phicr)_p]  \abs{z_1(q)}.
    \end{align}
    Differentiating \eqref{unwinding changes again} with respect to $p$ and plugging in \eqref{better R bound again}, we conclude 
    \begin{align*}
      w_p(q,p) \le
      c_0^{-1/2} z_1(q)(\Phicr)_p(p)
      + \frac {c_0^{-1/2}}2 \min[ (\Phicr)_p]  \abs{z_1(q)} < 0
    \end{align*}
    as soon as $q$ is large enough that $z_1(q) < 0$. Integrating with respect to $p$, we find, for instance, that $w(q,0) < 0$ for such $q$, and hence in particular that $w$ cannot be a wave of elevation.
  \end{proof}
\end{lemma}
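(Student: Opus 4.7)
The plan is to use the center manifold reduction of Lemma~\ref{center manifold lemma} to translate the uniqueness question into a phase-portrait analysis for the two-dimensional reduced system \eqref{reduced ode}, and then to combine this with the positivity $w > 0$ on $T$ to rule out all small solutions other than $w^\epsilon$. First I would show that any $(w, F^\epsilon) \in X \times \R$ with $\n w_{C^2(\Omega)}$ sufficiently small produces, via Lemma~\ref{change of vars lem}\ref{elliptic to dynamic} and Lemma~\ref{center manifold lemma}(iii), a bounded reversible orbit $(z_1, z_2) \maps \R \to \R^2$ of the reduced system with $(z_1, z_2)(q) \to (0, 0)$ as $|q| \to \infty$; the evenness of $w$ in $q$ built into $X$ translates to the symmetry $(z_1(-q), z_2(-q)) = (z_1(q), -z_2(q))$.

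Arguing by contradiction, assume $w \ne w^\epsilon$, so that $(z_1, z_2)$ is not a time-translate of the known orbit $(z_1^\epsilon, z_2^\epsilon)$ from Lemma~\ref{small existence lemma}. In the rescaled coordinates of Figure~\ref{phase figure}, the origin is a hyperbolic saddle of \eqref{rescaled ode} and, at $\epsilon = 0$, the reversible homoclinic is precisely the KdV soliton $Z_1 = \sech^2(Q/2) > 0$. A careful inspection of the two branches of the stable manifold at the origin---one of which closes up, for small $\epsilon > 0$, into the perturbed homoclinic $(z_1^\epsilon, z_2^\epsilon)$, while the other exits the neighborhood into $\{Z_1 < 0\}$---combined with reversibility shows that any non-translate bounded orbit must approach the origin along the negative-$z_1$ branch of both the stable and unstable manifolds. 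In particular, $z_1(q) < 0$ and $|z_2(q)| \lesssim \epsilon^{1/2} |z_1(q)|$ for $|q|$ sufficiently large, the ratio estimate coming from the slope of the stable manifold at the saddle, cf.\ \eqref{ratio limit}.

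The last step is to reconstruct $w$ from $(z_1, z_2)$ and derive the desired contradiction with $w > 0$ on $T$. Unwinding the changes of variables in Lemma~\ref{center manifold lemma}\ref{best reduced} and Lemma~\ref{G lemma} yields an expansion
\[
w(q, p) = c_0^{-1/2} z_1(q) \Phicr(p) + \mathcal{R}(q, p),
\]
with a remainder $\mathcal{R}$ that is quadratically small in $(z_1, z_2)$ and picks up at most one extra factor of $\epsilon$. Using the positivity of $(\Phicr)_p$ on $[-1, 0]$ from Lemma~\ref{lem_minimality}\ref{lem_minimality_pos}, the ratio bound above, and taking $\epsilon$ small, one dominates $\n{\mathcal{R}_p(q, \placeholder)}_{L^\infty}$ by $\tfrac12 c_0^{-1/2} \min (\Phicr)_p \cdot |z_1(q)|$, so that $w_p(q, \placeholder)$ inherits the negative sign of $z_1(q)$ for large $|q|$. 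Integrating from the bed $B = \{p = -1\}$, where $w = 0$, then yields $w(q, 0) < 0$ for $|q|$ large, contradicting $w > 0$ on $T$.

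The main obstacle will be the phase-portrait step, where one must pin down rigorously the structure of bounded orbits of \eqref{rescaled ode} in a fixed neighborhood of the origin, uniformly in small $\epsilon$. This requires choosing $k$ large enough in Lemma~\ref{center manifold lemma} so that the remainder terms $\mathcal{R}_3, \mathcal{R}_4$ in \eqref{rescaled ode} are under suitable $C^1$ control, and then invoking standard smoothness and persistence results for the stable and unstable manifolds of a hyperbolic fixed point, together with the reversibility of the reduced system. Once the sign of $z_1(q)$ at infinity is secured, the reconstruction and contradiction are essentially mechanical given the near-identity bounds on $\Theta^\epsilon$ and $\Xi^\epsilon$.
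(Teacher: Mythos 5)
Your proposal is correct and follows essentially the same route as the paper's proof: pass to the reduced system via the center manifold, use the saddle structure and reversibility to conclude that any non-translate homoclinic orbit has $z_1(q)<0$ with $|z_2|\lesssim \epsilon^{1/2}|z_1|$ for large $|q|$, then reconstruct $w$ via $w = c_0^{-1/2}z_1\Phicr + \mathcal R$ and use the positivity of $(\Phicr)_p$ to force $w(q,0)<0$, contradicting elevation. The only difference is that you spell out the stable/unstable-manifold branch analysis that the paper compresses into a reference to the phase portrait.
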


\subsection{Proof of small-amplitude existence} \label{small amplitude proof section} 

We now have all of the major components necessary to prove our main theorem.  

\begin{proof}[Proof of Theorem~\ref{small amplitude theorem}]
We already established the existence of the one-parameter family $(w^\epsilon, F^\epsilon)$ as well as (i) in Lemma~\ref{small existence lemma}. The invertibility of $\F_w(w^\epsilon, F^\epsilon)$ follows immediately from Lemma~\ref{strong invertibility lemma} when we note that $\epsilon > 0$ implies $F^\epsilon > \Fcr$.  Likewise, parts (iii) and (iv) of the theorem statement were already proved in Lemmas \ref{small elevation lemma} and \ref{uniqueness lemma}.    

All that remains, therefore, is part (v).  But, for any $\epsilon \in (0,\epsilon_*)$, $\F_w(w^\epsilon, F^\epsilon)$ is an isomorphism by the above reasoning, and moreover $\F$ is real analytic.  Applying the real analytic implicit function theorem, we find that $w^\epsilon$ depends analytically on $\epsilon \in (0,\epsilon_*)$, which completes the theorem.
\end{proof}

\section{Large-amplitude existence theory} \label{global bifurcation section}
The final step in the proof of Theorem \ref{main existence theorem} is to show that the local solution curve $\cm_\loc$ can be continued to obtain a curve of large-amplitude solution $\cm$.  This will be accomplished using an argument based on analytic global bifurcation theory.  Unfortunately, the existing literature does not immediately apply to our problem for two reasons:  (i) the point of bifurcation $(0, \Fcr)$ is singular in the sense that $\F_w(0,\Fcr)$ fails to be a Fredholm operator with index $0$; and (ii) it is far from obvious that $\F^{-1}(0)$ is locally compact.  

With that in mind, in the next subsection we provide an abstract global-bifurcation-theoretic result that does hold in this more general context.  
The price we pay for this is the appearance of a new (and undesirable) possibility for the behavior of the global solution curve (see Figure~\ref{global figure} for an illustration).  
For a large class of elliptic systems set on infinite cylinders, we show that this undesirable alternative occurs precisely when there exists a sequence of translated solutions along the curve that converges to a front.   At last, in Section \ref{proof main result section}, these results are applied to our problem and combined with the qualitative theory developed in Section \ref{qualitative section} to complete the proof of Theorem~\ref{main existence theorem}.    

\subsection{Global continuation} \label{abstract global section} 
  
\begin{figure}
  \includegraphics[scale=1.1]{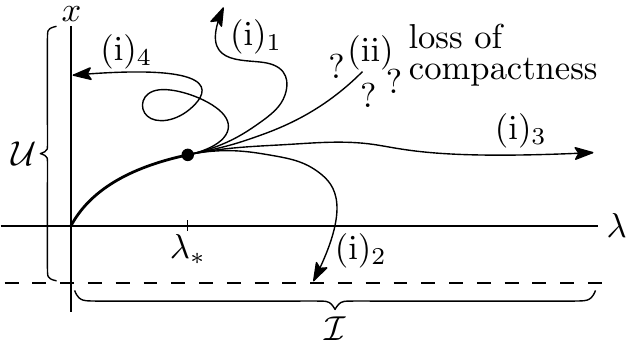}  
  \caption{Several ways for the alternatives \ref{gen blowup alternative} and \ref{gen noncompact} in Theorem~\ref{generic global theorem} to hold. In \ref{gen blowup alternative}$_1$, the first term on the right-hand side of \ref{gen global blowup} tends to infinity as $s \to \infty$. In \ref{gen blowup alternative}$_2$, it is the second term on the right-hand side of \ref{gen global blowup}, and so on. Alternative \ref{gen noncompact} holds if the curve loses compactness even while $N(s)$ remains bounded.}
  \label{global figure}
\end{figure}

Let $\genX, \genY$ be Banach spaces, $\genI$ an open interval (possibly unbounded) with $0 \in \overline{\genI}$, and $\genU \sub \genX$ an open set with $0 \in \partial \genU$. Consider the abstract operator equation
\[ \genG(x,\lambda) = 0, \]
where $\genG \maps \genU \by \genI \to \genY$ is an analytic mapping. Assume that for any $(x,\lambda) \in \genU \times \genI$ with $\genG(x,\lambda) = 0$, the Fr\'echet derivative $\genG_x(x,\lambda) \maps \genX \to \genY$ is Fredholm with index $0$.

\begin{theorem}[Global continuation] \label{generic global theorem}
  Suppose that there exists a continuous curve $\cm_\loc$ of solutions to $\genG(x,\lambda) = 0$, parametrized as
  \begin{align*}
    \cm_\loc :=  \{(\tilde x(\lambda),\lambda) : 0 < \lambda < \lambda_*\}
    \sub \genG^{-1}(0)
  \end{align*}
  for some $\lambda_* > 0$ and continuous $\tilde x \maps (0,\lambda_*) \to \genU$.  If
  \begin{align}
    \label{gen limit inv}
    \lim_{\lambda \searrow 0} \tilde x(\lambda) = 0 \in \dell \genU,
    \qquad 
    \genG_x(\tilde x(\lambda),\lambda) \maps \genX \to \genY
    \textup{ is invertible for all $\lambda$},
  \end{align}
  then $\cm_\loc$ is contained in a curve of solutions $\cm$, parametrized as
  \begin{align*}
    \cm := \{(x(s),\lambda(s)) : 0 < s < \infty\} \sub \genG^{-1}(0)
  \end{align*}
  for some continuous $(0,\infty) \ni s \mapsto (x(s),\lambda(s)) \in \genU \by \genI$, with the following properties.
  \begin{enumerate}[label=\rm(\alph*)]
  \item \label{gen alternatives} One of the following alternatives holds:
    \begin{enumerate}[label=\rm(\roman*)]
    \item \textup{(Blowup)} \label{gen blowup alternative}
      As $s \to \infty$,
      \begin{align}
        \label{gen global blowup}
        N(s):= \n{x(s)}_\genX + \frac 1{\dist(x(s),\dell \genU)} +
        \lambda(s) + \frac 1{\dist(\lambda(s),\dell \genI)} \to \infty.
      \end{align}
    \item \label{gen noncompact} \textup{(Loss of compactness)} There exists a sequence $s_n \to \infty$ such that $\sup_n N(s_n) < \infty$ but 
      $\{x(s_n)\}$ has no subsequences converging in $\genX$.
    \end{enumerate}
  \item \label{gen reparam} Near each point $(x(s_0),\lambda(s_0)) \in \cm$, we can reparametrize $\cm$ so that $s\mapsto (x(s),\lambda(s))$ is real analytic.
  \item \label{gen reconnect} $(x(s),\lambda(s)) \not\in \cm_\loc$ for $s$ sufficiently large.
  \end{enumerate}
  \begin{proof}
    \begin{figure}
      \includegraphics[scale=1.1]{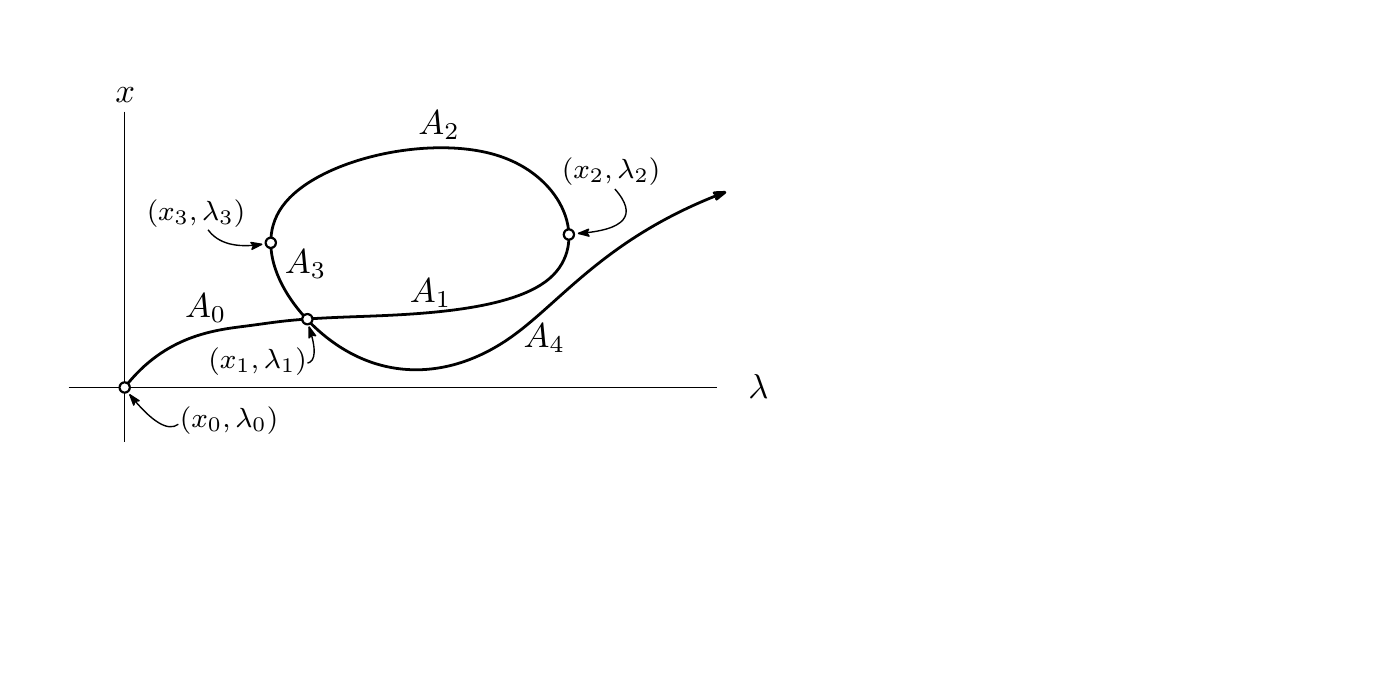}  
      \caption{The distinguished arcs in the proof of Theorem~\ref{generic global theorem}.}
      \label{arcs figure}
    \end{figure}

    Set $(x_0,\lambda_0) := (0,0) \in \dell (\genU \by \genI)$. Since we are not guaranteed that $\genG(x_0,\lambda_0)$ exists or that $\genG_x(x_0,\lambda_0)$ is Fredholm, we cannot start our continuation argument there. Instead, we will start our argument at $(x_{1/2},\lambda_{1/2}):=(\tilde x(\lambda_*/2),\lambda_*/2)$.

    Since $\genG$ is real analytic and $\genG_x(x_{1/2},\lambda_{1/2})$ is invertible by \eqref{gen limit inv}, $(x_{1/2},\lambda_{1/2})$ lies in some connected component $A_0$ of
    \begin{align*}
      \mathcal A := \Big\{ (x,\lambda) \in \genU \times \genI  : \genG(x,\lambda) = 0,
      \ \text{$\genG_x(x,\lambda)$ is invertible} \Big\}.
    \end{align*}
    Following \cite{buffoni2003analytic}, we will call such connected components \emph{distinguished arcs}. The analytic implicit function theorem guarantees that, like all distinguished arcs, $A_0$ is a graph:
    \begin{align*}
      A_0 = \{ (\til x_0(\lambda),\lambda) : \lambda  \in I_0 \},
    \end{align*}
    where $I_0 \subset \genI$ is a (possibly unbounded) open interval and $\til x_0 \maps I_0 \to \genX$ is analytic. For notational convenience, we will also reparametrize $A_0$ as 
    \begin{align*}
      A_0 = \{ (x(s),\lambda(s)) : 0 < s < 1 \}
    \end{align*}
    where $\lambda(s)$ is increasing.

    An easy argument using the implicit function theorem shows that $\cm_\loc$ lies entirely in $A_0$. Thus $I_0 = (0,\lambda_1)$ for some $\lambda_1 \ge s_*$, and $A_0$ has ``starting endpoint''
    \begin{align*}
      \lim_{s \searrow 0} (x(s),\lambda(s)) = 
      (x_0,\lambda_0) = (0,0)
    \end{align*}
    by \eqref{gen limit inv}. Next we turn our attention to the limit $s \nearrow 1$. We will show that either $\cm := A_0$ satisfies \ref{gen blowup alternative} or \ref{gen noncompact} (after reparametrization) or that $A_0$ connects to another distinguished arc $A_1$. If $N(s) \to \infty$ as $s \nearrow 1$, then, after a trivial reparametrization mapping $(0,1)$ to $(0,\infty)$, \ref{gen blowup alternative} occurs and we are done. So assume that \eqref{gen global blowup} does not hold as $s \nearrow 1$, in which case we can find a sequence $\{ s_n \} \subset (1/2,1)$ with $s_n \nearrow 1$ so that $N(s_n) \le M < \infty$ for all $n$. Without loss of generality we can assume that $\lambda(s_n) \to \lambda_1 \in \genI$. If $\{ x(s_n) \}$ has no convergent subsequences in $\genX$, then \ref{gen noncompact} occurs, and again we are done after a trivial reparametrization. Assume instead, passing to a subsequence, that we have $x(s_n) \to x_1 \in \genU$.

    By continuity, $\genG(x_1,\lambda_1) = 0$, and hence $\genG_x(x_1,\lambda_1)$ is Fredholm with index $0$.  Let $1 \leq k < \infty$ be the dimension of its null space.  Using the real-analytic Lyapunov--Schmidt procedure as in \cite[Theorem~8.2.1 and Remark~8.2.2]{buffoni2003analytic}, the problem of solving $\genG(x, \lambda) = 0$ near $(x_1, \lambda_1)$ can be reduced to a finite-dimensional system.  That is, there exists a real-analytic bifurcation function $\mathcal B \maps \R^k \by \R \to \R^k$, and a real-analytic bijection mapping the zero-set of $\mathcal{B}$ in neighborhood of the origin to the zero-set of $\genG$ near $(x_1, \lambda_1)$.   
    Complexifying $\mathcal{B}$ in the natural way, we see that its zero-set represents a complex-analytic variety, which implies a great deal about its structure; see \cite[Theorem 7.4.7]{buffoni2003analytic}. Arguing as in Steps~1 and 2 of the proof of \cite[Theorem 9.1.1]{buffoni2003analytic}, we find that $(x(s),\lambda(s)) \to (x_1,\lambda_1)$ as $s \nearrow 1$, and that, at $(x_1,\lambda_1)$, the distinguished arc $A_0$ is uniquely paired to another (different) distinguished arc $A_1$, parametrized as    
    \begin{align*}
      A_1 = \{ (x(s),\lambda(s)) : 1 < s < 2 \}
    \end{align*}
    and with $(x(s),\lambda(s)) \to (x_1,\lambda_1)$ as $s \searrow 1$. Moreover, we can choose $s \mapsto (x(s),\lambda(s))$ to be analytic in a neighborhood of $s=1$, 
    perhaps at the cost of giving up analyticity for other values of $s$.
    Note that, by the implicit function theorem, the closures $\overline {A_0}$ and $\overline{A_1}$ can only meet at $(x_1,\lambda_1)$ and perhaps also at the other endpoint $(x_0,\lambda_0)$. In particular, $\overline{A_1} \cap \cm_\loc = \varnothing$.

    We can now argue for $A_1$ as we did for $A_0$. We find that either (after a trivial reparametrization) $\cm := A_0 \cup A_1$ satisfies \ref{gen blowup alternative} or \ref{gen noncompact}, or that $A_1$ terminates in a second endpoint $(x_2,\lambda_2)$ as $s \nearrow 2$, and that at this point $A_1$ is uniquely paired with another distinguished arc $A_2$; see Figure~\ref{arcs figure}. The parametrization can be chosen to be analytic near $(x_2,\lambda_2)$, and $\overline{A_1 \cup A_2} \cap \cm_\loc = \varnothing$.
    Continuing in this way, we can assume that we have an infinite sequence of distinguished arcs 
    \begin{align*}
      A_n = \{ (x(s),\lambda(s)) : n < s < n+1 \}
    \end{align*}
    with (two-sided) limits $(x(s),\lambda(s)) \to (x_n,\lambda_n)$ as $s \to n$. The unique way consecutive arcs are paired at the endpoints $(x_n,\lambda_n)$, together with the fact that the only ``unpaired'' endpoint $(x_0,\lambda_0)$ lies in $\dell (\genU \times \genI)$ and not in $\genU \times \genI$, guarantees that all of the $A_n$ are distinct. Setting $\cm := \cup_n A_n$, the implicit function theorem guarantees as before that 
    $\cup_{n \ge 1}\overline{A_n} \cap \cm_\loc = \varnothing$, which implies \ref{gen reconnect}.

    It remains to show, in this case, that either \ref{gen blowup alternative} or \ref{gen noncompact} holds. Seeking a contradiction, suppose that neither holds, so that we can find a sequence $s_n \to \infty$ with $N(s_n) \le M < \infty$ and where $(x(s_n),\lambda(s_n)) \to (x_\infty,\lambda_\infty) \in \genU \by \genI$ and $\genG(x_\infty,\lambda_\infty) = 0$. Therefore any neighborhood of the point $(x_\infty,\lambda_\infty)$ intersects infinitely many distinguished arcs. 
    But, applying the analytic Lyapunov--Schmidt reduction near $(x_\infty,\lambda_\infty)$ along with the structure theorem for real-analytic varieties as in Step~4 of the proof of \cite[Theorem 9.1.1]{buffoni2003analytic}, we see that sufficiently small neighborhoods of $(x_\infty,\lambda_\infty)$ only meet a finite number of distinguished arcs, a contradiction.
    
    Finally, we point out that this construction also yields (b): in the interior of a distinguished arc, one can apply the analytic implicit function theorem to reparameterize, and at the end points, we get it from the analytic Lyapunov--Schmidt reduction instead (cf.\ \cite[Theorem 9.1.1(d)]{buffoni2003analytic}).
  \end{proof}   
\end{theorem}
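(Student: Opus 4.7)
The plan is to adapt the analytic global bifurcation framework of Dancer and Buffoni--Toland to the present setting, where two of the standard hypotheses fail: first, the Fr\'echet derivative $\genG_x$ need not be Fredholm at the putative bifurcation point $(0,0)$ (which may not even lie in the domain of $\genG$); and second, we do not know a priori that bounded subsets of $\genG^{-1}(0)$ are relatively compact. The first obstruction means the continuation argument cannot begin at $(0,0)$ itself, so I would instead start at an interior point of the given local curve, say $(x_{1/2},\lambda_{1/2}) := (\tilde x(\lambda_*/2),\lambda_*/2)$, where $\genG_x$ is invertible by hypothesis.

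Following \cite{buffoni2003analytic}, I would define \emph{distinguished arcs} to be maximal connected components of
\[
  \mathcal A := \{(x,\lambda) \in \genU \by \genI : \genG(x,\lambda)=0,\ \genG_x(x,\lambda) \text{ invertible}\}.
\]
The analytic implicit function theorem ensures that each such arc is a graph $\{(\tilde x_n(\lambda),\lambda) : \lambda \in I_n\}$ over an open interval, with $\tilde x_n$ real analytic. Let $A_0$ be the distinguished arc containing $(x_{1/2},\lambda_{1/2})$; a straightforward implicit function theorem argument shows that $\cm_\loc \sub A_0$, and the hypothesis that $\tilde x(\lambda) \to 0 \in \dell\genU$ as $\lambda \searrow 0$ identifies one endpoint of $A_0$ with $(0,0)$. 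I would then parametrize $A_0$ as $\{(x(s),\lambda(s)) : 0 < s < 1\}$ and analyze the behavior at the other endpoint $s \nearrow 1$.

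At this endpoint, three scenarios are possible. If $N(s) \to \infty$ then setting $\cm := A_0$ (after a trivial reparametrization of $(0,1)$ onto $(0,\infty)$) gives alternative \ref{gen blowup alternative}. If $N$ stays bounded along some sequence $s_n \nearrow 1$ but $\{x(s_n)\}$ has no convergent subsequence, alternative \ref{gen noncompact} holds. Otherwise we may pass to a subsequence converging to some $(x_1,\lambda_1) \in \genU \by \genI$ with $\genG(x_1,\lambda_1)=0$; the Fredholm hypothesis then applies at $(x_1,\lambda_1)$, and the real-analytic Lyapunov--Schmidt reduction (\cite[Theorem~8.2.1]{buffoni2003analytic}) locally identifies the zero set of $\genG$ with the zero set of an analytic map $\mathcal B \maps \R^k \by \R \to \R^k$. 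Invoking the structure theorem for real-analytic varieties (\cite[Theorem~7.4.7]{buffoni2003analytic}) and arguing as in Steps~1--2 of the proof of \cite[Theorem~9.1.1]{buffoni2003analytic}, I expect to conclude that the full limit $(x(s),\lambda(s)) \to (x_1,\lambda_1)$ exists, and that $A_0$ is uniquely paired at $(x_1,\lambda_1)$ to a distinct distinguished arc $A_1$, with the concatenation admitting an analytic parametrization near the junction (giving \ref{gen reparam}).

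Iterating produces a chain of distinguished arcs $A_0, A_1, A_2, \ldots$ meeting at successive endpoints $(x_n,\lambda_n)$. The uniqueness of the pairing, together with the fact that the only ``unpaired'' endpoint $(0,0)$ lies in $\dell(\genU \by \genI)$, forces the $A_n$ to be distinct and, moreover, disjoint from $\cm_\loc$ for $n \ge 1$, yielding \ref{gen reconnect}. Setting $\cm := \bigcup_n A_n$ and reparametrizing so that $A_n$ corresponds to $n < s < n+1$, it remains to show that if the chain is infinite and neither alternative \ref{gen blowup alternative} nor \ref{gen noncompact} occurs, one obtains a contradiction. The main obstacle — and essentially the only delicate point — is ruling out interior accumulation of distinguished arcs: if $s_n \to \infty$ with $N(s_n)$ bounded and $x(s_n) \to x_\infty \in \genU$, then every neighborhood of $(x_\infty,\lambda_\infty)$ would meet infinitely many $A_n$; but the analytic Lyapunov--Schmidt reduction at $(x_\infty,\lambda_\infty)$, combined with the structure theorem for real-analytic varieties (as in Step~4 of \cite[Theorem~9.1.1]{buffoni2003analytic}), forces only finitely many distinguished arcs to meet any sufficiently small neighborhood of an interior zero of $\genG$, a contradiction. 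Thus either the chain terminates in alternative \ref{gen blowup alternative}/\ref{gen noncompact}, or it extends to parametrize $\cm$ over all of $(0,\infty)$ as required.
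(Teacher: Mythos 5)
Your proposal is correct and follows essentially the same route as the paper: starting the continuation at an interior point of $\cm_\loc$, chaining distinguished arcs via the analytic Lyapunov--Schmidt reduction and the structure theorem for real-analytic varieties, and ruling out interior accumulation of arcs exactly as in Step~4 of \cite[Theorem~9.1.1]{buffoni2003analytic}. No gaps to report.
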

\begin{remark} \label{no loop remark} 
  This theorem represents a somewhat atypical global bifurcation result. More commonly, one begins with a local curve that bifurcates from a trivial solution and perhaps lies inside some open set $\mathcal{O}$.  Provided that the necessary compactness hypotheses are met, then one expects to obtain a statement of the form:  there exists a global continuation of the local curve that either is unbounded, contains a sequence limiting to $\partial \mathcal{O}$, or is a closed loop.  

  The situation here is quite different as  
  the trivial solution is not the initial point of bifurcation, 
  and in fact it lies in $\partial ( \genU  \times \genI)$.  It is therefore impossible to have a closed loop in this case, since $\cm = \bigcup_{n} A_n$, where each distinguished arc $A_n$ is \emph{distinct}. 
\end{remark}

Next let us look more closely at alternative \ref{noncompact} of Theorem \ref{generic global theorem}.  
In doing so, we will restrict our attention to the case of an elliptic PDE that generalizes the height equation \eqref{w equation}.  Fix an integer $k \ge 0$, and let $\Omega := \R \by B$ be an infinite cylinder whose base $B \subset \R^{n-1}$ is a bounded $C^{k+2+\alpha}$ domain. We will denote points in $\Omega$ as $(x,y)$ where $x \in \R$ and $y\in B$. 
Partition the components of $\dell B$ as $\dell B = \dell_1 B \cup \dell_2 B$ (either may be empty). Consider a nonlinear elliptic problem
\begin{equation}
 \label{lem:compact:main} \left\{ 
 \begin{alignedat}{2}
   \mathcal F(y,u,Du,D^2u,\lambda)&=0 && \text{ in } \Omega,\\
   \mathcal G(y,u,Du,\lambda)&=0 && \text{ on } \R \by \partial_1 B ,\\
   u &=0 && \text{ on } \R \by \partial_2 B,
 \end{alignedat} \right. 
\end{equation}
  where the parameter $\lambda \in \R^m$ and where $\mathcal F$ and $\mathcal G$ have the regularity
  \begin{align*}
    \mathcal F \in C^{k+1+\alpha}_\bdd(\overline{ B} \by \R \by \R^n \by \mathbb S^{n\by n} \by \R^m),
    \qquad 
    \mathcal G \in C^{k+2+\alpha}_\bdd(\R \by \partial_1 B \by \R \by \R^n \by \R^m).
  \end{align*}
  Moreover, assume that $\mathcal F$ is uniformly elliptic in that
  \begin{align}
    \label{lem:compact:elliptic}
    \mathcal F_{r^{ij}}(y,z,\xi,r,\lambda) \eta_i \eta_j \ge c \abs\eta^2
  \end{align}
  for all $y,z,\xi,r,\eta,\lambda$ while $\mathcal G$ is uniformly oblique in that 
  \begin{align}
    \label{lem:compact:oblique}
    \mathcal G_{\xi^i}(y,z,\xi,\lambda) \nu_i > c
  \end{align}
  for all $z,\xi,\lambda$ and $y\in \dell_1 B$, where here $\nu$ is the outward pointing normal to $\dell\Omega$.
\begin{lemma}[Compactness or front]\label{lem:compact:gen}
    If $\{(u_n,\lambda_n)\}$ is a sequence of solutions to \eqref{lem:compact:main} that is uniformly bounded in $C^{k+2+\alpha}_\bdd(\overline \Omega) \by \R^m$, with the additional monotonicity property
  \begin{align}
    \label{lem:compact:gen:monotone}
    \textup{$u_n(x,y)$ is even in $x$ and has $\dell_x u_n \le 0$ for $x \ge 0$}
  \end{align}
  for each $n$ as well as the asymptotic condition
  \begin{align}
    \label{lem:compact:gen:asymptotic}
    \lim_{\abs x \to \infty} u_n(x,y) = U(y)
    \quad
    \textup{uniformly in $y$}
  \end{align}
  for some fixed function $U \in C^{k+2+\alpha}_\bdd(\overline B)$, then either
  \begin{enumerate}[label=\rm(\roman*)]
  \item \label{lem:compact:gen:cpt} we can extract a subsequence so that $u_n \to u$ in $C^{k+2+\alpha}_\bdd(\overline\Omega)$; or
  \item \label{lem:compact:gen:bore} we can extract a subsequence and find $x_n \to +\infty$ so that the translated sequence 
    $\{\til u_n\}$ defined by $\til u_n = u_n(\placeholder+x_n, \placeholder)$
    converges in $C^{k+2}_\loc(\overline\Omega)$ to some $\til u \in C^{k+2+\alpha}_\bdd(\overline\Omega)$ which solves \eqref{lem:compact:main} and has $\til u \not \equiv U$ and
    $\dell_x \til u \le 0$.
  \end{enumerate}
  \begin{proof}
    Without loss of generality we can assume that $\lambda_n \to \lambda \in \R^m$. Suppose first that 
    \begin{align}
      \label{eqn:equidecay}
      \lim_{\abs x\to\infty} \sup_n \sup_y \abs{u_n(x,y)-U(y)} = 0. 
    \end{align}
    We will show that alternative \ref{lem:compact:gen:cpt} occurs. Using Arzel\`a--Ascoli, \eqref{eqn:equidecay}, and a diagonalization argument, we can extract a subsequence so that $u_n \to u$ in $C^{k+2}_\loc(\overline \Omega)$ and $C^0_\bdd(\overline \Omega)$ for some $u \in C^{k+2+\alpha}_\bdd(\overline\Omega) \cap C^0_0(\overline\Omega)$. It remains to show that $u_n \to u$ in $C^{k+2+\alpha}_\bdd(\overline \Omega)$. For this we observe that $v_n := u_n-u$ satisfies a linear elliptic equation
    \begin{equation}
      \label{eqn:vn12k} \left\{     \begin{alignedat}{2} 
        a^{ij}_n D_{ij} v_n + b^i_n D_i v_n + c_n v_n &= 0 &\qquad& \text{in } \Omega,\\
        \beta^i D_i v_n + \mu_n v_n &= 0 && \text{on } \R \by \partial_1 B,\\
        v_n &= 0 && \text{on } \R \by \partial_2 B,
      \end{alignedat} \right. 
    \end{equation}
    where the coefficients $a^{ij}_n, b^i_n, c_n, \beta^i_n, \mu_n$ are defined in terms of the convex combinations $u_n^{(s)} := su_n + (1-s)u$ and $\lambda^{(s)} := s\lambda_n + (1-s)\lambda$ by
    \begin{align*}
      a^{ij}_n
      &:= 
      \int_0^1 \mathcal F_{r^{ij}}(y, u_n^{(s)}, Du_n^{(s)}, D^2 u_n^{(s)},\lambda^{(s)})\, ds,
      &
      b^i_n
      &:= 
      \int_0^1 \mathcal F_{\xi^{i}}(y, u_n^{(s)}, Du_n^{(s)}, D^2 u_n^{(s)},\lambda^{(s)})\, ds,
      \\
      c_n
      &:= 
      \int_0^1 \mathcal F_z(y, u_n^{(s)}, Du_n^{(s)}, D^2 u_n^{(s)},\lambda^{(s)})\, ds,
      &
      \beta^i_n
      &:= 
      \int_0^1 \mathcal G_{\xi^{i}}(y, u_n^{(s)}, Du_n^{(s)},\lambda^{(s)})\, ds,
      \\
      \mu_n
      &:= 
      \int_0^1 \mathcal G_z(y, u_n^{(s)}, Du_n^{(s)},\lambda^{(s)})\, ds.
    \end{align*}
    From the assumptions on $\mathcal F,\mathcal G$ and the uniform bounds on $u_n$ and $u$ in $C^{k+\alpha}_\bdd(\overline\Omega)$, the $C^{k+\alpha}$ norms of $a^{ij}_n,b^i_n,c_n$ as well as the $C^{k+1+\alpha}$ norms of $\beta^i_n,\mu_n$ are bounded uniformly in $n$. From \eqref{lem:compact:elliptic} and \eqref{lem:compact:oblique} we also see that $a^{ij}_n \eta_i \eta_j \ge c \abs\eta^2$ and $\beta^i \nu_i > c$ so that so that \eqref{eqn:vn12k} is uniformly elliptic with uniformly oblique boundary condition. Thus we have a Schauder estimate
    \begin{align*}
      \n{v_n}_{C^{k+2+\alpha}(\Omega)} \le C \n{v_n}_{C^0(\Omega)} 
    \end{align*}
    where the constant $C$ is independent of $n$. Since $u_n \to u$ and hence $v_n \to 0$ in $C^0_\bdd(\overline\Omega)$, this proves that $u_n \to u$ in $C^{k+2+\alpha}_\bdd(\overline\Omega)$ as desired.
    
    Now assume that \eqref{eqn:equidecay} does not hold; we will show that  \ref{lem:compact:gen:bore} occurs. We can find a sequence $\{(x_n,y_n)\} \subset \Omega$ with $x_n \to +\infty$ and $\varepsilon > 0$ so that \begin{align*}
      \abs{u_n(x_n,y_n)-U(y_n)} \ge \varepsilon 
    \end{align*}
    for all $n$. Extracting a subsequence we can assume that $y_n \to y_\infty \in \overline B$. Consider the translated sequence
    $\{\til u_n\}$ defined by
    \begin{align*}
      \til u_n(x,y) := u_n(x+x_n,y).
    \end{align*}
    Thanks to the uniform bounds on $u_n$ and hence $\til u_n$, we can extract a further subsequence so that $\til u_n \to \til u$ in $C^{k+2}_\loc(\overline \Omega)$ for some $\til u \in C^{k+2+\alpha}_\bdd(\overline \Omega)$. Since $\mathcal F$ and $\mathcal G$ have no explicit dependence on $x$, the $\til u_n$ are also solutions to \eqref{lem:compact:main}, and therefore the $C^{k+2}_\loc$ limit implies that 
    $\til u$ also solves \eqref{lem:compact:main}.
   
    By \eqref{lem:compact:gen:monotone}, we have $\dell_x \til u_n \le 0$ for $x \ge -x_n$, and hence $\dell_x \til u \le 0$ on $\overline \Omega$. Finally, to see that $\til u \not \equiv U$, we simply note that
    \begin{gather*}
      \abs{\til u(0,y_\infty)-U(y_\infty)}
      = \lim_{n \to \infty} \abs{u_n(x_n,y_n)-U(y_n)}
      \ge \varepsilon > 0.
      \qedhere
    \end{gather*}
  \end{proof}
\end{lemma}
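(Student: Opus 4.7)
The plan is to split into two cases based on whether the asymptotic convergence \eqref{lem:compact:gen:asymptotic} is \emph{equi-decaying}, i.e., whether
\begin{align*}
  \lim_{\abs x\to\infty} \sup_n \sup_{y\in\overline B} \abs{u_n(x,y)-U(y)} = 0.
\end{align*}
If this equi-decay holds, I expect to be able to extract a subsequence converging in $C^{k+2+\alpha}_\bdd(\overline\Omega)$ (alternative \ref{lem:compact:gen:cpt}); if not, the failure of equi-decay will produce a nontrivial shifted limit (alternative \ref{lem:compact:gen:bore}).

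In the equi-decay case, I would use the uniform $C^{k+2+\alpha}$ bound together with Arzelà--Ascoli and a diagonalization in $x$ to extract a subsequence with $u_n \to u$ in $C^{k+2}_\loc(\overline\Omega)$, for some $u \in C^{k+2+\alpha}_\bdd(\overline\Omega)$ that inherits the asymptotic $u \to U$. The equi-decay assumption upgrades local convergence to $C^0_\bdd$ convergence. To promote this further to $C^{k+2+\alpha}_\bdd$, the natural move is to write the difference $v_n := u_n - u$ as a solution of a linear elliptic problem with oblique boundary condition, obtained by integrating the derivatives of $\mathcal F$ and $\mathcal G$ along the segment from $(u,\lambda)$ to $(u_n,\lambda_n)$. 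The uniform bounds on $u_n$, $u$, $\lambda_n$ and the regularity of $\mathcal F, \mathcal G$ give uniform $C^{k+\alpha}$ bounds on the interior coefficients and uniform $C^{k+1+\alpha}$ bounds on the boundary coefficients, while \eqref{lem:compact:elliptic}--\eqref{lem:compact:oblique} guarantee uniform ellipticity and obliqueness. The global Schauder estimate then yields $\n{v_n}_{C^{k+2+\alpha}} \lesssim \n{v_n}_{C^0}$, and the right-hand side tends to $0$.

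If equi-decay fails, there exist $\varepsilon>0$ and a sequence $(x_n,y_n)$ with $\abs{x_n}\to\infty$ and $\abs{u_n(x_n,y_n)-U(y_n)} \ge \varepsilon$; by the evenness \eqref{lem:compact:gen:monotone} I may take $x_n \to +\infty$, and after passing to a subsequence $y_n\to y_\infty \in \overline B$. Translate to $\til u_n(x,y):= u_n(x+x_n,y)$. Since $x$ does not appear explicitly in $\mathcal F,\mathcal G$, each $\til u_n$ solves \eqref{lem:compact:main} with parameter $\lambda_n$, and the uniform $C^{k+2+\alpha}$ bound lets me extract a further subsequence so that $\til u_n \to \til u$ in $C^{k+2}_\loc(\overline\Omega)$; passing to the limit in the PDE (and in $\lambda_n \to \lambda$) shows $\til u$ solves \eqref{lem:compact:main}. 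The bound $\dell_x u_n \le 0$ for $x \ge 0$ translates to $\dell_x \til u_n \le 0$ for $x \ge -x_n$, and since $x_n \to +\infty$, in the limit $\dell_x \til u \le 0$ on all of $\overline \Omega$. Finally $\abs{\til u(0,y_\infty)-U(y_\infty)} = \lim_n \abs{u_n(x_n,y_n)-U(y_n)} \ge \varepsilon$, so $\til u \not\equiv U$.

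The only genuinely delicate step is the Schauder upgrade from $C^0_\bdd$ to $C^{k+2+\alpha}_\bdd$ convergence in the equi-decay case, since the naive Schauder estimate produces a constant depending on the problem; I expect to handle this by linearizing about $u$ as above so that the coefficients are controlled uniformly in $n$ and the right-hand side vanishes. The rest is a standard translation-compactness argument, and the monotonicity hypothesis \eqref{lem:compact:gen:monotone} is what forces the translated limit to carry the sign $\dell_x \til u \le 0$, which is exactly what will be needed downstream to invoke Theorem~\ref{bores theorem}.
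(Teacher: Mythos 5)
Your proposal is correct and follows essentially the same route as the paper: the same dichotomy on equi-decay, the same linearization of $v_n = u_n - u$ via integrated coefficients to get a uniform Schauder estimate, and the same translation-compactness argument with monotonicity passing to the limit in the second case. No gaps.
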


\begin{remark}
  The above proof can easily be genearlized to the case where the $u_n$ are not necessarily even and where \eqref{lem:compact:gen:monotone} is replaced by the monotonicity of the $u_n$ for $\abs x > M$ for some fixed $M > 0$.
\end{remark}

\subsection{Proof of the main result} \label{proof main result section}

Recall from Section~\ref{subsec_spaces} that we can formulate the height equation \eqref{w equation} as a nonlinear operator equation $\F(w,F) = 0$ with $\F \maps U \to Y$ given in \eqref{operator equation} and \eqref{F domain}. 
Obviously $\F$ is real analytic on $U$, and from Lemma~\ref{fredholm lemma} it follows that $\F_w(w,F)$ is Fredholm with index $0$ whenever $(w,F) \in U$.
By Theorem~\ref{small amplitude theorem}, we know that there is a local curve 
\begin{align*}
  \cm_\loc = \{ (w^\epsilon,F^\epsilon) : 0 < \epsilon < \epsilon_* \} \subset U
\end{align*}
of nontrivial symmetric and monotone waves of elevation with $F$ slightly larger than $\Fcr$. (Recall from \eqref{def F epsilon} that $F^\epsilon = ( 1/\Fcr^2 - \epsilon )^{-1/2}$.) Moreover, $\F_w$ is invertible along $\cm_\loc$. 

Applying Theorem~\ref{generic global theorem} to our nonlinear operator $\F \maps U \to Y$, we obtain the following.
\begin{theorem}[Global continuation]\label{global theorem}
  The local curve $\cm_\loc$ is contained in a continuous curve of solutions, parametrized as
  \begin{align*}
    \cm = \{(w(s),F(s)) : 0 < s < \infty\} \subset U
  \end{align*}
  with the following properties.
  \begin{enumerate}[label=\rm(\alph*)]
  \item \label{alternatives} One of two alternatives must hold: either
    \begin{enumerate}[label=\rm(\roman*)]
    \item \textup{(Blowup)} \label{blowup alternative}
      as $s \to \infty$,
      \begin{align}
        \label{global blowup}
        N(s):= \n{w(s)}_X + \frac 1{\inf_R (w_p(s)+H_p)} +
        F(s) + \frac 1{F(s)-\Fcr} \to \infty;~\textrm{ or}
        \hspace*{-2em}
      \end{align}
    \item \textup{(Loss of compactness)} \label{noncompact} there exists a sequence $s_n \to \infty$ such that $\sup_n N(s_n) < \infty$ but $\{w(s_n)\}$ has no subsequences converging in $X$.
    \end{enumerate}
  \item \label{reparam} Near each point $(w(s_0),F(s_0)) \in \cm$, we can reparametrize $\cm$ so that the mapping $s\mapsto (w(s),F(s))$ is real analytic.
  \item \label{reconnect} $(w(s),F(s)) \not\in \cm_\loc$ for $s$ sufficiently large.
  \end{enumerate}
\end{theorem}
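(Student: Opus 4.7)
The plan is to apply the abstract Theorem~\ref{generic global theorem} directly, with only a routine change of parameter. Set $\genX := X$, $\genY := Y$, $\lambda := F - \Fcr$, and $\genI := (0,\infty)$, and take
\[
  \genU := \{w \in X : \inf_R (H_p + w_p) > 0\},
\]
so that the open set $U$ in \eqref{F domain} is precisely $\genU \by \genI$ after the substitution $F = \Fcr + \lambda$. The abstract nonlinear operator is $\genG(w,\lambda) := \F(w,\Fcr + \lambda)$, which is real-analytic on $\genU \by \genI$ with values in $Y$.

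Next I verify the hypotheses. Lemma~\ref{fredholm lemma} shows that $\genG_w(w,\lambda) = \F_w(w,\Fcr + \lambda)$ is Fredholm of index $0$ at every point of $\genU \by \genI$. Theorem~\ref{small amplitude theorem}(i)--(ii) provide the initial continuous branch $\cm_\loc$, with $(w^\epsilon, F^\epsilon - \Fcr) \to (0,0) \in \partial(\genU \by \genI)$ as $\epsilon \searrow 0$, and with $\genG_w$ invertible along $\cm_\loc$. A small bookkeeping point: the abstract statement requires $0 \in \partial \genU$, but what its proof actually uses is $(0,0) \in \partial(\genU \by \genI)$, which is automatic here since $0 \in \partial \genI$. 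If desired, one may instead adjoin ``$w > 0$ on $T$'' to the definition of $\genU$, which by Theorem~\ref{small amplitude theorem}\ref{small amplitude theorem elevation} excludes no point of $\cm_\loc$ and places $0$ literally on $\partial \genU$.

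With the hypotheses verified, Theorem~\ref{generic global theorem} immediately delivers the extended curve $\cm$, the local analytic reparametrization \ref{reparam}, and the non-reconnection property \ref{reconnect}. The last step is to reconcile the abstract blowup quantity with \eqref{global blowup}: since $X \hookrightarrow C^1$ and $\genU$ is defined by the sharp $C^0$ inequality $\inf_R(H_p + w_p) > 0$, the quantity $1/\dist(w,\partial \genU)$ is comparable to $1/\inf_R(H_p+w_p)$; and since $\genI = (0,\infty)$, we have $1/\dist(\lambda,\partial \genI) = 1/\lambda = 1/(F - \Fcr)$, while $\lambda$ itself differs from $F$ by the constant $\Fcr$. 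Hence the abstract $N$ and the $N$ in \eqref{global blowup} blow up along the same sequences, and the abstract loss-of-compactness alternative is \ref{noncompact} verbatim. No individual step here is particularly hard --- Lemma~\ref{fredholm lemma}, Theorem~\ref{small amplitude theorem}, and Theorem~\ref{generic global theorem} do the heavy lifting --- and the real difficulty of narrowing down \ref{blowup alternative} and \ref{noncompact} to the extreme-wave conclusion \eqref{c-u to 0} is deferred to the subsequent arguments completing the proof of Theorem~\ref{main existence theorem}, which will rely on the qualitative results of Section~\ref{qualitative section} together with Lemma~\ref{lem:compact:gen}.
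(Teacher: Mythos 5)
Your proposal is correct and follows essentially the same route as the paper: the paper's proof of Theorem~\ref{global theorem} consists precisely of observing that $\F$ is real analytic on $U$, that $\F_w$ is Fredholm of index $0$ there by Lemma~\ref{fredholm lemma}, that Theorem~\ref{small amplitude theorem} supplies the local curve with invertible linearization limiting to $(0,\Fcr)$, and then invoking Theorem~\ref{generic global theorem}. Your additional care regarding the condition $0\in\partial\genU$ versus $(0,0)\in\partial(\genU\times\genI)$ and the comparability of the abstract blowup functional with \eqref{global blowup} only makes explicit what the paper leaves implicit.
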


We will now use the qualitative results from Section~\ref{qualitative section} to pare down the alternatives in Theorem~\ref{global theorem} until we are left with only $\inf_R(w_p(s)+H_p) \to 0$, proving Theorem~\ref{main existence theorem}. 

First we consider alternative~\ref{noncompact}. Here we would like to apply Lemma~\ref{lem:compact:gen}, but first we need to know that these solutions have the required monotonicity properties \eqref{lem:compact:gen:asymptotic}. 
\begin{lemma}\label{global nodal}
  The nodal properties \eqref{elevation nodal properties} hold along the global bifurcation curve $\cm$. 
  \begin{proof}
    First we claim that \eqref{elevation nodal properties} holds along the local bifurcation curve $\cm_\loc$. By Theorem~\ref{small amplitude theorem}\ref{small amplitude theorem elevation}, any $(w,F) \in \cm_\loc$ is a wave of elevation in that $w > 0$ on $R \cup T$. Thus we can apply Theorem~\ref{symmetry theorem} to get $w_q < 0$ on $R \cup T$, and hence by Lemma~\ref{nodal: monotone implies nodal lemma} that \eqref{elevation nodal properties} holds.
    
    Let $V \sub \cm$ denote the set of all $(w,F) \in \cm$ satisfying \eqref{elevation nodal properties}. Since $\cm$ is a continuous curve, it is connected in $X \times \R$. By Lemmas~\ref{nodal open lemma} and \ref{nodal closed lemma}, $V \sub \cm$ is both relatively open and relatively closed. Since $\cm_\loc \sub V$, $V$ is nonempty, and we conclude that $V = \cm$ as desired.
  \end{proof}
\end{lemma}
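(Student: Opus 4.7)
The natural plan is a standard continuity argument: I would introduce the set
\[
V := \{ s \in (0,\infty) : (w(s),F(s)) \text{ satisfies } \eqref{elevation nodal properties}\},
\]
show it is nonempty, relatively open, and relatively closed in $(0,\infty)$, and then invoke the connectedness of the parametrizing interval to conclude $V = (0,\infty)$.

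For nonemptiness, I would verify that $\cm_\loc \sub V$. Each $(w,F) \in \cm_\loc$ is supercritical by construction and is a wave of elevation by Theorem~\ref{small amplitude theorem}\ref{small amplitude theorem elevation}, so Theorem~\ref{symmetry theorem} applies and yields symmetry and strict monotonicity $w_q < 0$ in $R^+ \cup T^+$; Lemma~\ref{nodal: monotone implies nodal lemma} then upgrades monotonicity to the full list \eqref{elevation nodal properties}. Relative openness of $V$ follows directly from Lemma~\ref{nodal open lemma}, since $s \mapsto (w(s),F(s))$ is continuous into $X \by \R$ and $X$ embeds continuously into $C^3_\bdd(\overline R)$.

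The hard part will be relative closedness, where the subtlety is the exception ``unless $w \equiv 0$'' in Lemma~\ref{nodal closed lemma}. To dispose of it, I would show that no point of $\cm$ is a trivial laminar flow. The key ingredient is Lemma~\ref{strong invertibility lemma}: since $\F_w(0,F)$ is invertible whenever $F > \Fcr$, the implicit function theorem forces the zero set of $\F$ in a neighborhood of any such $(0,F_0)$ to coincide with the one-parameter family $\{(0,F) : F \text{ near } F_0\}$ of laminar flows. Consequently the set $W := \{s \in (0,\infty) : w(s) \equiv 0\}$ is relatively open in $(0,\infty)$ (by this local uniqueness combined with continuity of the curve in $X \by \R$) and relatively closed (by continuity of $s \mapsto w(s)$ into $X$); since $\cm_\loc \sub \cm \setminus W$ shows $W \ne (0,\infty)$, connectedness forces $W = \varnothing$.

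With trivial solutions excluded, Lemma~\ref{nodal closed lemma} delivers relative closedness of $V$ without qualification, and the connectedness of $\cm$---being the continuous image of $(0,\infty)$---closes the argument. The only nontrivial obstacle is the exclusion of the laminar branch in the closedness step; the remainder is a clean bookkeeping exercise using the nodal machinery of Section~\ref{nodal section}.
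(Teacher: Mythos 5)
Your proposal is correct and follows essentially the same route as the paper: establish the nodal properties on $\cm_\loc$ via Theorem~\ref{small amplitude theorem}\ref{small amplitude theorem elevation}, Theorem~\ref{symmetry theorem}, and Lemma~\ref{nodal: monotone implies nodal lemma}, then run an open--closed--connected argument using Lemmas~\ref{nodal open lemma} and \ref{nodal closed lemma}. The one point where you go beyond the paper is in explicitly disposing of the ``unless $w \equiv 0$'' caveat of Lemma~\ref{nodal closed lemma}: the paper's proof passes over this silently, and your argument---that $W=\{s : w(s)\equiv 0\}$ is open by Lemma~\ref{strong invertibility lemma} and the implicit function theorem (every point of $\cm$ has $F>\Fcr$), closed by continuity, and not all of $(0,\infty)$, hence empty---is a correct and worthwhile way to close that small gap.
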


Now we are ready to eliminate alternative~\ref{noncompact} in Theorem~\ref{global theorem}. We state the lemma in a slightly more general form for later convenience.
\begin{lemma}\label{lem:compact}
  Given a sequence of solutions $\{(w_n,F_n)\} \sub \cm$ to $\F(w,F) = 0$ with $\n{w_n}_X$ uniformly bounded, we can extract a subsequence so that $(w_n,F_n)$ converges in $X \by \R$ to a solution $(w,F)$ of $\F(w,F) = 0$. In particular, alternative~\ref{noncompact} in Theorem~\ref{global theorem} cannot occur.
  \begin{proof}
    We will apply a slight variant of Lemma~\ref{lem:compact:gen} to \eqref{height equation} in non-divergence form, setting $k :=1$, $\lambda := F$, $u := h$, $U := H$, 
  \[ 
      B := [-1,0],
      \quad
      \dell_1 B := \{0\},
      \quad
      \dell_2 B:= \{-1\},\]
    and 
    \begin{align}
      \label{nondivergence}
      \begin{aligned}
        \mathcal F(p,z,\xi,r,F)  
        &:=  (1+\xi_1^2) r_{22} - 2\xi_1 \xi_2 r_{12} + \xi_2^2 r_{11} 
        + \Big( \frac 1{2H_p^2} \Big)_p \xi_2^3 + \frac 1{F^2} \rho_p (H-z) \xi_2^3, \\
        \mathcal G(z,\xi,F) &:=  \frac{1+\xi_1^2}{2 \xi_2^2} - \frac 1{2H_p(0)^2} + \frac{1}{F^2} \rho(0) (z - 1).
      \end{aligned}
    \end{align}
    
    Given a sequence $\{(w_n,F_n)\}$ as in the statement of the lemma, Lemma~\ref{finite lemma} at once furnishes a uniform bound
    \begin{align*}
      \n{h_n}_X  +
      \frac 1{\inf_R \dell_p h_n}
      +
      F_n  \le M < \infty.
    \end{align*}
    Thus, for all $n$ and at any $(q,p) \in \overline\Omega$, $(p,h_n,Dh_n,D^2h_n,F_n) \in \mathscr D_M$, where
    \begin{align*}
      \mathscr D_M := \big\{
      (p,z,\xi,r,F) \in [-1,0] \by \R \by \R^2 \by \mathbb S^{2\by 2} \by \R :
      \xi_2 \ge \tfrac 1M,\ \Fcr \le F \le M \big\}
    \end{align*}
    Here $\mathbb{S}^{2 \by 2}$ is the set of symmetric $2\times 2$ real matrices.      We easily check that $\mathcal F, \mathcal G$ satisfy the requirements of Lemma~\ref{lem:compact:gen} when restricted to $\mathscr D_M$. Since $\mathscr D_M$ is convex, the proof of Lemma~\ref{lem:compact:gen} is easily extended to this setting. We conclude that, after extracting a subsequence, we can arrange for $F_n \to F \ge \Fcr$ and find $q_n \to +\infty$ so that $\tilde h_n(q_n+\placeholder,\placeholder)$ converges in $C^3_\loc(\overline\Omega)$ to a solution $\tilde h \in C^{3+\alpha}_\bdd(\overline\Omega)$ of \eqref{height equation} with $\tilde h_q \le 0$, 
    \begin{align}
      \label{contradict this}
      \tilde h \not \equiv H,
    \end{align}
    and $\tilde h_p \ge 1/M$. Moreover, since each $h_n \ge H$, we have $\tilde h \ge H$, and since $\flowforce(h_n)=\flowforce(H)$, $\flowforce(\tilde h) = \flowforce(H)$.

    Because $\tilde h_q \le 0$, and $\tilde h$ is bounded, for all $p$ we must have pointwise limits 
    \[ H_\pm(p) := \lim_{q \to \pm\infty} \tilde h(q,p).\]
     As $\tilde h \ge H$, we have 
    \begin{align}
      \label{going to pinch}
      H \le H_- \le \tilde h \le H_+.
    \end{align}
    Moreover, since $\flowforce(\tilde h)=\flowforce(H)$, $\flowforce(H_-)=\flowforce(H_+)=\flowforce(H)$. Combining these facts with Corollary~\ref{bores corollary}, we conclude that $H_+ \equiv H_- \equiv H$. But then \eqref{going to pinch} forces $\tilde h \equiv H$, contradicting \eqref{contradict this}.
  \end{proof}
\end{lemma}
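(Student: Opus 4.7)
The plan is to recast the height equation in non-divergence form and apply Lemma~\ref{lem:compact:gen} with $u_n = h_n := w_n+H$, $U=H$, $B=[-1,0]$, $\dell_1 B = \{0\}$, and $\dell_2 B = \{-1\}$. The first task is to assemble the uniform a priori bounds that put the sequence in the setting of that lemma. The hypothesis $\n{w_n}_X \le M$ gives a uniform $C^{3+\alpha}_\bdd(\overline R)$ bound on $h_n$. Corollary~\ref{bound on w and w_q cor} furnishes the uniform lower bound $\inf_R(H_p + w_{n,p}) \ge \delta_*$, making the problem uniformly elliptic along the sequence. Theorem~\ref{upper bound on F theorem} controls $F_n$ from above in terms of $\n{h_{n,p}(0,\placeholder)}_{L^\infty}$, hence in terms of $M$; meanwhile $(w_n,F_n) \in U$ means $F_n > \Fcr$ automatically. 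Passing to a subsequence we may assume $F_n \to F \in [\Fcr,\infty)$.

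Next, Lemma~\ref{global nodal} asserts that the nodal properties \eqref{elevation nodal properties} hold along $\cm$; combined with the evenness built into $X$, this gives $\dell_q h_n \le 0$ on $\{q \ge 0\}$ and symmetry in $q$, so the monotonicity hypothesis \eqref{lem:compact:gen:monotone} is in force. The asymptotic condition \eqref{lem:compact:gen:asymptotic} with $U=H$ follows from $w_n \in C^0_0(\overline R)$. Lemma~\ref{lem:compact:gen} then yields either (i) a subsequence with $h_n \to h$ in $C^{3+\alpha}_\bdd(\overline R)$, which at once produces the desired limit $(w,F) \in X \by \R$ solving $\F(w,F)=0$, or (ii) translates $\tilde h_n(\placeholder,\placeholder) := h_n(q_n+\placeholder,\placeholder)$ with $q_n \to +\infty$ whose $C^{3}_\loc$ limit $\tilde h$ solves the height equation, is bounded, has $\dell_q \tilde h \le 0$, and is not identically $H$.

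To rule out (ii), I invoke the conjugate flow machinery from Section~\ref{bore section}. Since each $h_n \ge H$ (waves of elevation via Lemma~\ref{global nodal}), the limit satisfies $\tilde h \ge H$. The bound $\dell_q \tilde h \le 0$ together with boundedness guarantees pointwise limits $H_\pm(p) := \lim_{q \to \pm\infty} \tilde h(q,p)$ with $H \le H_+ \le \tilde h \le H_-$. Because $\flowforce$ is $q$-independent on solutions and $\flowforce(h_n)=\flowforce(H)$ for every $n$, passing to the limit gives $\flowforce(\tilde h)=\flowforce(H)$, and sending $q \to \pm\infty$ in \eqref{DJ S} yields $\flowforce(H_\pm)=\flowforce(H)$. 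Corollary~\ref{bores corollary} then forces $H_+ \equiv H_- \equiv H$, and the sandwich $H \le H_+ \le \tilde h \le H_- = H$ collapses to $\tilde h \equiv H$, contradicting the nontriviality clause of Lemma~\ref{lem:compact:gen}. Hence (i) holds and the conclusion follows.

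The principal obstacle is exactly the loss of compactness on the unbounded strip: a sequence whose crests spread into broad flat shelves (Figure~\ref{noncompact figure}) would \emph{a priori} prevent extraction of a strongly convergent subsequence in $X$. The resolution is to combine Lemma~\ref{lem:compact:gen}, which localizes every such failure to a $q$-translation of the sequence accumulating on a monotone bore-type solution with $\flowforce=\flowforce(H)$, with the nonexistence result of Corollary~\ref{bores corollary}, which rules out exactly such monotone bores in the free-surface stratified regime. The qualitative theory thus does precisely the work that Schauder estimates cannot do on an unbounded domain.
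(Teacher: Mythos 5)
Your proposal is correct and follows essentially the same route as the paper: rewrite the height equation in non-divergence form, feed the uniform bounds (Corollary~\ref{bound on w and w_q cor}, Theorem~\ref{upper bound on F theorem}, packaged in the paper as Lemma~\ref{finite lemma}) and the monotonicity from Lemma~\ref{global nodal} into Lemma~\ref{lem:compact:gen}, and kill the translated-limit alternative with flow-force conservation plus Corollary~\ref{bores corollary}. The only differences are cosmetic — you make the appeal to the nodal properties explicit where the paper leaves it implicit, and your ordering $H_+ \le \tilde h \le H_-$ for a $q$-nonincreasing limit is the correct one.
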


Now we turn to alternative~\ref{blowup alternative} in Theorem~\ref{global theorem}, and clarify the way in which the first and last terms on the left-hand side of \eqref{global blowup} may be unbounded.  First we apply Corollary~\ref{bound on w and w_q cor} and Theorem~\ref{upper bound on F theorem} to bound the middle two terms in \eqref{global blowup} by the first term. 
\begin{lemma}[Finite Froude number and velocity]\label{finite lemma}
  Let $(w,F) \in U$ solve $\F(w,F) = 0$. If $\n w_{C^1(\Omega)} \le K$, there is a constant $C$ depending only on $K$ so that 
  \begin{align*}
    \frac 1{\inf_R (w_p+H_p)} + F  < C.
  \end{align*}
  \begin{proof}
    Corollary~\ref{bound on w and w_q cor} immediately gives $\inf_R (w_p + H_p) > \delta_*$, while Theorem~\ref{upper bound on F theorem} gives
    \begin{gather*}
      F^2 
      \le 
      C
      \n{h_p}_{L^\infty}
      \le C \n{H_p+w_p}_{L^\infty}
      \le C (1 + \n w_{C^1(\Omega)})
      \le C.
      \qedhere
    \end{gather*}
  \end{proof}
\end{lemma}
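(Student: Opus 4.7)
The plan is to combine two a priori estimates already established in the qualitative theory: the uniform lower bound on $w_p + H_p$ from Corollary~\ref{bound on w and w_q cor} and the upper bound on $F$ from Theorem~\ref{upper bound on F theorem}. Since both results are in hand, the argument amounts to a short verification that both reduce the problem to a bound controlled by $K$.

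First I would invoke Corollary~\ref{bound on w and w_q cor}, which applies to any solitary wave with $F \ge \Fcr$ (and in our setting $(w,F) \in U$ so certainly $F > \Fcr$). That corollary supplies a constant $\delta_* > 0$, depending only on $u^*$, $\mathring\varrho$, $g$, and $d$, with
\[
\inf_R (w_p+H_p) > \delta_*,
\]
and in particular $1/\inf_R(w_p+H_p) < 1/\delta_*$. Note that this first bound does not even require the $C^1$ assumption on $w$.

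Next I would invoke Theorem~\ref{upper bound on F theorem}, which gives
\[
F^2 \le \frac{1}{\pi}\,\n{H_p}_{L^\infty}^2\,\n{\rho}_{L^\infty}\,\n{h_p(0,\placeholder)}_{L^\infty}.
\]
Since $h = w + H$, the hypothesis $\n w_{C^1(\Omega)} \le K$ yields $\n{h_p(0,\placeholder)}_{L^\infty} \le \n{H_p}_{L^\infty} + K$, so the right-hand side is bounded by a constant depending only on $K$ (and on the fixed data $H$ and $\rho$). Combining this with the previous estimate, we obtain a constant $C$ depending only on $K$ with $1/\inf_R(w_p+H_p) + F < C$, as required.

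There is no real obstacle: both ingredients have already been proven in Sections~\ref{bounds velocity pressure section} and \ref{bounds on F section}, and the lemma merely repackages their conclusions in a form convenient for ruling out part of alternative~\ref{blowup alternative} of Theorem~\ref{global theorem}. The only mild point to note is that the estimate on $F$ legitimately uses the $C^1$ bound on $w$ (unlike the estimate on $w_p+H_p$, which is unconditional among solitary waves with $F \ge \Fcr$); this is why both hypotheses appear in the statement even though only one is used per summand.
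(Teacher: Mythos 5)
Your proposal is correct and follows the same route as the paper: Corollary~\ref{bound on w and w_q cor} for the uniform lower bound $\inf_R(w_p+H_p) > \delta_*$, and Theorem~\ref{upper bound on F theorem} combined with the $C^1$ bound on $w$ to control $F^2$ via $\n{h_p(0,\placeholder)}_{L^\infty} \le \n{H_p}_{L^\infty} + K$. Your remark that only the second summand actually uses the hypothesis $\n w_{C^1(\Omega)} \le K$ is accurate and consistent with the paper's argument.
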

  
Next we deal with the fourth term in \eqref{global blowup} by showing that
uniform bounds on $\| w(s) \|_X$ imply that $F(s)$ is bounded away from $\Fcr$.
\begin{lemma}[Asymptotic supercriticality] \label{asymptotical supercritical lemma}  
  If $\n{w(s)}_X$ is uniformly bounded along $\cm$, then
  \begin{equation}
    \liminf_{s \to \infty} F(s) >  F_{\mathrm{cr}}. \label{liminf F > Fcr} 
  \end{equation}
\end{lemma}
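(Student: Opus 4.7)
The plan is to argue by contradiction, combining the compactness result (Lemma \ref{lem:compact}), the nonexistence of critical waves (Theorem \ref{froude lower theorem}\ref{no critical}), and the small-amplitude uniqueness in Theorem \ref{small amplitude theorem}(iii) to show that a sequence with $F(s_n) \to \Fcr$ would have to re-enter the local curve $\cm_\loc$, violating Theorem \ref{global theorem}\ref{reconnect}.

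Suppose, for the sake of contradiction, that $\liminf_{s \to \infty} F(s) = \Fcr$. Choose a sequence $s_n \to \infty$ with $F(s_n) \searrow \Fcr$. Since $\n{w(s_n)}_X$ is uniformly bounded by hypothesis, Lemma~\ref{lem:compact} allows us to extract a subsequence (still denoted $s_n$) such that $(w(s_n), F(s_n)) \to (w_*, \Fcr)$ in $X \times \R$. By continuity of the (extended) operator $\F$, we have $\F(w_*, \Fcr) = 0$, and Lemma~\ref{finite lemma} ensures that $\inf_R(H_p + w_{*p}) > 0$, so the limit solution lies in the extended domain where $\F$ is defined. Applying Theorem~\ref{froude lower theorem}\ref{no critical} at the critical Froude number then forces $w_* \equiv 0$; in particular $w(s_n) \to 0$ in $X$.

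Next, by Lemma~\ref{global nodal}, the nodal properties \eqref{elevation nodal properties} hold all along $\cm$. The strict inequality $w_q < 0$ on $R^+ \cup T^+$, together with the evenness of $w(s_n)$ in $q$ and the fact that $w(s_n) \to 0$ as $\abs q \to \infty$, implies that $w(s_n) > 0$ on $T$ for every $n$. Now define
\begin{equation*}
  \epsilon_n := \frac{1}{\Fcr^2} - \frac{1}{F(s_n)^2} > 0,
\end{equation*}
so $F(s_n) = F^{\epsilon_n}$ and $\epsilon_n \searrow 0$. For $n$ large enough we have $\epsilon_n \in (0, \epsilon_*)$, $\n{w(s_n)}_X$ is as small as we like, and $w(s_n) > 0$ on $T$, so Theorem~\ref{small amplitude theorem}(iii) applies and yields $w(s_n) = w^{\epsilon_n}$. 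Hence $(w(s_n), F(s_n)) \in \cm_\loc$ for all sufficiently large $n$.

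This directly contradicts Theorem~\ref{global theorem}\ref{reconnect}, which asserts that $(w(s), F(s)) \notin \cm_\loc$ for $s$ sufficiently large. The contradiction proves \eqref{liminf F > Fcr}. The main subtle point is the passage to the limit: one must check that the limit $(w_*, \Fcr)$ lies in the extended domain of $\F$ so that Theorem~\ref{froude lower theorem} is applicable, which is exactly what the uniform lower bound from Lemma~\ref{finite lemma} (derived from Corollary~\ref{bound on w and w_q cor}) guarantees.
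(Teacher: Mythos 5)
Your proposal is correct and follows essentially the same route as the paper: contradiction via Lemma~\ref{lem:compact}, Theorem~\ref{froude lower theorem}(ii) to force the limit to be trivial, the nodal properties from Lemma~\ref{global nodal} to get elevation, small-amplitude uniqueness (Lemma~\ref{uniqueness lemma}, i.e.\ Theorem~\ref{small amplitude theorem}(iii)) to place $(w(s_n),F(s_n))$ back on $\cm_\loc$, and Theorem~\ref{global theorem}(c) for the contradiction. The extra care you take in deducing $w(s_n)>0$ on $T$ from monotonicity plus decay, and in checking the limit lies in the extended domain of $\F$, is a welcome but minor elaboration of the paper's argument.
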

\begin{proof} 
  Arguing by contradiction, suppose that there exists a sequence $s_n \to \infty$ with
  \begin{equation} \label{w bounded F to Fcr} 
    \limsup_{n \to \infty} \| w(s_n) \|_X < \infty 
    \qquad \text{and} \qquad  
    F(s_n) \to \Fcr.
  \end{equation} 
  Applying Lemma~\ref{lem:compact}, we can extract a subsequence so that $\{(w(s_n),F(s_n))\}$ converges in $X \by \R$ to a solution $(w^*,F^*)$ of $\F(w,F) = 0$ with critical Froude number $F^* = \Fcr$. By Theorem~\ref{froude lower theorem}\ref{no critical}, we know that this can only happen if $w^* = 0$. Consequently, $\| w(s_n) \|_X \to 0$. Since, by Lemma~\ref{global nodal}, each $w(s_n)$ is a wave of elevation, we have $(w(s_n), F(s_n)) \in \cm_\loc$ for $n$ sufficiently large by Lemma~\ref{uniqueness lemma}. But by Theorem~\ref{global theorem}\ref{reconnect}, $(w(s),F(s)) \not\in\cm_\loc$ for $s$ sufficiently large, so this is a contradiction.
\end{proof}

Together, Lemmas~\ref{lem:compact}, \ref{finite lemma}, \ref{asymptotical supercritical lemma},
and Theorem~\ref{global theorem} show that $\| w \|_X$ is necessarily unbounded along $\cm$.  The next result explains more precisely which derivatives of $w$ are growing in the limit.  
\begin{theorem}[Uniform regularity] \label{uniform regularity theorem}  
  For each $K > 0$ there exists a constant $C = C(K) > 0$ such that, if $(w, F) \in \cm$ with $\| w_p \|_{C^0({R})} < C$, then $\| w \|_{C^{3+\alpha}({R})} < K$.  
\end{theorem}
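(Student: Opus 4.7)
The plan is a direct elliptic regularity argument: view the height equation as a quasilinear uniformly elliptic PDE and bootstrap Schauder estimates starting from the $C^1$ bound provided by the a priori estimates of Section~\ref{qualitative section}. The key observation is that smallness of $\|w_p\|_{C^0(R)}$, combined with the bed condition $w|_B = 0$, forces smallness of $\|w\|_{C^0(R)}$, so that a Schauder-type estimate of the form $\|w\|_{C^{3+\alpha}(R)} \le C\|w\|_{C^0(R)}$ for the associated linear problem will deliver the desired bound.

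Given $(w,F) \in \cm$ with $\|w_p\|_{C^0(R)} < C$ for some $C \in (0,1]$, Corollary~\ref{bound on w and w_q cor} supplies the uniform lower bound $\inf_R(H_p+w_p) \ge \delta_* > 0$ and the estimate $\|w\|_{C^1(R)} \le 2C_*$; integrating $w_p$ from the bed gives $\|w\|_{C^0(R)} \le C$; and Lemma~\ref{finite lemma} bounds $F$ above by some $F^{\max}$. Writing the interior equation \eqref{F_1} in non-divergence form as a linear elliptic equation $L_w w = 0$ with a linear oblique boundary condition $B_w w = 0$ on $T$, the ellipticity and obliqueness constants are uniformly controlled by the lower bound on $h_p$, and the coefficients depend smoothly on $(w,\nabla w, F)$ and are therefore uniformly bounded in $C^0(R)$. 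A quasilinear regularity result of Ladyzhenskaya--Uraltseva type, adapted to the oblique boundary condition, would upgrade the $C^1$ bound to a uniform $C^{1+\alpha}(R)$ bound, after which two iterations of linear Schauder estimates yield
\[
  \|w\|_{C^{2+\alpha}(R)} \le M_2\|w\|_{C^0(R)} \quad\text{and}\quad \|w\|_{C^{3+\alpha}(R)} \le M_3\|w\|_{C^0(R)},
\]
with $M_2, M_3$ depending only on the data, $\delta_*$, $C_*$, and $F^{\max}$. Combining with $\|w\|_{C^0} \le C$ gives $\|w\|_{C^{3+\alpha}(R)} \le M_3 C$; setting $C = C(K) := \min(1, K/(2M_3))$ completes the argument.

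The main technical obstacle is executing the Schauder bootstrap on the unbounded strip $R$: standard interior and boundary Schauder estimates are stated for bounded domains, so one must patch local estimates via a uniform covering argument, using the uniform boundedness of coefficients to absorb the ``bad'' sign of the zeroth-order term flagged in Section~\ref{sec linearized}. This is particularly delicate at the oblique boundary $T$, but analogous procedures are developed in Appendix~\ref{appendix calculations} and in \cite{wheeler2013solitary}; the bootstrap here should follow those closely.
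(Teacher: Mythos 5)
Your proposal is correct and follows essentially the same route as the paper's proof in Appendix~\ref{appendix: large amplitude}: the a priori $C^1$ and Froude-number bounds from Corollary~\ref{bound on w and w_q cor} and Lemma~\ref{finite lemma}, a Lieberman-type quasilinear estimate to reach $C^{1+\alpha'}$, and a Schauder bootstrap on the unbounded strip. The only differences are in execution: the paper runs the bootstrap by differentiating the height equation in $q$ (so that $h_q$ solves a linear divergence-form problem whose coefficients involve only $Dh$) and then recovers $h_{pp}$ algebraically from the equation, which sidesteps the coefficient-regularity bookkeeping of applying Schauder directly to the linearized equation for $w$; and it states the conclusion as a bound on $\|w\|_{C^{3+\alpha}}$ depending only on $\|w_p\|_{C^0(R)}$ --- the form actually invoked in the proof of Theorem~\ref{main existence theorem}, where $\|w_p\|_{C^0}$ need not be small --- rather than through your homogeneous estimate $\|w\|_{C^{3+\alpha}}\le M_3\|w\|_{C^0}$ combined with smallness of $C$.
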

In other words, if $\|w(s)\|_X \to \infty$, then $\| w_p(s) \|_{C^0({R})} \to \infty$ as well. This is a consequence of the structure of the height equation and elliptic regularity theory.  Statements of this type are well known in the context of steady water waves (see, e.g.,  \cite[Section~6]{constantin2004exact},  \cite[Section~6]{walsh2009stratified}, \cite[Section~5]{wheeler2013solitary}, and \cite[Section~5]{walsh2014global}), so we only provide a sketch of the argument and relegate it to Appendix~\ref{appendix: large amplitude}. The crucial first step will be to apply Corollary~\ref{bound on w and w_q cor}.

At last, we are prepared to prove the main result.  Most of the work has already been done, and so all that remains is to assemble the various pieces laid out above.

\begin{proof}[Proof of Theorem~\ref{main existence theorem}]
  Let $\cm$ be given as in Theorem~\ref{global theorem}.  Applying Lemma~\ref{lem:compact}, we conclude that alternative~\ref{blowup alternative} in Theorem~\ref{global theorem} holds, i.e.,
  \begin{align*}
    \n{w(s)}_X + \frac 1{\inf_R (w_p(s)+H_p)} +
    F(s) + \frac 1{F(s)-\Fcr} \to \infty
  \end{align*}
  as $s \to \infty$. Using Lemma~\ref{finite lemma}, we can simplify this to 
  \begin{align*}
    \n{w(s)}_X + \frac 1{F(s)-\Fcr} \to \infty
  \end{align*}
  and using Lemma~\ref{asymptotical supercritical lemma} we conclude $\| w(s) \|_{X} \to \infty$. Theorem~\ref{uniform regularity theorem} now yields
  \begin{align*}
    \| w_p(s) \|_{C^0(\overline R)} \to \infty.
  \end{align*}
  Switching to our dimensionless Eulerian variables using \eqref{h to uv}, we find, for $s$ sufficiently large,
  \begin{align}
    \label{dimensionless stagnation}
    \inf_{\tilde\Omega(s)} (\tilde c-\tilde u(s)) 
    = \frac 1{\sup_R \abs{\sqrt\rho h_p}}
    \le \frac 1{\min \sqrt\rho} \frac 1{\sup_R \abs{w_p} - \max{H_p}}
    \to 0,
  \end{align}
  where $\tilde\Omega(s)$ is the dimensionless fluid domain corresponding to $(w(s),F(s))$, and $(\tilde u(s), \tilde v(s))$ is the dimensionless velocity field. To prove a similar statement 
  for the dimensional horizontal velocity 
  \begin{align*}
    u -c = \frac m{\sqrt{\rho_0}d}(\tilde u - \tilde c)
    = F \sqrt{gd} (\tilde u - \tilde c),
  \end{align*}
  we combine \eqref{dimensionless stagnation} with the bound the bound \eqref{eqn:hurray} from Theorem~\ref{upper bound on F theorem},
  \begin{equation*}
    F^2(s) \le \frac C{\inf_{\Omega(s)} (\tilde c-\tilde u(s))},
  \end{equation*}
  to get
  \begin{align*}
    \inf_{\Omega(s)} (c-u(s))
    = F \sqrt{gd} \inf_{\tilde\Omega(s)}(\tilde c-\tilde u)
    \le C \sqrt{\inf_{\tilde\Omega(s)}(\tilde c-\tilde u)}
    \to 0.
  \end{align*}

  The regularity statements about the parameterization are inherited from those of Theorem~\ref{global theorem}.
\end{proof}

\appendix

\section{Proofs and calculations} \label{appendix calculations}

This appendix collects some of the more technical proofs and straightforward calculations, organized by section.  

\subsection{Proofs from Section~\ref{fredholm section}}
Using the reformulation \eqref{v equation} of $\F_w(0,F)\dot w = (f_1,f_2)$, we first show, as a preliminary step, that $\F_w(0,F)\dot w = f$ is uniquely solvable in the Hilbert space 
\begin{align}
  \mathbb{H} := \left\{ \dot w \in H^1(R) : \dot w|_B = 0 \textrm{ in the trace sense}\right\}.
\end{align}
\begin{lemma}[Weak solvability] \label{weak invertibility lemma} 
  If $F > \Fcr$, then, for each $(f_1, f_2) \in L^2(R) \by L^2(T)$, there exists a unique $\dot w \in \mathbb H$ solving $\F_w(0,F)\dot w = (f_1,f_2)$ in the sense of distributions. Equivalently, $v = w/\Phia$ is a weak solution to \eqref{v equation}.
  \begin{proof}
    By definition, a weak solution of \eqref{v equation} must satisfy 
    \begin{equation} 
      \mathscr{B}[v, u] = - \int_{R} \frac{f_1}{\Phia} u \, dq \, dp - \int_T \frac{f_2}{\Phia} u \, dq\label{weak v problem} 
    \end{equation}
    for each $u \in \mathbb{H}$, where the bilinear form $\mathscr{B}\colon \mathbb{H} \times \mathbb{H} \to \R$ is given by
    \begin{align*}
      \mathscr{B}[ v, u] := \int_{R} \left[ \frac{1}{H_p^3} v_p u_p  + \frac{1}{H_p} v_q u_q \right] \, dq \, dp +
      \int_{T} \left( \frac{\Phia_p}{H_p^3} - \frac{1}{F^2}\rho \Phia \right) v u \, dq - \int_{T} \frac{1}{H_p} v_q u\, dq.
    \end{align*}
    The boundedness of $\mathscr{B}$ is obvious, so consider the question of its coercivity.  We estimate 
    \begin{align*} \mathscr{B}[ v, v] &= \int_{R} \left[ \frac{1}{H_p^3} v_p^2  + \frac{1}{H_p} v_q^2 \right] \, dq \, dp +
      \int_{T} \left( \frac{\Phia_p}{H_p^3} - \frac{1}{F^2}\rho \Phia \right) v^2 \, dq - \int_{T} \frac{1}{H_p} \partial_q ( \frac{1}{2} v^2)\, dq \\
      & \geq C \| v \|_{\dot{H}^1(R)}^2.
    \end{align*}
    To derive the second line, we have used \eqref{Phia top} to conclude the second term is nonnegative, while observing the third term is an integral of pure derivative in $q$.   Because the problem is set on a strip with homogeneous Dirichlet condition on the bottom $\{ p= -1\}$, we may apply Poincar\'e's inequality to conclude that the $\dot{H}^1$ norm is equivalent to the full $H^1$ norm.  This proves that $\mathscr{B}$ is coercive.  The unique (weak) solvability of \eqref{weak v problem} then follows directly from Lax--Milgram.   \end{proof}
\end{lemma}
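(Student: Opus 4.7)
The plan is to work entirely with the reformulated problem \eqref{v equation} and apply the Lax--Milgram theorem. Setting $\dot w =: \Phia v$, the condition $\dot w \in \mathbb{H}$ is equivalent to $v \in \mathbb{H}$, since Lemma~\ref{lem Phia} together with $\Phia(-1) = \epsilon > 0$ guarantees that $\Phia$ and $1/\Phia$ are smooth, uniformly positive, $p$-dependent multipliers on $[-1,0]$, and multiplication by such functions is an isomorphism of $\mathbb{H}$. So I will focus on weak solvability of \eqref{v equation} and then translate back.

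Multiplying the PDE in \eqref{v equation} by a test function $u \in \mathbb{H}$ and integrating by parts against the natural oblique boundary condition, I arrive at the bilinear form $\mathscr B \maps \mathbb H \times \mathbb H \to \R$ written in the proof sketch, namely
\begin{align*}
\mathscr B[v,u] = \int_R \Big[ \tfrac{v_p u_p}{H_p^3} + \tfrac{v_q u_q}{H_p}\Big]\, dq\, dp
+ \int_T \Big( \tfrac{\Phia_p}{H_p^3} - \tfrac{1}{F^2}\rho \Phia\Big) v u\, dq
- \int_T \tfrac{1}{H_p} v_q u\, dq,
\end{align*}
with right-hand side $L(u) = -\int_R (f_1/\Phia) u\, dq\, dp - \int_T (f_2/\Phia) u\, dq$. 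Boundedness of $\mathscr B$ and continuity of $L$ are routine: the coefficients are $L^\infty$ and the boundary integrals are controlled by the trace embedding $H^1(R) \hookrightarrow L^2(T)$ (using $\inf H_p > 0$ throughout).

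The key step is coercivity. Testing against $u = v$, the first integral already dominates $c\,\|\nabla v\|_{L^2(R)}^2$ because $H_p$ is bounded above and below. The boundary integral $\int_T ( \Phia_p/H_p^3 - \rho\Phia/F^2) v^2\, dq$ has the \emph{good} sign by \eqref{Phia top} from Lemma~\ref{lem Phia} --- this is precisely why we introduced $\Phia$ --- and so can simply be dropped. The remaining term $-\int_T v_q v/H_p\, dq = -\frac{1}{2H_p(0)} \int_\R \partial_q (v(q,0)^2)\, dq$ vanishes by the fundamental theorem of calculus in $H^{1/2}(\R)$, once one observes that $v(\cdot,0)^2 \in L^1(\R)$ (from the trace) is in $W^{1,1}(\R)$ and thus has zero integral of its derivative. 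Finally, since $v|_B = 0$, the Poincar\'e inequality on the strip $R$ upgrades $\|\nabla v\|_{L^2(R)}$ to the full $\|v\|_{H^1(R)}$, yielding $\mathscr B[v,v] \ge c\, \|v\|_{\mathbb H}^2$.

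Lax--Milgram then gives a unique $v \in \mathbb{H}$ with $\mathscr B[v,u] = L(u)$ for every $u \in \mathbb H$, i.e., a unique weak solution of \eqref{v equation}. Setting $\dot w := \Phia v$, reversing the derivation of $\mathscr B$ (which is just multiplication by a smooth positive function and an integration by parts that is reversible) shows that $\dot w$ is a distributional solution to $\F_w(0,F)\dot w = (f_1,f_2)$; conversely any such $\dot w \in \mathbb H$ yields a $v = \dot w/\Phia \in \mathbb H$ solving the weak form, so uniqueness transfers. I expect the only delicate point is the vanishing of the antisymmetric boundary term $\int_T v_q v/H_p\, dq$: to make this rigorous without extra regularity, I would first verify the identity on the dense subspace $\{v \in C^\infty(\overline R) : v|_B = 0,\ v\text{ has compact support in }q\}$ and then pass to the limit in $\mathbb H$ using continuity of the trace map into $L^2(T)$.
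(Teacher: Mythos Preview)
Your proposal is correct and follows essentially the same route as the paper: both reformulate via $\dot w = \Phia v$, set up the identical bilinear form $\mathscr{B}$, and verify coercivity by noting that the gradient term is uniformly positive, the boundary term $\int_T(\Phia_p/H_p^3 - \rho\Phia/F^2)v^2\,dq$ is nonnegative by \eqref{Phia top}, the antisymmetric term $\int_T v_q v/H_p\,dq$ vanishes as a total $q$-derivative, and Poincar\'e on the strip upgrades to the full $H^1$ norm. Your additional remarks on the $\mathbb{H}$-isomorphism given by multiplication by $\Phia$ and on justifying the vanishing of the antisymmetric term via density are welcome clarifications but do not change the argument.
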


Lemma~\ref{weak invertibility lemma} shows that $\F_w(0,F)$ is invertible in the Sobolev setting, but we need more than this as we wish to work with the domain $\Xb$ whose elements need not vanish at infinity.  To pass from one regime to the other will be accomplished in two steps.  First, we prove that $\F_w(0,F)$ is injective as a mapping from $\Xb$ to $\Yb$.  This will imply that $\F_w(0,F)$ is locally proper by a translation argument (cf., e.g., \cite{volpert2003degree}).  Using this fact, we will be able to infer the surjectivity of $\F_w(0,F)$ from its weak invertibility and a limiting argument.

\begin{lemma}[Strong injectivity] \label{strong injectivity lemma} 
  For $F > \Fcr$, there are no nontrivial solutions $\dot w \in \Xb$ of $\F_w(0,F)\dot w = 0$.
  \begin{proof}  
    Let $\dot w \in \Xb$ be a solution to $\F_w(0,F)\dot w = 0$, and let $v = \dot w/\Phia$ be the corresponding solution of \eqref{v equation} with $f_1 = f_2 = 0$. It suffices to show that $v \equiv 0$. For any $\delta > 0$ the function $u := \sech(\delta q) v$ lies in $\mathbb H$. Writing down the equation solved by $u$, we can argue as in the proof of Lemma~\ref{weak invertibility lemma} to conclude $u \equiv 0$, provided that $\delta$ is sufficiently small. This in turn forces $v \equiv 0$ and hence $\dot w \equiv 0$ as desired.
    \end{proof} 
\end{lemma}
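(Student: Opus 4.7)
The plan is to reduce the problem to the weak-solution setting already handled by Lemma~\ref{weak invertibility lemma}. The strategy has three parts. First, to deal with the ``bad'' sign of the zeroth-order coefficient $\rho_p/F^2$ in the interior equation, I would introduce the change of dependent variable $\dot w = \Phia v$, where $\Phia$ is the function constructed in Lemma~\ref{lem Phia}. Because $\Phia_p(-1) = 1 > 0$ and $\dot w$ vanishes on $B$ with $\dot w \in C^{3+\alpha}_\bdd$, the quotient $v$ is well-defined, belongs to $C^{3+\alpha}_\bdd(\overline R)$, and solves~\eqref{v equation} with $f_1 = f_2 = 0$. In these variables the interior operator has no zeroth-order term and the Robin boundary operator on $T$ has a \emph{negative} coefficient in front of $v$, thanks to~\eqref{Phia top}.

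Second, since $v$ need not decay as $|q| \to \infty$, it does not lie in $\mathbb H$, so one cannot test $v$ against itself directly as in the Lax--Milgram argument of Lemma~\ref{weak invertibility lemma}. To get around this I would introduce the weighted localization $u_\delta := \sech(\delta q)\, v$ for each small $\delta > 0$. The exponential decay of $\sech(\delta q)$ combined with the uniform bounds on $v$ and $\nabla v$ guarantee $u_\delta \in \mathbb H$. A direct computation expresses the PDE for $u_\delta$ as the PDE for $v$ (conjugated by $\sech(\delta q)$) plus extra first- and zeroth-order terms whose coefficients are uniformly $O(\delta)$, since $(\sech(\delta q))_q/\sech(\delta q) = -\delta\tanh(\delta q)$ and $(\sech(\delta q))_{qq}/\sech(\delta q)$ are bounded by a constant times $\delta$ and $\delta^2$, respectively.

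Third, I would test this equation for $u_\delta$ against $u_\delta$ itself and integrate by parts exactly as in Lemma~\ref{weak invertibility lemma}. This yields an identity $\mathscr B_\delta[u_\delta,u_\delta]=0$, where $\mathscr B_\delta$ is a perturbation of the coercive form $\mathscr B$ defined there. The unperturbed $\mathscr B$ is bounded below by $C\|u_\delta\|_{\dot H^1(R)}^2$, which controls $\|u_\delta\|_{H^1(R)}^2$ via Poincar\'e on the strip (using $u_\delta|_B=0$). The perturbing terms, being of order $O(\delta)$ in $L^\infty$, can be absorbed into this leading quadratic form by Cauchy--Schwarz and Young's inequality, provided $\delta$ is small enough. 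One concludes $\mathscr B_\delta[u_\delta,u_\delta] \gtrsim \|u_\delta\|_{H^1}^2$, which forces $u_\delta \equiv 0$. Since $\sech(\delta q)$ is pointwise positive, $v \equiv 0$ and hence $\dot w = \Phia v \equiv 0$.

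The main obstacle is keeping track of the boundary contributions. The $\mathscr B$ from Lemma~\ref{weak invertibility lemma} already contains a stray term $-\int_T \frac{1}{H_p} v_q v\, dq$ that was handled in the weak case by recognizing it as a pure $q$-derivative of $\frac{1}{2H_p} v^2$; after the cutoff, the analogous term $-\int_T \frac{1}{H_p} (u_\delta)_q u_\delta\, dq$ still telescopes, since $\sech(\delta q)\to 0$ at $q=\pm\infty$, contributing no new trouble. The genuine new boundary terms come from differentiating $\sech(\delta q)$ through the surface integral and are $O(\delta)$ multiples of $\int_T \sech(\delta q)^2 v^2\, dq$; these are absorbed by the strictly positive boundary term coming from~\eqref{Phia top} once $\delta$ is small. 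Once this bookkeeping is verified, the conclusion follows routinely.
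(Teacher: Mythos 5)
Your proposal is correct and is essentially the paper's own argument: the paper likewise passes to $v = \dot w/\Phia$, multiplies by $\sech(\delta q)$ to land in $\mathbb H$, and absorbs the resulting $O(\delta)$ perturbation terms into the coercive bilinear form of Lemma~\ref{weak invertibility lemma} for $\delta$ small. Your bookkeeping of the boundary contributions fills in exactly the details the paper leaves implicit.
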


\begin{lemma}[Local properness] \label{local properness lemma} 
  For $F > \Fcr$, $\F_w(0,F) \colon \Xb \to \Yb$ is locally proper.  That is, for any compact set $K \subset \Yb$ and any closed and bounded set $D \subset \Xb$, $\F_w(0,F)^{-1}(K) \cap D$ is compact in $\Xb$.
\end{lemma}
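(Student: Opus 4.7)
The plan is a standard translation-plus-injectivity argument that exploits the fact that the coefficients of $\F_w(0,F)$ are independent of $q$, so that the operator is invariant under horizontal translation. Concretely, let $\{w_n\} \subset D$ be a sequence with $\F_w(0,F)w_n = f_n \to f$ in $\Yb$; we must produce a subsequence converging in $\Xb$.

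Since $D$ is bounded in $\Xb \hookrightarrow C^{3+\alpha}_\bdd(\overline R)$, the Arzel\`a--Ascoli theorem combined with a standard diagonal extraction gives a subsequence (still denoted $\{w_n\}$) and a limit $w \in \Xb$ such that $w_n \to w$ in $C^{3+\alpha'}_\loc(\overline R)$ for every $\alpha' < \alpha$, and in fact in $C^{3+\alpha}_\loc(\overline R)$ by the uniform $C^{3+\alpha}$ bound and interpolation. Passing to the limit in the equation, $\F_w(0,F) w = f$. The issue is to upgrade this local convergence to convergence in $\Xb$, i.e.\ to control the tails of $w_n-w$ as $|q| \to \infty$.

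Suppose for contradiction that $\n{w_n - w}_{\Xb} \not\to 0$. After passing to a further subsequence, there exist $\epsilon > 0$ and a sequence $(q_n,p_n) \in \overline R$ with $|q_n| \to \infty$ (say $q_n \to +\infty$) at which $w_n - w$, or one of its derivatives up to order three, has absolute value at least $\epsilon$ (the case where the $\alpha$-H\"older seminorm of the third derivatives fails to converge is handled analogously by also tracking a second point $(q_n',p_n')$ and translating by $q_n$). Define the translated differences $\til w_n(q,p) := w_n(q+q_n,p) - w(q+q_n,p)$; these are uniformly bounded in $\Xb$ and, because $\F_w(0,F)$ has coefficients depending only on $p$, they satisfy
\[
  \F_w(0,F)\til w_n(q,p) = f_n(q+q_n,p) - f(q+q_n,p),
\]
whose right-hand side tends to $0$ in $\Yb$ since $f_n \to f$ uniformly. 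Another application of Arzel\`a--Ascoli produces $\til w_n \to \til w$ in $C^{3+\alpha'}_\loc(\overline R)$ for some $\til w \in \Xb$ with $\F_w(0,F)\til w = 0$. Lemma~\ref{strong injectivity lemma} then forces $\til w \equiv 0$, which contradicts the uniform lower bound $|D^k \til w_n(0,p_n)| \ge \epsilon$ after extracting a subsequence with $p_n \to p_* \in [-1,0]$.

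The principal technical point, and the reason the translation argument closes, is the translation invariance of $\F_w(0,F)$ together with the strong injectivity of Lemma~\ref{strong injectivity lemma}; without the latter, the limit $\til w$ could be a nontrivial bounded solution localized at infinity, and the argument would fail. Everything else is routine compactness and diagonal extraction. This mirrors the limit-operator philosophy of \cite{volpert2003degree} in the simplified setting where the ``asymptotic operators'' at $q=\pm\infty$ coincide with $\F_w(0,F)$ itself.
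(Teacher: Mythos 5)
The paper itself disposes of this lemma by citing \cite[Lemma~A.7]{wheeler2013solitary}, which is ``proved using Schauder estimates and a translation argument,'' so you are essentially reconstructing that proof. Your skeleton — translation invariance of the constant-coefficient-in-$q$ operator plus the strong injectivity of Lemma~\ref{strong injectivity lemma} — is exactly the right one, and the core contradiction (a translated limit would be a nontrivial bounded kernel element) is correct.

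However, there is a genuine gap in how you pass from local/pointwise information to convergence in the $\Xb = C^{3+\alpha}_\bdd$ norm, and it is precisely the step where the Schauder estimate is indispensable. First, the claim that $w_n \to w$ in $C^{3+\alpha}_\loc$ ``by the uniform $C^{3+\alpha}$ bound and interpolation'' is false: interpolation between $C^0$ and $C^{3+\alpha}$ only yields $C^{3+\alpha'}$ convergence for $\alpha'<\alpha$, and the closed unit ball of $C^{3+\alpha}$ on a compact set is not compact in $C^{3+\alpha}$. Second, and more seriously, your parenthetical treatment of the H\"older seminorm does not close: if the two points $x_n,y_n$ realizing a quotient $|D^3(w_n-w)(x_n)-D^3(w_n-w)(y_n)|/|x_n-y_n|^\alpha \ge \epsilon$ satisfy $|x_n-y_n|\to 0$, then local $C^{3+\alpha'}$ convergence of the translated differences to $0$ gives no contradiction — a uniform $C^{3+\alpha}$ bound only bounds that quotient, it does not make it small (think of $n^{-\alpha}\phi(n\,\cdot)$-type oscillations, which vanish in every $C^{\alpha'}$ with $\alpha'<\alpha$ while their $C^\alpha$ seminorm stays bounded below). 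The correct repair is to apply the global Schauder estimate of Theorem~\ref{schauder theorem} to $v_n := w_n - w$, which solves the linear problem with data $f_n - f \to 0$ in $\Yb$; this yields $\n{v_n}_{C^{3+\alpha}(R)} \le C(\n{v_n}_{C^0(R)} + \n{f_n-f}_{\Yb})$ and reduces the entire lemma to showing $w_n \to w$ in $C^0_\bdd(\overline R)$. For that sup-norm statement your translation-plus-injectivity argument (tracking only points where $|w_n - w| \ge \epsilon$) does close cleanly, and this is exactly the structure of the cited lemma.
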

\begin{proof}
  Since the coefficients of $\F_w(0,F)$ are independent of the horizontal variable $q$, this follows immediately from  \cite[Lemma~A.7]{wheeler2013solitary}, which is proved using Schauder estimates and a translation argument.
\end{proof}
  
\begin{proof}[Proof of Lemma~\ref{strong invertibility lemma}]
  We have already confirmed in Lemma~\ref{strong injectivity lemma} that $\F_w(0,F)$ is injective between these spaces, so it suffices to show that it is surjective. Fix $(f_1,f_2) \in \Yb$. We will construct a solution $\dot w \in \Xb$ of $\F_w(0,F)\dot w = (f_1,f_2)$.

  First, since $\F_w(0,F)$ has trivial kernel by Lemma~\ref{strong injectivity lemma} and is locally proper by Lemma~\ref{local properness lemma}, a standard argument shows that it enjoys an improved Schauder estimate
  \begin{align}
    \label{schauderlater}
    \n{\dot w}_\Xb \le C\n{\F_w(0,F)\dot w}_\Yb
  \end{align}
  with no $\n{\dot w}_{C^0}$ term on the right-hand side.

  For $\delta > 0$, define
  \begin{align*}
    f_{1,\delta} := \sech(\delta q) f_1 \in L^2(R),
    \qquad 
    f_{2,\delta} := \sech(\delta q) f_2 \in L^2(T).
  \end{align*}
  By Lemma~\ref{weak invertibility lemma}, there exist unique weak solutions $\dot w_\delta \in \mathbb H$ to $\F_w(0,F) \dot w_\delta = (f_{1,\delta},f_{2,\delta})$, and by standard elliptic regularity theory $\dot w_\delta \in \Xb$. Thus \eqref{schauderlater} yields the bound
  \begin{align*}
    \n{\dot w_\delta}_\Xb
    \le C\n{(f_{1,\delta},f_{2,\delta})}_\Yb
    \le C\n{(f_1,f_2)}_\Yb.
  \end{align*}
  In particular, $\dot w_\delta$ is bounded in $\Xb$ uniformly in $\delta$, so that we can extract a subsequence converging in $C^3_\loc(\overline R)$ to a function $\dot w \in \Xb$. As $f_{1,\delta} \to f_1$ in $C^1_\loc(\overline R)$ and $f_{2,\delta} \to f_2$ in $C^2_\loc(T)$, we conclude that $\F_w(0,F) \dot w = (f_1,f_2)$, completing the proof.
\end{proof}
  
\begin{proof}[Proof of Lemma~\ref{fredholm lemma}]
  Fix $(w,F) \in U$, and notice that, since $w \in X$, the coefficients of $\F_w(w,F)$ tend to those of $\F_w(0,F)$ as $\abs q \to \infty$. Because $\F_w(0,F) \maps \Xb \to \Yb$ is invertible by Lemma~\ref{strong invertibility lemma}, the statement then follows from \cite[Lemmas~A.12 and A.13]{wheeler2015pressure}.
\end{proof}

\subsection{Proofs and calculations from Section~\ref{small-amplitude section}} \label{appendix: small amplitude} 

\begin{proof}[Proof of Lemma~\ref{hamiltonian spectral lemma}\ref{spectral bound}]
  Let $u = (w, r) \in \D(\linear)$, $\xi\in \R$ be given, and denote $(f,g) := (\linear-i\xi)u$,
  that is, 
  \begin{equation}
    \label{resolvent}  
    f = H_p r - i\xi w , \qquad 
    g = \displaystyle -\left( {w_p \over H^3_p} \right)_p + {1\over \Fcr^2} \rho_p w - i \xi r.  
  \end{equation}
  Recall also that the definition of $\D(\linear)$ implies that 
  \begin{equation}
    \label{resolvent bc} 
    w(-1) = r(-1) = 0, \qquad
    \displaystyle \left.\LC -{w_p \over H^3_p} + {1\over \Fcr^2} \rho w \RC\RV_{p = 0} = 0. 
  \end{equation}

  From the bottom boundary conditions, Sobolev embedding theorem, Poincar\'e inequality, and an interpolation argument, we have 
  \begin{equation*}
    \|w\|_{L^\infty} \lesssim \|w\|_{H^1} \lesssim \|w_p\|_{L^2}, \quad \|r\|_{L^\infty} \lesssim \|r\|_{H^1} \lesssim \|r_p\|_{L^2}, \qquad \| w \|_{H^2} \lesssim \| w_{pp} \|_{L^2}.
  \end{equation*}
  Following \cite[Lemma~3.4]{groves2007spatial} and \cite[Lemma~4.9]{wheeler2013solitary}, we compute 
  \begin{align*}
    {|f_p|^2 \over H^3_p} + H_p |g|^2 &=  {\LV (rH_p)_p \RV^2 \over H^3_p} + H_p \LV  \LC{w_p \over H^3_p} \right)_p - { \rho_p\over \Fcr^2} w\RV^2 + |\xi|^2 \LC {|w_p|^2\over H^3_p} + H_p|r|^2 \RC \\
    & \qquad + 2\xi \imagpart {\left\{{-(rH_p)_p \bar w_p \over H^3_p} + \bar rH_p \LB \LC {w_p\over H^3_p} \RC_p - {\rho_p w  \over \Fcr^2} \RB\right\}}.
  \end{align*}
  Integrating over $[-1,0]$, applying integration by parts, and using the facts that $H\in C^{3+\alpha}$ and $H_p>0$, we eventually arrive at the estimate  
  \begin{align*}
    C(\|f\|^2_{H^1} + \|g\|^2_{L^2}) & \geq \|w_{pp}\|^2_{L^2} + \|r_p\|^2_{L^2} + |\xi|^2 (\|w_p\|^2_{L^2} + \|r\|^2_{L^2}) \\
    & \quad - (|\xi| + C) \LC \|w\|^2_{H^1} + \|r\|^2_{L^2} \RC - C|\xi| |r(0) \bar w(0)|.
  \end{align*}
  Note that $|w(0)| \leq C\|w_p\|_{L^2}$. To control $|r(0)|$, we use the first equation in \eqref{resolvent} and the boundary condition \eqref{resolvent bc} to conclude that
  \begin{align*}
    \LV H_p(0)r(0) \RV^2 & = 2 \int^0_{-1} \realpart \LC H_p \bar{r} (H_p r)_p \RC \, dp = 2 \realpart \int^0_{-1} H_p \bar r (f_p + i\xi w_p) \, dp \\
    & \leq \delta^2 \LC \|f_p\|^2_{L^2} + |\xi|^2\|w_p\|^2_{L^2} \RC + {1\over \delta^2} \|r\|^2_{L^2},
  \end{align*}
  where $\delta > 0$ is to be determined.

  Putting all of the above together, and choosing $|\xi|$ sufficiently large and $\delta$ sufficiently small, we have
  \begin{align*}
    C(\|f\|^2_{H^1} + \|g\|^2_{L^2}) & \geq \|w_{pp}\|^2_{L^2} + \|r_p\|^2_{L^2} + |\xi|^2 (\|w_p\|^2_{L^2} + \|r\|^2_{L^2}) \\
    & \gtrsim \|w\|^2_{H^2} + \|r\|^2_{H^1} + |\xi|^2 (\|w\|^2_{H^1} + \|r\|^2_{L^2}),
  \end{align*}
  which proves the second part of the theorem.
\end{proof}

\begin{proof}[Proof of Lemma~\ref{change of vars lem}\ref{dynamic to elliptic}]
  The argument is similar to the proof of \cite[Lemma~4.3]{wheeler2013solitary}. Note that $u \in C^4_0(\R,\X) \cap C^3(\R,\U)$ implies $w \in C^4_0(\R,H^1) \cap C^3_0(\R,H^2)$ and hence, since $\alpha \le 1/2$, $w \in C^4_0(\R,C^\alpha) \cap C^3_0(\R,C^{1+\alpha})$. This is the only place in the paper where the assumption $\alpha \le 1/2$ is used.
  We claim that $w \in C^{3+\alpha}_0(\overline R) = C^{3+\alpha}_\bdd(\overline R) \cap C^3_0(\overline R)$. To see this, first observe that $w \in C^4_0(\R,C^\alpha) \cap C^3_0(\R,C^{1+\alpha})$ implies 
  \begin{align}
    \label{W uglier}
    w,w_q,w_{pq},w_{qq},
    w_{pqq},w_{qqq}
    \in C^\alpha_0(\overline R).
  \end{align}
  Introducing the notation
  \begin{align}
    W &:= (H,H_p,H_{pp},w,w_p,w_{pq},w_{qq}),
  \end{align}
  we can abbreviate \eqref{W uglier} as 
  $W, W_q \in C^\alpha_0(\overline R)$. It remains to show that $w_{pp}, w_{ppq}, w_{ppp} \in C^\alpha_0(\overline R)$. Using the equation, we can solve explicitly for $w_{pp}$ in terms of $W$, say $w_{pp} = f(W)$. From the form of $f$ we immediately discover that 
  \begin{align}
    \label{W ugliest}
    w_{pp} = f(W) \in C^\alpha_0(\overline R),
    \qquad 
    w_{ppq} = f_W(W) W_q \in C^\alpha_0(\overline R),
  \end{align}
  so the last thing to verify is that $w_{ppp} \in C^\alpha_0(\overline R)$. But, from \eqref{W uglier}, \eqref{W ugliest}, 
  and the fact that $H \in C^{3+\alpha}([-1,0])$, we know that $W_p \in C^\alpha_0(\overline R)$. Differentiating $w_{pp} = f(W)$ with respect to $p$ thus yields
  $w_{ppp} = f_W(W)W_p \in C^\alpha_0(\overline R)$ as desired.
\end{proof}

\subsection{Calculation of the reduced system} \label{appendix: reduced system calc}
In this subsection, we present the computation of the leading order part of the reduced system in Lemma~\ref{center manifold lemma}\ref{best reduced}.

Let us now record the variations of the Hamiltonian that will be required to derive the reduced system. We will write $u = (w, r)$, and similarly for variations $\dot{u} = (\dot{w}, \dot{r})$ and so on.  First note that for an arbitrary $u$ and variation $\dot{u}$, we have 
\begin{align} 
  \label{Hu} 
  \ham_u^\epsilon(u)[ \dot{u} ] & = \int_{-1}^0 \left[ r \dot{r} + \frac{\dot{w}_p}{(H_p+w_p)^3} + \frac{1}{(F^\epsilon)^2} \rho \dot{w} \right] (w_p + H_p) \, dp \\
  \notag
  & \qquad + \int_{-1}^0 \left[ \int_0^p  \frac{1}{(F^\epsilon)^2} \rho H_p \, dp^\prime - \frac{1}{2H_p^2} + \frac{1}{2} r^2 - \frac{1}{2(H_p+w_p)^2} + \frac{1}{(F^\epsilon)^2}  \rho w \right] \dot{w}_p \, dp.
\end{align}
Taking a second and third derivative in $u$ yields
\begin{align} 
  \notag
  \ham_{uu}^\epsilon(u) [\dot{u}, \, \ddot{u}] & = \int_{-1}^0 \left( \dot{r} \ddot{r} - \frac{3 \dot{w}_p \ddot{w}_p}{(H_p + w_p)^4} \right) (w_p + H_p) \, dp \\
  \label{Huu}
  & \qquad + \int_{-1}^0 \left( r \dot{r} + \frac{\dot{w}_p}{(H_p + w_p)^3} + \frac{1}{(F^\epsilon)^2} \rho \dot{w} \right) \ddot{w}_p \, dp \\
  \notag
  & \qquad + \int_{-1}^0 \left( r \ddot{r} + \frac{\ddot{w}_p}{(H_p + w_p)^3} + \frac{1}{(F^\epsilon)^2} \rho \ddot{w} \right) \dot{w}_p \, dp,\\
  \label{Huuu} 
  \ham_{uuu}^\epsilon(u)[ \dot{u},\ddot{u} , \dddot{u}] & = 3 \int_{-1}^0 \frac{\dot{w}_p \ddot{w}_p \dddot{w}_p}{(H_p+w_p)^4} \, dp  +  \int_{-1}^0 \left( \dot{r} \ddot{r} \dddot{w}_p + \dot{r} \dddot{r} \ddot{w}_p + \ddot{r} \dddot{r} \dot{w}_p \right) \, dp.
\end{align}
Finally, differentiating \eqref{Huu} with respect to $\epsilon$ and recalling the definition \eqref{def F epsilon} of $F^\epsilon$ leads to
\begin{equation} \label{Huuepsilon}
\ham_{uu\epsilon}^\epsilon[ \dot{u}, \ddot{u}] = -\int_{-1}^0  \rho \partial_p ( \dot{w} \ddot{w} )  \, dp.
\end{equation}

In Lemma~\ref{hamiltonian spectral lemma}(i), we found that the center space is spanned by the eigenvector $(\Phicr, 0)$ and generalized eigenvector $(0,  \Phicr/H_p)$ corresponding to $0$, 
where $\Phicr$ is given in Lemma~\ref{lem_minimality}.  Evaluating \eqref{Hu}, \eqref{Huu}, and \eqref{Huuu} at $u = 0$ and with 
\begin{equation}
  \dot{u} = \ddot{u} = \dddot{u}  
  = z_1 e_1 + z_2 e_2 =
  \Big( c_0^{-1/2} z_1 \Phicr, \, c_0^{-1/2} z_2 \frac{1}{H_p} \Phicr\Big) =: \ucenter,
\end{equation}
gives 
\begin{align}
  \ham_u^\epsilon(0)[\ucenter] &= \ham_{u\epsilon}^\epsilon(0)[\ucenter] = 0,  \label{Hu(0) on center}  \\
  \label{Huu(0) on center}
  \ham_{uu}^\epsilon(0)[ \ucenter, \ucenter] &= 
    z_2^2,  \\
    \ham_{uu\epsilon}^\epsilon(0)[\ucenter,\ucenter] & =  - c_0^{-1} c_1 z_1^2, \label{Huuuep(0) on center} \\ 
  \ham_{uuu}^\epsilon(0) [\ucenter, \ucenter , \ucenter]   &= 3 
  c_0^{-3/2} c_2 
  z_1^3 + 3 c_0^{-1/2} z_1 z_2^2, \label{Huuu(0) on center}
\end{align}
where $c_0$ is defined in \eqref{def c0} and $c_1,c_2$ are defined in \eqref{def c1 c2}.

Now consider the Taylor expansion of the Hamiltonian at $0$ taking variations only in the center directions.  That is, for each $\ucenter$ in the center space and 
$\epsilon \in [0, \epsilon_*)$, 
we consider the quantity 
\begin{equation} \label{def Kepsilon} 
  \cham(\ucenter) := \ham^\epsilon(\ucenter + \Theta^\epsilon(\ucenter)),
\end{equation}
where $\Theta^\epsilon$ is the reduction function of Lemma \ref{center manifold lemma}\ref{best reduced}.  It is helpful to study the related function $\twodcham \in C^\infty(\R^2, \R)$ defined by
\begin{equation}
  \twodcham(z_1, z_2) := \cham(z_1 e_1 + z_2 e_2),
\end{equation}
which will serve as the Hamiltonian for the reduced system.  

As $\cham$ is smooth and vanishes at $0$, we have the Taylor expansion
\begin{equation} \label{preliminary expansion K}
\cham(\ucenter)  =  \cham_{\ucenter}(0)[\ucenter]  + \frac{1}{2}   \cham_{\ucenter \ucenter}(0)[\ucenter, \ucenter]  + \frac{1}{6} \cham_{\ucenter\ucenter\ucenter}(0)[\ucenter, \ucenter, \ucenter] + \mathcal{O}(\| \ucenter \|^4).
\end{equation}
The derivatives of $\cham$ above can be computed from \eqref{def Kepsilon} and our previous calculations of the variations of $\ham^\epsilon$.   Indeed,
\[
\cham_{\ucenter}(\ucenter)[\varuc]  =  \ham_u^\epsilon(\ucenter + \Theta^\epsilon(\ucenter)) (1+\Theta_{\ucenter}^\epsilon(\ucenter))[\varuc],
\]
and hence 
\[ \cham_{\ucenter}(0)[ \varuc ] = \ham_u^\epsilon(0) (1+\Theta_{\ucenter}^\epsilon(0))[\varuc] = 0.\]
This implies that the linear term in \eqref{preliminary expansion K} vanishes.  For the quadratic term we will also expand in $\epsilon$ near $\epsilon = 0$.  With that in mind, we compute
\begin{align*}
\chamzero_{\ucenter \ucenter}(0)[\varuc, \varuc] & = \ham_{uu}^0(0)[ \varuc, \varuc], \\
\chamzero_{\epsilon \ucenter \ucenter}(0)[\varuc, \varuc] & = \ham_{\epsilon u u}^0(0)[ \varuc, \varuc] + 2 \ham_{uu}^0(0)[ \varuc, \, \Theta_{\epsilon \ucenter}^0(0) \varuc].
\end{align*}  
However, in light of \eqref{Huu}, we see that for any variation $\dot{u}$ 
\begin{align*}
 \ham_{uu}^0(0)[(\Phicr,0), \dot{u}]  &= \int_{-1}^0 \left[ -\frac{\partial_p \Phicr }{H_p^3} \dot{w}_p + \mu_{\textrm{cr}} \rho \partial_p (\dot{w} \Phicr) \right] \, dp \\
 & = \int_{-1}^0 \left[ \left( \frac{\partial_p \Phicr }{H_p^3} \right)_p - \frac 1{\Fcr^2} \rho_p \Phicr  \right] \dot{w} \, dp + \left[ -\frac{\partial_p \Phicr }{H_p^3} + \mu_{\textrm{cr}} \rho \Phicr \right] \dot{w} \bigg|^0 = 0,\end{align*}
given the equation satisfied by $\Phicr$.  Thus,
\begin{align*}
  \chamzero_{\epsilon \ucenter \ucenter}(0)[\varuc, \varuc] & = 
  -c_0^{-1} c_1
  z_1^2 
   + 2 \ham_{uu}^0(0) \left[ e_2, \Theta_{\epsilon \ucenter}^0(0) e_1\right] z_1 z_2 \\
  & \qquad +  2 \ham_{uu}^0(0)\left[ e_2,
  \Theta_{\epsilon \ucenter}^0(0) e_2 \right]  z_2^2.
\end{align*}
The quadratic terms in \eqref{preliminary expansion K} can then be written as 
\begin{align}
  \cham_{\ucenter \ucenter}(0)[ \ucenter, \ucenter] & = \ham_{uu}^0(0)[ \ucenter, \ucenter] + \epsilon \left( \ham_{\epsilon u u}^0(0)[ \ucenter, \ucenter] + 2 \ham_{uu}^0(0)[ \ucenter, \, \Theta_{\epsilon \ucenter}^0(0) \ucenter] \right)  \nonumber \\
  &\qquad+ \mathcal{O}(\epsilon^2 \| \ucenter \|^2 ) \nonumber  \\
  & = z_2^2 - \epsilon c_0^{-1} c_1  z_1^2 
  + \mathcal{O}(|\epsilon| |(z_1, z_2)| |z_2| )
  + \mathcal{O}(\epsilon^2 |(z_1, z_2)|^2) 
  . 
  \label{quad term K taylor}
\end{align}

Finally, for the cubic term we see that
\begin{align*}
  \cham_{\ucenter \ucenter \ucenter}(\ucenter)[\varuc, \vvaruc, \vvvaruc] & =  \ham^{\epsilon}_{uuu}(u) [ (1+ \Theta_{\ucenter}^\epsilon(\ucenter)) \varuc, \,  (1+ \Theta_{\ucenter}^\epsilon(\ucenter)) \vvaruc, \, (1+ \Theta_{\ucenter}^\epsilon(\ucenter)) \vvvaruc ] \\
& \qquad +   \ham_{uu}^\epsilon(u) [ \Theta_{\ucenter \ucenter}^\epsilon(u) [\vvaruc, \vvvaruc],  \, (1+\Theta_{\ucenter}^\epsilon(\ucenter)) \varuc ] \\
& \qquad +  \ham_{uu}^\epsilon(u) [ (1+\Theta_{\ucenter}^\epsilon(\ucenter)) \vvaruc, \,   \Theta_{\ucenter \ucenter}^\epsilon(u)[\varuc, \vvvaruc ]]. 
\end{align*}
Evaluating this at $u = 0$ and along the diagonal, we obtain
\begin{equation} 
  \cham_{\ucenter \ucenter \ucenter}(0) [\ucenter, \ucenter, \ucenter]  =  \ham_{uuu}^\epsilon(0) [\ucenter, \ucenter, \ucenter] + 2 \ham_{uu}^\epsilon(0) [ \ucenter, \,  \Theta_{\ucenter \ucenter}^\epsilon(0) [\ucenter, \ucenter] ].
\label{K cubic terms at 0}
\end{equation}
Here we have used the fact that $\Theta_{\ucenter}^\epsilon(0) = 0$.  The first term on the right-hand side above can be found via \eqref{Huuu(0) on center}; 
for the second, we again note that $\ham_{uu}(0)$ vanishes when one variation is taken in the $(\Phicr, 0)$ direction.  
In total,  the contribution of the cubic terms in \eqref{preliminary expansion K} is thus 
\begin{equation} \label{cubic terms K taylor} 
\begin{split}
  \cham_{\ucenter \ucenter \ucenter}(0) [\ucenter, \ucenter, \ucenter]  &= 3 c_0^{-3/2} c_2 z_1^3 + \mathcal{O}(|z_2| |(z_1, z_2)|^2).
\end{split}
\end{equation}

Combining \eqref{preliminary expansion K}, \eqref{quad term K taylor}, and \eqref{cubic terms K taylor}, we arrive at the following expansion for the reduced Hamiltonian $\twodcham$:
\begin{equation} \label{expansion K} 
  \begin{split}
    \twodcham(z_1, z_2) &= 
    \frac{1}{2} z_2^2 - \frac{1}{2} \epsilon c_0^{-1} c_1 z_1^2 
    +  \frac{1}{2} c_0^{-3/2} c_2 z_1^3\\ 
    &\qquad 
    + \mathcal{O}(|z_2| |(z_1, z_2)|^2) 
    + \mathcal{O}(|\epsilon| |z_2| |(z_1, z_2)|)
    + \mathcal{O}(|(\epsilon, z_1, z_2)|^2 |(z_1, z_2)|^2) 
    .
  \end{split}
\end{equation}

\subsection{Proofs from Section~\ref{global bifurcation section}} \label{appendix: large amplitude}

\begin{proof}[Proof of Theorem~\ref{uniform regularity theorem}]
Let $K > 0$ be given.  Throughout the proof, we let $C > 0$ denote a generic constant that depends only on $\| w_p \|_{C^0({R})}$.  In light of Corollary~\ref{bound on w and w_q cor}, we already know that $\| w \|_{C^1({R})}$ can be controlled by $\| w_p \|_{C^0({R})}$.  It remains now to bound the higher order derivatives.  

First, we establish uniform H\"older norm estimates for the gradient.  Note that the height equation \eqref{height equation} can be written abstractly as 
\[ \mathcal{F}(p, h, Dh, D^2 h, F) = 0 \textrm{ in } R, \qquad \mathcal{G}(h, Dh, F) = 0 \textrm{ on } T, \qquad h = 0 \textrm{ on } B.\]
where 
\[ \mathcal{F} \colon [-1,0] \times \R \times \R^2 \times \mathbb{S}^{2 \times 2} \by \R_+ \to \R, \qquad \mathcal{G} \colon  \R \times \R^2 \times \R_+ \to \R\]
are defined by \eqref{nondivergence}. 
We are interested in deriving bounds that are uniform for 
\begin{equation}
  \sup_{{R}} h_p > \delta_* > 0, \qquad \| h \|_{C^1({R})} + F + \frac 1F < C.
\end{equation}
Translating this to the notation above, this means that one should consider the restriction of $\mathcal{F}$ and $\mathcal{G}$ to sets of the form 
\[ V := \left\{ (z, \xi, F) \in  \R \times \R^2 \times \R_+ : ~  \xi_2 > \delta_*, ~ z > 0, ~ z +  |\xi_1| + |\xi_2| + F + \frac 1F < C \right\}.\]
This can be achieved in the usual way by using cutoff functions.  For $(z,\xi,F) \in V$, $p \in [-1,0]$, and $r \in \mathbb{S}^{2 \times 2}$, it is easy to confirm that 
\begin{gather*} 
  c_1  I \leq \mathcal{F}_r(p, z, \xi, r, F)\leq c_1 c_2 I, \qquad |\mathcal{G}_\xi(z, \xi,F)| > c_3,  \\
  |\mathcal{F}(p, z, \xi, 0, F)| < c_1 c_4 |p|^{\alpha-1}, 	\\ 
  (1+|r|) | \mathcal{F}_\xi(p, z, \xi, r, F) | + |\mathcal{F}_z(p,z, \xi, r, F)| + |\mathcal{F}_p(p, z, \xi, r, F)| \leq  c_1c_5(|r|^2 + |p|^{\alpha-2}), 
\end{gather*}
where $I$ is the $2 \times 2$ identity matrix and $c_1$, $c_2$, $c_3$, $c_4$, and $c_5$ are positive constants depending only on $C$ and $\delta_*$.  
Moreover,  for any $(z, \xi, F)$, $(z', \xi', F) \in V$ there exists a positive constant $c_6 > 0$, depending only on $C$ and $\delta_*$, such that
\[ | \mathcal{G}(z, \xi,F) - \mathcal{G}(z',\xi',F) | \leq c_3 c_6 \left( |z - z^\prime|^{\alpha} + |\xi - \xi^\prime|^{\alpha} \right).\]
These structural properties permit us to apply quasilinear elliptic estimates up to the boundary as in \cite[Theorem~1]{lieberman1987nonlinear} to conclude that 
\[ \| h \|_{C^{1+\alpha^\prime}({R})} < C,\]
for some $\alpha^\prime \in (0, \alpha]$.  Here we have also used that the fact that $h \in X \subset W_\loc^{3,2}(R)$ and $h$ is uniformly bounded in the local Lipshitz norm by $C$.  

Next, we consider the higher-order derivatives.    For this we yet again exploit the height equation's translation invariance in $q$ to quasi-linearize it by applying $\partial_q$.    That is, $h_q$ is the solution of a uniformly elliptic second-order divergence form PDE with a uniformly oblique boundary condition \eqref{hq height equation}.  Our efforts thus far show that the coefficients of this PDE are uniformly bounded in $C^{\alpha^\prime}(\overline{R})$, thus linear Schauder estimates are enough to get control of $h_q$ in $C^{1+\alpha^\prime}(\overline{R})$ (see, e.g., \cite[Theorem~3]{constantin2011discontinuous}).  Lastly, to bound $h_{pp}$ in $C^{\alpha^\prime}(\overline{R})$, we use the full height equation \eqref{height equation} to express it in terms of $h$, $h_q$, $h_p$, $h_{qq}$, and $h_{qp}$.  

Thus, $h$ is uniformly controlled in $C^{2+\alpha^\prime}(\overline{R})$.  But then it is in particular bounded uniformly in $C^{1+\alpha}(\overline{R})$.  Repeating the same argument above, we see that the coefficients of the linear PDE for $h_q$ are in $C^{\alpha}(\overline{R})$, hence $h_q$ is controlled uniformly in $C^{3+\alpha}(\overline{R})$.  
As before, this is enough to conclude that
\[ \| h \|_{C^{2+\alpha}({R})} < C.\]
It is straightforward to continue in this fashion and obtain uniform bounds of $h$ in $C^{3+\alpha}(\overline{R})$, which finishes the proof. 
\end{proof}

\section{Quoted results} \label{appendix quotes}
For the convenience of the reader, this appendix contains a number of results from the literature that are drawn on in the paper.  

We begin with some essential tools from elliptic theory.  First, let us recall the maximum principle, Hopf boundary lemma, and Serrin edge point lemma \cite{serrin1971symmetry}.  In particular, note that we are using the version that allows for an adverse sign of the zeroth order term provided that the sign of the solution is known; see, for example, \cite{fraenkel2000introduction}, \cite[Lemma~1]{serrin1971symmetry}, and \cite[Lemma~S]{gidas1979symmetry}.

\begin{theorem} \label{max principle}  
  Let $\Omega \subset \mathbb{R}^n$ be a connected, open set (possibly unbounded), and consider the second-order operator $L$ given by
  \begin{equation}
    L := \sum_{i,j = 1}^n a_{ij}(x) \partial_i \partial_j + \sum_{i=1}^n b_i(x) \partial_i + c(x) \label{appendix: def L} 
  \end{equation}
  where $\partial_i := \partial_{x_i}$ and the coefficients $a_{ij}, b_i, c$ are of class $C^0(\overline{\Omega})$.  We assume that $L$ is uniformly elliptic in the sense that there exists $\lambda > 0$ with 
  \begin{equation}
    \sum_{ij} a_{ij}(x) \xi_i \xi_j \geq \lambda |\xi|^2, \qquad \textup{for all } \xi \in \mathbb{R}^n, \, x \in \overline{\Omega},
  \end{equation}
  and that $a_{ij}$ is symmetric.  Let $u \in C^2(\Omega) \cap C^0(\overline{\Omega})$ be a classical solution of $Lu = 0$ in $\Omega$.  

  \begin{enumerate}[label=\rm(\roman*)]
  \item \label{strong max principle} {\rm (Strong maximum principle)} Suppose $u$ attains its maximum value on $\overline{\Omega}$ at a point in the interior of $\Omega$.  If $c \leq 0$ in $\Omega$, or if $\sup_{\Omega} u = 0$, then $u$ is a constant function.  

  \item \label{hopf lemma} {\rm (Hopf boundary lemma)} Suppose that $u$ attains its maximum value on $\overline{\Omega}$ at a point $x_0 \in \partial \Omega$ for which there exists an open ball $B \subset \Omega$ with $\overline{B} \cap \partial\Omega = \{ x_0 \}$.  Assume that either $c \leq 0$ in $\Omega$, or else $\sup_B u = 0$.  Then $u$ is a constant function or 
    \[ \nu \cdot \nabla u(x_0) > 0,\]
    where $\nu$ is the outward unit normal to $\Omega$ at $x_0$.
  \item \label{edge point} {\rm (Serrin edge point lemma)} 
    Let $x_0 \in \partial\Omega$ be an ``edge point" in the sense that near $x_0$ the boundary $\partial \Omega$ consists of two transversally intersecting $C^2$ hypersurfaces $\{\gamma(x) = 0\}$ and $\{\sigma(x) = 0\}$. Suppose that $\gamma, \sigma < 0$ in $\Omega$. If $u\in C^2(\overline{\Omega})$, $u>0$ in $\Omega$ and $u(x_0) = 0$. 
    Assume further that $a_{ij} \in C^2$ in a neighborhood of $x_0$,
    \begin{equation}\label{bluntness2}
      B(x_0) = 0, \quad \text{and } \quad \partial_\tau B(x_0) = 0
    \end{equation}
    for every differential operator $\partial_\tau$ tangential to $\{\gamma=0\} \cap \{\sigma=0\}$ at $x_0$. Then for any unit vector 
    $s$
    outward from $\Omega$ at $x_0$, either
    \begin{equation*}
      {\partial_s u}(x_0) < 0 \ \text{or }\ {\partial^2_s u}(x_0) < 0.
    \end{equation*}
  \end{enumerate}
\end{theorem}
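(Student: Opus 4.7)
The plan is to treat the three parts in order, since each relies on progressively more delicate barrier constructions but all rest on the same core comparison principle. Because the coefficient $c$ is not assumed to have a favorable sign, everywhere below one must split the argument according to whether $c \le 0$ is given, or instead one only has $\sup u = 0$. In the latter case, the point is that at a zero of $u$ the offending term $cu$ drops out, so one may as well replace $c$ by $c^{+}$ (or $0$) in neighborhoods where $u \le 0$; this trick is the engine of the ``sign-known'' version of the maximum principle and I would use it uniformly across all three parts.

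For part \ref{strong max principle}, the strategy is the classical one. Assume $u$ attains its maximum $M$ at an interior point, and let $\Sigma := \{x \in \Omega : u(x) = M\}$, which is closed in $\Omega$. The goal is to show $\Sigma$ is also open, and thus equals $\Omega$ by connectedness. To do so, pick $x_0 \in \Sigma$ and, if $\Sigma \ne \Omega$, choose a closed ball $\overline{B_r(y)} \subset \Omega \setminus \Sigma$ with a single touching point $x_1 \in \Sigma \cap \partial B_r(y)$. On an annular region around $\partial B_r(y)$, introduce the radial barrier $v(x) := e^{-\alpha |x-y|^2} - e^{-\alpha r^2}$. For $\alpha$ large enough, a direct computation using uniform ellipticity and the boundedness of $b_i$ gives $L v > 0$ (after replacing $c$ by $0$ in the region $\{u < M\}$, which is justified because $M - u > 0$ there). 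Applying the weak maximum principle to $u + \varepsilon v - M$ on the annulus then forces $\partial_\nu u(x_1) > 0$, contradicting the interior maximum.

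For part \ref{hopf lemma}, the same radial barrier does the job directly, since now $x_0 \in \partial \Omega$ and one has an inward-tangent ball $B = B_r(y) \subset \Omega$ with $\overline{B} \cap \partial \Omega = \{x_0\}$. On $B$, consider $w := u - M + \varepsilon v$, where $v$ is the same exponential barrier. For $\alpha$ large, $L v > 0$ inside $B$, and $\varepsilon$ can be chosen so that $w \le 0$ on $\partial B$ (using $u < M$ on $\overline{B} \setminus \{x_0\}$ and $v(x_0) = 0$). The comparison argument, with the sign-known trick handling $c$, yields $w \le 0$ throughout $B$. Evaluating the outward normal derivative at $x_0$ gives $\partial_\nu u(x_0) \ge -\varepsilon\, \partial_\nu v(x_0) > 0$.

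The main obstacle is part \ref{edge point}, the Serrin edge point lemma. Here, the usual tangent ball does not fit inside $\Omega$ at an edge point where two hypersurfaces meet transversally, so one cannot apply the Hopf lemma directly; this is precisely why the conclusion involves a second-derivative alternative. The plan is to flatten the boundary locally by a $C^2$ diffeomorphism that straightens $\{\gamma = 0\}$ and $\{\sigma = 0\}$ to coordinate hyperplanes, so that $\Omega$ becomes a product wedge near $x_0$. After this change of variables, I would construct a more refined barrier of the form $v(x) := (x \cdot e_1)(x \cdot e_2) [A - B|x-x_0|^2]$ --- the product structure vanishes on both hyperplanes to second order, matching the fact that we are allowed to lose one order of decay in the outward derivative. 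The conditions \eqref{bluntness2} on the coefficient $B$ of the zeroth-order part of $L$, together with the symmetry and regularity of $a_{ij}$, are exactly what is needed to show $Lv$ has a definite sign at $x_0$ when one expands to second order. The comparison of $u$ with $\varepsilon v$ on a small neighborhood of $x_0$ in $\Omega$, together with the vanishing of $u$ and its first derivatives forced by $u > 0$ inside and $u(x_0) = 0$, then yields the desired strict inequality on either $\partial_s u(x_0)$ or $\partial_s^2 u(x_0)$. The delicate step is verifying that the barrier actually satisfies $Lv > 0$ to the necessary order at $x_0$ despite having two factors vanishing there; this is where the regularity of $a_{ij}$ and conditions \eqref{bluntness2} enter essentially, and is the only genuinely new ingredient beyond parts \ref{strong max principle} and \ref{hopf lemma}.
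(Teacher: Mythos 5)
This theorem sits in Appendix~\ref{appendix quotes}, which the authors explicitly describe as a list of results quoted from the literature (Serrin, Gidas--Ni--Nirenberg, Fraenkel); the paper gives no proof, so the benchmark for your proposal is the classical arguments themselves. For parts \ref{strong max principle} and \ref{hopf lemma} your sketch is the standard one and is essentially sound: the reduction of the adverse-sign case to the classical case by writing $Lu=0$ as $(L-c^{+})u=-c^{+}u\ge 0$ on the set where $u\le 0$ (legitimate because $\sup u=0$ forces the maximum to be $0$), followed by the exponential barrier and the touching-ball/openness argument. Two small slips to fix in a careful write-up: the barrier $v=e^{-\alpha|x-y|^2}-e^{-\alpha r^2}$ satisfies $Lv>0$ only on an annulus bounded away from the center $y$ (near $y$ the $\alpha^2$ term degenerates), so the comparison must be run on $B_r(y)\setminus B_{r/2}(y)$ with the inner sphere handled via strict inequality $u<M$ there; and the alternative ``$u$ is constant'' in \ref{hopf lemma} has to be extracted by invoking part \ref{strong max principle} when $u=M$ somewhere inside $B$.

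The genuine gap is in part \ref{edge point}. You identify $B$ in \eqref{bluntness2} with ``the coefficient of the zeroth-order part of $L$,'' i.e.\ with $c$. That is not what the hypothesis means (the statement admittedly fails to define $B$): in Serrin's edge-point lemma the quantity is $B(x)=\sum_{i,j}a_{ij}(x)\,\partial_i\gamma(x)\,\partial_j\sigma(x)$, the ``$a$-angle'' between the two hypersurfaces, and \eqref{bluntness2} says they meet $a$-orthogonally at $x_0$ to first order along the edge. This is exactly the term your barrier computation needs: applying the principal part of $L$ to a product barrier of the form $\gamma\sigma\cdot(\cdots)$ produces the cross term $2\sum a_{ij}\partial_i\gamma\,\partial_j\sigma=2B$ plus terms carrying factors of $\gamma$ or $\sigma$ (which vanish on the edge), so the sign of $Lv$ near $x_0$ is governed by $B$, and only the vanishing of $B$ and its tangential derivatives at $x_0$ (together with $a_{ij}\in C^2$, giving $B=O(\operatorname{dist}(\cdot,x_0)^2)$) allows the quadratic correction $-B_0|x-x_0|^2$ in your barrier to dominate and yield a one-signed $Lv$. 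Since you explicitly defer the entire verification of ``$Lv>0$ to the necessary order'' to the role of \eqref{bluntness2}, and you have the wrong object playing that role, the proof of \ref{edge point} as proposed does not close. Separately, be aware that the quoted statement itself is imprecise in ways your proof inherits: part \ref{strong max principle} with $c\le 0$ requires the interior maximum to be nonnegative (otherwise $u''-u=0$, $u=-\cosh x$ is a counterexample), and the sign of the second alternative in \ref{edge point} should be checked against the orientation convention for $s$; neither issue is your doing, but a self-contained proof must state the hypotheses correctly.
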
 

Next, we present a version of the classical Schauder estimates that applies to unbounded domains; see, for example, the discussion in \cite[Appendix A.1]{wheeler2013solitary}.

\begin{theorem} \label{schauder theorem} 
  For $n > 1$, let $\Omega := \mathbb{R}^{n-1} \times (0,1)$, and denote $\partial_1\Omega := \mathbb{R}^{n-1} \times \{1\}$, $\partial_0 \Omega := \mathbb{R}^{n-1} \times \{0\}$.  Consider the elliptic problem 
  \begin{equation}
    Lu = f \textrm{ in } \Omega, \qquad Bu = g \textrm{ on } \partial_1 \Omega, \qquad u = 0 \textrm{ on } \partial_0 \Omega,  \label{appendix: schauder problem} 
  \end{equation} 
  where $L$ is a second-order uniformly elliptic operator of the form \eqref{appendix: def L}, and $B$ is a uniformly oblique boundary operator: 
  \[ Bu := \beta(x) u + \sum_{i=1}^{n} \gamma_i(x) \partial_i u, \qquad |\gamma_n| \geq \mu > 0. \]
  Fix $\alpha \in (0,1)$ and $k \geq 0$.  We assume that the coefficients have the regularity 
  \[ \| a_{ij}, b_i, c\|_{C^{k+\alpha}({\Omega})}, \, \| \beta, \gamma\|_{C^{k+\alpha}(\partial_1 \Omega)} < \nu.\]   

  Suppose that $u \in C_{\mathrm{b}}^0(\overline{\Omega}) \cap C^{2+\alpha}(\overline{\Omega})$ solves \eqref{appendix: schauder problem} for $f \in C^{k+\alpha}(\overline{\Omega})$ and $g \in C^{k+1+\alpha}(\partial_1 \Omega)$.  
  Then $u \in C_{\mathrm{b}}^{k+2+\alpha}(\overline{\Omega})$ satisfies the Schauder estimate 
  \begin{equation}
    \| u \|_{C^{k+2+\alpha}({\Omega})} \leq C \left( \| u \|_{C^0({\Omega})} + \| f \|_{C^{k+\alpha}({\Omega})} + \| g \|_{C^{k+1+\alpha}(\partial_1\Omega)} \right)  \label{appendix: schauder estimate} 
  \end{equation}
  for a constant $C = C(n, \alpha, k, \lambda, \mu, \nu) > 0$.
\end{theorem}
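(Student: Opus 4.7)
The plan is to deduce the global estimate \eqref{appendix: schauder estimate} from the standard Schauder theory on bounded cylinders via a covering and translation argument, exploiting the fact that $\Omega = \R^{n-1} \by (0,1)$ has a product structure with finite width in the $x_n$ direction but is translation-invariant in the $x^\prime = (x_1,\ldots,x_{n-1})$ directions.

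First I would cover $\overline{\Omega}$ by a countable family of overlapping bounded subdomains of the form $\Omega_k := B_2^\prime(x_k^\prime) \by (0,1)$, where $\{x_k^\prime\} \sub \R^{n-1}$ is chosen so that $\{B_1^\prime(x_k^\prime)\}$ covers $\R^{n-1}$ with uniformly bounded overlap. Each $\Omega_k$ is a bounded cylinder with smooth lateral boundary, flat bottom (where Dirichlet data is imposed), and flat top (where the oblique boundary condition is imposed). On each such $\Omega_k$, the classical mixed boundary Schauder theory (see, e.g., \cite[Theorem~6.30]{fraenkel2000introduction} or the analogous estimates of Gilbarg--Trudinger and Lieberman) yields, for $u \in C^{k+2+\alpha}(\overline{\Omega_k})$,
\begin{align*}
\n{u}_{C^{k+2+\alpha}(B_1^\prime(x_k^\prime) \by (0,1))}
  \le
  C\left(
  \n u_{C^0(\Omega_k)}
  + \n f_{C^{k+\alpha}(\Omega_k)}
  + \n g_{C^{k+1+\alpha}(\partial_1 \Omega_k)}
  \right),
\end{align*}
where $C$ depends on the ellipticity constant $\lambda$, the obliqueness constant $\mu$, the bound $\nu$ on the coefficients, $n$, $k$, and $\alpha$. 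Crucially, the constant $C$ is independent of $k$ because the geometry of each $\Omega_k$ is identical (they are translates of one another), the ellipticity and obliqueness constants are uniform, and the H\"older norms of the coefficients on each $\Omega_k$ are controlled by $\nu$. This uniformity in the translation parameter is the key point that makes the argument work.

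Since every point of $\overline{\Omega}$ lies in some $B_1^\prime(x_k^\prime) \by (0,1)$, taking the supremum over $k$ and using the uniform bound on the overlap of the covering gives
\begin{align*}
  \n u_{C^{k+2+\alpha}(\Omega)}
  \le C\left(
  \n u_{C^0(\Omega)}
  + \n f_{C^{k+\alpha}(\Omega)}
  + \n g_{C^{k+1+\alpha}(\partial_1 \Omega)}
  \right),
\end{align*}
which is exactly \eqref{appendix: schauder estimate}. The fact that $u \in C^{k+2+\alpha}_\bdd(\overline{\Omega})$ (as opposed to merely $C^{2+\alpha}$) follows from the same covering argument together with higher interior and boundary regularity applied on each $\Omega_k$ in turn, bootstrapping from $k=0$.

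The main technical obstacle is ensuring that the local Schauder estimates have constants that are genuinely uniform across the covering. This comes down to verifying that: (i) the mixed boundary value problem on a fixed bounded reference cylinder $\Omega_0 := B_2^\prime(0) \by (0,1)$ admits the standard interior-and-boundary Schauder estimate with interior ball of radius $1$, and (ii) that the coefficients of $L$ and $B$ on each $\Omega_k$ have H\"older seminorms controlled uniformly by $\nu$---which is automatic from the hypothesis $\n{a_{ij},b_i,c}_{C^{k+\alpha}(\Omega)}, \n{\beta,\gamma}_{C^{k+\alpha}(\dell_1 \Omega)} < \nu$ stated on the full strip. Once these ingredients are in hand, the assembly is routine.
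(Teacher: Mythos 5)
Your proposal is correct, but note that the paper does not prove this theorem at all: it appears in Appendix~\ref{appendix quotes} (``Quoted results'') and is simply cited from the literature (specifically \cite[Appendix~A.1]{wheeler2013solitary}). The covering-by-translated-cylinders argument you give, with local interior and boundary Schauder estimates whose constants are uniform under horizontal translation, is precisely the standard proof found in that reference, so there is nothing to add.
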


We quote below the center manifold reduction theorem that forms the basis of the small-amplitude existence theory in Section~\ref{small-amplitude section} (cf.\ \cite{mielke1988reduction}
and \cite{haragus2011book} for a general discussion). The version that we use is specifically designed to take advantage of the Hamiltonian structure of the system. 
 
\begin{theorem}[Buffoni, Groves, and Toland \cite{buffoni1996plethora}] \label{center manifold theorem} 
  Suppose that $(\mathcal{X}, \omega^\epsilon, \mathcal{H}^\epsilon)$ is a one-parameter family of reversible Hamiltonian systems, where $\mathcal{X}$ is a Hilbert space, $\omega^\epsilon$ a symplectic form on $\mathcal{X}$, and $\mathcal{H}^\epsilon$ the Hamiltonian.  Write the corresponding Hamilton equation in the form
  \begin{equation}
    u_q = Lu + N^\epsilon(u), \label{appendix:hamilton equation} 
  \end{equation}
  where $u(q)$ is assumed to lie in $\mathcal{X}$ for each $q$.  We assume that $L \colon \mathcal{D}(L) \subset \mathcal{X} \to \mathcal{X}$ is a densely defined, closed linear operator.   Suppose that $0$ is an equilibrium for \eqref{appendix:hamilton equation} at $\epsilon = 0$ and that the following conditions hold.
  \begin{itemize}
  \item[\emph{(H1)}] 
    The spectrum $\sigma(L)$ of $L$ contains at most finitely many eigenvalues on the imaginary axis, each of which has finite multiplicity.  Moreover, $\sigma(L) \cap i \mathbb{R}$ is separated from $\sigma(L) \setminus i \mathbb{R}$ in the sense of Kato.  Let $P^{\mathrm{c}}$ denote  the spectral projection corresponding to $\sigma(L) \cap i \mathbb{R}$ and put $\mathcal{X}^{\mathrm{c}} := P^{\mathrm{c}}\mathcal{X}$, $\mathcal{X}^{\mathrm{su}} := (1- P^{\mathrm{c}}) \mathcal{X}$. We let $n$ be the (finite) dimension of $\mathcal{X}^{\mathrm{c}}$.
  \item[\emph{(H2)}]  
    There exists $C > 0$ such that the operator $L$ satisfies the resolvent estimate 
    \begin{equation}
      \| u \|_{\mathcal{X}} \leq \frac{C}{1+|\xi|} \| (L - i \xi I) u \|_{\mathcal{X}}, 
    \end{equation}
    for all $\xi \in \mathbb{R}$ and $u \in \mathcal{X}^{\mathrm{su}}$.  
  \item[\emph{(H3)}] 
    There exists a natural number $k$, an interval $\Lambda \subset \mathbb{R}$ containing $0$, and a neighborhood $\mathcal{U}$ of $0$ in $\mathcal{D}(L)$ such that $N$ is $C^{k+1}$ in its dependence on $(\epsilon, u)$ on $ \Lambda \times \mathcal{U}$.  Moreover, $N^0(0) = 0$ and $D_u N^0(0) = 0$.
  \end{itemize}
  Then, after possibly shrinking the interval $\Lambda$ and neighborhood $\mathcal{U}$, we have that, for each $\epsilon \in \Lambda$, there exists an $n$-dimensional local center manifold $\cman \sub \U$ together with an invertible coordinate map 
  \begin{align*}
    \chi^\epsilon := P^\cs |_\cman \maps \cman \to \U^\cs := P^\cs \U 
  \end{align*}
  with the following properties:
  \begin{enumerate}[label=\rm(\roman*)]
  \item Defining $\Psi^\epsilon \maps \U^\cs \to \U^\hs := P^\hs \U$ by 
      $u^\cs + \Psi^\epsilon(u^\cs) = (\chi^\epsilon)^{-1}(u^\cs)$,
    the map $(\epsilon,u) \mapsto \Psi^\epsilon(u)$ is $C^k(\Lambda \by \U^\cs, \U^\hs)$. Moreover $\Psi^\epsilon = 0$ for all $\epsilon \in \Lambda$ and $D_u\Psi^0(0) = 0$.
  \item Every initial condition $u_0 \in \cman$ determines a unique solution $u$ of \eqref{appendix:hamilton equation} which remains in $\cman$ as long as it remains in $\U$.
  \item If $u$ solves \eqref{appendix:hamilton equation} and lies in $U$ for all $q$, then $u$ lies entirely in $\cman$.
  \item If $u^\cs \in C^1((a,b),\U^\cs)$ solves the reduced system
    \begin{align}
      \label{appendix: reduced ode}
      u^\cs_q = 
      f^\epsilon(u^\cs) :=
      L u^\cs + P^\cs N^\epsilon ( u^\cs + \Psi^\epsilon(u^\cs)),
    \end{align}
    then $u = (\chi^\epsilon)^{-1} (u^\cs)$ solves the full system \eqref{appendix:hamilton equation}.
  \item 
    $\mathcal{M}^\epsilon$ is a symplectic submanifold of $\mathcal{X}$ when equipped with the symplectic form $\omega^\epsilon|_{\mathcal{M}^\epsilon}$ and Hamiltonian 
    $\cham(u^\cs) = \ham^\epsilon(u^\cs+\Psi^\epsilon(u^\cs))$.
    The reduced system \eqref{appendix: reduced ode} corresponds to the Hamiltonian flow for $(\mathcal{M}^\epsilon, \omega^\epsilon|_{\mathcal{M}^\epsilon}, \cham)$.  In fact, it is reversible and coincides with the restriction of the full Hamiltonian to the center manifold.
  \end{enumerate}
\end{theorem}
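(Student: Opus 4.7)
\textbf{Proof plan for Theorem~\ref{center manifold theorem}.}
The plan is to execute the Lyapunov--Perron/Mielke construction, adapted to the reversible Hamiltonian setting, and then verify that the resulting finite-dimensional reduced flow inherits a Hamiltonian structure. First, I would modify $N^\epsilon$ outside a small neighborhood of $0$ by multiplying by a smooth cutoff $\chi(u/\delta)$, so that the new nonlinearity $\widetilde N^\epsilon$ is globally defined on $\mathcal X$, is $C^{k+1}$ jointly in $(\epsilon,u)$, satisfies $\widetilde N^0(0)=0$, $D_u \widetilde N^0(0)=0$, and has globally small Lipschitz constant in $u$ (and in $\epsilon$ for $\epsilon$ near $0$). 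Any solution of the modified equation which remains in the ball of radius $\delta$ solves the original equation, so properties (ii)--(iii) will follow a posteriori from local uniqueness.

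Second, I would produce bounded trajectories of the modified equation by a Lyapunov--Perron fixed-point argument. The key analytic input is that, by (H1), we have a spectral splitting $\mathcal X = \mathcal X^{\mathrm c} \oplus \mathcal X^{\mathrm{su}}$, and the restriction $L_{\mathrm{su}} := L|_{\mathcal X^{\mathrm{su}}}$ has spectrum bounded away from $i\mathbb R$. Combined with the resolvent estimate (H2) on $\mathcal X^{\mathrm{su}}$, a standard Mielke argument (see e.g.~\cite{mielke1988reduction,haragus2011book}) shows that the operator $\partial_q - L_{\mathrm{su}}$ is an isomorphism between suitable function spaces of tempered trajectories. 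Concretely, fix a small exponential rate $\eta > 0$ smaller than $\mathrm{dist}(\sigma(L_{\mathrm{su}}),i\mathbb R)$ and work in the Banach space
\begin{equation*}
 Y_\eta := \{ u\in C^0(\mathbb R,\mathcal X) : \sup_q e^{-\eta|q|}\n{u(q)}_{\mathcal X} < \infty \}.
\end{equation*}
Given $u_0^{\mathrm c}\in \mathcal X^{\mathrm c}$, one seeks $u \in Y_\eta$ with
\begin{equation*}
 u(q) = e^{Lq}u_0^{\mathrm c} + \int_0^q e^{L(q-s)}P^{\mathrm c}\widetilde N^\epsilon(u(s))\,ds + \int_{\pm\infty}^q e^{L(q-s)}P^{\mathrm{su},\pm}\widetilde N^\epsilon(u(s))\,ds,
\end{equation*}
where $P^{\mathrm{su},\pm}$ are the projections onto the stable/unstable parts of $\mathcal X^{\mathrm{su}}$. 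Since $\widetilde N^\epsilon$ has small Lipschitz constant, this is a contraction on $Y_\eta$ for $(\epsilon,u_0^{\mathrm c})$ near the origin and produces a unique solution $u(q;u_0^{\mathrm c},\epsilon)$. Defining $\Psi^\epsilon(u_0^{\mathrm c}) := P^{\mathrm{su}}u(0;u_0^{\mathrm c},\epsilon)$ then yields (i), with $\Psi^\epsilon(0)=0$ and $D\Psi^0(0)=0$ coming from $D_u\widetilde N^0(0)=0$; $C^k$ smoothness in $(\epsilon,u_0^{\mathrm c})$ follows from parameter-dependent contraction-mapping smoothness, together with a scale-of-Banach-spaces argument (taking $\eta$ small enough so that $k\eta$ still lies in the spectral gap) to recover derivatives of all orders up to $k$. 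Invariance (ii), uniqueness (iii), and equivalence with the reduced equation (iv) are then standard consequences of the construction.

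Third, I would upgrade the manifold to a symplectic, reversible one. Reversibility passes to $\mathcal M^\epsilon$ automatically from the uniqueness of the Lyapunov--Perron fixed point combined with reversibility of $\widetilde N^\epsilon$ (after choosing the cutoff to be reverser-invariant); one checks $\mathcal S \Psi^\epsilon(\mathcal S u^{\mathrm c}) = \Psi^\epsilon(u^{\mathrm c})$. For the symplectic structure, the key point is that the center subspace $\mathcal X^{\mathrm c}$ is a symplectic subspace of $(\mathcal X,\omega^\epsilon)$: because $\omega^\epsilon$ is $L$-invariant and $\sigma(L|_{\mathcal X^{\mathrm c}})\subset i\mathbb R$ is disjoint from $\sigma(L_{\mathrm{su}})$, one shows $\omega^\epsilon(\mathcal X^{\mathrm c},\mathcal X^{\mathrm{su}})=0$, so $\omega^\epsilon|_{\mathcal X^{\mathrm c}}$ is nondegenerate. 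Since $\Psi^\epsilon$ is tangent to $\mathcal X^{\mathrm{su}}$ to first order, the pulled-back form $(u_0^{\mathrm c}+\Psi^\epsilon(u_0^{\mathrm c}))^\ast \omega^\epsilon$ differs from $\omega^\epsilon|_{\mathcal X^{\mathrm c}}$ by a small correction and remains nondegenerate near the origin. Finally, because the reduced vector field in (iv) equals $P^{\mathrm c}$ of the full vector field and $\omega^\epsilon$ is invariant under the flow, one verifies directly (or by Cartan's formula) that it is Hamiltonian with Hamiltonian $K^\epsilon(u^{\mathrm c}) := \mathcal H^\epsilon(u^{\mathrm c}+\Psi^\epsilon(u^{\mathrm c}))$.

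The hard part will be the $C^k$ regularity of $\Psi^\epsilon$, which requires carefully trading off the exponential weight $\eta$ against the number of derivatives; a too-large $\eta$ kills the contraction in $Y_\eta$, while a too-small one leaves no room for differentiating the fixed-point equation $k$ times and still controlling the resulting terms through (H2). The standard remedy is a family of spaces $Y_{j\eta}$ for $j=0,\ldots,k$ together with induction on derivative order, which is what forces the loss of one derivative relative to the regularity of $N^\epsilon$ assumed in (H3). Verifying that the symplectic pullback really is nondegenerate (needed for (v)) is the secondary technical point, but it is a direct consequence of the spectral decomposition and the smallness of $D\Psi^\epsilon(0)$.
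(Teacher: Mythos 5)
This statement is quoted, not proved, in the paper: it sits in Appendix~B (``Quoted results'') and is attributed to Buffoni, Groves, and Toland, with the general reduction machinery going back to Mielke. So there is no in-paper proof to compare against; the relevant comparison is with the cited literature, and your plan is essentially the standard construction used there: cutoff modification of the nonlinearity, Lyapunov--Perron fixed point in exponentially weighted trajectory spaces, $C^k$ smoothness via a scale of weights (accounting for the loss of one derivative from the $C^{k+1}$ hypothesis in (H3)), reversibility from uniqueness of the fixed point, and the symplectic/Hamiltonian structure of the reduced flow from the $\omega$-orthogonality of the spectral subspaces plus the general fact that an invariant symplectic submanifold inherits the Hamiltonian flow of the restricted Hamiltonian.

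One technical point deserves correction. In the spatial-dynamics setting to which this theorem is applied, $L$ does not generate a semigroup: the hyperbolic part of the spectrum is unbounded in both half-planes, so the variation-of-constants integrals $\int_{\pm\infty}^q e^{L(q-s)}P^{\mathrm{su},\pm}\widetilde N^\epsilon(u(s))\,ds$ that you display are not literally defined. The correct mechanism --- which you do invoke in words --- is that (H2), via the Fourier transform, shows $\partial_q - L|_{\mathcal X^{\mathrm{su}}}$ is an isomorphism between suitable exponentially weighted spaces of trajectories valued in $\mathcal D(L)$ and $\mathcal X$ respectively; the fixed-point equation should be written with that bounded solution operator in place of the semigroup convolution, and the trajectory space must carry the $\mathcal D(L)$ norm (not just the $\mathcal X$ norm) since $N^\epsilon$ is only defined on a neighborhood in $\mathcal D(L)$. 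With that repair, and with the cutoff chosen reverser-equivariant and applied so as to keep $\widetilde N^\epsilon$ well defined on all of $\mathcal D(L)$, your outline matches the proof in the sources the paper cites.
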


\bibliographystyle{siam}
\bibliography{projectdescription}

\def\cprime{$'$}
\begin{thebibliography}{100}

\bibitem{alazard2013global}
{\sc T.~Alazard and J.-M. Delort}, {\em Global solutions and asymptotic
  behavior for two dimensional gravity water waves}, Ann. Sci. {\'E}c. Norm.
  Sup{\'e}r.,  (to appear).

\bibitem{alexandrov1962spheres}
{\sc A.~D. Alexandrov}, {\em A characteristic property of spheres}, Ann. Mat.
  Pura Appl. (4), 58 (1962), pp.~303--315.

\bibitem{alford2015formation}
{\sc M.~H. Alford, T.~Peacock, J.~A. MacKinnon, J.~D. Nash, M.~C. Buijsman,
  L.~R. Centuroni, S.-Y. Chao, M.-H. Chang, D.~M. Farmer, O.~B. Fringer, K.-H.
  Fu, P.~C. Gallacher, H.~C. Graber, K.~R. Helfrich, S.~M. Jachec, C.~R.
  Jackson, J.~M. Klymak, D.~S. Ko, S.~Jan, T.~M.~S. Johnston, S.~Legg, I.-H.
  Lee, R.-C. Lien, M.~J. Mercier, J.~N. Moum, R.~Musgrave, J.-H. Park, A.~I.
  Pickering, R.~Pinkel, L.~Rainville, S.~R. Ramp, D.~L. Rudnick, S.~Sarkar,
  A.~Scotti, H.~L. Simmons, L.~C. St~Laurent, S.~K. Venayagamoorthy, Y.-H.
  Wang, J.~Wang, Y.~J. Yang, T.~Paluszkiewicz, and T.-Y. (David)~Tang}, {\em
  The formation and fate of internal waves in the {S}outh {C}hina {S}ea},
  Nature, 521 (2015), pp.~65--69.

\bibitem{amick1984semilinear}
{\sc C.~J. Amick}, {\em Semilinear elliptic eigenvalue problems on an infinite
  strip with an application to stratified fluids}, Ann. Scuola Norm. Sup. Pisa
  Cl. Sci. (4), 11 (1984), pp.~441--499.

\bibitem{amick1982stokes}
{\sc C.~J. Amick, L.~E. Fraenkel, and J.~F. Toland}, {\em On the {S}tokes
  conjecture for the wave of extreme form}, Acta Math., 148 (1982),
  pp.~193--214.

\bibitem{amick1981periodic}
{\sc C.~J. Amick and J.~F. Toland}, {\em On periodic water-waves and their
  convergence to solitary waves in the long-wave limit}, Philos. Trans. Roy.
  Soc. London Ser. A, 303 (1981), pp.~633--669.

\bibitem{amick1981solitary}
\leavevmode\vrule height 2pt depth -1.6pt width 23pt, {\em On solitary
  water-waves of finite amplitude}, Arch. Ration. Mech. Anal., 76 (1981),
  pp.~9--95.

\bibitem{amick1986global}
{\sc C.~J. Amick and R.~E.~L. Turner}, {\em A global theory of internal
  solitary waves in two-fluid systems}, Trans. Amer. Math. Soc., 298 (1986),
  pp.~431--484.

\bibitem{amick1989small}
\leavevmode\vrule height 2pt depth -1.6pt width 23pt, {\em Small internal waves
  in two-fluid systems}, Arch. Ration. Mech. Anal., 108 (1989), pp.~111--139.

\bibitem{beale1977existence}
{\sc J.~T. Beale}, {\em The existence of solitary water waves}, Comm. Pure
  Appl. Math., 30 (1977), pp.~373--389.

\bibitem{benjamin1966internal}
{\sc T.~B. Benjamin}, {\em Internal waves of finite amplitude and permanent
  form}, J. Fluid Mech., 25 (1966), pp.~241--270.

\bibitem{benjamin1971unified}
\leavevmode\vrule height 2pt depth -1.6pt width 23pt, {\em A unified theory of
  conjugate flows}, Philos. Trans. Roy. Soc. London Ser. A, 269 (1971),
  pp.~587--643.

\bibitem{benjamin1984impulse}
\leavevmode\vrule height 2pt depth -1.6pt width 23pt, {\em Impulse, flow force
  and variational principles}, IMA J. Appl. Math., 32 (1984), pp.~3--68.

\bibitem{benjamin1990solitary}
{\sc T.~B. Benjamin, J.~L. Bona, and D.~K. Bose}, {\em Solitary-wave solutions
  of nonlinear problems}, Philos. Trans. Roy. Soc. London Ser. A, 331 (1990),
  pp.~195--244.

\bibitem{berestycki1988monotonicity}
{\sc H.~Berestycki and L.~Nirenberg}, {\em Monotonicity, symmetry and
  antisymmetry of solutions of semilinear elliptic equations}, J. Geom. Phys.,
  5 (1988), pp.~237--275.

\bibitem{berestycki1991method}
\leavevmode\vrule height 2pt depth -1.6pt width 23pt, {\em On the method of
  moving planes and the sliding method}, Bol. Soc. Brasil. Mat. (N.S.), 22
  (1991), pp.~1--37.

\bibitem{bona1983finite}
{\sc J.~L. Bona, D.~K. Bose, and R.~E.~L. Turner}, {\em Finite-amplitude steady
  waves in stratified fluids}, J. Math. Pures Appl. (9), 62 (1983),
  pp.~389--439.

\bibitem{buffoni1999multiplicity}
{\sc B.~Buffoni and M.~D. Groves}, {\em A multiplicity result for solitary
  gravity-capillary waves in deep water via critical-point theory}, Arch.
  Ration. Mech. Anal., 146 (1999), pp.~183--220.

\bibitem{buffoni1996plethora}
{\sc B.~Buffoni, M.~D. Groves, and J.~F. Toland}, {\em A plethora of solitary
  gravity-capillary water waves with nearly critical {B}ond and {F}roude
  numbers}, Philos. Trans. Roy. Soc. London Ser. A, 354 (1996), pp.~575--607.

\bibitem{buffoni2003analytic}
{\sc B.~Buffoni and J.~Toland}, {\em Analytic theory of global bifurcation: an
  introduction}, Princeton University Press, 2003.

\bibitem{chen2016continuous}
{\sc R.~M. Chen and S.~Walsh}, {\em Continuous dependence on the density for
  stratified steady water waves}, Arch. Ration. Mech. Anal., 219 (2016),
  pp.~741--792.

\bibitem{constantin2007symmetry}
{\sc A.~Constantin, M.~Ehrnstr\"om, and E.~Wahl\'en}, {\em Symmetry of steady
  periodic gravity water waves with vorticity}, Duke Math. J., 140 (2007),
  pp.~591--603.

\bibitem{constantin2004symmetry}
{\sc A.~Constantin and J.~Escher}, {\em Symmetry of steady periodic surface
  water waves with vorticity}, J. Fluid Mech., 498 (2004), pp.~171--181.

\bibitem{constantin2004exact}
{\sc A.~Constantin and W.~A. Strauss}, {\em Exact steady periodic water waves
  with vorticity}, Comm. Pure Appl. Math., 57 (2004), pp.~481--527.

\bibitem{constantin2011discontinuous}
\leavevmode\vrule height 2pt depth -1.6pt width 23pt, {\em Periodic traveling
  gravity water waves with discontinuous vorticity}, Arch. Ration. Mech. Anal.,
  202 (2011), pp.~133--175.

\bibitem{coutand2007wellposedness}
{\sc D.~Coutand and S.~Shkoller}, {\em Well-posedness of the free-surface
  incompressible {E}uler equations with or without surface tension}, J. Amer.
  Math. Soc., 20 (2007), pp.~829--930.

\bibitem{craig1988symmetry}
{\sc W.~Craig and P.~Sternberg}, {\em Symmetry of solitary waves}, Comm.
  Partial Differential Equations, 13 (1988), pp.~603--633.

\bibitem{dancer1973bifurcation}
{\sc E.~Dancer}, {\em Bifurcation theory for analytic operators}, Proc. London
  Math. Soc., 26 (1973), pp.~359--384.

\bibitem{dancer1973globalstructure}
\leavevmode\vrule height 2pt depth -1.6pt width 23pt, {\em Global structure of
  the solutions of nonlinear real analytic eigenvalue problems}, Proc. London
  Math. Soc, 27 (1973), pp.~747--765.

\bibitem{dias2001free}
{\sc F.~Dias and A.~Il{\cprime}ichev}, {\em Interfacial waves with free-surface
  boundary conditions: an approach via a model equation}, Phys. D, 150 (2001),
  pp.~278--300.

\bibitem{dubreil1934determination}
{\sc M.~Dubreil-Jacotin}, {\em Sur la determination rigoureuse des ondes
  permanentes periodiques d'ampleur finie}, J. Math. Pures Appl., 13 (1934),
  pp.~217--291.

\bibitem{dubreil1937theoremes}
\leavevmode\vrule height 2pt depth -1.6pt width 23pt, {\em Sur les theoremes
  d'existence relatifs aux ondes permanentes periodiques a deux dimensions dans
  les liquides heterogenes}, J. Math. Pures Appl., 16 (1937), pp.~43--67.

\bibitem{fraenkel2000introduction}
{\sc L.~Fraenkel}, {\em An introduction to maximum principles and symmetry in
  elliptic problems}, Cambridge Univ Press, 2000.

\bibitem{friedrichs1954existence}
{\sc K.~O. Friedrichs and D.~H. Hyers}, {\em The existence of solitary waves},
  Comm. Pure Appl. Math., 7 (1954), pp.~517--550.

\bibitem{germain2009global}
{\sc P.~Germain, N.~Masmoudi, and J.~Shatah}, {\em Global solutions for the
  gravity water waves equation in dimension 3}, Ann. of Math. (2), 175 (2012),
  pp.~691--754.

\bibitem{germain2015capillary}
\leavevmode\vrule height 2pt depth -1.6pt width 23pt, {\em Global existence for
  capillary water waves}, Comm. Pure Appl. Math., 68 (2015), pp.~625--687.

\bibitem{gidas1979symmetry}
{\sc B.~Gidas, W.~M. Ni, and L.~Nirenberg}, {\em Symmetry and related
  properties via the maximum principle}, Comm. Math. Phys., 68 (1979),
  pp.~209--243.

\bibitem{groves2001spatial}
{\sc M.~D. Groves and A.~Mielke}, {\em A spatial dynamics approach to
  three-dimensional gravity-capillary steady water waves}, Proc. Roy. Soc.
  Edinburgh Sect. A, 131 (2001), pp.~83--136.

\bibitem{groves2007spatial}
{\sc M.~D. Groves and E.~Wahl{\'e}n}, {\em Spatial dynamics methods for
  solitary gravity-capillary water waves with an arbitrary distribution of
  vorticity}, SIAM J. Math. Anal., 39 (2007), pp.~932--964.

\bibitem{groves2008vorticity}
\leavevmode\vrule height 2pt depth -1.6pt width 23pt, {\em Small-amplitude
  {S}tokes and solitary gravity water waves with an arbitrary distribution of
  vorticity}, Phys. D, 237 (2008), pp.~1530--1538.

\bibitem{grue2000breaking}
{\sc J.~Grue, A.~Jensen, P.-O. Rus{\aa}s, and J.~K. Sveen}, {\em Breaking and
  broadening of internal solitary waves}, J. Fluid Mech., 413 (2000),
  pp.~181--217.

\bibitem{haragus2011book}
{\sc M.~Haragus and G.~Iooss}, {\em Local bifurcations, center manifolds, and
  normal forms in infinite-dimensional dynamical systems}, Universitext,
  Springer-Verlag London, Ltd., London; EDP Sciences, Les Ulis, 2011.

\bibitem{helfrich2006review}
{\sc K.~R. Helfrich and W.~K. Melville}, {\em Long nonlinear internal waves},
  in Annual review of fluid mechanics. {V}ol. 38, vol.~38 of Annu. Rev. Fluid
  Mech., Annual Reviews, Palo Alto, CA, 2006, pp.~395--425.

\bibitem{hur2008solitary}
{\sc V.~M. Hur}, {\em Exact solitary water waves with vorticity}, Arch. Ration.
  Mech. Anal., 188 (2008), pp.~213--244.

\bibitem{hur2008symmetry}
\leavevmode\vrule height 2pt depth -1.6pt width 23pt, {\em Symmetry of solitary
  water waves with vorticity}, Math. Res. Lett, 15 (2008), pp.~491--509.

\bibitem{ionescu2015gravity}
{\sc A.~D. Ionescu and F.~Pusateri}, {\em Global solutions for the gravity
  water waves system in 2d}, Invent. Math., 199 (2015), pp.~653--804.

\bibitem{james1997small}
{\sc G.~James}, {\em Small amplitude steady internal waves in stratified
  fluids}, Annali dellUniversit{\`a} di Ferrara, 43 (1997), pp.~65--119.

\bibitem{keady1975conjugate}
{\sc G.~Keady and J.~Norbury}, {\em Water waves and conjugate streams}, J.
  Fluid Mech., 70 (1975), pp.~663--671.

\bibitem{keady1978existence}
\leavevmode\vrule height 2pt depth -1.6pt width 23pt, {\em On the existence
  theory for irrotational water waves}, in Mathematical Proceedings of the
  Cambridge Philosophical Society, vol.~83, Cambridge Univ Press, 1978,
  pp.~137--157.

\bibitem{keady1978conjugate}
\leavevmode\vrule height 2pt depth -1.6pt width 23pt, {\em Waves and conjugate
  streams with vorticity}, Mathematika, 25 (1978), pp.~129--150.

\bibitem{keady1974bounds}
{\sc G.~Keady and W.~G. Pritchard}, {\em Bounds for surface solitary waves},
  Proc. Cambridge Philos. Soc., 76 (1974), pp.~345--358.

\bibitem{kirchgassner1982wavesolutions}
{\sc K.~Kirchg{{\"a}}ssner}, {\em Wave-solutions of reversible systems and
  applications}, J. Differential Equations, 45 (1982), pp.~113--127.

\bibitem{kirchgassner1988resonant}
\leavevmode\vrule height 2pt depth -1.6pt width 23pt, {\em Nonlinearly resonant
  surface waves and homoclinic bifurcation}, in Advances in applied mechanics,
  {V}ol.\ 26, vol.~26 of Adv. Appl. Mech., Academic Press, Boston, MA, 1988,
  pp.~135--181.

\bibitem{kirchgassner1993structure}
{\sc K.~Kirchg{{\"a}}ssner and K.~Lankers}, {\em Structure of permanent waves
  in density-stratified media}, Meccanica, 28 (1993), pp.~269--276.

\bibitem{kozlov2015bounds}
{\sc V.~Kozlov, N.~Kuznetsov, and E.~Lokharu}, {\em On bounds and non-existence
  in the problem of steady waves with vorticity}, J. Fluid Mech., 765 (2015),
  p.~R1 (13 pages).

\bibitem{kozlov2015conjecture}
\leavevmode\vrule height 2pt depth -1.6pt width 23pt, {\em On the
  {B}enjamin--{L}ighthill conjecture for water waves with vorticity},
  arXiv:1511.02818,  (2015).

\bibitem{krasovskii1962theory}
{\sc Y.~P. Krasovski\u\i}, {\em On the theory of steady-state waves of finite
  amplitude}, USSR Computational Mathematics and Mathematical Physics, 1
  (1962), pp.~996--1018.

\bibitem{lamb2000three}
{\sc K.~G. Lamb}, {\em Conjugate flows for a three-layer fluid}, Phys. Fluids,
  12 (2000), pp.~2169--2185.

\bibitem{lamb1998conjugate}
{\sc K.~G. Lamb and B.~Wan}, {\em Conjugate flows and flat solitary waves for a
  continuously stratified fluid}, Phys. Fluids, 10 (1998), pp.~2061--2079.

\bibitem{lankers1997fast}
{\sc K.~Lankers and G.~Friesecke}, {\em Fast, large-amplitude solitary waves in
  the 2d euler equations for stratified fluids}, Nonlinear Anal., 29 (1997),
  pp.~1061--1078.

\bibitem{Lavrentiev1954}
{\sc M.~A. Lavrentiev}, {\em I. on the theory of long waves. {II}. a
  contribution to the theory of long waves}, Amer. Math. Soc. Translation, 1954
  (1954), p.~53.

\bibitem{levi1924determinazione}
{\sc T.~Levi-Civita}, {\em Determinazione rigorosa delle onde irrotazionali
  periodiche in acqua profonda}, Rend. Accad. Lincei, 33 (1924), pp.~141--150.

\bibitem{li1991monotonicity}
{\sc C.~Li}, {\em Monotonicity and symmetry of solutions of fully nonlinear
  elliptic equations on unbounded domains}, Comm. Partial Differential
  Equations, 16 (1991), pp.~585--615.

\bibitem{lieberman1987nonlinear}
{\sc G.~M. Lieberman}, {\em Two-dimensional nonlinear boundary value problems
  for elliptic equations}, Trans. Amer. Math. Soc., 300 (1987), pp.~287--295.

\bibitem{long1953some}
{\sc R.~R. Long}, {\em Some aspects of the flow of stratified fluids. {I}. {A}
  theoretical investigation}, Tellus, 5 (1953), pp.~42--58.

\bibitem{maia1997symmetry}
{\sc L.~A. Maia}, {\em Symmetry of internal waves}, Nonlinear Anal., 28 (1997),
  pp.~87--102.

\bibitem{makarenko1992bore}
{\sc N.~I. Makarenko}, {\em Smooth bore in a two-layer fluid}, in Free boundary
  problems in continuum mechanics ({N}ovosibirsk, 1991), vol.~106 of Internat.
  Ser. Numer. Math., Birkh{\"a}user, Basel, 1992, pp.~195--204.

\bibitem{mcleod1984froude}
{\sc J.~B. McLeod}, {\em The {F}roude number for solitary waves}, Proc. Roy.
  Soc. Edinburgh Sect. A, 97 (1984), pp.~193--197.

\bibitem{mcleod1997stokes}
\leavevmode\vrule height 2pt depth -1.6pt width 23pt, {\em The {S}tokes and
  {K}rasovskii conjectures for the wave of greatest height}, Stud. Appl. Math.,
  98 (1997), pp.~311--333.

\bibitem{mielke1988reduction}
{\sc A.~Mielke}, {\em Reduction of quasilinear elliptic equations in
  cylindrical domains with applications}, Math. Methods Appl. Sci., 10 (1988),
  pp.~51--66.

\bibitem{mielke1991book}
\leavevmode\vrule height 2pt depth -1.6pt width 23pt, {\em Hamiltonian and
  {L}agrangian flows on center manifolds}, vol.~1489 of Lecture Notes in
  Mathematics, Springer-Verlag, Berlin, 1991.
\newblock With applications to elliptic variational problems.

\bibitem{nekrasov1921steady}
{\sc A.~Nekrasov}, {\em On steady waves}, Izv. Ivanovo-Voznesensk. Politekhn.
  In-ta, 3 (1921).

\bibitem{grue2002solitary}
{\sc P.-O. Rus{\aa}s and J.~Grue}, {\em Solitary waves and conjugate flows in a
  three-layer fluid}, Eur. J. Mech. B Fluids, 21 (2002), pp.~185--206.

\bibitem{russell1844report}
{\sc J.~S. Russell}, {\em Report on waves}, in 14th meeting of the British
  Association for the Advancement of Science, vol.~311, 1844, p.~390.

\bibitem{serrin1971symmetry}
{\sc J.~Serrin}, {\em A symmetry problem in potential theory}, Arch. Ration.
  Mech. Anal., 43 (1971), pp.~304--318.

\bibitem{shatah2008geometry}
{\sc J.~Shatah and C.~Zeng}, {\em Geometry and a priori estimates for free
  boundary problems of the {E}uler equation}, Comm. Pure Appl. Math., 61
  (2008), pp.~698--744.

\bibitem{shatah2011interface}
\leavevmode\vrule height 2pt depth -1.6pt width 23pt, {\em Local well-posedness
  for fluid interface problems}, Arch. Ration. Mech. Anal., 199 (2011),
  pp.~653--705.

\bibitem{starr1947momentum}
{\sc V.~P. Starr}, {\em Momentum and energy integrals for gravity waves of
  finite height}, J. Mar. Res., 6 (1947), pp.~175--193.

\bibitem{stoker1992water}
{\sc J.~J. Stoker}, {\em Water waves}, Wiley Classics Library, John Wiley \&
  Sons, Inc., New York, 1992.
\newblock The mathematical theory with applications, Reprint of the 1957
  original, A Wiley-Interscience Publication.

\bibitem{stokes1880theory}
{\sc G.~G. Stokes}, {\em On the theory of oscillatory waves}, Mathematical and
  Physical Papers, 1 (1880), pp.~197--229, 314--26.

\bibitem{sun1995existence}
{\sc S.~M. Sun}, {\em Existence of solitary internal waves in a two-layer fluid
  of infinite depth}, in Proceedings of the {S}econd {W}orld {C}ongress of
  {N}onlinear {A}nalysts, {P}art 8 ({A}thens, 1996), vol.~30, 1997,
  pp.~5481--5490.

\bibitem{sun2002solitary}
\leavevmode\vrule height 2pt depth -1.6pt width 23pt, {\em Solitary internal
  waves in continuously stratified fluids of great depth}, Phys. D, 166 (2002),
  pp.~76--103.

\bibitem{terkrikorov1960existence}
{\sc A.~Ter-Krikorov}, {\em The existence of periodic waves which degenerate
  into a solitary wave}, J. Appl. Math. Mech., 24 (1960), pp.~930--949.

\bibitem{ter1962solitary}
\leavevmode\vrule height 2pt depth -1.6pt width 23pt, {\em The solitary wave on
  the surface of a turbulent liquid}, USSR Comp. Maths. Math. Phys., 1 (1962),
  pp.~1253--1264.

\bibitem{ter1963theorie}
\leavevmode\vrule height 2pt depth -1.6pt width 23pt, {\em Th{\'e}orie exacte
  des ondes longues stationnaires dans un liquide h{\'e}t{\'e}rog{\`e}ne}, J.
  M{\'e}canique, 2 (1963), pp.~351--376.

\bibitem{toland1978existence}
{\sc J.~F. Toland}, {\em On the existence of a wave of greatest height and
  {S}tokes's conjecture}, Proc. Roy. Soc. London Ser. A, 363 (1978),
  pp.~469--485.

\bibitem{tuleuov1997bore}
{\sc B.~I. Tuleuov}, {\em Smooth bores in a two-layer fluid with a free
  surface}, Prikl. Mekh. Tekhn. Fiz., 38 (1997), pp.~87--92.

\bibitem{turner1981internal}
{\sc R.~E.~L. Turner}, {\em Internal waves in fluids with rapidly varying
  density}, Ann. Scuola Norm. Sup. Pisa Cl. Sci. (4), 8 (1981), pp.~513--573.

\bibitem{turner1984variational}
\leavevmode\vrule height 2pt depth -1.6pt width 23pt, {\em A variational
  approach to surface solitary waves}, J. Differential Equations, 55 (1984),
  pp.~401--438.

\bibitem{turner1988broadening}
{\sc R.~E.~L. Turner and J.-M. Vanden-Broeck}, {\em Broadening of interfacial
  solitary waves}, Phys. Fluids, 31 (1988), pp.~2486--2490.

\bibitem{varvaruca2009extreme}
{\sc E.~Varvaruca}, {\em On the existence of extreme waves and the {S}tokes
  conjecture with vorticity}, J. Differential Equations, 246 (2009),
  pp.~4043--4076.

\bibitem{volpert2003degree}
{\sc V.~Volpert and A.~Volpert}, {\em Properness and topological degree for
  general elliptic operators}, Abstr. Appl. Anal.,  (2003), pp.~129--181.

\bibitem{walsh2009symmetry}
{\sc S.~Walsh}, {\em Some criteria for the symmetry of stratified water waves},
  Wave Motion, 46 (2009), pp.~350--362.

\bibitem{walsh2009stratified}
\leavevmode\vrule height 2pt depth -1.6pt width 23pt, {\em Stratified and
  steady periodic water waves}, SIAM J. Math. Anal., 41 (2009), pp.~1054--1105.

\bibitem{walsh2014global}
\leavevmode\vrule height 2pt depth -1.6pt width 23pt, {\em Steady stratified
  periodic gravity waves with surface tension {II}: {G}lobal bifurcation},
  Discrete Contin. Dyn. Syst. Ser. A, 34 (2014), pp.~3241--3285.

\bibitem{wheeler2013solitary}
{\sc M.~H. Wheeler}, {\em Large-amplitude solitary water waves with vorticity},
  SIAM J. Math. Anal., 45 (2013), pp.~2937--2994.

\bibitem{wheeler2015froude}
\leavevmode\vrule height 2pt depth -1.6pt width 23pt, {\em The {F}roude number
  for solitary water waves with vorticity}, J. Fluid Mech., 768 (2015),
  pp.~91--112.

\bibitem{wheeler2015pressure}
\leavevmode\vrule height 2pt depth -1.6pt width 23pt, {\em Solitary water waves
  of large amplitude generated by surface pressure}, Arch. Ration. Mech. Anal.,
  218 (2015), pp.~1131--1187.

\bibitem{whyburn1964topological}
{\sc G.~T. Whyburn}, {\em Topological analysis}, Second, revised edition.
  Princeton Mathematical Series, No. 23, Princeton University Press, Princeton,
  N.J., 1964.

\bibitem{wu2011global}
{\sc S.~Wu}, {\em Global wellposedness of the 3-{D} full water wave problem},
  Invent. Math., 184 (2011), pp.~125--220.

\bibitem{yanowitch1962gravity}
{\sc M.~Yanowitch}, {\em Gravity waves in a heterogeneous incompressible
  fluid}, Comm. Pure Appl. Math., 15 (1962), pp.~45--61.

\bibitem{yih1965dynamics}
{\sc C.-S. Yih}, {\em Dynamics of nonhomogeneous fluids}, The Macmillan Co.,
  New York, 1965.

\end{thebibliography}

\end{document}